\documentclass[11pt]{amsart}
\usepackage{amsmath, amssymb, extpfeil}

\usepackage{amsfonts}
\usepackage{mathrsfs}
\usepackage[arrow,matrix,curve,cmtip,ps]{xy}
\usepackage{tikz-cd}
\tikzset{rot90/.style={anchor=south, rotate=90, inner sep=.5mm}}

\usepackage{amsthm}
\usepackage{mathtools} 
\usepackage{array}
\usepackage{mathdots}
\usepackage{setspace}
\usepackage{mathabx}
\usepackage{multirow}
\usepackage[normalem]{ulem}

\usepackage{enumitem} 

\usepackage[top= 2.25cm, bottom=2cm, left = 2.8cm, right= 2.8cm]{geometry}
\usepackage{hyperref}

\allowdisplaybreaks

\numberwithin{equation}{section}

\numberwithin{equation}{section}

\newtheorem{theorem}[equation]{Theorem}
\newtheorem{lemma}[equation]{Lemma}
\newtheorem{proposition}[equation]{Proposition}
\newtheorem{corollary}[equation]{Corollary}

\newtheorem{conjecture}[equation]{Conjecture}

\newtheorem*{theorem*}{Theorem}
\newtheorem{hypothesis}[]{Hypothesis}

\theoremstyle{remark}
\newtheorem{remark}[equation]{Remark}
\newtheorem{definition}[equation]{Definition}

\theoremstyle{definition}
\newtheorem{clause}[equation]{}

\newtheorem{assumption}[equation]{Assumption}

\newcommand{\q}{\psi}

\renewcommand{\L}[1]{{}^L#1}

\newcommand{\D}[1]{\widehat{#1}}

\renewcommand{\a}{\alpha}

\renewcommand{\Im}{\text{Im}\,}

\newcommand{\Ind}{\operatorname{Ind}}

\newcommand{\C}{\mathbb{C}}

\newcommand{\R}{\mathbb{R}}

\newcommand{\N}[1]{\frak{N}_{#1}}

\newcommand{\Hom}{\operatorname{Hom}}

\newcommand{\Rep}{\operatorname{Rep}}

\newcommand{\ul}{\underline{l}}

\newcommand{\ueta}{\underline{\eta}}

\renewcommand{\N}{\mathbb N}

\newcommand{\la}{\left\langle}
\newcommand{\ra}{\right\rangle}
\newcommand{\gl}{{\mathfrak {gl}}}
\newcommand{\GL}{GL}

\newcommand{\FDR}{{S}}

\renewcommand{\Re}{{\mathrm{Re}}}
\newcommand{\sgn}{{\operatorname{sgn}}}

\newcommand{\ES}{{\mathcal {E}}}
\newcommand{\MRV}{{\mathcal D}}
\newcommand{\AV}{{\mathrm{pure}}}
\newcommand{\bp}{\mathrm{bp}}
\newcommand{\Std}{{\mathrm {Std}}}
\newcommand{\g}{{\mathfrak g}}
\renewcommand{\k}{{\mathfrak k}}
\newcommand{\h}{{\mathfrak h}}
\renewcommand{\t}{{\mathfrak t}}
\renewcommand{\a}{{\mathfrak a}}
\newcommand{\LAq}{{\mathfrak q}}

\renewcommand{\l}{{\mathfrak l}}
\newcommand{\m}{{\mathfrak m}}
\newcommand{\n}{{\mathfrak n}}
\renewcommand{\u}{{\mathfrak u}}
\renewcommand{\b}{{\mathfrak b}}

\newcommand{\bk}{d}
\newcommand{\bh}{c}

\newcommand{\LI}{{\mathrm B}}
\newcommand{\MI}{{\mathrm D}}
\newcommand{\UI}{{\mathrm C}}
\newcommand{\PI}{{\mathrm P}}
\newcommand{\QI}{{\mathrm Q}}

\newcommand{\inv}{^{-1}}
\renewcommand{\le}{\leqslant}
\renewcommand{\ge}{\geqslant}
\newcommand{\widesim}[2][1.5]{\mathrel{\overset{#2}{\scalebox{#1}[1]{$\sim$}}}}
\renewcommand{\forany}{\,\forall\,} 

\newcommand{\surj}{\twoheadrightarrow}

\newcommand{\surjects}{\xtwoheadrightarrow{}}

\newcommand{\inj}{\hookrightarrow}

\newcommand{\injects}{\xhookrightarrow{\;\;\;\;\;}}

\newcommand{\bij}{\xrightarrow{\raisebox{-1.2ex}[0ex][1ex]{$\sim$}}}

\newcommand{\bijects}{\xrightarrow{\,\raisebox{-1.2ex}[0ex][1ex]{$\widesim[1]{}$\,}}}

\newcommand{\aro}{\xrightarrow{\;\;\;\;\;}}

\newcommand{\acts}{\mathrel{\reflectbox{$\righttoleftarrow$}}}
\newcommand{\racts}{\righttoleftarrow}

\DeclareMathOperator{\diag}{diag}
\DeclareMathOperator{\Ad}{Ad}
\DeclareMathOperator{\Fr}{Fr}
\DeclareMathOperator{\supp}{supp}

\newcommand{\BC}{\mathbb{C}}

\newcommand{\BQ}{\mathbb{Q}}
\newcommand{\BR}{\mathbb{R}}

\newcommand{\BZ}{\mathbb{Z}}

\newcommand{\cD}{\mathcal{D}}

\newcommand{\cI}{\mathcal{I}}

\newcommand{\cR}{\mathcal{R}}

\newcommand{\cV}{\mathcal{V}}

\newcommand{\cX}{\mathcal{X}}

\newcommand{\fg}{\mathfrak{g}}

\newcommand{\fh}{\mathfrak{h}}

\newcommand{\bm}{\mathbf{m}}
\newcommand{\fm}{\mathfrak{m}}
\newcommand{\fp}{\mathfrak{p}}

\newcommand{\ft}{\mathfrak{t}}

\newcommand{\SL}{SL}

\newcommand{\Sp}{Sp}
\newcommand{\SO}{SO}

\newcommand{\Span}{\mathbf{Span}}

\newcommand{\fgl}{\mathfrak{gl}}

\begin{document}

\title[Relating real and $p$-adic A-packets]{Relating Arthur packets of real unitary groups and $p$-adic symplectic and orthogonal groups}

\author{Taiwang DENG}
\address{Beijing Institute of Mathematical Sciences and Applications, Huairou District Beijing 101408, China}
\email{dengtaiw@bimsa.cn}

\author{Chang HUANG}
\address{Department of Mathematics \\ Tsinghua University, Haidian District Beijing 100084, China}
\email{hc21@mails.tsinghua.edu.cn}

\author{Bin XU}
\address{Yau Mathematical Sciences Center and Department of Mathematics \\  Tsinghua University, Haidian District Beijing 100084, China}
\email{binxu@tsinghua.edu.cn}

\author{Qixian ZHAO}
\address{Yau Mathematical Sciences Center \\ Tsinghua University, Haidian District Beijing 100084, China}
\email{zhao{\_}qixian@tsinghua.edu.cn}

\begin{abstract}
We establish an explicit correspondence of certain Arthur packets between real unitary groups and $p$-adic symplectic or orthogonal groups. This allows one to compute Arthur packets of real unitary groups by translating results from the $p$-adic side. A main ingredient in our proof is an explicit relation between Zuckerman's translation functor on the real side and the Jacquet functor on the $p$-adic side. To achieve this, we construct a correspondence of stacks of Langlands parameters with fixed infinitesimal characters between the relevant real and $p$-adic groups. Our approach also allows one to relate the Kazhdan-Lusztig polynomials and the microlocal geometry between real and $p$-adic sides. 
\end{abstract}

\maketitle

\tableofcontents

\section{Introduction}
\label{sec: introduction}

A fundamental problem in the representation theory of real or $p$-adic reductive groups is the classification of their irreducible unitary representations, i.e. their unitary dual. Since the tempered representations are all unitary, the main difficulty is to classify the nontempered unitary representations. There is an important class of such representations coming from automorphic forms, or more precisely the local components of automorphic representations. As suggested by the trace formula, Arthur \cite{Arthur:1984} conjectured that these local components can be grouped into packets enjoying nice analytic properties, namely the endoscopic character relation. These packets are nowadays called Arthur packets or $A$-packets, and they are parameterized by the so-called Arthur parameters. In fact, this additional packet structure is useful for their classification, and thus one would like to understand the packets first. 

There have been several different but conjecturally compatible constructions of Arthur packets, each with its own advantages and limitations. 
For real reductive groups, Adams-Barbasch-Vogan \cite{ABV:1992} gave a construction of Arthur packets through the microlocal geometry of Langlands parameters space (abbreviated as ABV-space), and we will refer to them as the ABV-packets for distinction, denoted by $\Pi_{\psi^{\mathbb{R}}}^{{\rm ABV}}$, where $\psi^\BR$ denotes a real Arthur parameter. The unitarity of these representations is not clear from this definition. In the case of $Sp(2n, \mathbb{R})$, $SO(p,q)$, $U(p,q)$, Arthur \cite{Arthur:2013}, Mok \cite{Mok:2014} in the quasisplit case and Moeglin-Renard \cite{MoeglinRenard:2020} \cite{MoeglinRenard:2018} \cite{MR:2019} in the nonquasisplit case constructed the Arthur packets using harmonic analysis, more precisely the (twisted) endoscopy theory. We denote these simply by $\Pi_{\psi^{\mathbb{R}}}$. The unitarity of these packets follows from lifting them to the automorphic representations. Again for these groups, Moeglin-Renard \cite{MoeglinRenard:2020} gave an equivalent algebraic construction using cohomological induction and parabolic induction starting from the unipotent case \cite{Moeglin:2017}. Lastly, dropping the restriction on groups but instead requiring the representations are cohomological, Adams-Johnson \cite{AJ:1987} proposed a construction using cohomological induction from one dimensional characters of certain twisted Levi subgroups, resulting in the so-called Adams-Johnson packets, denoted by $\Pi_{\psi^{\mathbb{R}}}^{{\rm AJ}}$. These constructions are compatible in the sense that $\Pi_{\psi^{\mathbb{R}}}^{{\rm ABV}} = \Pi_{\psi^{\mathbb{R}}}$ \cite{AAM:2024} \cite{AM:2024} and $\Pi_{\psi^{\mathbb{R}}} = \Pi_{\psi^{\mathbb{R}}}^{{\rm AJ}}$ \cite{AMR:2018} whenever both sides are defined.

For $p$-adic reductive groups, there is a parallel story. First there is also a construction of A-packets based on the local Langlands correspondence together with the microlocal geometry of Langlands parameters space (abbreviated as Vogan variety) given by Vogan \cite{Vogan:1993} and reinterpreted in \cite{CFMMX:2022} using vanishing cycles. By the analogy with ABV's construction for real groups, they are again referred to as ABV-packets, denoted by $\Pi_{\psi^{\mathbb{Q}_p}}^{{\rm ABV}}$, where $\psi^{\BQ_p}$ is a $p$-adic A-parameter. In the case of symplectic or special orthogonal groups, Arthur \cite{Arthur:2013} in the quasisplit case and Moeglin \cite{Moeglin:2009} based on \cite{MoeglinRenard:2018} in the nonquasisplit case constructed the Arthur packets using the (twisted) endoscopy theory, denoted by $\Pi_{\psi^{\mathbb{Q}_p}}$. Again for these groups, Moeglin \cite{Moeglin:2009} gave an algebraic construction using certain recursive formulas involving parabolic induction and Jacquet restriction. It is expected that Moeglin's work can also be extended to $p$-adic unitary groups based on \cite{Mok:2014} \cite{K-M-S-W:2014}. While it is unclear what should be the analogue of cohomological induction for $p$-adic groups, Moeglin distinguished a special type of Arthur packets in her construction, which will be shown to correspond to the Adams-Johnson packets in our present work. Finally, there is an ongoing project of Cunningham-Hazeltine-Liu-Lo-Ray-Xu aiming to show that $\Pi_{\psi^{\mathbb{Q}_p}}^{{\rm ABV}} = \Pi_{\psi^{\mathbb{Q}_p}}$ when both sides are defined.

Our motivation mostly lies in the explicit computation of A-packets. This is difficult from the geometric (i.e. ABV) point of view (Some examples are given on the $p$-adic side in \cite{CFMMX:2022}). Through the approach of endoscopy theory, it is possible to compute some examples using the computer program \textsf{Atlas}. In order to obtain general results, the algebraic approach seems more promising. For example, the Adams-Johnson packets can be computed explicitly by the theory of cohomological induction in the good range, cf. \cite{VZ:1984} \cite{KV:1995}. For more general A-packets of real groups, one also needs to consider the cohomological induction in the weakly fair range, cf. \cite{MoeglinRenard:2020} \cite{Trapa:2001}, which can be very difficult to compute. On the other hand, for $p$-adic symplectic or special orthogonal groups, Moeglin's construction leads to a complete understanding of the packets structure cf. \cite{Moeglin1:2011} \cite{Xu:Comb} \cite{Atobe:2022} \cite{Atobe:2023} \cite{HLL:2022}. Our motivation is to translate these results in the $p$-adic world to real classical groups. In this paper, we focus on real unitary groups, where the translation is most clean.

\subsection{Main results}
\label{subsec: main}

For a positive integer $n$, let $G$ be $U(\frac{n}{2}, \frac{n}{2})$ if $n$ is even or $U(\frac{n-1}{2}, \frac{n+1}{2})$ if $n$ is odd. Let $\D{G} = GL_n(\BC)$, the Langlands dual group of $G$. Let $W_{\mathbb{R}}$ (resp. $W_{\mathbb{C}}$) be the Weil group of $\mathbb{R}$ (resp. $\mathbb{C}$), i.e. $W_{\mathbb{C}} = \mathbb{C}^{\times}$ and 
\(
W_{\mathbb{R}} = \langle \mathbb{C}^{\times}, j \rangle
\)
subject to the relations $j^{2} = -1$ and $jzj^{-1} = \bar{z}$ for any $z \in \mathbb{C}^{\times}$. There is a short exact sequence 
\[
1 \rightarrow W_{\mathbb{C}} \rightarrow W_{\mathbb{R}} \rightarrow {\rm Gal}(\mathbb{C}/\mathbb{R}) \rightarrow 1.
\]
An Arthur parameter of $G$ is a  continuous group homomorphism
\[
\psi^{\mathbb{R}}: W_{\mathbb{R}} \times SL_{2}(\mathbb{C}) \rightarrow \L{G} := \D G \rtimes {\rm Gal}(\mathbb{C}/\mathbb{R})
\]
such that it is compatible with projections to ${\rm Gal}(\mathbb{C}/\mathbb{R})$ on both sides and $\psi^{\mathbb{R}}(\mathbb{C}^{\times})$ is bounded and consists of semisimple elements. Under base change to $\mathbb{C}$, we get a representation 
\[
BC(\psi^{\mathbb{R}}): W_{\mathbb{C}} \times SL_{2}(\mathbb{C}) \rightarrow \D G
\]
which is conjugate self-dual of  parity $(-1)^{n + 1}$ (cf. \cite[Lemma 2.2.1]{Mok:2014}). The isomorphism class o $BC(\psi^{\BR})$ determines $\psi^{\mathbb{R}}$ up to $\D{G}$-conjugation. We say $\psi^{\mathbb{R}}$ has good parity if the irreducible constituents of $BC(\psi^{\mathbb{R}})$ are all conjugate self-dual of the same parity as $BC(\psi^{\BR})$. Under the good parity assumption, we can write
\begin{align}
\label{eq: real A-parameter}
BC(\psi^{\mathbb{R}}) = \bigoplus_{i = 1}^{r} \, (z/\bar{z})^{\frac{k_i}{2}} \boxtimes S_{m_i}, \quad z \in \mathbb{C}^{\times}
\end{align}
for some $k_i \in \mathbb{Z}, m_i \in \mathbb{Z}_{> 0}$ such that $n \equiv k_i + m_i$ mod $2$, where $S_{m_i}$ denotes the $m_i$-dimensional irreducible representation of $SL_{2}(\mathbb{C})$. We define the \textit{pure Arthur packet} of $G$ to be
\[
\Pi_{\psi^{\mathbb{R}}}^{\rm pure}(G) := \bigsqcup_{(p, q): p + q = n} \Pi_{\psi^{\mathbb{R}}}(U(p,q)).
\]
We would like to compare it with certain pure Arthur packet for a split classical group $H$ over $\mathbb{Q}_p$. To do so, we let $\delta \in \mathbb{Z}$ be any number such that $k_i + \delta > m_i$ for all $i$. 
Let 
\[
N = \sum_{i = 1}^{r} (k_i + \delta)m_i.
\]
\begin{itemize}
\item
If $\delta$ is odd, then $N \equiv n$ mod $2$. We take $H$ (and its Langlands dual group $\D H$) to be
\begin{center}
\begin{tabular}{| c | c | c |}
     \hline
      $n$         &  $H$                     & $\D{H}$               \\
     \hline
      odd     &  $Sp(N-1)$           &      $SO(N, \mathbb{C})$  \\
     \hline
      even   &   $SO(N+1)$         & $Sp(N, \mathbb{C})$ \\
      \hline
\end{tabular}
\end{center}
\item
If $\delta$ is even, then $N$ is even. We take $H$ to be
\begin{center}
\begin{tabular}{| c | c | c |}
     \hline
      $n$     &  $H$                       &      $\D{H}$               \\
     \hline
      odd     &  $SO(N+1)$           &      $Sp(N, \mathbb{C})$  \\
     \hline
      even   &   $O(N)$                &       $O(N, \mathbb{C})$ \\
      \hline
\end{tabular}
\end{center}
(Note only in Subsection~\ref{subsec: LLC p-adic} ~\ref{subsec: comparison} we will denote the split $p$-adic special even orthogonal group $SO(N)$ by $H$ and the full even orthogonal group $O(N)$ by $H^{+}$.)
\end{itemize}
Let $W_{\mathbb{Q}_p}$ be the Weil group of $\mathbb{Q}_p$. Since $H$ is split, an Arthur parameter of $H$ is equivalent to a continuous group homomorphism
\[
\psi^{\mathbb{Q}_p}: W_{\mathbb{Q}_p} \times SL_{2}(\mathbb{C}) \times SL_{2}(\mathbb{C}) \rightarrow \D{H},
\]
such that $\psi^{\mathbb{Q}_p}(W_{\mathbb{Q}_p})$ is bounded and consists of semisimple elements of the identity component $\D{H}^0$. Composing it with the standard representation ${\rm std}_{\D{H}}$ of $\D{H}$, we get a representation
\[
{\rm std}_{\D{H}} \circ \psi^{\mathbb{Q}_p}: W_{\mathbb{Q}_p} \times SL_{2}(\mathbb{C}) \times SL_{2}(\mathbb{C}) \rightarrow GL_{N}(\mathbb{C}),
\]
which is self-dual of orthogonal type (paity $+1$) or
symplectic type (parity $-1$) depending on the type of $\D{H}$. The isomorphism class of ${\rm std}_{\D{H}} \circ \psi^{\mathbb{Q}_p}$ determines $\psi^{\mathbb{Q}_p}$ up to $\D{H}$-conjugation. We say $\psi^{\mathbb{Q}_p}$ has good parity if the irreducible constituents of ${\rm std}_{\D{H}} \circ \psi^{\mathbb{Q}_p}$ are all self-dual of the same parity as ${\rm std}_{\D{H}} \circ \psi^{\mathbb{Q}_p}$. In this paper, we will always assume that $\psi^{\mathbb{Q}_p}$ is trivial on $W_{\mathbb{Q}_p}$. Then under the good parity assumption, we can write
\begin{align}
\label{eq: p-adic A-parameter}
{\rm std}_{\D{H}} \circ \psi^{\mathbb{Q}_p} = \bigoplus_{i = 1}^{r'} 1_{W_{\mathbb{Q}_p}} \boxtimes S_{a_i} \boxtimes S_{b_i}
\end{align}
where $a_i + b_i$ is even if $\D{H}$ is orthogonal, or is odd if $\D{H}$ is symplectic. We define the \textit{pure Arthur packet} to be 
\[
\Pi_{\psi^{\mathbb{Q}_p}}^{\rm pure}(H) := \begin{cases} \Pi_{\psi^{\mathbb{Q}_p}}(H) & \text{ if $H$ is symplectic, } \\
\Pi_{\psi^{\mathbb{Q}_p}}(H) \sqcup \Pi_{\psi^{\mathbb{Q}_p}}(H') & \text{ if $H$ is orthogonal. }
\end{cases} 
\]
Here $H'$ is the non-split inner form of $H$ defined by some quadratic form of discriminant one. For the comparison with $\psi^{\mathbb{R}}$ as in \eqref{eq: real A-parameter}, we take $r = r'$, $a_i = k_i + \delta$ and $b_i = m_i$. Note the condition $k_i + \delta > m_i$ implies that $a_i > b_i$ for all $i$.

In order to describe the relation between $\Pi_{\psi^{\mathbb{R}}}^{\rm pure}(G)$ and $\Pi_{\psi^{\mathbb{Q}_p}}^{\rm pure}(H)$, we first relate two larger sets, namely the sets of isomorphism classes of irreducible admissible representations with the same infinitesimal characters as those of $\psi^{\mathbb{R}}$ and $\psi^{\mathbb{Q}_p}$, respectively. The infinitesimal character of $\psi^{\mathbb{R}}$ is represented by $\lambda^{\mathbb{R}} = (\lambda_1, \cdots, \lambda_n) \in \mathbb{C}^{n}$, satisfying
\begin{align*}
\{\lambda_1, \cdots, \lambda_n \} & = \bigsqcup_{i = 1}^{r} \{ -\frac{m_i - 1}{2} + \frac{k_i}{2}, \cdots, \frac{m_i - 1}{2} + \frac{k_i}{2} \}.
\end{align*}
as multisets. Under the condition $k_i + \delta > m_i$, we have $\lambda_{i} > (1 - \delta)/2$ for all $i$. The good parity assumption implies the good parity assumption on $\lambda^{\mathbb{R}}$, i.e., $\lambda_i \in \mathbb{Z}$ if $n$ is odd and $\lambda_i \in \frac{1}{2}\mathbb{Z} \backslash \mathbb{Z}$ if $n$ is even. In terms of $\lambda$, we have 
\[
N = \sum_{i=1}^{n} (2\lambda_i + \delta).
\]
Denote the infinitesimal character of $\psi^{\mathbb{Q}_p}$ by $\lambda^{\mathbb{Q}_p}$. Let $\Phi(G)_{\lambda^{\mathbb{R}}}$ (resp. $\Phi(H)_{\lambda^{\mathbb{Q}_p}}$) be the set of $\D{G}$ (resp. $\D{H}$) -conjugacy classes of Langlands parameters of $G$ (resp. $H$) with infinitesimal character $\lambda^{\mathbb{R}}$ (resp. $\lambda^{\mathbb{Q}_p}$). We define an injection
\[
\iota: \Phi(G)_{\lambda^{\mathbb{R}}} \rightarrow \Phi(H)_{\lambda^{\mathbb{Q}_p}}
\]  
as follows. For any $\phi^{\mathbb{R}} \in \Phi(G)_{\lambda^{\mathbb{R}}}$, it can be characterized by its base change to $\mathbb{C}$, a conjugate self-dual representation of $W_{\mathbb{C}}$ of parity $(-1)^{n+1}$:
\begin{align}
\label{eq: real L-parameter}
BC(\phi^{\mathbb{R}}) = \bigoplus_{i = 1}^{n} (z/\bar{z})^{t_i} (z \bar{z})^{s_i}, \quad z \in \mathbb{C}^{\times}
\end{align}
where $t_i \in \frac{1}{2}\mathbb{Z}$, $s_{i} \in \mathbb{C}$ are such that 
\[
\{\lambda_1, \cdots, \lambda_n\} = \{t_1 + s_1, \cdots, t_n + s_n\}
\]
as multisets. Note if $(z/\bar{z})^{t} (z \bar{z})^{s}$ appears in \eqref{eq: real L-parameter}, then so does its conjugate dual $(z/\bar{z})^{t} (z \bar{z})^{-s}$. Hence $s_{i} \in \frac{1}{2}\mathbb{Z}$ and $t_i > (1 - \delta)/2$. Similarly, any $\phi^{\mathbb{Q}_p} \in \Phi(H)_{\lambda^{\mathbb{Q}_p}}$ can be characterized by its composition with ${\rm std}_{\D{H}}$, which is a representation of $W_{\mathbb{Q}_p} \times SL_2(\mathbb{C})$. We set $\iota(\phi^\BR) = \phi^{\BQ_p}$ where 
\[
{\rm std}_{\D{H}} \circ \phi^{\mathbb{Q}_p} = \bigoplus_{i=1}^{n} |\cdot|^{s_i}_{W_{\mathbb{Q}_p}} \boxtimes S_{2t_i + \delta}. 
\]

The map $\iota$ can be extended to a map between the sets of complete Langlands parameters. These are pairs consisting of a Langlands parameter and an irreducible character of the component group of its centralizer. Let $S_{\phi^{\mathbb{R}}} = Z_{\D{G}}(\phi^{\mathbb{R}})$ and $S_{\phi^{\mathbb{Q}_p}} = Z_{\D{H}}(\phi^{\mathbb{Q}_p})$ be the centralizers of Langlands parameters. When $H$ is symplectic, we also consider $S^{+}_{\phi^{\mathbb{Q}_p}} = Z_{O(N, \mathbb{C})}(\phi^{\mathbb{Q}_p})$. To write down these centralizer groups, we group the isomorphic irreducible components in $BC(\phi^{\mathbb{R}})$ and ${\rm std}_{\D{H}} \circ \phi^{\mathbb{Q}_p}$ as follows
\[
BC(\phi^{\mathbb{R}}) = \bigoplus_{i \in K} \Big((z/\bar{z})^{t_i} (z \bar{z})^{s_i}\Big)^{\oplus l_i}
\]
and
\[
{\rm std}_{\D{H}} \circ \phi^{\mathbb{Q}_p} = \bigoplus_{i \in K} \Big( |\cdot|^{s_i}_{W_{\mathbb{Q}_p}} \boxtimes S_{2t_i + \delta} \Big)^{\oplus l_i}
\]
where $K$ is the index set of distinct irreducible components and the $l_i$'s denote the multiplicities. Since $BC(\phi^{\mathbb{R}})$ is conjugate self-dual of parity $(-1)^{n+1}$, we can decompose 
\begin{align}
\label{eq: index set}
K = I^{+} \sqcup I^{-} \sqcup J \sqcup J^{\vee}
\end{align}
where $I^{+}$ (resp. $I^{-}$) indexes conjugate self-dual irreducible components of the same parity as (resp. opposite parity to) $BC(\phi^{\mathbb{R}})$ and $J$ indexes a subset of non-conjugate self-dual irreducible components, whose conjugate dual are indexed by $J^{\vee}$. It is easy to check that the same decomposition applies to $\phi^{\mathbb{Q}_p}$ concerning the parities. Under the good parity assumption on $\lambda^{\mathbb{R}}$, we have $I^{-} = \emptyset$,
\[
I^{+} = \{i \in K \, | \, s_i = 0\}
\]
and we can choose
\[
J = \{i \in K \, | \, s_i > 0\}.
\]
Then as in \cite[(2.4.13)]{Mok:2014},
\[
S_{\phi^{\mathbb{R}}} \cong \prod_{i \in I^{+}} O(l_i, \mathbb{C}) \times \prod_{j \in J} GL(l_j).
\] 
On the $p$-adic side, we can get similar results (cf. \cite[(1.4.8)]{Arthur:2013}):
\begin{itemize}

\item 
when $H$ is orthogonal, 
\[
S_{\phi^{\mathbb{Q}_p}} \cong \prod_{i \in I^{+}} O(l_i, \mathbb{C}) \times  \prod_{j \in J} GL(l_j).
\] 

\item
when $H$ is symplectic, 
\[
S_{\phi^{\mathbb{Q}_p}} \cong (\prod_{i \in I^{+}} O(l_i, \mathbb{C}))^{+} \times  \prod_{j \in J} GL(l_j),
\] 
where $(\cdot)^{+}$ means that we are taking $(g_{i})_{i \in I^{+}}$ such that 
\[
\prod_{i \in I^{+}} {\rm det} (g_{i})^{2t_i + \delta} = 1
\] 
and
\[
S^{+}_{\phi^{\mathbb{Q}_p}} \cong \prod_{i \in I^{+}} O(l_i, \mathbb{C}) \times  \prod_{j \in J} GL(l_j)
\] 

\end{itemize}
Since these isomorphisms are unique up to inner automorphisms of the centralizer groups, we get an isomorphism $S_{\phi^{\mathbb{R}}} \cong S_{\phi^{\mathbb{Q}_p}}$ when $H$ is orthogonal (resp. $S_{\phi^{\mathbb{R}}} \cong S^{+}_{\phi^{\mathbb{Q}_p}}$ when $H$ is symplectic), unique up to inner automorphisms. Let $A_{\phi^{\mathbb{R}}} = \pi_0(S_{\phi^{\mathbb{R}}})$ and $A_{\phi^{\mathbb{Q}_p}} = \pi_0(S_{\phi^{\mathbb{Q}_p}})$, which are both abelian. When $H$ is orthogonal, we obtain a canonical isomorphism $A_{\phi^{\mathbb{R}}} \cong A_{\phi^{\mathbb{Q}_p}}$. When $H$ is symplectic, let $A^{+}_{\phi^{\mathbb{Q}_p}} = \pi_0(S^{+}_{\phi^{\mathbb{Q}_p}})$ (also abelian), and we get a canonical isomorphism $A_{\phi^{\mathbb{R}}} \cong A^{+}_{\phi^{\mathbb{Q}_p}}$. Note $A^{+}_{\phi^{\mathbb{Q}_p}} \cong A_{\phi^{\mathbb{Q}_p}} \times Z(O(N, \mathbb{C}))$. In either case, there is a canonical map of characters $\D{A}_{\phi^{\mathbb{R}}} \rightarrow \D{A}_{\phi^{\mathbb{Q}_p}}$ allowing one to  extend $\iota$ to a map between of complete Langlands parameters. By the local Langlands correspondence (cf. Subsection~\ref{subsec: hypothesis LLC}), this extneded map induces a map between isomorphism classes of irreducible representations with fixed infinitesimal characters  
\[
\tilde{\iota}: \Pi_{{\rm pure}}(\lambda^{\mathbb{R}}, G) \rightarrow \Pi_{{\rm pure}}(\lambda^{\mathbb{Q}_p}, H),
\]
where
\[
\Pi_{{\rm pure}}(\lambda^{\mathbb{R}}, G) := \bigsqcup_{(p,q) : p+q = n}\Pi(U(p,q))_{\lambda^{\mathbb{R}}} 
\]
and 
\[
\Pi_{{\rm pure}}(\lambda^{\mathbb{Q}_p}, H) := \begin{cases} \Pi(H)_{\lambda^{\mathbb{Q}_p}} \quad & \text{ if $H$ is symplectic, } \\ 
\Pi(H)_{\lambda^{\mathbb{Q}_p}} \bigsqcup \Pi(H')_{\lambda^{\mathbb{Q}_p}} \quad & \text{ if $H$ is orthogonal. }
\end{cases}
\]
When $H$ is orthogonal, $\tilde{\iota}$ is an injection. When $H$ is symplectic, $\tilde{\iota}$ induces an injection on the quotient that identifies $\Pi(U(p,q))$ with $\Pi(U(q, p))$. Note that $\Pi_{\psi^\BR}^{pure}(G)$ and $\Pi_{\psi^{\BQ_p}}^{pure}(H)$ are subsets of $\Pi_{{\rm pure}}(\lambda^{\mathbb{R}}, G)$ and $\Pi_{{\rm pure}}(\lambda^{\mathbb{Q}_p}, H)$, respectively. Our first main theorem is:

\begin{theorem}
\label{thm: comparison of A-packets}
When $H$ is orthogonal, $\tilde{\iota}$ restricts to a bijection $\Pi_{\psi^{\mathbb{R}}}^{\rm pure}(G) \xrightarrow{\simeq} \Pi_{\psi^{\mathbb{Q}_p}}^{\rm pure}(H)$. When $H$ is symplectic, $\tilde{\iota}$ restricts to a bijection
\[
\bigsqcup_{(p, q): p + q = n,\, p \equiv \frac{n-1}{2} \, {\rm mod} \, 2 } \Pi_{\psi^{\mathbb{R}}}(U(p,q)) \xrightarrow{\simeq} \Pi_{\psi^{\mathbb{Q}_p}}^{\rm pure}(H).
\]
\end{theorem}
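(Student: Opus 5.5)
We will prove Theorem~\ref{thm: comparison of A-packets} by induction on the "size" of the Arthur parameter, following the recursive algebraic constructions on both sides: Moeglin--Renard's construction on the real side and Moeglin's construction on the $p$-adic side. The idea is that both constructions reduce a general good-parity parameter to simpler ones. On the real side the reduction is Zuckerman translation functors, which move the infinitesimal character $\lambda^{\BR}$ across walls. On the $p$-adic side it is Jacquet functors $\mathrm{Jac}_{x}$, which peel a segment $|\cdot|^x$ off a block $S_{a_i}\boxtimes S_{b_i}$. The key intermediate result is a compatibility statement. If $\lambda^{\BR}$ and $\lambda'^{\BR}$ differ by shifting one coordinate $\lambda_j$ by $1$ (so $a_j$ changes by $2$ on the $p$-adic side), then for every $\pi\in\Pi_{\rm pure}(\lambda^{\BR},G)$ the translation $T_{\lambda}^{\lambda'}\pi$ corresponds under $\tilde\iota$ to the appropriate Jacquet module $\mathrm{Jac}_{x}\tilde\iota(\pi)$, up to a multiplicity that is the same on both sides. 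Both sides also have the matching property that they send irreducibles to irreducibles or to zero.

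\textbf{Base case.} Choose $\delta$ large; the statement is independent of $\delta$ once we know the Jacquet/translation compatibility. Then all $a_i=k_i+\delta$ are far apart from the $b_i$, and we reduce to the case where the $\lambda_i$ are strictly dominant and the blocks $(a_i,b_i)$ have pairwise disjoint, "far away" supports. On the real side such $\psi^{\BR}$ is an Adams--Johnson parameter in the good range, so $\Pi_{\psi^\BR}=\Pi^{\rm AJ}_{\psi^\BR}$ by \cite{AMR:2018}. Its members are $A_{\mathfrak q}(\lambda)$ modules for $\theta$-stable parabolics with Levi $\prod U(p_i,q_i)$. Their Langlands parameters (via Vogan--Zuckerman \cite{VZ:1984}) are given explicitly by the signatures $(p_i,q_i)$. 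On the $p$-adic side, Moeglin's elementary/discrete-diagonal-restriction case describes $\Pi_{\psi^{\BQ_p}}$ by explicit Langlands data parametrized by sign data $\epsilon_i$ on the blocks, subject to a product condition. We match $(p_i,q_i)$ with the sign pattern on the $b_i$ segments of block $i$, and check that the characters of $A_\phi$ correspond under $\D A_{\phi^\BR}\to \D A_{\phi^{\BQ_p}}$.

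\textbf{Sign conventions and the symplectic restriction.} In the base case the symplectic parity restriction $p\equiv\frac{n-1}{2}$ mod $2$ should appear. It comes from the central character condition on $A^{+}_{\phi^{\BQ_p}}=A_{\phi^{\BQ_p}}\times Z(O(N,\BC))$: of $U(p,q)$ and $U(q,p)$, exactly one lands on the trivial central sign. We verify this by comparing the Kottwitz sign/central character normalizations of the pure inner forms with the product formula for $\det$ in $(\prod O(l_i))^+$.

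\textbf{Inductive step.} Moving a general $\psi$ toward the base case by successive translations and Jacquet modules, we use the following facts. Both packets are characterized by their images under these functors; this is Moeglin's recursion on the $p$-adic side and the Moeglin--Renard description via translation from the good range on the real side. Translation and Jacquet functors commute with $\tilde\iota$. Hence membership in the packets is preserved, and $\tilde\iota$ maps packet to packet bijectively.

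\textbf{Main obstacle.} The hard step is the compatibility of $T_\lambda^{\lambda'}$ with $\mathrm{Jac}_x$ on the level of irreducible modules. We would prove it geometrically. First identify the ABV stack of real parameters with infinitesimal character $\lambda^\BR$ with the Vogan variety for $\lambda^{\BQ_p}$ (graded nilpotent orbits), matching orbit closures and equivariant local systems; this is the paper's announced stack correspondence. Then compare the standard-module formulas: translation acts on standard modules by shifting the parameter $t_i$ or killing it, and Jacquet modules of standard modules are computed by the geometric lemma. Both sides are governed by the same Kazhdan--Lusztig polynomials via this identification, which transfers the statement from standard to irreducible modules.
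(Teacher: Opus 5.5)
Your skeleton is the paper's. The regular case is settled by matching the explicit complete Langlands parameters of the Adams--Johnson packet (Appendix~\ref{sec: Langlands}) with Moeglin's description under discrete diagonal restriction. Your account of the symplectic parity restriction is also the paper's: $\epsilon_\pi(-I_n)=\prod_i\epsilon_i$ is compared with Moeglin's condition $\prod_i\epsilon_i=1$. The singular case is obtained from $\psi_{\gg}$ by Trapa's translations and Moeglin's derivatives (Theorems~\ref{thm: singular real A-packet} and \ref{thm: singular p-adic A-packet}), so everything hinges on Theorem~\ref{thm: translation and derivative}.

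Some side claims are off, though none is needed:
\begin{itemize}
\item Enlarging $\delta$ does not regularize $\lambda^{\mathbb{R}}$; the passage to the regular case is the shift by the $t_i$. Changing the parity of $\delta$ changes the type of $H$, so no Jacquet functor relates different $\delta$.
\item Translation and $\mathcal{D}_x$ do not send irreducibles to irreducibles or zero in general. Only the specific composites applied to $\pi(\psi_{\gg};\ul,\ueta)$ do.
\item The intertwining is for $\mathcal{R}^*$, which agrees with $\tilde\iota$ only up to sign.
\end{itemize}

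The genuine gap is your proof of the compatibility itself, for two reasons.

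First, the geometry does not identify the ABV stack with the Vogan variety. The map $\theta$ is defined only on the open full-rank substack $[H_{\lambda^{\mathbb{Q}_p}}\backslash V^{reg}_{\lambda^{\mathbb{Q}_p}}]$, and it is $2$-to-$1$ when $H$ is symplectic. So Kazhdan--Lusztig data match only on that locus. You must then explain why the derivative of anything in the image of $\mathcal{R}^*$ receives no contributions from orbits of $V_{\lambda^{\mathbb{Q}_p}_-}$ outside the full-rank part. Your proposal does not see this issue.

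Second, the standard-module comparison is asserted, not carried out, and your description of it is inaccurate.
\begin{itemize}
\item When $\lambda_{j-1}=\lambda_j$, the functor $\psi^{\lambda-e_j}_{\lambda}$ moves off one wall and onto another. The paper factors it through $\lambda+e_1+\cdots+e_{j-1}$ (Lemma~\ref{lemma: second decomposition}). This case is forced on you by the composites in Proposition~\ref{prop: Trapa translation}, for instance when one Arthur segment passes through another. So standard modules go to sums, not to a shifted module or zero.
\item On the $p$-adic side, the geometric lemma requires the derivatives of the tempered inducing data, and their vanishing depends on the characters $\epsilon$.
\item You would have to match that vanishing, through the explicit $\tilde\iota$ on component groups, with the vanishing of translated limits of discrete series. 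That matching is the whole content of the step, and it is not addressed.
\end{itemize}

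The missing idea is to transpose both functors under the pairings and compare the resulting geometric operations.
\begin{itemize}
\item The transpose of $\psi^{\lambda^{\mathbb{R}}_-}_{\lambda^{\mathbb{R}}}$ is $Rp_{2*}\circ p_1^*$ along $X_{\lambda^{\mathbb{R}}_-}\leftarrow \widehat G/\widehat P(\lambda^{\mathbb{R}},\lambda^{\mathbb{R}}_-)\to X_{\lambda^{\mathbb{R}}}$ (Theorem~\ref{thm: translation vs pushpull}).
\item The transpose of $\mathcal{D}_{\tilde\lambda_j}$ is Lusztig's induction $Rq_{2*}\circ q_1^*\circ u^*$ (Conjecture~\ref{conj: adjoint of derivative}).
\item Lemma~\ref{lemma: full rank part P} shows three things: $q_1$ carries the full-rank part of $V^{\widehat P}_{\lambda_M^{\mathbb{Q}_p}}$ into that of $V_{\lambda_M^{\mathbb{Q}_p}}$; the square over $q_2$ is Cartesian; and the slices are exactly $[\widehat P(\lambda^{\mathbb{R}})\backslash F]\leftarrow[\widehat P(\lambda^{\mathbb{R}},\lambda^{\mathbb{R}}_-)\backslash F]\to[\widehat P(\lambda^{\mathbb{R}}_-)\backslash F]$.
\end{itemize}
Base change then gives $\iota_{geom}^*\circ\Ind_{\widehat P}^{\widehat H}\cong Rp_{2*}\circ p_1^*\circ\iota_{geom,-}^*$. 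This handles the full-rank issue automatically and needs no computation of Jacquet modules or translated standard modules. A direct standard-module verification might in principle bypass the conjecture the paper relies on. As written, however, it is a placeholder for the main content of the theorem.
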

A special case of this theorem was conjectured in \cite[Conjecture 1.2.1]{DHXZ:GLn}.

In endoscopy theory, Arthur packets are also equipped with maps 
\begin{align}
\label{eq: endoscopy real}
\Pi_{\psi^{\mathbb{R}}}^{\rm pure}(G) \xrightarrow{\epsilon^{\mathbb{R}}} {\rm Rep}(A_{\psi^{\mathbb{R}}})
\end{align}
and
\begin{align}
\label{eq: endoscopy p-adic}
\Pi_{\psi^{\mathbb{Q}_p}}^{\rm pure}(H) \xrightarrow{\epsilon^{\mathbb{Q}_p}} {\rm Rep}(A_{\psi^{\mathbb{Q}_p}}),
\end{align}
where $A_{\psi^{\mathbb{R}}} = \pi_0(Z_{\D{G}}(\psi^{\mathbb{R}}))$ and $A_{\psi^{\mathbb{Q}_p}} = \pi_0(Z_{\D{H}}(\psi^{\mathbb{Q}_p}))$ (cf. \cite{Arthur:2013} \cite{Moeglin:2009}\cite{Moeglin1:2011} \cite{MoeglinRenard:2018} \cite{Mok:2014} \cite{MR:2019}). The two component groups here can be related in the same way as in the case of Langlands parameters. Let $S_{\psi^{\mathbb{R}}} = Z_{\D{G}}(\psi^{\mathbb{R}})$ and $S_{\psi^{\mathbb{Q}_p}} = Z_{\D{H}}(\psi^{\mathbb{Q}_p})$. When $H$ is symplectic, consider also $S^{+}_{\psi^{\mathbb{Q}_p}} = Z_{O(N, \mathbb{C})}(\psi^{\mathbb{Q}_p})$. Group the isomorphic irreducible components in $BC(\psi^{\mathbb{R}})$ and ${\rm std}_{\D{H}} \circ \psi^{\mathbb{Q}_p}$ as 
\[
BC(\psi^{\mathbb{R}}) = \bigoplus_{i \in K} \Big( (z/\bar{z})^{\frac{k_{i}}{2}} \boxtimes S_{m_i} \Big)^{\oplus l_i}
\]
and
\[
{\rm std}_{\D{H}} \circ \psi^{\mathbb{Q}_p} = \bigoplus_{i \in K} \Big(1_{W_{\mathbb{Q}_p}} \boxtimes S_{a_i} \boxtimes S_{b_i} \Big)^{\oplus l_i}.
\]
The index set $K$ of distinct irreducible components can be decomposed according to parities as in \eqref{eq: index set}. By
the good parity assumption on $\psi^{\mathbb{R}}$, we have $K = I^{+}$, and the same is true for $\psi^{\mathbb{Q}_p}$. Then we have
\begin{align}
\label{eq: real centralizer}
S_{\psi^{\mathbb{R}}} \cong \prod_{i \in I^{+}} O(l_i, \mathbb{C}) 
\end{align}
(cf. \cite[(2.4.13)]{Mok:2014}). On the $p$-adic side, we have (cf. \cite[(1.4.8)]{Arthur:2013}):
\begin{itemize}
\item
when $H$ is orthogonal, 
\[
S_{\psi^{\mathbb{Q}_p}} \cong \prod_{i \in I^{+}} O(l_i, \mathbb{C}).
\] 

\item
when $H$ is symplectic, 
\[
S_{\psi^{\mathbb{Q}_p}} \cong (\prod_{i \in I^{+}} O(l_i, \mathbb{C}))^{+}
\] 
where $(\cdot)^{+}$ means that we are taking $(g_{i})_{i \in I^{+}}$ such that 
\[
\prod_{i \in I^{+}} {\rm det} (g_{i})^{a_i b_i} = 1
\] 
and we also have
\[
S^{+}_{\psi^{\mathbb{Q}_p}} \cong \prod_{i \in I^{+}} O(l_i, \mathbb{C}).
\] 
\end{itemize}
Again these isomorphisms are unique up to inner automorphisms of the centralizer groups, and we have isomorphisms $S_{\psi^{\mathbb{R}}} \cong S_{\psi^{\mathbb{Q}_p}}$ and $A_{\psi^{\mathbb{R}}} \cong A_{\psi^{\mathbb{Q}_p}}$ when $H$ is orthogonal (resp. $S_{\psi^{\mathbb{R}}} \cong S^{+}_{\psi^{\mathbb{Q}_p}}$ and $A_{\psi^{\mathbb{R}}} \cong A^{+}_{\psi^{\mathbb{Q}_p}}$ when $H$ is symplectic), unique up to inner automorphism. Note that $A^{+}_{\psi^{\mathbb{Q}_p}} \cong A_{\psi^{\mathbb{Q}_p}} \times Z(O(N, \mathbb{C}))$. In either case, we have projections of finite dimensional representations ${\rm Rep}(A_{\psi^{\mathbb{R}}}) \rightarrow {\rm Rep}(A_{\psi^{\mathbb{Q}_p}})$ and characters $\D{A}_{\psi^{\mathbb{R}}} \rightarrow \D{A}_{\psi^{\mathbb{Q}_p}}$. 

\begin{theorem}
\label{thm: comparison of endoscopic maps}
The following diagram commutes
\[
\xymatrix{
\Pi_{\psi^{\mathbb{R}}}^{\rm pure}(G) \ar[d]_{\epsilon^{\mathbb{R}}} \ar[r]^{\tilde{\iota}} &  \Pi_{\psi^{\mathbb{Q}_p}}^{\rm pure}(H) \ar[d]^{\epsilon^{\mathbb{Q}_p}} \\
{\rm Rep}(A_{\psi^{\mathbb{R}}}) \ar[r] & {\rm Rep}(A_{\psi^{\mathbb{Q}_p}}).
}
\]
\end{theorem}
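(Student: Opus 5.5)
I will establish Theorem~\ref{thm: comparison of endoscopic maps} together with Theorem~\ref{thm: comparison of A-packets}, by induction on a measure of how far $\psi^{\mathbb{R}}$ is from the Adams--Johnson (``good range'') situation on the real side. On the $p$-adic side this corresponds to Moeglin's ``discrete diagonal restriction'' (DDR) situation. The base case and the inductive step each use one structural input on each side.

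\textbf{Base case.} Suppose the segments $\{-\tfrac{m_i-1}{2}+\tfrac{k_i}{2},\dots,\tfrac{m_i-1}{2}+\tfrac{k_i}{2}\}$ are pairwise disjoint and ordered. Equivalently, the $p$-adic Jordan blocks $(a_i,b_i)$ are totally ordered with $a_i-b_i>a_{i+1}+b_{i+1}$. On the real side the packet is then an Adams--Johnson packet, given by cohomological induction $\mathcal{R}_{\mathfrak q}(\lambda)$ from $\prod U(p_i,q_i)$, with $\sum p_i=p$. Its character $\epsilon^{\mathbb{R}}$ is computed explicitly in \cite{AMR:2018} and \cite{MR:2019} through the signs attached to $(p_i,q_i)$. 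On the $p$-adic side, Moeglin's elementary/DDR construction parametrises the packet by characters $\epsilon$ of $A_{\psi^{\mathbb{Q}_p}}$. Each element is the unique irreducible subrepresentation of a specific induced representation, whose Langlands data are written down in \cite{Moeglin:2009}.

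In this case I will compute the L-parameter and component character of each member on both sides explicitly. The real side uses the Vogan--Zuckerman description of $A_{\mathfrak q}(\lambda)$ in the good range. I will then check that $\tilde\iota$ matches them and carries $\epsilon^{\mathbb{R}}$ to $\epsilon^{\mathbb{Q}_p}$. This step is combinatorial: we need $2t_i+\delta$ to track the $SL_2$ segments and the signs to agree under the normalisation of $\D A$. The parity restriction $p\equiv\frac{n-1}{2}$ in the symplectic case should appear here as the condition that the character is trivial on $Z(O(N,\mathbb{C}))$.

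\textbf{Inductive step.} For general $\psi^{\mathbb{R}}$ I shift the $k_i$ apart to reach a parameter $\psi^{\mathbb{R}}_{\mathrm{big}}$ in the good range. The packet of $\psi^{\mathbb{R}}$ is then obtained by applying Zuckerman translation functors, which push the infinitesimal character back to $\lambda^{\mathbb{R}}$; this is the route of \cite{MoeglinRenard:2020}. Translation preserves $\epsilon^{\mathbb{R}}$, and members that translate to $0$ are discarded. On the $p$-adic side, Moeglin obtains the packet of $\psi^{\mathbb{Q}_p}$ from that of the DDR parameter $\psi^{\mathbb{Q}_p}_{\mathrm{big}}$ by iterated Jacquet functors $\mathrm{Jac}_{x}$, again preserving $\epsilon$. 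The key input is the relation announced in the abstract: under $\tilde\iota$, translation across one wall corresponds to a composite of Jacquet functors $\mathrm{Jac}_{x}$ followed by taking the appropriate irreducible constituent. I would derive this from the correspondence of Langlands-parameter stacks mentioned in the introduction, comparing the geometric action of both functors on standard and irreducible modules via the Kazhdan--Lusztig data. Granting it, the commutative square for $\psi_{\mathrm{big}}$ propagates down the chain. This gives both the bijection and the compatibility of the $\epsilon$ maps for $\psi$.

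\textbf{Main obstacle.} The hard step is the precise translation/Jacquet dictionary, and in particular showing that zero and nonzero outcomes match. The real translation can kill a module exactly when the corresponding $p$-adic Jacquet module has no constituent with the required parameter. I would first try to prove this one wall at a time, working with standard modules where both functors are explicit on parameters. I would then pass to irreducibles using the identification of Kazhdan--Lusztig polynomials on both sides coming from the stack correspondence. A secondary issue is the sign normalisation of $\epsilon$ in the symplectic case, where $A^{+}$ versus $A$ must be handled.
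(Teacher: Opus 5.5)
Your plan is the paper's own. The regular case is settled by comparing the explicit complete Langlands parameters and characters of Adams--Johnson packets (via Vogan--Zuckerman/Knapp--Vogan) with those of Moeglin's DDR packets. The singular case is settled by showing that $\tilde\iota$, which equals $\mathcal{R}^*$ up to signs, intertwines translation with Jacquet derivatives, combined on both sides with the multiplicity formulas $\epsilon_\pi=\sum_{(\ul,\ueta)}\mathrm{mult}(\pi,\pi(\psi;\ul,\ueta))\,\epsilon_{\ul,\ueta}$ of Moeglin--Renard and Moeglin.

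The paper's dictionary is sharper than yours. It is an exact identity on Grothendieck groups, one coordinate at a time: $\mathcal{T}_j=\psi^{\lambda-e_j}_{\lambda}$ corresponds to $\mathcal{D}_{\tilde\lambda_j}$, with no choice of irreducible constituent, once Trapa's block translations are factored as in Appendix~\ref{sec: factoring translation}. It is also proved on the sheaf side rather than by computing both functors on standard modules. The transposes are identified as $Rp_{2*}\circ p_1^*$ and Lusztig induction (Conjecture~\ref{conj: adjoint of derivative}), and base change over the full-rank slices then gives the intertwining.
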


\begin{remark}
In the setting of this theorem, it has been proved by Moeglin \cite{Moeglin1:2011} (resp. Moeglin-Renard \cite{MR:2019}) that the image of $\epsilon^{\mathbb{Q}_p}$ (resp. $\epsilon^{\mathbb{R}}$) lies in $\D{A}_{\psi^{\mathbb{Q}_p}})$ (resp. $\D{A}_{\psi^{\mathbb{R}}})$).
\end{remark}


For a general Arthur parameter $\psi^{\mathbb{R}}$ of $G$, we have 
\[
BC(\psi^{\mathbb{R}}) = \bigoplus_{i \in K} \Big( \chi_i \boxtimes S_{m_i} \Big)^{\oplus l_i}
\]
where each $\chi_i$ is a character of $\mathbb{C}^{\times}$. If we decompose $K$ according to the parity as in \eqref{eq: index set} and take $\psi^{\mathbb{R}}_{good}$ to be an A-parameter of a smaller unitary group $G_{-}$ given by
\[
BC(\psi^{\mathbb{R}}_{good}) = \bigoplus_{i \in I^{+}} \Big( \chi_i \boxtimes S_{m_i} \Big)^{\oplus l_i},
\]
then $A_{\psi^{\mathbb{R}}} \cong A_{\psi^{\mathbb{R}}_{good}}$ (cf. \eqref{eq: real centralizer}) and there is a bijection
\[
\Pi_{\psi^{\mathbb{R}}_{good}}^{\rm pure}(G_{-}) \xrightarrow{\simeq} \Pi_{\psi^{\mathbb{R}}}^{\rm pure}(G), \quad \pi_{-} \mapsto \pi
\]
where $\pi_{-}$ is an irreducible representation of $U(p - |J|, q- |J|)$ and
\[
\pi := {\rm Ind}^{U(p,q)}_{P} \, \Big(\boxtimes_{i \in J} (\chi_i \circ {\rm det}_{GL_{m_i}})\Big) \boxtimes \pi_{-}
\]
for a parabolic subgroup $P$ whose Levi factor is isomorphic to $\prod_{i \in J} GL_{m_i}(\mathbb{C}) \times U(p - |J|, q- |J|)$. Moreover, the following diagram commutes:
\[
\xymatrix{
\Pi_{\psi^{\mathbb{R}}_{good}}^{\rm pure}(G) \ar[d]_{\epsilon^{\mathbb{R}}} \ar[r]^{\simeq} &  \Pi_{\psi^{\mathbb{R}}}^{\rm pure}(G) \ar[d]^{\epsilon^{\mathbb{R}}} \\
{\rm Rep}(A_{\psi^{\mathbb{R}}_{good}}) \ar[r]^{\simeq} & {\rm Rep}(A_{\psi^{\mathbb{R}}}).
}
\]
Hence, one can always reduce the study of $\Pi_{\psi^{\mathbb{R}}}^{\rm pure}(G)$ to the good parity case (cf. \cite[Section 4.2]{MoeglinRenard:2020}). Nevertheless, it is still an interesting problem to extend Theorem~\ref{thm: comparison of A-packets} and Theorem~\ref{thm: comparison of endoscopic maps} beyond the good parity case. Indeed, we also consider the bad parity case in this paper, which can be viewed as the other extreme. 

In fact, in the course of our proof we obtain a map $\tilde \iota$ between representations of $G$ and $H$ for \textit{arbitrary} real integral infinitesimal characters $\lambda^\BR$ that do not necessarily come from A-parameters, and it enjoys remarkable properties that may be of independent interest. To state them, let $\lambda^\BR = (\lambda_1,\ldots,\lambda_n)$ be integral so that $\Phi(G)_{\lambda^{\mathbb{R}}} \neq \emptyset$. Then $\lambda_i - \lambda_j \in \mathbb{Z}$. Moreover, $\lambda_i \in \mathbb{Z}$ if $n$ is odd, and $\lambda_i \in \frac{1}{2} \mathbb{Z}$ if $n$ is even (cf. Lemma~\ref{lemma: Upq param space}). We say $\lambda^{\mathbb{R}}$ is of good parity if for all $i$,
\[
\begin{cases}
\lambda_i \in \mathbb{Z}, & \text{ $n$ is odd,} \\
\lambda_i \in \frac{1}{2}\mathbb{Z} \backslash \mathbb{Z}, & \text{ $n$ is even.} 
\end{cases}
\]
Otherwise, we say it is of bad parity. We take $\delta \in \mathbb{Z}$ such that $\lambda_i > (1 - \delta)/2$ for all $i$, and let
\[
N = \sum_{i=1}^{n} (2\lambda_i + \delta).
\]
In the good parity case, the split $p$-adic group $H$ can be defined as before. In the bad parity case, both $n$ and $N$ are even, and we let 
\[
H = \begin{cases} SO(N+1) \quad & \text{$\delta$ is odd} \\
O(N) \quad & \text{$\delta$ is even}
\end{cases}
\]
split over $\mathbb{Q}_p$. In both good and bad parity cases, we take $\lambda^{\mathbb{Q}_p}$ to be the infinitesimal character of the Langlands parameter $\phi^{\mathbb{Q}_p}$ of $H$ such that
\[
{\rm std}_{\D{H}} \circ \phi^{\mathbb{Q}_p} = \bigoplus_{i = 1}^{n} |\cdot|_{W_{\mathbb{Q}_p}}^{\frac{\lambda_i - \lambda_{n+1 - i}}{2}} \boxtimes S_{\lambda_i + \lambda_{n+1 - i} + \delta}
\]
We may define
\[
\iota: \Phi(G)_{\lambda^{\mathbb{R}}} \rightarrow \Phi(H)_{\lambda^{\mathbb{Q}_p}}
\]  
and
\[
\tilde{\iota}: \Pi_{{\rm pure}}(\lambda^{\mathbb{R}}, G) \rightarrow \Pi_{{\rm pure}}(\lambda^{\mathbb{Q}_p}, H)
\]
as before. The following result follows from Corollary~\ref{cor: geom comparison} and Theorem~\ref{thm: ABV-packet}.

\begin{theorem}
\label{thm: comparison of ABV-packets}
\begin{enumerate}

\item If we extend $\tilde{\iota}$ to virtual representations by linearity, then $\tilde{\iota}$ preserves standard representations. 

\item For $\phi^{\mathbb{R}} \in \Phi(G)_{\lambda^{\mathbb{R}}}$, let $\phi^{\mathbb{Q}_p} = \iota(\phi^{\mathbb{R}})$. When $H$ is orthogonal, $\tilde{\iota}$ induces a bijection $\Pi_{\phi^{\mathbb{R}}}^{\rm ABV, pure}(G) \rightarrow \Pi_{\phi^{\mathbb{Q}_p}}^{\rm ABV, pure}(H)$. When $H$ is symplectic, $\tilde{\iota}$ induces a bijection
\[
\bigsqcup_{(p, q): p + q = n,\, p \equiv \frac{n-1}{2} \, {\rm mod} \, 2 } \Pi_{\phi^{\mathbb{R}}}^{\rm ABV}(U(p,q)) \xrightarrow{\simeq} \Pi_{\phi^{\mathbb{Q}_p}}^{\rm ABV, pure}(H).
\] 
\end{enumerate}
\end{theorem}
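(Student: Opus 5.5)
The plan is to derive both parts from a geometric comparison of the ABV-space for $G$ at $\lambda^\BR$ with the Vogan variety for $H$ at $\lambda^{\BQ_p}$, as the text after the statement indicates (Corollary~\ref{cor: geom comparison} and Theorem~\ref{thm: ABV-packet}).

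\textbf{Step 1: the geometric comparison.} Write $X^\BR_{\lambda}$ for the ABV-space of Langlands parameters of $G$ with infinitesimal character $\lambda^\BR$, and $V_{\lambda^{\BQ_p}}$ for the Vogan variety of $H$. I would build an open embedding of quotient stacks
\[
[X^\BR_{\lambda}/\D G] \hookrightarrow [V_{\lambda^{\BQ_p}}/Z_{\D H}(\lambda^{\BQ_p})],
\]
possibly after replacing $\D H$ by $O(N,\BC)$ in the symplectic case. After fixing representatives, $X^\BR_\lambda$ is a union of $K$-orbit spaces $\D G/P_\lambda$ twisted by involutions, where $K\cong GL_p\times GL_q$ ranges over pure inner forms. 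Here $V_{\lambda^{\BQ_p}}$ is a graded piece of $\mathfrak{gl}_N$-type nilpotents, and $Z_{\D H}(\lambda^{\BQ_p})$ is a product of $GL$'s together with one classical factor. The condition $\lambda_i>(1-\delta)/2$ makes all segments $S_{2t_i+\delta}$ nonempty, so orbit data match: conjugate-dual pairs $(t,\pm s)$ correspond to $|\cdot|^{\pm s}\boxtimes S_{2t+\delta}$. Concretely, I would check that the map $\iota$ is induced by this stack morphism on the level of orbits, and that the equivariant fundamental groups (component groups) match as computed in the introduction. The local systems then match through $\D A_{\phi^\BR}\to\D A_{\phi^{\BQ_p}}$.

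\textbf{Step 2: part (1).} Standard modules correspond, under the local Langlands correspondence on both sides, to extensions by zero of equivariant local systems on orbits, evaluated through the respective pairings between the Grothendieck groups of representations and of perverse sheaves. An open embedding preserves stalks, so standard classes pull back to standard classes. Equivalently, I would verify directly that the parabolic-induction description of standard modules matches, using the matching of $GL$-blocks: $(z/\bar z)^t(z\bar z)^s$ paired with its dual corresponds to $|\cdot|^{s}\boxtimes S_{2t+\delta}$ and its dual.

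\textbf{Step 3: part (2).} ABV-packets are defined by nonvanishing of the microlocal (vanishing-cycle) functor $\mathrm{Evs}_{C^*}$ at the conormal to the orbit of $\phi$. Because the comparison is an open embedding of smooth stacks, intermediate extensions restrict to intermediate extensions. Characteristic cycles and microlocal multiplicities are local, so they coincide. This gives the bijection, once one accounts for which pure inner forms occur. In the symplectic case only $\D H$ (not $O(N)$) is relevant: the central character of $Z(O(N,\BC))$ picks out those $(p,q)$ with $p\equiv\frac{n-1}{2}$ mod 2, and the swap $U(p,q)\leftrightarrow U(q,p)$ is identified.

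\textbf{Main obstacle.} I expect the main difficulty to be Step 1: showing the morphism is genuinely an open immersion, or at least smooth and bijective onto a union of strata, compatibly with the equivariance groups. The real side uses Vogan's $K$-orbits on flag-type varieties, while the $p$-adic side uses graded nilpotent orbits. My first attempt would be to realize both inside the same partial flag variety of $GL_N$ via a Zelevinsky-type embedding of graded nilpotents into flags, and then compare stabilizers orbit by orbit. The bad parity case needs a separate check of the parity bookkeeping.
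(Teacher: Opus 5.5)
You have the paper's architecture right: compare the parameter stacks, use locality of stalks and microlocal multiplicities together with the Kazhdan--Lusztig duality on both sides, and match component groups. But the whole proof lives in your Step~1, which you leave open, and the starting point you give for it is wrong.

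\textbf{The real side is misdescribed.} The ABV space of $G$ is not a union over pure inner forms of $GL_p\times GL_q$-orbit spaces. $GL_p(\BC)\times GL_q(\BC)$ is the complexified maximal compact of $U(p,q)$ on the \emph{group} side (Beilinson--Bernstein). It is related to the parameter side only through Vogan's character duality, which reverses closure order, so an embedding built from it cannot match IC stalks or characteristic cycles. On the parameter side, Lemma~\ref{lemma: Upq param space} shows that $\cI(\lambda^\BR)$ is a single $\D G$-orbit. Hence $\cX_{\lambda^\BR}\cong[O(n,\BC)\backslash GL_n(\BC)/\D P(\lambda^\BR)]$ in good parity and $[Sp(n,\BC)\backslash GL_n(\BC)/\D P(\lambda^\BR)]$ in bad parity. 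The pure inner forms $U(p',q')$ enter only through local systems.

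\textbf{The missing idea is the full rank part.} Take the open set $V^{reg}_{\lambda^{\BQ_p}}$ where every graded component has maximal rank, and the projection $\varphi$ forgetting the middle component(s).
\begin{itemize}
\item The $\dim W_i$ are unimodal; this is where $\lambda_i>(1-\delta)/2$ is really used.
\item Consequently $V^{reg,\diamond}_{\lambda^{\BQ_p}}$ is a single $H_{\lambda^{\BQ_p}}$-orbit. Its stabilizer maps isomorphically onto the parabolic of $GL(W_{i_0})$ fixing the flag of images of the injective maps $W_i\to W_{i_0}$, and this parabolic is $\D P(\lambda^\BR)$.
\item The slice $\varphi^{-1}(x^\diamond)$ is $\operatorname{Sym}_n(\BC)_{reg}\cong O(n,\BC)\backslash GL_n(\BC)$, or $\operatorname{Skew}_n(\BC)_{reg}\cong Sp(n,\BC)\backslash GL_n(\BC)$, or $\Hom(W_{-1},W_0)_{reg}\cong GL_n(\BC)$ with $SO$, $O$ or $Sp(W_0)$ acting.
\end{itemize}
This gives Proposition~\ref{prop: full rank part} and Corollary~\ref{cor: geom comparison}.

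\textbf{Why your fallback does not work.} Comparing stabilizers orbit by orbit (or a Zelevinsky-type embedding into flags) cannot replace this. It only reproduces the bijection on orbits and component groups, which is the combinatorial $\tilde\iota$ itself. It carries no information about IC stalks or vanishing cycles, since those depend on how the orbits fit together.

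\textbf{Further points.}
\begin{itemize}
\item Even granting a stack identification, the two sides are quotients of different varieties by different groups, related by induction equivalences. So ``characteristic cycles are local'' must be replaced by the fact that induction equivalences preserve microlocal multiplicities, \cite[Prop.~20.2]{ABV:1992}.
\item Since $\tilde\iota$ is defined combinatorially, you must actually prove that the geometric map induces $\iota$ and the identification $A_{\phi^\BR}\cong A^{(+)}_{\phi^{\BQ_p}}$ of the introduction. This is Proposition~\ref{prop: iota vs iotageom}: send $\phi^\BR$ to $\dot s$ with $s\lambda^\BR=-\mu$, read off the multisegment from the middle component, and track the elements $\varepsilon_j$.
\item Your alternative for part~(1), matching parabolic-induction data, is not equivalent. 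Saying the linear extension of $\tilde\iota$ preserves standard modules means the standard-to-irreducible multiplicities agree on both sides. That is a statement about IC stalks, not about Langlands data.
\end{itemize}
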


We can also have analogs of Theorem~\ref{thm: comparison of A-packets} and Theorem~\ref{thm: comparison of endoscopic maps} in the case where $\lambda^\BR$ has bad parity. In that situation $\Pi_{{\rm pure}}(\lambda^{\mathbb{R}}, G) = \Pi(G)_{\lambda^{\mathbb{R}}}$ (resp. $\Pi_{{\rm pure}}(\lambda^{\mathbb{Q}_p}, H) = \Pi(H)_{\lambda^{\mathbb{Q}_p}}$) which is in bijection with $\Phi(G)_{\lambda^{\mathbb{R}}}$ (resp. $\Phi(H)_{\lambda^{\mathbb{Q}_p}}$). For an Arthur parameter $\psi^{\mathbb{R}}$ with infinitesimal character $\lambda^{\mathbb{R}}$, its base change has the form
\[
BC(\psi^{\mathbb{R}}) = \bigoplus_{i = 1}^{r} \, (z/\bar{z})^{\frac{k_i}{2}} \boxtimes S_{m_i}
\]
where $k_i \in \mathbb{Z}, m_i \in \mathbb{Z}_{> 0}$ and $k_i + m_i$ is odd. We may attach a $p$-adic A-parameter $\psi^{\BQ_p}$ to $\psi^\BQ$ exactly as in the good parity case. Let $\phi_{\psi^{\mathbb{R}}}$ (resp. $\phi_{\psi^{\mathbb{Q}_p}}$) be the associated Langlands parameter of $\psi^{\mathbb{R}}$ (resp. $\psi^{\mathbb{Q}_p}$) (see \eqref{eq: associated L-parameter real} \eqref{eq: associated L-parameter p-adic}). Then $\phi_{\psi^{\mathbb{Q}_p}} = \iota (\phi_{\psi^{\mathbb{R}}})$ and $\Pi_{\psi^{\mathbb{R}}}(G)$ (resp. $\Pi_{\psi^{\mathbb{Q}_p}}(H)$) consists of a single representation parametrized by $\phi_{\psi^{\mathbb{R}}}$ (resp. $\phi_{\psi^{\mathbb{Q}_p}}$) (cf. \cite{MoeglinRenard:2020} \cite{Moeglin:2009}). Hence $\tilde{\iota}$ induces a bijection
\[
\Pi_{\psi^{\mathbb{R}}}(G) \xrightarrow{\simeq} \Pi_{\psi^{\mathbb{Q}_p}}(H).
\]
At last, $A_{\psi^{\mathbb{R}}} = A_{\psi^{\mathbb{Q}_p}} = 1$ and both $\epsilon^{\mathbb{R}}$ and $\epsilon^{\mathbb{Q}_p}$ map to the trivial representation. Hence the analogue of Theorem~\ref{thm: comparison of endoscopic maps} is trivial here.


\subsection{Strategy of proof}
\label{subsec: strategy}

Let us return to Theorem~\ref{thm: comparison of A-packets} and Theorem~\ref{thm: comparison of endoscopic maps}, where $\lambda^\BR$ is of good parity. When $\lambda^{\mathbb{R}}$ is regular, $\Pi_{\psi^{\mathbb{R}}}^{\rm pure}(G)$ is a pure Adams-Johnson packet, which have been computed in terms of complete Langlands parameters (see Appendix~\ref{sec: Langlands}). The corresponding $\psi^{\mathbb{Q}_p}$ satisfies the condition of discrete diagonal restriction, in which case Moeglin has provided an explicit formula for $\Pi_{\psi^{\mathbb{Q}_p}}^{\rm pure}(H)$, again in terms of complete Langlands parameters. Since $\tilde{\iota}$ is defined explicitly, Theorem~\ref{thm: comparison of A-packets} and Theorem~\ref{thm: comparison of endoscopic maps} can be verified directly (see Subsection~\ref{subsec: regular-case}). When $\lambda^{\mathbb{R}}$ is singular, $\Pi_{\psi^{\mathbb{R}}}^{\rm pure}(G)$ can be obtained from the regular case by applying translation functors, while $\Pi_{\psi^{\mathbb{Q}_p}}^{\rm pure}(H)$ can be obtained from the case of discrete diagonal restriction by taking Jacquet restrictions (see Subsection~\ref{subsec: singular-case}). Therefore, it suffices to establish comparison results between translations and Jacquet restrictions. We achieve this by comparing of the geometry of the Langlands parameter spaces for real and $p$-adic groups (see Section~\ref{sec: translation and derivative}).  

The Langlands parameter space (i.e., ABV space) for $G$ with infinitesimal character $\lambda^{\mathbb{R}}$ is a partial flag variety $X_{\lambda^{\mathbb{R}}}$ of $\D{G}$, equipped with a left action by $\D{K} := \D{G}^{\sigma}$ for some involution $\sigma$ on $\D{G}$. There is a bijection
\[
\Phi(G)_{\lambda^{\mathbb{R}}} \xrightarrow{\simeq} \D{K} \backslash X_{\lambda^{\mathbb{R}}}.
\]
The Langlands parameter space (i.e., Vogan variety) for $H$ with infinitesimal character $\lambda^{\mathbb{Q}_p}$ is an affine space $V_{\lambda^{\mathbb{Q}_p}}$ in the Lie algebra of $\D{H}$, equipped with the adjoint action of a reductive subgroup $H_{\lambda^{\mathbb{Q}_p}}$ in $\D{H}$. There is a bijection
\[
\Phi(H)_{\lambda^{\mathbb{Q}_p}} \xrightarrow{\simeq} H_{\lambda^{\mathbb{Q}_p}} \backslash V_{\lambda^{\mathbb{Q}_p}}.
\]
Under this bijection, the image of $\iota$ corresponds to an $H_{\lambda^{\mathbb{Q}_p}}$-stable open subset $V^{reg}_{\lambda^{\mathbb{Q}_p}}$ of $V_{\lambda^{\mathbb{Q}_p}}$. We will show in Section~\ref{sec: geometry} that $\iota$ can be upgraded to a correspondence between quotient stacks as follows
\[
[\D{K} \backslash X_{\lambda^{\mathbb{R}}}] \xleftarrow{\theta} [H_{\lambda^{\mathbb{Q}_p}}\backslash V^{reg}_{\lambda^{\mathbb{Q}_p}}] \xrightarrow{\iota} [H_{\lambda^{\mathbb{Q}_p}}\backslash V_{\lambda^{\mathbb{Q}_p}}],
\]
where we denote the natural inclusion of open substack by $\iota$ again and $\theta$ is an isomorphism if $H$ is orthogonal, and is a finite 2-to-1 morphism if $H$ is symplectic. Theorem~\ref{thm: comparison of ABV-packets} above essentially follows from this observation. 

Let $K\Pi_{{\rm pure}}(\lambda^{\mathbb{R}}, G)$ (resp. $K\Pi_{{\rm pure}}(\lambda^{\mathbb{Q}_p}, H)$) be the free $\mathbb{Z}$-module generated by $\Pi_{{\rm pure}}(\lambda^{\mathbb{R}}, G)$ (resp. $\Pi_{{\rm pure}}(\lambda^{\mathbb{Q}_p}, H)$). Let $K{\rm Per}(\D{K} \backslash X_{\lambda^{\mathbb{R}}})$ (resp. $K{\rm Per}(H_{\lambda^{\mathbb{Q}_p}} \backslash V_{\lambda^{\mathbb{Q}_p}})$) be the Grothendieck group of equivariant perverse sheaves. Through the local Langlands correspondence for pure inner forms $G$ and $H$ respectively (cf. Subsection~\ref{subsec: hypothesis LLC}), one can define perfect pairings
\begin{align}
\label{eq: pairing real}
\langle \cdot, \cdot \rangle: K\Pi_{{\rm pure}}(\lambda^{\mathbb{R}}, G) \times K{\rm Per}(\D{K} \backslash X_{\lambda^{\mathbb{R}}}) \rightarrow \mathbb{Z}
\end{align}
and
\begin{align}
\label{eq: pairing p-adic}
\langle \cdot, \cdot \rangle: K\Pi_{{\rm pure}}(\lambda^{\mathbb{Q}_p}, H) \times K{\rm Per}(H_{\lambda^{\mathbb{Q}_p}} \backslash V_{\lambda^{\mathbb{Q}_p}}) \rightarrow \mathbb{Z}.
\end{align}
Consider the map
\begin{align}
\label{eq: geometric restriction}
\mathcal{R}: K{\rm Per}(H_{\lambda^{\mathbb{Q}_p}} \backslash V_{\lambda^{\mathbb{Q}_p}}) \rightarrow K{\rm Per}(\D{K} \backslash X_{\lambda^{\mathbb{R}}}),
\end{align}
defined by $\mathcal{R} := R\theta_{*} \circ \iota^*$. It has an adjoint with respect to the pairings defined above
\begin{align}
\label{eq: adjoint}
\mathcal{R}^{*}: K\Pi_{{\rm pure}}(\lambda^{\mathbb{R}}, G) \rightarrow K\Pi_{{\rm pure}}(\lambda^{\mathbb{Q}_p}, H).
\end{align}
We will show that $\mathcal{R}^{*}$ is equal to $\tilde{\iota}$ on the irreducible representations up to some signs (cf. Proposition~\ref{prop: iota vs iotageom}). At last, we will compare the translation and Jacquet restriction by comparing their transposes under the pairings. The transpose of translation is given by certain pullback-pushforward functors (cf. Theorem~\ref{thm: translation vs pushpull}). The transpose of Jacquet restriction is given by Lusztig's induction of sheaves (cf. Conjecture~\ref{conj: adjoint of derivative}); this will be proven in \cite{CDX:adjunction}.

The idea of relating real and $p$-adic groups by the geometry of their Langlands parameter spaces is not new. Earlier works in this direction include Lusztig-Zelevinsky \cite{Zelevinskii:1985}, Ciubotaru-Trapa \cite{CiubotaruTrapa:2012}, and Barchini-Trapa \cite{BarchiniTrapa}. In particular, Barchini-Trapa constructed, for each reductive group $G$ of classical type and certain exceptional types, a map from a Vogan variety attached to $G(\BQ_p)$ to an ABV space attached to $G(\BR)$. Our construction goes in the other direction, where the ABV space of a real group is mapped (up to a possibly two-to-one map) into the Vogan variety of a $p$-adic group of different type and having a larger rank. We hope to explore the relationship between our map and theirs in a future work.

\subsection{Working hypotheses concerning the Local Langlands correspondence}
\label{subsec: hypothesis LLC}

Our results depend on the existence of the local Langlands correspondence for pure inner forms of $G$ and $H$, since both the definitions of $\tilde{\iota}$ and the pairings \eqref{eq: pairing real} \eqref{eq: pairing p-adic} do. For $G$, we use the results of Langlands \cite{Langlands:1989} and Shelstad \cite{Shelstad:1979}. For $H$, we use the results of Arthur \cite{Arthur:2013} and Moeglin-Renard \cite{MoeglinRenard:2018}, which still depends on the validity of the twisted weighted fundamental lemma after the recent work of Atobe-Gan-Ichino-Kaletha-Minguez-Shin \cite{AGIKMS:2024}. In particular, we need to assume that their results can also be extended to the full even orthogonal groups (cf. Conjecture~\ref{conj: LLC full even orthogonal}). In the course of the proof, we also use the Kazhdan-Lusztig conjecture (cf. Theorem~\ref{thm: LLC/R}, Conjecture~\ref{conj: LLC/Qp}, Conjecture~\ref{conj: disconnected LLC/Qp}), which says that the standard representations (resp. irreducible representations) and standard sheaves (resp. simple perverse sheaves) are dual to each other up to some signs under the pairings \eqref{eq: pairing real}, \eqref{eq: pairing p-adic}. For $G$, this is a consequence of Vogan duality \cite{Vogan:1982} (also see \cite[Theorem 1.24]{ABV:1992}). For $H$, we first make the following hypothesis.
\begin{hypothesis}
$\Pi_{{\rm pure}}(\lambda^{\mathbb{Q}_p}, H)$ consists of unipotent representations in the sense of Lusztig. 
\end{hypothesis}
For unipotent representations of inner forms of unramified $p$-adic reductive groups, Lusztig \cite{Lusztig1:1995} \cite{Lusztig:2002} (in case of adjoint groups) and Solleveld \cite{Solleveld:2023} (in general) also construct a local Langlands correspondence. Moreover, they have proved the Kazhdan-Lusztig conjecture for the pairing defined with respect to their constructions of the local Langlands correspondence \cite{Lusztig:1995B} \cite{Lusztig1:2002} \cite{Solleveld:2025}. To compare different choices of the local Langlands correspondence, we make another hypothesis:
\begin{hypothesis}
The local Langlands correspondence for unipotent representations established by Lusztig and Solleveld coincide with that of Arthur and Moeglin-Renard for $\Pi_{{\rm pure}}(\lambda^{\mathbb{Q}_p}, H)$.
\end{hypothesis}
In the case when $H$ is special odd orthogonal and $p$ is sufficiently large, both hypotheses have been proved by Waldspurger and Moeglin \cite{MW:2003} \cite{Waldspurger1:2007} \cite{Waldspurger:2016}. For the purpose of understanding the pure Arthur packets of $G$, we can always restrict to this case by taking $\delta \equiv n + 1$ mod $2$.

\subsection{Acknowledgement} 
This work was initiated through extensive computational experiments conducted using the \textsf{Atlas} program. The results of these experiments served as a significant source of inspiration and encouragement throughout this project. The authors wish to express their gratitude to the developers of \textsf{Atlas}. In addition, Q.Z. would like to thank Peter Trapa for helpful discussions. 

B.X. is supported by Ministry of Science and Technology of China, No. 2021YFA1000700 and National Natural Science Foundation of China, No.12571027. T.D. is supported by National Natural Science Foundation of
China, No. 12401013 and Beijing Natural Science Foundation, No. 1244042.

\section{Geometry}\label{sec: geometry}

\subsection{Langlands correspondence for $U(p,q)$}
\label{subsec: LLC real}

\begin{clause}[Langlands correspondence for real groups]\label{cls: LLC/R}
	Let $G$ be a quasi-split real group and $\D G$ be its (complex) Langlands dual group with a fixed pinning. In particular, we have fixed a Borel subgroup with a Cartan subgroup $\D B \supset \D T$. Let $\hat{\mathfrak{g}}$, $\hat{\mathfrak{t}}$ be the Lie algebras of $\D{G}, \D{T}$ respectively. Let $\tau \in \operatorname{Gal}(\BC/\BR)$ be the nontrivial element and form the L-group ${}^L G := \D G \rtimes \{1,\tau\}$. Write $W_\BR = \langle \BC^\times, j\rangle$ for the real Weil group. Let $\lambda^{\mathbb{R}} \in \hat{\mathfrak{t}}$ represent an integral infinitesimal character of $G$ and write $\Phi(G)_{\lambda^{\mathbb{R}}}$ for the set of $\D{G}$-conjugacy classess of Langlands parameters $\phi^\BR: W_\BR \to {}^L G$ with infinitesimal character $\lambda^{\mathbb{R}}$. Here we say $\phi^\BR$ has infinitesimal character $\lambda^{\mathbb{R}}$ if its restriction to $\BC^\times$ is given by
	\begin{equation*}
		\phi^\BR(z) = z^{\lambda'} \bar z^{\Ad \phi^\BR(j) \cdot \lambda'}
	\end{equation*}
    for some semisimple element $\lambda' \in \D \fg$ conjugate to $\lambda^{\mathbb{R}}$ under $\D{G}$.
	Then the Langlands correspondence for pure inner forms of $G$ is a bijection
	\[
		\Pi_{{\rm pure}}(\lambda^{\mathbb{R}}, G) \bijects 
		\Xi(\lambda^{\mathbb{R}},G) := 
		\{ (\phi^\BR, \epsilon) \mid 
			\phi^\BR \in \Phi(G)_{\lambda^{\mathbb{R}}}, \,
			\epsilon \in \D A_{\phi^\BR}
		\}
	\]
	(cf. \cite[Theorem 6]{Vogan:1993}). Here $\Pi_{{\rm pure}}(\lambda^{\mathbb{R}},G)$ is the set of isomorphism classes of irreducible admissible representations of pure inner forms of $G$ with infinitesimal character $\lambda^{\mathbb{R}}$, $A_{\phi^\BR}$ is the component group of the centralizer $Z_{\D G}(\phi^\BR)$ of $\phi^\BR$, and $\D A_{\phi^\BR}$ is the set of its irreducible representations. The set $\Xi(\lambda^{\mathbb{R}}, G)$ is called the set of \textit{complete Langlands parameters} of infinitesimal character $\lambda^{\mathbb{R}}$.
	
	Adams-Barbasch-Vogan \cite{ABV:1992} reformulated the right hand side as a set of irreducible perverse sheaves. In more details, let 
	\[
		\cI(\lambda^{\mathbb{R}}) = \{ y \in \D G \rtimes \tau \subset {}^L G \mid y^2 \in {\rm Ad}(\D G) (e^{2\pi i\lambda^{\mathbb{R}}}) \}.
	\]
    Since $\lambda^{\mathbb{R}}$ is assumed to be integral, the element $e^{2\pi i\lambda^{\mathbb{R}}}$ lies in the center $Z(\D G)$ of $\D G$. Therefore, $\cI(\lambda^{\mathbb{R}}) = \{ y \in \D G \rtimes \tau \subset {}^L G \mid y^2 = e^{2\pi i\lambda^{\mathbb{R}}} \}$.
	Let $\D P(\lambda^{\mathbb{R}}) \subset \D G$ be the parabolic subgroup stabilizing the canonical flat through $\lambda^{\mathbb{R}}$, and let $X_{\lambda^{\mathbb{R}}} = \D G / \D P(\lambda^{\mathbb{R}})$ be the corresponding partial flag variety. The product $\cI(\lambda^{\mathbb{R}}) \times X_{\lambda^{\mathbb{R}}}$ equipped with the diagonal $\D G$-action
	\[
		\cX_{\lambda^{\mathbb{R}}} := [\D G \backslash (\cI(\lambda^{\mathbb{R}}) \times X_{\lambda^{\mathbb{R}}})]
	\]
	is called the \textit{ABV space}. Irreducible perverse sheaves on the ABV space have a nice parameterization, namely
	\[
		\operatorname{Irr} \operatorname{Per}( \D G \backslash ( \cI(\lambda^{\mathbb{R}}) \times X_{\lambda^{\mathbb{R}}}) ) \cong
		\left\{(C, \cV) \mid 
		\begin{array}{c}
			C \text{ a $\D G$-orbit on } \cI(\lambda^{\mathbb{R}}) \times X_{\lambda^{\mathbb{R}}},\\
			\cV \text{ a simple $\D G$-equivariant local system on } C.
		\end{array}
		\right\},
	\]
	where $(C,\cV)$ corresponds to the perverse sheaf $i_{C!*} \cV[\dim C]$. A pair $(C,\cV)$ on the right side is called \textit{complete geometric parameters} in \textit{loc. cit.} 
	
	Note that $\cI(\lambda^{\mathbb{R}})$ is a disjoint union of finitely many $\D G$-orbits. Hence, for any choice of representatives $y_i \in \cI(\lambda^{\mathbb{R}})$ of the $\D G$-orbits, we have a decomposition of quotient stacks
	\begin{equation*}
		[\D G \backslash (\cI(\lambda^{\mathbb{R}}) \times X_{\lambda^{\mathbb{R}}})] = \bigsqcup_i \, [\D K_i \backslash X_{\lambda^{\mathbb{R}}}]
	\end{equation*}
	where $\D K_i = Z_{\D G}(y_i)$. In particular, 
	\begin{equation*}
		\operatorname{Per}(\D G \backslash (\cI(\lambda^{\mathbb{R}}) \times X_{\lambda^{\mathbb{R}}})) \cong \bigoplus_i \operatorname{Per}( \D K_i \backslash X_{\lambda^{\mathbb{R}}}).
	\end{equation*}
	
	\begin{proposition}[{\cite[Chapter 5, 6]{ABV:1992}, \cite[Prop 7.4]{AdC:2009}}]\label{prop: orbit vs L-param}
		There is a bijection 
		\[
			\Phi(G)_{\lambda^{\mathbb{R}}} \bijects \D G \backslash ( \cI(\lambda^{\mathbb{R}}) \times X_{\lambda^{\mathbb{R}}} )
		\]
		defined as follows. Given $\phi^\BR \in \Phi(G)_{\lambda^{\mathbb{R}}}$, there is a unique way of writing the restriction of $\phi^\BR$ to $\BC^\times$ as
		\[
		\phi^\BR(z) = z^{\lambda'} \bar z^{\Ad \phi^\BR(j) \lambda'}
		\]
		for some semisimple element $\lambda' \in \D \fg$. Let $y = e^{\pi i \lambda'} \phi^\BR(j)$ and let $\D P(\lambda')$ be the stabilizer of the canonical flat through $\lambda'$ in $\D G$. Then 
        the above bijection sends 
		\[
			\phi^\BR \mapsto (y, \D P(\lambda')).
		\]
		There is a canonical isomorphism between $Z_{\D G}(\phi^\BR)$ and the reductive quotient of $Z_{\D G}(y, \D P(\lambda'))$, and hence the above bijection upgrades to
		\[
			\Xi(\lambda^{\mathbb{R}},G) \bijects \operatorname{Irr} \operatorname{Per}( \D G \backslash (\cI(\lambda^{\mathbb{R}}) \times X_{\lambda^{\mathbb{R}}})).
		\]
	\end{proposition}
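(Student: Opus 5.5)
The plan is to verify the map is well defined, injective and surjective by decomposing the restriction of $\phi^\BR$ to $\BC^\times$, and then to compare centralizers.

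\emph{Well-definedness.} Since $\BC^\times$ is abelian and $\phi^\BR(\BC^\times)$ consists of commuting semisimple elements, the restriction $\phi^\BR|_{\BC^\times}$ factors through a torus. Writing $z = e^{w}$, continuity gives $\phi^\BR(z) = z^{\lambda'}\bar z^{\mu'}$ for commuting semisimple $\lambda',\mu'$ with $\lambda'-\mu'$ integral. These are unique by linear independence of the characters $w$ and $\bar w$.

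The relation $jzj^{-1}=\bar z$ forces $\mu' = \Ad\phi^\BR(j)\lambda'$. The infinitesimal character condition gives that $\lambda'$ is conjugate to $\lambda^\BR$. Next, set $y = e^{\pi i\lambda'}\phi^\BR(j)$. Using $\phi^\BR(j)^2=\phi^\BR(-1)= e^{\pi i(\lambda'-\mu')}$ and the fact that $\Ad\phi^\BR(j)$ sends $\lambda'$ to $\mu'$, we compute
\[
y^2 = e^{\pi i\lambda'} e^{\pi i \mu'}\phi^\BR(j)^2 = e^{2\pi i\lambda'}.
\]
This equals $e^{2\pi i\lambda^\BR}$, since the latter is central by integrality. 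Hence $y\in\cI(\lambda^\BR)$. Moreover $\D P(\lambda')$ is conjugate to $\D P(\lambda^\BR)$, so it gives a point of $X_{\lambda^\BR}$. Conjugating $\phi^\BR$ by $g$ conjugates $(\lambda',y)$ by $g$, so the map descends to $\D G$-conjugacy classes.

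\emph{Bijectivity.} I would produce an inverse. Given $(y,P)$, the flag $P$ determines $\lambda'$ as the unique element of its canonical flat conjugate to $\lambda^\BR$. This is the step I expect to be the main obstacle. One must show that the pair can be conjugated so that $\Ad y$ preserves $\lambda'$ up to the right shape, i.e.\ that $\mu':=\Ad(y)\lambda'$ commutes with $\lambda'$ and $\lambda'-\mu'$ is integral. The approach I would follow, as in \cite{ABV:1992}, is to replace $\lambda'$ within the canonical flat $\lambda'+\mathfrak n(\lambda')$ (where $\mathfrak n(\lambda')$ is the nilradical of the Lie algebra of $\D P(\lambda')$). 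Then use the fact that $y^2$ is central, so $\Ad y$ is an involution, and choose a $\theta_y$-stable Cartan in $P\cap yPy^{-1}$; the $P$-conjugate of $\lambda'$ lying in that Cartan is the correct choice. Uniqueness up to $P$-conjugacy of this choice is what gives injectivity. Given the choice, define
\[
\phi(z) = z^{\lambda'}\bar z^{\mu'}, \qquad \phi(j)=e^{-\pi i\lambda'}y,
\]
and check the Weil group relations by the computation above run backwards.

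\emph{Centralizers.} $Z_{\D G}(\phi^\BR)$ equals the set of elements centralizing $\lambda'$, $\mu'$ and $\phi^\BR(j)$. This group lies in $Z(y)\cap L(\lambda')$, where $L(\lambda')$ is the Levi subgroup of $\D P(\lambda')$. On the other side, $Z_{\D G}(y,\D P(\lambda')) = Z(y)\cap\D P(\lambda')$ has a Levi decomposition whose unipotent radical lies in the unipotent radical of $\D P(\lambda')$. Its reductive quotient is therefore identified with $Z(y)\cap L(\lambda')\cap L(\mu')$, which is exactly $Z(\phi^\BR)$. Passing to component groups matches $\D A_{\phi^\BR}$ with equivariant local systems on the orbit, which gives the upgraded bijection $\Xi(\lambda^\BR,G)\to\operatorname{Irr}\operatorname{Per}$.
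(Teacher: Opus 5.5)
Your strategy is the standard one behind \cite{ABV:1992}, which the paper simply cites without proof: compute $y^2=e^{2\pi i\lambda'}$, then invert by choosing, inside the canonical flat $\Lambda=\lambda'+\mathfrak{n}(\lambda')$, the point lying in a $\theta$-stable Cartan of $Q:=\widehat P\cap\theta(\widehat P)$, where $\theta=\Ad(y)$ (such a Cartan exists by Steinberg's theorem on semisimple automorphisms). This does give well-definedness and surjectivity. The gap is injectivity.

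First, $\widehat P$ does not single out $\lambda'$ as ``the unique element of its canonical flat conjugate to $\lambda^{\mathbb R}$'': all of $\Lambda$ is a single $N(\lambda')$-orbit. For the same reason, ``uniqueness up to $P$-conjugacy'' is automatic and proves nothing. If $\phi_1,\phi_2$ both map to $(y,\widehat P)$, then $\phi_i(j)=e^{-\pi i\lambda_i}y$, so any conjugating element must fix $y$. What you need is that $\{\lambda\in\Lambda:[\lambda,\theta\lambda]=0\}$ is a single orbit of $Z_{\widehat G}(y)\cap\widehat P$. This is the real content of the proposition. It cannot be obtained by conjugating $\theta$-stable Cartans, since these are in general not $Q^\theta$-conjugate. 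Argue with Levi factors instead:
\begin{itemize}
\item For such $\lambda_i$, the group $M_i:=Z(\lambda_i)\cap Z(\theta\lambda_i)$ is a $\theta$-stable Levi factor of $Q$, and $\Lambda\cap\operatorname{Lie}M_i=\{\lambda_i\}$.
\item Levi factors of $Q$ form a torsor under $U:=R_u(Q)$. The normalizer of $M_1$ in $U$ is trivial because $M_1$ contains a maximal torus of $\widehat G$. Hence $M_2=uM_1u^{-1}$ for a unique $u\in U$.
\item Applying $\theta$ and using uniqueness forces $\theta(u)=u$, i.e.\ $u\in Z(y)\cap\widehat P$.
\item Then $\Ad(u)\lambda_1\in\Lambda\cap\operatorname{Lie}M_2=\{\lambda_2\}$, so $u\phi_1u^{-1}=\phi_2$.
\end{itemize}

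In the centralizer step, your identification $Z_{\widehat G}(\phi^{\mathbb R})=Z(y)\cap L(\lambda')\cap L(\mu')$ is correct. However, the claim that the unipotent radical of $Z(y)\cap\widehat P(\lambda')$ lies in $N(\lambda')$ is false, so you cannot read off the reductive quotient as the image in $L(\lambda')$.

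Here is a counterexample. Take $n=3$, $\lambda^{\mathbb R}=(1,1,0)$, and $y=\dot sJ^{-1}\tau$ with $\dot s$ the permutation matrix of $s=(13)$. Then $\theta(X)=-\dot s\,{}^tX\,\dot s$ on $\mathfrak{gl}_3$, and:
\begin{itemize}
\item $\lambda'=\operatorname{diag}(1,1,0)$ and $\mu'=\operatorname{diag}(0,-1,-1)$;
\item $\mathfrak{n}(\lambda')=\operatorname{span}\{E_{13},E_{23}\}$, and $Q$ is the upper triangular Borel;
\item $\operatorname{Lie}(Z(y)\cap\widehat P)=\{\operatorname{diag}(a,0,-a)+c(E_{12}-E_{23})\}$.
\end{itemize}
The nilradical $\mathbb{C}(E_{12}-E_{23})$ is not contained in $\mathfrak{n}(\lambda')$. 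In fact $Z(y)\cap N(\lambda')=1$, so $Z(y)\cap\widehat P$ maps injectively into $L(\lambda')$ with non-reductive image (it contains $1+tE_{12}$).

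The correct statement is that $Z(y)\cap\widehat P=Q^\theta$. Since $Q=(L(\lambda')\cap L(\mu'))\ltimes U$ with both factors $\theta$-stable, we get $Q^\theta=(L(\lambda')\cap L(\mu'))^\theta\ltimes U^\theta$, where $U^\theta$ is connected unipotent. Its Levi factor is exactly $Z(\phi^{\mathbb R})$, and $\pi_0$ is unchanged. This gives the upgrade to $\Xi(\lambda^{\mathbb R},G)\to\operatorname{Irr}\operatorname{Per}$.
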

	
	From now on, we will write $\Xi(\lambda^{\mathbb{R}}, G)$ both for the set of complete Langlands parameters $(\phi^\BR, \epsilon)$ and for the set of complete geometric parameters $(C, \cV)$.
	
	As a result, we obtain a bijection
	\begin{equation*}
		\Pi_{{\rm pure}}(\lambda^{\mathbb{R}}, G) \bijects \Xi(\lambda^{\mathbb{R}}, G) \bijects \operatorname{Irr} \operatorname{Per}( \D G \backslash (\cI(\lambda^{\mathbb{R}}) \times X_{\lambda^{\mathbb{R}}})). 
	\end{equation*}
	Adams-Barbasch-Vogan upgraded this bijection to a perfect pairing in a way so that character formulas can be computed using local multiplicities of perverse sheaves. For a parameter $\xi = (C,\cV) \in \Xi(\lambda^{\mathbb{R}},G)$, we write $\pi(\xi) \in \Pi_{{\rm pure}}(\lambda^{\mathbb{R}}, G)$ for the corresponding irreducible representation of some pure inner form $G_{\xi}$ of $G$, and write $M(\xi)$ for the standard representation having $\pi(\xi)$ as its Langlands quotient. Further, we write $\mathcal{P}(\xi) = j_{C!*} \cV[\dim C]$, $\mu(\xi) = j_{C!} \cV[\dim C]$\footnote{Our definition of $\mu(\xi)$ differs with the one in \cite{ABV:1992} by a cohomological shift by degree $\dim C$.} for the simple and standard objects in the equivariant derived category $D^b( \D G \backslash (\cI(\lambda^{\mathbb{R}}) \times X_{\lambda^{\mathbb{R}}}))$ (these objects are in fact in the perverse heart $\operatorname{Per}(\D G \backslash (\cI(\lambda^{\mathbb{R}}) \times X_{\lambda^{\mathbb{R}}}))$). Let $\Rep_{{\rm pure}}(\lambda^{\mathbb{R}}, G)$ denote the category of admissible representations of pure inner forms of $G$ with infinitesimal character $\lambda^{\mathbb{R}}$. The simple objects in $\Rep_{{\rm pure}}(\lambda^{\mathbb{R}}, G)$ are precisely $\Pi_{{\rm pure}}(\lambda^{\mathbb{R}}, G)$. Define a perfect pairing 
	\begin{equation}\label{eq: real pairing}
		\langle -,- \rangle : K \Rep_{{\rm pure}}(\lambda^{\mathbb{R}}, G) \times K D^b(\D G \backslash (\cI(\lambda^{\mathbb{R}}) \times X_{\lambda^{\mathbb{R}}})) \aro \BC
	\end{equation}
	by 
    \begin{equation*}
			\langle \pi(\xi), \mathcal{P}(\xi') \rangle = (-1)^{\dim C} e(G_{\xi}) \delta_{\xi, \xi'}, 
    \end{equation*}
	where $C$ is the orbit appearing in $\xi = (C, \cV)$, $e(G_{\xi})$ is the Kottwitz sign of $G_{\xi}$ (\cite{Kottwitz:1983}), and $\delta_{\xi, \xi'}$ is the Kronecker delta. 
	
	\begin{theorem}[{\cite[Thm 1.24]{ABV:1992}}]\label{thm: LLC/R}
		Under the above pairing, we have
		\begin{equation*}
		\langle M(\xi), \mu(\xi') \rangle = (-1)^{\dim C} e(G_{\xi}) \delta_{\xi, \xi'}.
	\end{equation*}
	\end{theorem}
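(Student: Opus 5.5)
This is cited from ABV, Theorem 1.24; the plan is to reduce it to Vogan's character multiplicity duality together with the decomposition theorem for orbit closures. Expand the standard representation in irreducibles, $M(\xi)=\sum_{\eta} m(\eta,\xi)\,\pi(\eta)$. Expand the standard sheaf in simple perverse sheaves in the Grothendieck group, $\mu(\xi')=\sum_{\eta'} c(\eta',\xi')\,\mathcal{P}(\eta')$. By definition of the pairing,
\[
\langle M(\xi),\mu(\xi')\rangle=\sum_{\eta} m(\eta,\xi)\,c(\eta,\xi')\,(-1)^{\dim C_\eta}e(G_\eta).
\]
So the claim is that the matrices $\big(m(\eta,\xi)\big)$ and $\big((-1)^{\dim C_\eta}e(G_\eta)\,c(\eta,\xi')\big)$ are inverse to one another, up to the diagonal sign normalization.

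First, I would compute $c(\eta,\xi')$ geometrically. Inverting the unitriangular relation $\mathcal{P}(\xi')=\sum_\eta (\text{stalks})\,\mu(\eta)$ expresses the coefficients through the local Euler characteristics
\[
\sum_i (-1)^i\,\mathrm{mult}\big(\cV_\eta,\ \mathcal{H}^i(\mathcal{P}(\xi'))|_{C_\eta}\big).
\]
The inverse matrix is then $c$. Both matrices are unitriangular with respect to the closure order on orbits.

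Second, I would invoke the Kazhdan--Lusztig/Vogan conjecture for real groups, proved by Vogan in ``Irreducible characters of semisimple Lie groups IV'' via Beilinson--Bernstein localization. It says the multiplicities $m(\eta,\xi)$ are given, up to signs, by the same local intersection cohomology Euler characteristics. The catch is that these are computed on the $K$-orbits of flag varieties for the real group side, not on $\D K$-orbits on $X_{\lambda^{\BR}}$. Vogan duality (Vogan, Irreducible characters IV/ABV Ch.~15) bridges this. It gives a bijection between parameters for $G$ at $\lambda$ and parameters for the dual group at $\lambda$, reversing closure order. Under it the multiplicity matrix of one side is the inverse of the (signed) IC-stalk matrix on the other.

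The main obstacle is identifying Vogan's dual-side geometric data with the $\D K_i$-orbits on $X_{\lambda^{\BR}}$ of the ABV space, including local systems and summing over all pure inner forms. One must then check that the signs $(-1)^{\dim C}e(G_\xi)$ match Vogan's sign normalization $(-1)^{\ell(\xi)-\ell(\eta)}$. I would do this by checking that the length difference agrees with the difference of orbit dimensions mod $2$. I would also check that the Kottwitz sign is constant on each inner form and compensates the grading of orbits across the different strong real forms, as in ABV Chapters 15--16. Once these match, the orthogonality gives $\langle M(\xi),\mu(\xi')\rangle=(-1)^{\dim C}e(G_\xi)\delta_{\xi,\xi'}$.
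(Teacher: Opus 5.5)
Your strategy is the standard one, and it is all the paper's citation of \cite{Vogan:1982} and \cite[Thm 1.24]{ABV:1992} stands for: KLV, Vogan's character--multiplicity duality, and identifying the dual real forms' $K^\vee$-orbit geometry with $\D K_i\backslash X_{\lambda^{\BR}}$. Your first step is also correct. Write $[\mathcal{P}(\eta)]=\sum_{\xi}\chi(\xi,\eta)\,[\mu(\xi)]$, so that $c=\chi^{-1}$, and set $s_\eta=(-1)^{\dim C_\eta}e(G_\eta)$. Then the theorem is equivalent to
\[
m(\eta,\xi)=s_\xi s_\eta\,\chi(\xi,\eta).
\]
So the multiplicity of $\pi(\eta)$ in $M(\xi)$ must be, up to sign, the stalk of $\mathcal{P}(\eta)$ along $C_\xi$ on $X_{\lambda^{\BR}}$ itself; that is, $m$ is the signed \emph{transpose} of $\chi$, not its inverse.

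Your second step delivers the inverse. The KLV conjecture does not compute $m$ by IC stalks on the real group's own flag variety. It expresses irreducibles in terms of standard modules, so it computes $m^{-1}$. Chaining your two statements as written (``$m$ is given by stalks on $G$'s side'' and ``the multiplicity matrix of one side is the inverse of the stalk matrix of the other'') gives $m=\pm\chi^{-1}$. Then $\sum_\eta m(\eta,\xi)s_\eta c(\eta,\xi')$ does not collapse to $s_\xi\delta_{\xi,\xi'}$, because an inverse KLV matrix is in general not entrywise $\pm$ the KLV matrix. The correct chain runs as follows. Vogan's duality identifies the multiplicity matrix of $G$, up to signs and transpose, with the \emph{character} matrix (irreducibles in terms of standards) of the dual real forms of $\D G$. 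KLV applied to \emph{those} groups computes that character matrix by IC stalks on their $K^\vee$-orbits on $\D G/\D B$. For regular $\lambda^{\BR}$ these are exactly the $\D K_i$-orbits on $X_{\lambda^{\BR}}$, hence $m=\pm{}^t\chi$.

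Second, Vogan's duality in the form you invoke compares full flag varieties on both sides, i.e.\ regular integral infinitesimal character. Here $\lambda^{\BR}$ may be singular, which is the case this paper actually needs, and then $X_{\lambda^{\BR}}=\D G/\D P(\lambda^{\BR})$. Your proposal has no step for this, so you need a reduction to a regular $\lambda'$. On the representation side, translation to the wall $\psi^{\lambda^{\BR}}_{\lambda'}$ is exact and sends irreducibles to irreducibles or $0$. On the geometric side, take $p:\D G/\D B\to X_{\lambda^{\BR}}$. The shifted pullback $p^*[\dim \D P(\lambda^{\BR})/\D B]$ sends $\mathcal{P}(C,\cV)$ to the IC sheaf of the open orbit $\tilde C\subset p^{-1}(C)$ with local system $p^*\cV|_{\tilde C}$, and it preserves stalks. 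The substantive point is that the irreducibles surviving translation, and the standard modules translating to the $M(\xi)$, are exactly those attached to these lifted parameters. Granting that, $m$ and $\chi$ at $\lambda^{\BR}$ are submatrices of those at $\lambda'$, and the identity descends.

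Finally, the Kottwitz sign needs no compensation across strong real forms. Since $m(\eta,\xi)\neq 0$ forces $G_\eta=G_\xi$, the factor $e(G_\eta)=e(G_\xi)$ comes out of the sum and cancels. Only your parity check $\ell(\xi)-\ell(\eta)\equiv\dim C_\xi-\dim C_\eta \pmod 2$ remains.
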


For any $\mathcal{P} \in {\rm Per}(\D G \backslash (\cI(\lambda^{\mathbb{R}}) \times X_{\lambda^{\mathbb{R}}}))$, its characteristic cycle  is a formal linear combination
\[
CC(\mathcal{P}) = \sum_{C} m_{C}(\mathcal{P}) [\overline{T^{*}_{C} 
(\cI(\lambda^{\mathbb{R}}) \times X_{\lambda^{\mathbb{R}}})}], \quad m_C(\mathcal{P}) \in \mathbb{Z}_{\geq 0}
\]
where the sum is over $\D{G}$-orbits $C$ in $\cI(\lambda^{\mathbb{R}}) \times X_{\lambda^{\mathbb{R}}}$ \cite[Section 9.4]{KS:1994} or \cite[Definition 2.2.2]{HTT}. For any $\phi \in \Phi(G)_{\lambda^{\mathbb{R}}}$, let $C_{\phi}$ be the associated orbit under Proposition \ref{prop: orbit vs L-param}. Then the ABV-packet attached to $\phi$ is defined to be
\[
\Pi^{\rm ABV, pure}_{\phi}(G) := \{\pi(\xi) \, | \, \xi \in \Xi(\lambda^{\mathbb{R}}, G), m_{C_{\phi}}(\mathcal{P}(\xi)) \neq 0\}.
\]

For any A-parameter $\psi^{\mathbb{R}}$ of $G$, one can associate an L-parameter 
by
    \begin{align}
    \label{eq: associated L-parameter real}
    \phi_{\psi^{\mathbb{R}}} (w) := \psi^{\mathbb{R}}\Big( w,
		{\small \begin{pmatrix}
				|w|^{1/2}\\&|w|^{-1/2}
			\end{pmatrix}} \Big), \quad w \in W_{\mathbb{R}}
    \end{align}
Define the infinitesimal character of $\psi^{\mathbb{R}}$ to be that of $\phi_{\psi^{\mathbb{R}}}$. Denote by $\Psi(G)_{\lambda^{\mathbb{R}}}$ the set of $\D{G}$-conjugacy classes of A-parameters with infinitesimal character $\lambda^{\mathbb{R}}$. Then the rule $\psi^\BR \mapsto \phi_{\psi^\BR}$ gives an inclusion $\Psi(G)_{\lambda^{\mathbb{R}}} \hookrightarrow \Phi(G)_{\lambda^{\mathbb{R}}}$. For $\psi \in \Psi(G)_{\lambda^{\mathbb{R}}}$, we set
\[
\Pi^{\rm ABV, pure}_{\psi^{\mathbb{R}}}(G):= \Pi^{\rm ABV, pure}_{\phi_{\psi^{\mathbb{R}}}}(G).
\]

We now specialize to the unitary groups $U(p,q)$.

\end{clause}

\begin{clause}[The case of $U(p,q)$]
	Let $n$ be a positive integer, and $G$ be $U(\frac{n}{2}, \frac{n}{2})$ if $n$ is even or $U(\frac{n-1}{2}, \frac{n+1}{2})$ if $n$ is odd. Let
	\begin{equation*}
		J = {\small \begin{pmatrix}
				&& -1\\
				& \iddots\\
				(-1)^n
		\end{pmatrix}}.
	\end{equation*}
	Then $\D G = \GL_n(\BC)$ and $\tau \acts \D G$ by $\tau(g) = J {}^t g\inv J\inv$. Let $\D{B}$ (resp. $\D{T}$) be the subgroup of upper-triangular (resp. diagonal) matrices. Let $W^{\D{G}} := W(\D{G}, \D{T})$ be the Weyl group of $\D{G}$.
    Let $\lambda^{\mathbb{R}} = (\lambda_1,\ldots,\lambda_n) \in \BC^n = \hat{\mathfrak{t}}$ be a dominant integral infinitesimal character of $G$, i.e., $\lambda_i - \lambda_{i+1} \in \mathbb{Z}_{\geqslant 0}$ for all $1 \leqslant i \leqslant n -1$. Then $\D{P}(\lambda^{\mathbb{R}})$ is the standard parabolic subgroup of $\D{G}$ defined by the set of simple roots $\{\alpha_i \, | \, 1 \leqslant i \leqslant n-1, \lambda_i = \lambda_{i+1} \}$.
    
	\begin{lemma}\label{lemma: Upq param space}
		\begin{enumerate}
			\item The isomorphism classes of pure inner forms of $U(p,q)$ are represented by $U(p',q')$, where $p' + q' = n$ and $1 \le p' \le n$ (here and later, $U(p', q')$ and $U(q', p')$ denote two different pure inner forms whenever $p' \neq q'$).
			
			\item If $n$ is odd, then $\lambda^{\mathbb{R}} \in \BZ^n$, $\cI(\lambda^{\mathbb{R}}) = \Ad(\D G) \cdot J\inv \tau$, and the ABV space is isomorphic to $[O(n,\BC) \backslash \GL_n(\BC) / \D P(\lambda^{\mathbb{R}})]$. 
			
			\item If $n$ is even, then $\lambda^{\mathbb{R}} \in \BZ^n$ or $(\frac12 \BZ \backslash \BZ)^n$:
			\begin{itemize}
				\item \sloppy if $\lambda^{\mathbb{R}} \in \BZ^n$, then $\cI(\lambda^{\mathbb{R}}) = \Ad(\D G) \cdot \tau$, and the ABV space is isomorphic to $[\Sp(n,\BC) \backslash \GL_n(\BC) / \D P(\lambda^{\mathbb{R}})]$;
				\item if $\lambda^{\mathbb{R}} \in (\frac12 \BZ \backslash \BZ)^n$, then $\cI(\lambda^{\mathbb{R}}) = \Ad(\D G) \cdot J\inv \tau$, and the ABV space is isomorphic to  $[O(n,\BC) \backslash \GL_n(\BC) / \D P(\lambda^{\mathbb{R}})]$.
			\end{itemize}
		\end{enumerate}
	\end{lemma}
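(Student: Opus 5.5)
The plan is to treat the three parts separately, using only the explicit descriptions of $\D G = GL_n(\BC)$, of $\tau(g) = J\,{}^t g^{-1} J^{-1}$, and of $\cI(\lambda^{\mathbb{R}})$ from Clause~\ref{cls: LLC/R}.

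For part (1), I would use the standard description of pure inner forms through Galois cohomology. Pure inner forms of $U(p,q)$ are classified by $H^1(\BR, U(p,q))$. This set is in bijection with Hermitian forms on $\BC^n$ up to isometry, and by Sylvester's law these are classified by their signature $(p',q')$ with $p'+q'=n$. The forms of signature $(p',q')$ and $(q',p')$ give isomorphic groups but distinct classes in $H^1$, which explains the parenthetical remark.

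For parts (2) and (3), first write $y = g\tau$ with $g \in \D G$. Then
\[
y^2 = g\,\tau(g) = g J\,{}^t g^{-1} J^{-1}.
\]
So the condition $y^2 = z$, with $z = e^{2\pi i \lambda^{\mathbb{R}}}$ central, becomes $g J = z\, J\, {}^t g$. Writing $h = gJ$, this reads $h = z\,{}^t h\,({}^tJ)^{-1}J$.

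Next compute ${}^t J = (-1)^{n+1}J$. The antidiagonal entries of $J$ are $-1$ in the top-right corner and $(-1)^n$ in the bottom-left, and transposing swaps these two entries. This gives $h = (-1)^{n+1} z\,{}^t h$.

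Taking determinants then forces $z = \pm 1$. Since $z$ is the scalar $e^{2\pi i\lambda_k}$ and is the same for all $k$, all $\lambda_k$ are congruent mod $1$ and each lies in $\tfrac12\BZ$. (Recall that integrality means $z$ is central, i.e.\ scalar.) Hence $h$ is either symmetric or alternating:
\begin{itemize}
\item If $(-1)^{n+1}z = 1$, then $h$ is symmetric and invertible. All such $h$ form a single $\D G$-orbit under $h \mapsto k h\,{}^t k$, which is the $\tau$-twisted conjugation action $k\,(g\tau)\,k^{-1}$.
\item If $(-1)^{n+1}z = -1$, then $h$ is alternating, which requires $n$ even.
\end{itemize}

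We now read off the cases. For $n$ odd, an alternating invertible $h$ is impossible, so $z = (-1)^{n+1} = 1$ and $\lambda^{\mathbb{R}} \in \BZ^n$. For $n$ even:
\begin{itemize}
\item $z = 1$ gives $\lambda^{\mathbb{R}} \in \BZ^n$ and alternating $h$;
\item $z = -1$ gives half-integral $\lambda^{\mathbb{R}}$ and symmetric $h$.
\end{itemize}

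Choose representatives in each case. The symmetric case is represented by $h = I$, i.e.\ $g = J^{-1}$ and $y = J^{-1}\tau$. The alternating case for $n$ even is represented by $h = J$, since $J$ is alternating exactly when $n$ is even; this gives $g = I$ and $y = \tau$.

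Since there is a single orbit in each case, the stack decomposition in Clause~\ref{cls: LLC/R} has one piece $[\D K\backslash X_{\lambda^{\mathbb{R}}}]$ with $\D K = Z_{\D G}(y)$. The stabilizer of $h$ under $k h\,{}^t k$ is $O(h)$ or $Sp(h)$, which is isomorphic to $O(n,\BC)$ or $Sp(n,\BC)$ respectively. Finally, $X_{\lambda^{\mathbb{R}}} = GL_n(\BC)/\D P(\lambda^{\mathbb{R}})$ gives the stated quotients.

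The main obstacle is bookkeeping. The sign computation for ${}^tJ$ and the convention $y^2 = g\tau(g)$ must be done carefully, so that $J^{-1}\tau$ and $\tau$ are matched to the correct parity cases. Everything else is classical orbit theory of bilinear forms.
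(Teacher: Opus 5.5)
Your approach is the same as the paper's in all three parts.
\begin{itemize}
\item Part (1): you classify pure inner forms by Hermitian forms up to signature, via $H^1(\BR,U(p,q))$ and Sylvester's law. The paper does the same explicitly, by noting that $gI_{p,q}$ is Hermitian.
\item Parts (2)--(3): you rewrite $y^2=z$ as the relation $h=c\,{}^th$ with $h=gJ$ and $c=(-1)^{n+1}z$, and identify $h\mapsto kh\,{}^tk$ as the conjugation action. You then take representatives $h=I$, giving $J^{-1}\tau$, and $h=J$, giving $\tau$, and read off the stabilizers $O(n,\BC)$ and $\Sp(n,\BC)$.
\end{itemize}
Your sign computation ${}^tJ=(-1)^{n+1}J$ and the resulting case analysis are correct.

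One step is wrong as justified, though. Taking determinants of $h=c\,{}^th$ gives only $\det h=c^n\det h$, that is, $c^n=1$ (equivalently $z^n=1$). For $n\ge 3$ this does not force $c=\pm1$. Your symmetric/alternating dichotomy, and hence every later conclusion, depends on $c=\pm1$.

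The fix is one line. Transpose the relation to get ${}^th=c\,h$, substitute back to obtain $h=c^2h$, and use invertibility of $h$ to conclude $c^2=1$, hence $z=\pm1$. The paper gets the same conclusion differently, by applying $\tau$ to $y^2=z$: $z^{-1}=\tau(z)=\tau(g)g=g^{-1}(g\tau(g))g=z$. With this repair the rest of your argument goes through as written.
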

	
	If $\D K = O(n,\BC)$ (resp. $\D K = \Sp(n,\BC)$), we say $\lambda^{\mathbb{R}}$ has \textbf{good parity} (resp. \textbf{bad parity}). To summarize, we have the following cases:
	\begin{center}
		\begin{tabular}{cccc}
			parity of $n$ & $\lambda^{\mathbb{R}}$ & ABV space $\cX_{\lambda^{\mathbb{R}}}$
			\\ \hline
			odd & $\BZ^n$ & $[O(n,\BC) \backslash \GL_n(\BC) / \D P(\lambda^{\mathbb{R}})]$ & good parity
			\\ 
			even & $(\frac12 \BZ \backslash \BZ)^n$ & $[O(n,\BC) \backslash \GL_n(\BC) / \D P(\lambda^{\mathbb{R}})]$ & good parity
			\\
			even & $\BZ^n$ & $[\Sp(n,\BC) \backslash \GL_n(\BC) / \D P(\lambda^{\mathbb{R}})]$ & bad parity
		\end{tabular}
	\end{center}
	
	\begin{proof}
		For a real group $G$, consider the corresponding action $\operatorname{Gal}(\BC/\BR) = \{1,\tau\} \acts G_\BC$ and form the semidirect product $G_\BC \rtimes \{1,\tau\}$. A pure inner form of $G$ is the same as an order two element $g \tau$ in the non-identity component $G_\BC \rtimes \tau \subset G_\BC \rtimes \{1,\tau\}$ \cite[Def 2.6]{Vogan:1993}, and it induces an involution on $G_{\C}$ by conjugate action, whose fixed points is a real form of $G_{\C}$. 
        The isomorphism class of an pure inner form is defined to be its $G_{\C}$-conjugacy class (note that it is not determined by the associated real form). In the case $G = U(p,q)$, the action $\tau \acts G_\BC$ is given by $\tau(h) = I_{p,q} {}^t \bar h\inv I_{p,q}$, where $I_{p,q} = \diag(I_p, - I_q)$. 
        Suppose $g\tau$ has order $2$, that is to say $g\in G_\BC$ satisfies $g\tau(g) = 1 \in G_\BC$.
        Since $g\tau(g) = g I_{p, q} {}^t \bar g^{-1} I_{p, q} = (gI_{p, q}) \cdot {}^t\overline{(g I_{p, q})}^{-1}$, we see
        $g I_{p, q}$ is Hermitian.
        According to the classification of Hermitian forms, there is $h \in G_\BC$ such that $h (g I_{p, q}) {}^t\bar h = I_{p', q'}$, where $p', q'$ are the positive and the negative inertia index of $g I_{p, q}$, respectively. We may then replace $g\tau$ by its conjugate
        \[
        h g \tau h^{-1} = h g \tau(h)^{-1} \tau = h g I_{p, q} {}^t\bar h I_{p, q} \tau = I_{p', q'} I_{p, q} \tau.
        \]
        This shows that the isomorphism classes of pure inner forms of $G$ can be represented by $g\tau = I_{p', q'} I_{p, q} \tau$ for $1 \leqslant p' \leqslant n$, and the their associated real forms are exactly $U(p', q')$. 
        
		
		Next we turn to part (2) and (3). Recall that $\tau \acts \D G$ by $\tau(g) = J \, {}^t g\inv J\inv$. Let $y = g \tau \in \cI(\lambda^{\mathbb{R}})$. Since $\lambda^{\mathbb{R}}$ is integral,  
		\begin{equation} \label{eq: strong inv Upq}
			e^{2\pi i \lambda_1} I_n = e^{2\pi i \lambda^{\mathbb{R}}} = y^2 = (g \tau)^2 = g \tau(g) = g J {}^t g\inv J\inv.
		\end{equation}
	    This equation implies 
        \[
         e^{-2\pi i \lambda_1} I_n =\tau(e^{2\pi i \lambda_1} I_n)=\tau(g)g=g\tau(g)=e^{2\pi i \lambda_1} I_n
        \]  
        which forces $e^{2\pi i \lambda_1} = \pm 1$. Therefore either $\lambda_1 \in \BZ$ or $\frac12 \BZ \backslash \BZ$. Thus $\lambda^{\mathbb{R}} \in \BZ^n$ or $(\frac12 \BZ \backslash \BZ)^n$. 
		
		Suppose $\lambda^{\mathbb{R}} \in \BZ^n$. Equation (\ref{eq: strong inv Upq}) becomes $I_n = gJ {}^t g\inv J\inv$. If $n$ is odd, then $J\inv = {}^t J\inv$, and the equation reads $I_n = (gJ) \,  {}^t(gJ)\inv$, hence $gJ$ is symmetric. The $\D G$-conjugation action on $g \tau$ is given by
		\begin{align}
        \label{eq: conjugation}
			\Ad(h) \cdot g \tau = h g \tau h\inv = h g \tau(h^{-1}) \tau = (h g J \, {}^t h) J\inv \tau.
		\end{align}
		By choosing $h$ such that $hgJ\,{}^th = I_n$, we see that $\cI(\lambda^{\mathbb{R}})$ is equal to the $\D G$-orbit of $J\inv \tau$. The adjoint action $\Ad(J\inv \tau)$ is transpose inverse, hence we have $\D K = Z_{\D G}(J^{-1}\tau) = O(n,\BC)$. If $n$ is even, then $J\inv = - {}^t J\inv$, and equation (\ref{eq: strong inv Upq}) becomes $-I_n = (gJ) \, {}^t(gJ)\inv$, hence $gJ$ is skew symmetric. This time we can choose $h$ in \eqref{eq: conjugation} such that $hgJ\,{}^th = J$, and we see that $\cI(\lambda^{\mathbb{R}})$ is equal to the $\D G$-orbit of $J J\inv \tau = \tau$, and $\D K = Z_{\D G}( \tau) = \Sp(n,\BC)$.
		
		Suppose now $\lambda^{\mathbb{R}} \in (\frac12 \BZ \backslash \BZ)^n$. Then equation (\ref{eq: strong inv Upq}) becomes $I_n = -gJ {}^t g\inv J\inv$. If $n$ is odd, this forces $gJ$ to be skew symmetric. This is impossible because there is no skew symmetric invertible matrix of odd size. If $n$ is even, then $gJ$ is symmetric, and $\cI(\lambda^{\mathbb{R}})$ is equal to the $\D G$-orbit of $J\inv \tau$ with $\D K = O(n,\BC)$.
	\end{proof}
	
	In preparation for the comparison between real and $p$-adic parameter spaces, we would like to have an alternative description of the ABV spaces. Let $\operatorname{Sym}_n(\BC)_{reg}$ (resp. $\operatorname{Skew}_n(\BC)_{reg}$) denotes the set of invertible $n$ by $n$ symmetric (resp. skew symmetric) matrices equipped with the $\D P(\lambda^{\mathbb{R}})$-action by congruence.
	
	\begin{lemma}\label{lemma: sym/skew as ABV spaces}
		There is an isomorphism of stacks
		\begin{equation*}
			[O(n,\BC) \backslash \GL_n(\BC) / \D P(\lambda^{\mathbb{R}})] \bijects [\operatorname{Sym}_n(\BC)_{reg} / \D P(\lambda^{\mathbb{R}})], \quad
			g \mapsto {}^t g g.
		\end{equation*}
		If $n$ is even, there is an isomorphism
		\begin{equation*}
			[\Sp(n,\BC) \backslash \GL_n(\BC) / \D P(\lambda^{\mathbb{R}})] \bijects [\operatorname{Skew}_n(\BC)_{reg} / \D P(\lambda^{\mathbb{R}})],\quad
			g \mapsto {}^t g J\inv g.
		\end{equation*}
	\end{lemma}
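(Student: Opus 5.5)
The plan is to use the general principle that, for a group $A$ acting transitively on a variety $Y$ with stabilizer $S$ of a point $y_0$, the map $a \mapsto a\cdot y_0$ induces an isomorphism of stacks $[S\backslash A] \cong Y$. If moreover a group $P$ acts on $A$ on the right, commuting with the left $S$-action, and compatibly on $Y$, this upgrades to $[S\backslash A/P] \cong [Y/P]$. We apply this with $A = \GL_n(\BC)$ and $P = \D P(\lambda^{\mathbb{R}})$.

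For the symmetric case, take $Y = \operatorname{Sym}_n(\BC)_{reg}$ with the left $\GL_n(\BC)$-action $g\cdot s = {}^t g^{-1} s g^{-1}$. The relevant map is $g \mapsto {}^t g g$, which is $g \mapsto (g^{-1})\cdot I_n$, and it is taken as the orbit map for the right action $s\cdot g = {}^t g s g$.

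1. Transitivity of this action on $\operatorname{Sym}_n(\BC)_{reg}$ is the classification of nondegenerate complex quadratic forms: every invertible symmetric $s$ can be written as ${}^t g g$.
2. The stabilizer of $I_n$ is $\{g : {}^t g g = I_n\} = O(n,\BC)$, acting by left multiplication. The map is invariant under it, since ${}^t(hg)(hg) = {}^t g g$ for $h \in O(n,\BC)$.
3. The map is equivariant for the right action of $P$: $g p \mapsto {}^t p\, ({}^t g g)\, p$, which is exactly the congruence action of $P$ on $\operatorname{Sym}_n(\BC)_{reg}$.

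Hence $\GL_n(\BC) \to \operatorname{Sym}_n(\BC)_{reg}$ is an $O(n,\BC)$-torsor. Since both groups are smooth and we are in characteristic zero, the orbit map is smooth, and so $[O(n,\BC)\backslash \GL_n(\BC)] \cong \operatorname{Sym}_n(\BC)_{reg}$ as $P$-spaces. Passing to the quotient by $P$ gives the first isomorphism.

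For the skew case with $n$ even, use the map $g \mapsto {}^t g J^{-1} g$. Its image lies in $\operatorname{Skew}_n(\BC)_{reg}$ because $J^{-1}$ is skew-symmetric when $n$ is even, which was noted in the proof of Lemma~\ref{lemma: Upq param space}. Transitivity follows because all nondegenerate alternating forms are congruent. The stabilizer of $J^{-1}$ is $\{g : {}^t g J^{-1} g = J^{-1}\}$. This is the $\Sp(n,\BC)$ of Lemma~\ref{lemma: Upq param space}, namely $Z_{\D G}(\tau)$, since
\[
\tau(g) = J\,{}^t g^{-1} J^{-1} = g \iff {}^t g\, J^{-1} g = J^{-1}.
\]
Equivariance for $P$ is again the congruence action, and the same torsor argument finishes the proof.

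The only point needing care, and it is minor, is that the statement is about stacks and not just orbit sets. This is handled by the smoothness of the orbit map, which holds since the stabilizers are smooth in characteristic zero. With that, $\GL_n(\BC) \to Y$ is a principal bundle, and the isomorphism of quotient stacks follows formally.
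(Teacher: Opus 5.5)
Your proof is correct and follows essentially the same route as the paper: transitivity of the congruence action via the classification of nondegenerate symmetric (resp.\ alternating) forms, identification of the stabilizer of $I_n$ (resp.\ $J^{-1}$) with $O(n,\BC)$ (resp.\ $\Sp(n,\BC)=Z_{\D G}(\tau)$), and compatibility of right multiplication by $\D P(\lambda^{\mathbb{R}})$ with the congruence action. Your torsor/smoothness remark and your explicit check of the skew case only fill in what the paper leaves implicit when it writes ``isomorphism of varieties'' and ``the second claim is similar''.
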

	
	\begin{proof}
		Since every symmetric invertible matrix is congruent to the identity matrix, the congruence action $\GL_n(\BC) \acts \operatorname{Sym}_n(\BC)_{reg}$ is transitive, and the stabilizer of the identity matrix is the orthogonal group $O(n,\BC)$. Hence we have an isomorphism of varieties $O(n,\BC) \backslash \GL_n(\BC) \bij \operatorname{Sym}_n(\BC)_{reg}$, $g \mapsto {}^t g g$ where the right multiplication action of $\D P(\lambda^{\mathbb{R}})$ on the left hand side becomes the congruence action on the right hand side. The first claim follows. The second claim is similar.
	\end{proof}
	
	\begin{lemma}\label{lemma: Sym vs L-param}
		Let $\phi^\BR \in \Phi(G)_{\lambda^{\mathbb{R}}}$, and by $\D{G}$-conjugation we assume 
		\begin{equation*}
			\phi^\BR(z) = z^{\lambda^{\mathbb{R}}} \bar z^\mu, \quad z \in \mathbb{C}^{\times}
		\end{equation*}
        where $\mu \in \hat{\mathfrak{t}}$ and $\phi^{\R}(j)$ normalizes $\D{T}$.
		\begin{enumerate}
			\item Suppose $\lambda^{\mathbb{R}}$ has good parity. The chain of bijections
			\begin{equation*}
				\Phi(G)_{\lambda^{\mathbb{R}}}
				\bijects O(n,\BC) \backslash \GL_n(\BC) / \D P(\lambda^{\mathbb{R}})
				\bijects \operatorname{Sym}_n(\BC)_{reg} / \D P(\lambda^{\mathbb{R}})
			\end{equation*}
			sends
			\begin{equation*}
				\phi^\BR \mapsto h \mapsto \dot s
			\end{equation*}
			where $s \in W^{\D G}$ is  any involution so that $s \lambda^{\mathbb{R}} = -\mu$, $\dot s \in \operatorname{Sym}_n(\BC)_{reg}$ is the corresponding permutation matrix, and $h$ is any matrix so that ${}^t h h = \dot s$. 
			\item Suppose $\lambda^{\mathbb{R}}$ has bad parity. The chain of bijections
			\begin{equation*}
				\Phi(G)_{\lambda^{\mathbb{R}}}
				\bijects \Sp(n,\BC) \backslash \GL_n(\BC) / \D P(\lambda^{\mathbb{R}})
				\bijects \operatorname{Skew}_n(\BC)_{reg} / \D P(\lambda^{\mathbb{R}})
			\end{equation*}
			sends
			\begin{equation*}
				\phi^\BR \mapsto h \mapsto \dot s
			\end{equation*}
			where $s \in W^{\D G}$ is any  fixed point free involution so that $s\lambda^{\mathbb{R}} = -\mu$, $\dot s$ is any signed skew-symmetric permutation matrix representing $s$, and $h$ is any matrix so that ${}^t h J\inv h = \dot s$.
		\end{enumerate}		
	\end{lemma}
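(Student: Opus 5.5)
We trace $\phi^\BR$ through the two bijections, the one of Proposition~\ref{prop: orbit vs L-param} and the one of Lemma~\ref{lemma: sym/skew as ABV spaces}, using the explicit representatives $y_i$ from the proof of Lemma~\ref{lemma: Upq param space}.

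\textbf{Step 1: the triple $(\lambda', y, \D P(\lambda'))$.} By assumption $\phi^\BR(z) = z^{\lambda^\BR}\bar z^{\mu}$, so $\lambda' = \lambda^\BR \in \hat{\mathfrak t}$. Write $\phi^\BR(j) = n_0\tau$. The relation $jzj^{-1}=\bar z$ gives $\Ad(\phi^\BR(j))\lambda^\BR = \mu$. Since $\tau$ acts on $\hat{\mathfrak t}$ by $-w_0$ (transpose inverse composed with conjugation by the antidiagonal $J$), and $n_0$ normalizes $\D T$ with image $w\in W^{\D G}$, we get $\mu = -ww_0\lambda^\BR$. So $s:=ww_0$ satisfies $s\lambda^\BR=-\mu$. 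Moreover $j^2=-1$ gives $\phi^\BR(j)^2 = (-1)^{\lambda^\BR}\in \D T$, which forces $s^2$ to fix $\lambda^\BR$ up to the stabilizer. After adjusting by $W_{\lambda^\BR}$, we may therefore take $s$ to be an involution. Ambiguity in choosing $s$ changes it by elements of the stabilizer of $\lambda^\BR$, which lie in $\D P(\lambda^\BR)$ and so do not change the congruence class. The point $y = e^{\pi i\lambda^\BR}n_0\tau$ lies in $\cI(\lambda^\BR)$, paired with the base point $\D P(\lambda^\BR)$ of $X_{\lambda^\BR}$.

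\textbf{Step 2: moving $y$ to the representative.} Write $y = g\tau$ with $g = e^{\pi i\lambda^\BR} n_0$. By \eqref{eq: conjugation}, conjugating $y$ by $h^{-1}$ gives $(h^{-1}gJ\,{}^th^{-1})J^{-1}\tau$.
\begin{itemize}
\item In the good parity case, choose $h$ with $h^{-1}gJ\,{}^th^{-1}=I_n$, i.e. $gJ = h\,{}^th$, so that $y$ is sent to $J^{-1}\tau$.
\item In the bad parity case, the target representative is $\tau$. Choose $h$ with $h^{-1}gJ\,{}^th^{-1} = J$.
\end{itemize}
The pair $(y,\D P(\lambda^\BR))$ then becomes $(J^{-1}\tau, h^{-1}\D P(\lambda^\BR))$, respectively $(\tau, h^{-1}\D P(\lambda^\BR))$. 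This corresponds to the coset of $h^{-1}$ in $\D K\backslash \GL_n/\D P$, up to the convention of left versus right cosets.

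\textbf{Step 3: applying Lemma~\ref{lemma: sym/skew as ABV spaces}.} In the good parity case the image in $\operatorname{Sym}_n(\BC)_{reg}/\D P$ is ${}^t(h^{-1})^{-1}\cdots$, which up to inverse-transpose conventions is $(gJ)^{\pm1}$. We have $gJ = e^{\pi i\lambda^\BR} n_0 J$. Since $J$ represents $w_0$, the matrix $n_0J$ is a monomial matrix representing $s$. Symmetry of $gJ$ was established in the proof of Lemma~\ref{lemma: Upq param space}. We then rescale by diagonal matrices $t\in\D T\subset \D P(\lambda^\BR)$ acting by congruence, $M\mapsto {}^ttMt$, to make every nonzero entry equal to $1$. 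A symmetric monomial matrix of an involution can be normalized this way: fixed points give diagonal entries that can be scaled to $1$, and each $2$-cycle gives an entry pair $(a,a)$ that can be scaled to $1$. This yields $\dot s$.

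In the bad parity case the same computation produces a skew-symmetric monomial matrix. Skewness forces zero diagonal, so $s$ is fixed point free, and the matrix is normalized to a signed skew permutation matrix $\dot s$.

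Finally, we check that the displayed "$h$ with ${}^thh=\dot s$" matches the $h$ above, up to the cosmetic inversion and transposition conventions. This follows because $\D K$-cosets and congruence classes correspond exactly as in Lemma~\ref{lemma: sym/skew as ABV spaces}.

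\textbf{Main obstacle.} The main difficulty is bookkeeping: keeping left/right and inverse/transpose conventions consistent across Proposition~\ref{prop: orbit vs L-param}, \eqref{eq: conjugation}, and Lemma~\ref{lemma: sym/skew as ABV spaces}, and verifying that $s=ww_0$ (rather than $w_0w$) is the correct Weyl element. I would settle this first by a direct check of $\Ad(n_0\tau)$ on $\hat{\mathfrak t}$.
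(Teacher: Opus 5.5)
Your tracing of $\phi^\BR$ through Proposition~\ref{prop: orbit vs L-param}, formula \eqref{eq: conjugation} and Lemma~\ref{lemma: sym/skew as ABV spaces} is essentially the first half of the paper's proof. The image is $(gJ)^{-1}$, a symmetric (resp.\ skew-symmetric) monomial matrix whose permutation is $s_0=ww_0$, and congruence by $\D T\subset\D P(\lambda^{\mathbb{R}})$ normalizes it to $\dot s_0$. The ``adjustment by the stabilizer'' in Step 1 is unnecessary: a symmetric or skew-symmetric monomial matrix automatically has an involution as its permutation, fixed point free in the skew case.

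The gap is that this proves the lemma only for the single involution $s_0$ coming from $\phi^\BR(j)$. The statement asserts it for \emph{any} involution (resp.\ fixed point free involution) $s$ with $s\lambda^{\mathbb{R}}=-\mu$. Your one-line justification is that replacing $s_0$ by $s_0u$ with $u\in W_{\lambda^{\mathbb{R}}}$ does not change the class because $u$ lifts to $\D P(\lambda^{\mathbb{R}})$. This fails: congruence by a permutation matrix gives ${}^t\dot u\,\dot s\,\dot u=\dot u\inv\dot s\,\dot u$, which \emph{conjugates} $s$ by $u$ and never right-multiplies it. Monomial congruences always preserve the conjugacy class of the underlying permutation. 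For example, if $\lambda_1=\lambda_2$ and $\mu=-\lambda^{\mathbb{R}}$, both $s=1$ and $s=(12)$ are admissible. Yet $I$ and the permutation matrix of $(12)$ are not related by any monomial congruence.

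What you are missing is the second half of the paper's proof. In the good parity case, first use the non-monomial $\GL_2(\BC)$-blocks of $\D M(\lambda^{\mathbb{R}})$ to turn each within-block $2$-cycle block $\left(\begin{smallmatrix}0&1\\1&0\end{smallmatrix}\right)$ into the identity by congruence (the argument of Lemma~\ref{lemma: Weyl group parametrization-surjective}). This reduces to $s,s'\in\mathfrak{I}_{\lambda^{\mathbb{R}}}$ without changing $s\lambda^{\mathbb{R}}$. Next, $s\lambda^{\mathbb{R}}=s'\lambda^{\mathbb{R}}$ forces the block-incidence counts $A_{u,v}(s)=A_{u,v}(s')$. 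A genuine combinatorial argument (Lemma~\ref{lemma:W-orbit-by-Auv}) then shows $s,s'$ are conjugate by some $w\in W^{\D M(\lambda^{\mathbb{R}})}$, which in general is not $s^{-1}s'$. In the bad parity case no reduction is needed, but you still need the corresponding conjugacy result for fixed point free involutions (Lemma~\ref{lemma:W-orbit-by-Auv-skew}). Without these ingredients the ``any'' in the statement, which the paper later uses to choose $s\in\mathfrak{I}_{\lambda^{\mathbb{R}}}$, is not established.
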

	
	\begin{proof}
		According to Proposition \ref{prop: orbit vs L-param}, the $\D G$-orbit of $\phi^\BR$ corresponds to the $\D G$-orbit of the pair $(y, \D P(\lambda^{\mathbb{R}})) \in \cX_{\lambda^{\mathbb{R}}}$, where $y = e^{\pi i \lambda^{\mathbb{R}}} \phi^\BR(j)$. Write $y = g J\inv \tau$ for some element $g \in \D G$. Then $g$ normalizes $\D T$ and reprsents some element $s \in W^{\D G}$. Again by Proposition \ref{prop: orbit vs L-param}, we have $\Ad(\phi^\BR(j)) \lambda^{\mathbb{R}} = \mu$. Since $e^{\pi i\lambda^{\mathbb{R}}}$ centralizes $\D \ft$, we have $\Ad(y) \lambda^{\mathbb{R}} = \mu$. Hence
		\begin{equation*}
			\mu = \Ad(g) \Ad(J\inv \tau)\lambda^{\mathbb{R}} = - \Ad(g) \lambda^{\mathbb{R}}
		\end{equation*}
		or equivalently $\Ad(g\inv) \mu = - \lambda^{\mathbb{R}}$. So $s^{-1} \mu = \Ad(g\inv) \mu = -\lambda^{\mathbb{R}}$. 

        Suppose $\lambda^{\mathbb{R}}$ has good parity. By Lemma \ref{lemma: Upq param space}, there is an element $h \in \GL_n(\BC)$ so that $\Ad(h) \cdot y = J\inv \tau$. Hence the orbit of $(y, \D P(\lambda^{\mathbb{R}}))$ is the same as the orbit of $(J\inv \tau, h \D P(\lambda^{\mathbb{R}}))$, and the image of $\phi^\BR$ in $O(n,\BC) \backslash \GL_n(\BC) / \D P(\lambda^{\mathbb{R}})$ is equal to the coset of $h$. Finally, the image of the coset of $h$ in $\operatorname{Sym}_n(\BC)_{reg} / \D P(\lambda^{\mathbb{R}})$ is ${}^thh$. Since $\Ad(h) \cdot y = (h g \,{}^th) J\inv \tau$, the condition $\Ad(h) \cdot y = J\inv \tau$ implies $h g \, {}^th = I_n$, and hence ${}^th h = g\inv$. Therefore the chain of bijections sends $\phi^\BR$ to the symmetric matrix $g\inv$. It follows that $s^2 = 1$ and $g^{-1}$, $\dot{s}$ are $\D{T}$-congruent, where $\dot{s}$ is the symmetric permutation matrix representing $s$. It remains to show that for involutions $s, s' \in W^{\D G}$ such that $s \lambda^{\R} = s' \lambda^{\R} = - \mu$, the elements $\dot{s}, \dot{s}'$ are $\D{P}(\lambda^{\R})$-congruent. By the argument in Lemma~\ref{lemma: Weyl group parametrization-surjective}, we can further assume $s, s' \in \mathfrak{I}_{\lambda^{\mathbb{R}}}$ (cf. \eqref{eqn: I-lambda-good}). Since $s^{-1}s' \in W^{M(\lambda^{\R})}$ (the Weyl group of the Levi $M(\lambda^\BR)$ of $\D P(\lambda^\BR)$), it follows from Lemma~\ref{lemma:W-orbit-by-Auv} that $s, s'$ are $W^{M(\lambda^{\R})}$-conjugate, and hence $\dot s$ and $\dot s'$ are conjugate by some premutation matrix $\dot w$ lifting an element $w \in W^{M(\lambda^\BR)}$. Note that for permutation matrices $\dot w$ we have $\dot w\inv = {}^t \dot w$. So $\dot s$ and $\dot s'$ are congruent under $\dot w$. In particular, they are $\D{P}(\lambda^{\R})$-congruent.		

        Suppose $\lambda^{\mathbb{R}}$ has bad parity. By Lemma \ref{lemma: Upq param space}, there is an element $h \in \GL_n(\BC)$ so that $\Ad(h) \cdot y = \tau$. Hence the orbit of $(y, \D P(\lambda^{\mathbb{R}}))$ is the same as the orbit of $(\tau, h \D P(\lambda^{\mathbb{R}}))$. Hence the image of $\phi^\BR$ in $\Sp(n,\BC) \backslash \GL_n(\BC) / \D P(\lambda^{\mathbb{R}})$ is equal to the coset of $h$, and the image of the coset of $h$ in $\operatorname{Skem}_n(\BC)_{reg} / \D P(\lambda^{\mathbb{R}})$ is ${}^thJ^{-1}h$. Since $\Ad(h) \cdot y = (h g \,{}^th) J\inv \tau$, then the condition $\Ad(h) \cdot y = \tau$ means $h g \, {}^th = J$, and hence ${}^th J^{-1} h = g\inv$. Therefore the chain of bijections sends $\phi^\BR$ to the skew symmetric matrix $g\inv$. It follows that $s^2 = 1$ and $g^{-1}$, $\dot{s}$ are $\D{T}$-congruent, where $\dot{s}$ is a signed skew-symmetric permutation matrix representing $s$. It remains to show that for fixed point free involutions $s, s' \in W^{\D G}$ such that $s \lambda^{\R} = s' \lambda^{\R} = - \mu$, we have that $\dot{s}, \dot{s}'$ are $\D{P}(\lambda^{\R})$-congruent. Since $s^{-1}s' \in W^{M(\lambda^{\R})}$, Lemma~\ref{lemma:W-orbit-by-Auv-skew} says $s, s'$ are $W^{M(\lambda^{\R})}$-conjugate. Hence, $\dot{s}, \dot{s}'$ are $\D{P}(\lambda^{\R})$-congruent.

	\end{proof}
\end{clause}

\subsection{Langlands correspondence for $p$-adic symplectic/orthogonal groups}
\label{subsec: LLC p-adic}

\begin{clause}[Langlands correspondence for $p$-adic groups, \cite{Vogan:1993,CFMMX:2022}]
	Let $W_{\BQ_p} = I_{\BQ_p} \rtimes \langle \Fr \rangle$ be the Weil group of $\BQ_p$, where $I_{\BQ_p}$ denotes the Inertia subgroup and $\Fr$ is a fixed lift of the arithmetic Frobenius $x \mapsto x^p$. Let $H$ be a quasisplit connected reductive algebraic group over $\BQ_p$, let $\D H$ be its complex Langlands dual group, and let ${}^L H$ be the L-group determined by $H$. Let $\lambda^{\mathbb{Q}_p}: W_F \to {}^L G$ be an infinitesimal character in the sense of \cite[\textsection 4.1]{CFMMX:2022}, and assume it is \textbf{unramified}, i.e. trivial on the inertia $I_{\BQ_p}$. Write $\Phi(H)_{\lambda^{\mathbb{Q}_p}}$ for the set of $\D{H}$-conjugacy classes of  Langlands parameters $\phi^{\mathbb{Q}_p}: W_{\BQ_p} \times \SL_2(\BC) \to {}^L H$ with infinitesimal character $\lambda^{\mathbb{Q}_p}$ (up to $\D{H}$-conjugacy). Here the infinitesimal character $\lambda^{\mathbb{Q}_p}$ of $\phi^{\mathbb{Q}_p}$ is the map
	\begin{equation*}
		\lambda^{\mathbb{Q}_p}: W_{\BQ_p} \aro {}^L H, \quad w \mapsto \phi^{\mathbb{Q}_p}\Big( w,
		{\small \begin{pmatrix}
				|w|^{1/2}\\&|w|^{-1/2}
			\end{pmatrix}} \Big).
	\end{equation*}

	The conjectural local Langlands correspondence for pure inner forms of $H$ is the existence of a natural  bijection
	\begin{equation*}
		\Pi_{{\rm pure}}(\lambda^{\mathbb{Q}_p},H) \bijects \Xi(\lambda^{\mathbb{Q}_p}, H) := \{ (\phi^{\mathbb{Q}_p}, \epsilon) \mid \phi^{\mathbb{Q}_p} \in \Phi(H)_{\lambda^{\mathbb{Q}_p}}, \epsilon \in \D A_{\phi^{\mathbb{Q}_p}} \}
	\end{equation*}
	where $\Pi_{{\rm pure}}(\lambda^{\mathbb{Q}_p},H)$ denotes the set of isomorphism classes of irreducible smooth representations of pure inner forms of $H$, $A_{\phi^{\mathbb{Q}_p}}$ denotes the component group of $Z_{\D H}(\phi^{\mathbb{Q}_p})$, and $\D A_{\phi^{\mathbb{Q}_p}}$ is the set of its irreducible representations.
	
	Similar to the real case, the set $\Xi(\lambda^{\mathbb{Q}_p},H)$ can also be parametrized by certain set of simple perverse sheaves on the \textit{Vogan variety} of $\lambda^{\mathbb{Q}_p}$, which  is defined to be 
	\begin{equation*}
		V_{\lambda^{\mathbb{Q}_p}} = \{ x \in \D \fh \mid \Ad( \lambda^{\mathbb{Q}_p}(\Fr)) x = p x \},
	\end{equation*}
    where $\D \fh$ is the Lie algebra of $\D{H}$, and it is equipped with the conjugate action of $H_{\lambda^{\mathbb{Q}_p}} = Z_{\D H}(\lambda^{\mathbb{Q}_p})$ with finitely many orbits. We may then consider the category of equivariant perverse sheaves $\operatorname{Per}(H_{\lambda^{\mathbb{Q}_p}} \backslash V_{\lambda^{\mathbb{Q}_p}})$ and the equivariant derived category $D^b(H_{\lambda^{\mathbb{Q}_p}} \backslash V_{\lambda^{\mathbb{Q}_p}})$. We have
	\begin{equation*}
		\operatorname{Irr} \operatorname{Per} (H_{\lambda^{\mathbb{Q}_p}} \backslash V_{\lambda^{\mathbb{Q}_p}}) \cong 
		\left\{(C, \cV) \mid 
		\begin{array}{c}
			C \text{ an $H_{\lambda^{\mathbb{Q}_p}}$-orbit on } V_{\lambda^{\mathbb{Q}_p}},\\
			\cV \text{ a simple $H_{\lambda^{\mathbb{Q}_p}}$-equivariant local system on } C.
		\end{array}
		\right\}.	
	\end{equation*}
	
	\begin{proposition}[{\cite[Prop 4.2.2]{CFMMX:2022}}]\label{prop: param vs Vogan}
		There is a bijection
		\begin{equation}\label{eq: param vs Vogan}
			\Phi(H)_{\lambda^{\mathbb{Q}_p}} \bijects H_{\lambda^{\mathbb{Q}_p}} \backslash V_{\lambda^{\mathbb{Q}_p}},\quad
			\phi^{\mathbb{Q}_p} \mapsto x := d \big( \phi^{\mathbb{Q}_p}|_{\SL_2(\BC)} \big) 
			{\small \begin{pmatrix}
					0&1\\
					0&0
				\end{pmatrix}}.
		\end{equation}
		Moreover, $Z_{\D H}(\phi^{\mathbb{Q}_p})$ is canonically isomorphic to the reductive quotient of $Z_{H_{\lambda^{\mathbb{Q}_p}}}(x)$. Hence the map (\ref{eq: param vs Vogan}) induces a bijection
		\begin{equation*}
			\Xi(\lambda^{\mathbb{Q}_p},H) \bijects \operatorname{Irr} \operatorname{Per} (H_{\lambda^{\mathbb{Q}_p}} \backslash V_{\lambda^{\mathbb{Q}_p}}).
		\end{equation*}
	\end{proposition}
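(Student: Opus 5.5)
This is Proposition~\ref{prop: param vs Vogan}, the $p$-adic analogue of Proposition~\ref{prop: orbit vs L-param}. I would prove it by reducing, via the Jacobson--Morozov and Kostant theorems, to a statement about $\SL_2$-triples inside the graded Lie algebra determined by $\lambda^{\mathbb{Q}_p}(\Fr)$.

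First I would check that the map lands in $V_{\lambda^{\mathbb{Q}_p}}$. Since $\lambda^{\mathbb{Q}_p}$ is unramified, a parameter $\phi^{\mathbb{Q}_p}$ with infinitesimal character $\lambda^{\mathbb{Q}_p}$ is determined by two pieces of data: the semisimple element $f = \phi^{\mathbb{Q}_p}(\Fr) \in {}^L H$, and the restriction $\varphi = \phi^{\mathbb{Q}_p}|_{\SL_2(\BC)}$, which commutes with $f$. The constraint is
\[
\lambda^{\mathbb{Q}_p}(\Fr) = f \cdot \varphi\bigl(\diag(p^{-1/2},p^{1/2})\bigr),
\]
up to the normalization $|\Fr| = p^{\mp 1}$. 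Set $x = d\varphi\begin{pmatrix}0&1\\0&0\end{pmatrix}$. Then $f$ commutes with $x$, and $\Ad\varphi(\diag(t,t^{-1}))\, x = t^2 x$. Hence $\Ad(\lambda^{\mathbb{Q}_p}(\Fr))\, x = p^{\pm1} x$, and after fixing the sign convention this is $p\,x$, so $x \in V_{\lambda^{\mathbb{Q}_p}}$. Replacing $\phi^{\mathbb{Q}_p}$ by a $\D H$-conjugate with the same infinitesimal character means conjugating by an element of $H_{\lambda^{\mathbb{Q}_p}}$, so the map is well defined on $\D H$-classes modulo $H_{\lambda^{\mathbb{Q}_p}}$.

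Next I would construct the inverse. Write $s = \lambda^{\mathbb{Q}_p}(\Fr)$, take its Jordan-type decomposition $s = s_c s_h$ into a compact (elliptic) part and a hyperbolic part, and let $\D\fh = \bigoplus_\nu \D\fh_\nu$ be the resulting grading by eigenvalues of $\Ad(s_h)$; then $V_{\lambda^{\mathbb{Q}_p}}$ lies in the $p$-eigenspace. Given $x \in V_{\lambda^{\mathbb{Q}_p}}$, the graded Jacobson--Morozov theorem inside $Z_{\D\fh}(s_c)$ supplies an $\SL_2$-triple $(x,h,y)$ with $h$ in the degree-zero part and $\exp(\tfrac{\log p}{2} h)$ commuting with $s$. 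One then sets $f = s\cdot\varphi(\diag(p^{1/2},p^{-1/2}))$ and must check that $f$ commutes with $\varphi$. Uniqueness of such triples up to the unipotent radical of $Z_{H_{\lambda^{\mathbb{Q}_p}}}(x)$ (Kostant, graded version) shows the class of $\phi^{\mathbb{Q}_p}$ is independent of choices. The two constructions are mutually inverse on orbits.

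Finally, for the centralizer statement, I would write $Z_{H_{\lambda^{\mathbb{Q}_p}}}(x) = Z_{H_{\lambda^{\mathbb{Q}_p}}}(x,h,y) \ltimes U$, with $U$ unipotent, by the graded Kostant--Levi decomposition. The factor $Z_{H_{\lambda^{\mathbb{Q}_p}}}(x,h,y)$ is exactly $Z_{\D H}(s,\varphi) = Z_{\D H}(\phi^{\mathbb{Q}_p})$. Since $U$ is connected, taking component groups gives $A_{\phi^{\mathbb{Q}_p}}$, which is the equivariant fundamental group of the orbit. The stated bijection with irreducible perverse sheaves then follows from the parametrization of $\operatorname{Irr}\operatorname{Per}$ by pairs $(C,\cV)$.

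The main obstacle is the inverse step: one must choose $h$ in the degree-zero part so that $\varphi$ commutes with $s$, which amounts to the graded Jacobson--Morozov theorem. I would prove it in the standard way, by averaging or using the reductive group $Z(s)$ acting on the graded piece, following \cite{CFMMX:2022}.
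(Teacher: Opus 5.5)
Your argument is correct. It is the standard graded Jacobson--Morozov/Kostant proof behind \cite[Prop 4.2.2]{CFMMX:2022}, which the paper only cites and does not reprove. The reductive-quotient statement comes out of the splitting $Z_{H_{\lambda^{\mathbb{Q}_p}}}(x)=Z_{\D H}(\phi^{\mathbb{Q}_p})\ltimes U$, exactly as you describe.

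One sign needs fixing. In the paper's convention, $V_{\lambda^{\mathbb{Q}_p}}$ is the $p$-eigenspace and $\lambda^{\mathbb{Q}_p}(\Fr)=\phi^{\mathbb{Q}_p}(\Fr)\,\varphi(\diag(p^{1/2},p^{-1/2}))$. With that convention you must set
\[
f=s\,\varphi(\diag(p^{-1/2},p^{1/2}))=s_c\exp\bigl(\log s_h-\tfrac{\log p}{2}h\bigr),
\]
not $s\,\varphi(\diag(p^{1/2},p^{-1/2}))$.

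The commutation check you left open is then short. The exponent $\log s_h-\tfrac{\log p}{2}h$ kills $x$, $h$ and $y$, and $s_c$ centralizes $\varphi(\SL_2)$. So it is $f$, not $s$ as your last paragraph says, that commutes with $\varphi$, and $f$ is automatically semisimple, as a Langlands parameter requires.
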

	
	Hence we obtain a conjectural bijection
	\begin{equation*}
		\Pi_{{\rm pure}}(\lambda^{\mathbb{Q}_p}, H) \bijects \Xi(\lambda^{\mathbb{Q}_p},H) \bijects \operatorname{Irr} \operatorname{Per} (H_{\lambda^{\mathbb{Q}_p}} \backslash V_{\lambda^{\mathbb{Q}_p}}).
	\end{equation*}	
For a parameter $\xi = (C,\cV) \in \Xi(\lambda^{\mathbb{Q}_p}, H)$, we write $\pi(\xi) \in \Pi_{{\rm pure}}(\lambda^{\mathbb{Q}_p}, H)$ for the corresponding irreducible representation of some pure inner form $H_{\xi}$ of $H$, and write $M(\xi)$ for the standard representation having $\pi(\xi)$ as its Langlands quotient. Further, we write $\mathcal{P}(\xi) = j_{C!*} \cV[\dim C]$, $\mu(\xi) = (H^0 j_{C!} \cV)[\dim C]$ for the simple and standard objects in the equivariant derived category $D^b( H_{\lambda^{\mathbb{Q}_p}} \backslash V_{\lambda^{\mathbb{Q}_p}})$. Let $\Rep_{{\rm pure}}(\lambda^{\mathbb{Q}_p}, H)$ denote the category of admissible representations of pure inner forms of $H$ with infinitesimal character $\lambda^{\mathbb{Q}_p}$. The simple objects in $\Rep_{{\rm pure}}(\lambda^{\mathbb{Q}_p}, H)$ are precisely $\Pi_{{\rm pure}}(\lambda^{\mathbb{Q}_p}, H)$. Define a perfect pairing 
	\begin{equation}\label{eq: p-adic pairing}
		\langle -,- \rangle : K \Rep_{{\rm pure}}(\lambda^{\mathbb{Q}_p}, H) \times K D^b(H_{\lambda^{\mathbb{Q}_p}} \backslash V_{\lambda^{\mathbb{Q}_p}}) \aro \BC
	\end{equation}
	by 
    \begin{equation*}
			\langle \pi(\xi), \mathcal{P}(\xi') \rangle = (-1)^{\dim C} e(H_{\xi}) \delta_{\xi, \xi'},
	\end{equation*}
	where $C$ is the orbit appearing in $\xi = (C, \cV)$, $e(H_{\xi})$ is the Kottwitz sign of $H_{\xi}$ (\cite{Kottwitz:1983}), and $\delta_{\xi, \xi'}$ is the Kronecker delta.

	\begin{conjecture}[Kazhdan-Lusztig conjecture]\label{conj: LLC/Qp}	    
    Under the above pairing we have
	\begin{equation*}
		\langle M(\xi), \mu(\xi') \rangle = (-1)^{\dim C} e(H_{\xi}) \delta_{\xi, \xi'}.
	\end{equation*}
	\end{conjecture}
	
	The conjecture was first formulated in Zelevinsky \cite{Zelevinsky:1981} for $\GL_N(\BQ_p)$, the general case was also known as Lusztig's conjecture on character formula for standard modules \cite{Lusztig:1983}. Here our formulation follows \cite{Vogan:1993}. This conjecture is proven for $\GL_N(\BQ_p)$ through a combination of \cite[Theorem 8.6.23]{ChrissGinzburg:1997} and \cite{Ariki:1996}. For the unipotent representations, the conjecture was first proved by Lusztig \cite{Lusztig:1995B} \cite{Lusztig1:2002} in the case of adjoint groups and later extended to all cases by Solleveld \cite{Solleveld:2025}.  The proof goes by studying the representation theory of graded affine Hecke algebras via geometric methods.
    For more details, we refer to \cite[Theorem 5.4]{Solleveld:2025}. 

For any A-parameter $\psi^{\mathbb{Q}_p}$ of $H$, one can associate an L-parameter by 
    \begin{align}
    \label{eq: associated L-parameter p-adic}
    \phi_{\psi^{\mathbb{Q}_p}} (w, x) := \psi^{\mathbb{Q}_p}\Big( w, x, 
		{\small \begin{pmatrix}
				|w|^{1/2}\\&|w|^{-1/2}
			\end{pmatrix}} \Big), \quad w \in W_{\mathbb{Q}_p}, \, x \in SL_2(\BC)
    \end{align}
The infinitesimal character of $\psi^{\mathbb{Q}_p}$ is defined to be that of $\phi_{\psi^{\mathbb{Q}_p}}$. Denote by $\Psi(G)_{\lambda^{\mathbb{Q}_p}}$ the set of $\D{H}$-conjugacy classes of A-parameters with infinitesimal character $\lambda^{\mathbb{Q}_p}$. Then the above association gives an inclusion $\Psi(H)_{\lambda^{\mathbb{Q}_p}} \hookrightarrow \Phi(H)_{\lambda^{\mathbb{Q}_p}}$.

Lastly, for any $\phi^{\mathbb{Q}_p} \in \Phi(H)_{\lambda^{\mathbb{Q}_p}}$ (resp. $\psi^{\mathbb{Q}_p} \in \Psi(H)_{\lambda^{\mathbb{Q}_p}}$) the ABV-packet $\Pi^{\rm ABV, pure}_{\phi^{\mathbb{Q}_p}}(H)$ (resp.  $\Pi^{\rm ABV, pure}_{\psi^{\mathbb{Q}_p}}(H)$) can be defined in the same way as for the real groups.

\end{clause}

\begin{clause}[The case of $\GL_N(\BQ_p)$]\label{cls: GLNQp param space}
	For $H = \GL_N(\BQ_p)$, we use a superscript $GL$ on $V_{\lambda^{\mathbb{Q}_p}}^{GL}$ and $H_{\lambda^{\mathbb{Q}_p}}^{GL}$ to avoid confusions. 
	
	The Vogan variety $V_{\lambda^{\mathbb{Q}_p}}^{GL}$ admits an alternative description. Let $W := \BC^N$ equipped with the action of $\D \fh = \fgl_N(\BC)$. Write $W_k$ for the eigenspace of $\lambda^{\mathbb{Q}_p}(\Fr) \in \fgl_N(\BC)$ corresponding to the eigenvalue $p^k$. Then there is an isomorphism
	\begin{equation*}
		V_{\lambda^{\mathbb{Q}_p}}^{GL} \cong E_W := \bigoplus_{k \in \BC} \Hom(W_k, W_{k+1}) \subset \operatorname{End}(W) = \fgl_N(\BC).
	\end{equation*}
	Under this isomorphism, $H_{\lambda^{\mathbb{Q}_p}}^{GL}$ is identified with $G_W := \prod_{k \in \BC} \GL(W_k)$ (there are only finitely many $k$'s appearing).
	
	According to Zelevinskii \cite{Zelevinsky:1981}, the set of $H_{\lambda^{\mathbb{Q}_p}}^{GL}$-orbits in $V_{\lambda^{\mathbb{Q}_p}}^{GL}$ can be parameterized combinatorially as follows. A \textit{segment} is a subset in $\BC$ of the form
	\begin{equation*}
		[a,b] = \{a, a+1, a+2, \ldots, b-1, b\} \quad \text{ for some } a,b \in \BC \text{ with } b-a \in \BZ_{\ge 0}.
	\end{equation*}
	A \textit{multisegment} is a multiset of segments, denoted by $\bm = \{[a_j, b_j]\}_j$ or $\bm = \sum_j [a_j, b_j]$. Then there is a bijection
	\begin{align*}
		H_{\lambda^{\mathbb{Q}_p}}^{GL} \backslash V_{\lambda^{\mathbb{Q}_p}}^{GL} &\bijects
		\left\{ \text{multisegments } \bm \mid
		\begin{array}{c}
			\forany k \in \BC, \text{the number of times $k$ occurs}\\
			\text{in $\bm$ equals $\dim W_k$}
		\end{array}\right\},\\
		O_\bm &\longmapsto \bm.
	\end{align*}
	To be more precise, recall that each element $x \in V_{\lambda^{\mathbb{Q}_p}}^{GL}$ can be viewed as an endomorphism on $W = \bigoplus_k W_k$ which is nilpotent and homogeneous of degree one. Hence there exists a homogeneous basis of $W$ under which the matrix of $x$ is in Jordan normal form. This means that each basis vector $v_k \in W_k$ of degree $k$ is a part of a unique chain of the form $0 \xmapsto{x} v_a \xmapsto{x} v_{a+1} \xmapsto{x} \cdots \xmapsto{x} v_b \xmapsto{x} 0$. Such a chain is called a \textit{Jordan cell}. Then for any multisegment $\bm = \sum_j [a_j, b_j]$ of the above form, the corresponding orbit $O_\bm \subset V_{\lambda^{\mathbb{Q}_p}}^{GL}$ consists of those $x$ whose Jordan cells are in bijection with the segments $[a_j,b_j]$, and the cell corresponding to $[a_j,b_j]$ is of the form $0 \xmapsto{x} v_{a_j} \xmapsto{x} \cdots \xmapsto{x} v_{b_j} \xmapsto{x} 0$.
	
	\begin{lemma}\label{lemma: multiseg vs L-param}
		Let $\phi^{\mathbb{Q}_p}$ be a parameter of $H = \GL_N(\BQ_p)$ of the form
		\begin{equation*}
			\phi^{\mathbb{Q}_p} = \bigoplus_{i=1}^r | \cdot |_{W_{\BQ_p}}^{s_i} \boxtimes S_{t_i}
		\end{equation*}
		where $s_i \in \BC$, $t_i \in \BZ_{\ge 1}$, and $S_{t_i}$ denotes the $t_i$-dimensional irreducible representation of $\SL_2(\BC)$. Then under the bijection (\ref{eq: param vs Vogan}), the $\D H$-orbit of $\phi^{\mathbb{Q}_p}$ corresponds to the multisegment
		\begin{equation*}
			\bm = \big\{ [s_i + \tfrac12(1-t_i), s_i + \tfrac12(t_i-1)] \big\}_{i=1}^r.
		\end{equation*}
	\end{lemma}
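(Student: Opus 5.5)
The plan is to reduce to a single summand and then compute directly with the bijection of Proposition~\ref{prop: param vs Vogan}. Both the map $\phi^{\mathbb{Q}_p}\mapsto x$ and the passage from $x$ to its multisegment are compatible with direct sums. On the parameter side, if $\phi^{\mathbb{Q}_p}=\phi_1\oplus\phi_2$ acts on $W=W^{(1)}\oplus W^{(2)}$, then $\lambda^{\mathbb{Q}_p}(\Fr)$ is block diagonal. Hence each eigenspace splits as $W_k=W^{(1)}_k\oplus W^{(2)}_k$. The nilpotent $x=d(\phi^{\mathbb{Q}_p}|_{\SL_2})\begin{pmatrix}0&1\\0&0\end{pmatrix}$ is then block diagonal as well. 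Its Jordan cells are therefore the union of the Jordan cells of the two blocks, so the multisegment of $O_x$ is the sum of the multisegments of the summands. It thus suffices to treat $\phi^{\mathbb{Q}_p}=|\cdot|^{s}\boxtimes S_t$ on $W=\BC^t$.

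For a single summand, I would fix a standard weight basis $v_0,\dots,v_{t-1}$ of $S_t$. In this basis $d\big(\begin{smallmatrix}1&0\\0&-1\end{smallmatrix}\big)$ acts on $v_j$ by $t-1-2j$, and the raising operator $e=\begin{pmatrix}0&1\\0&0\end{pmatrix}$ sends $v_j\mapsto v_{j-1}$ up to nonzero scalars, with $v_0\mapsto 0$. I would next compute the Frobenius on $v_j$. Since $|\Fr|=p^{\pm1}$ depending on normalization, one has
\[
\lambda^{\mathbb{Q}_p}(\Fr)v_j=|\Fr|^{s}\,|\Fr|^{(t-1-2j)/2}v_j .
\]
With the convention making $V_{\lambda^{\mathbb{Q}_p}}$ the $p$-eigenspace of $\Ad(\lambda^{\mathbb{Q}_p}(\Fr))$, the eigenvalue on $v_j$ is $p^{k_j}$, where $k_j=s+\tfrac12(t-1)-j$ (or its negative, under the other normalization of $|\cdot|$). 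The point to check is that $e$ raises $k$ by exactly one. This is consistent with $\Ad(\lambda(\Fr))x=px$, which is precisely the condition that $x$ lie in the Vogan variety, so the sign convention is forced by that condition.

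Once the convention is fixed, $x$ has the single Jordan cell
\[
0\mapsto v_{t-1}\mapsto\cdots\mapsto v_0\mapsto 0,
\]
with degrees running from $s-\tfrac12(t-1)$ up to $s+\tfrac12(t-1)$. This is the segment $[s+\tfrac12(1-t),\,s+\tfrac12(t-1)]$, which is the claim for a single summand.

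The only real obstacle is the normalization bookkeeping. One has to match the normalization of $|\Fr|$ (arithmetic Frobenius, $|\Fr|=p$ versus $p^{-1}$) against the definition $\Ad(\lambda(\Fr))x=px$, so that the segment comes out as $[s-\tfrac{t-1}{2},\,s+\tfrac{t-1}{2}]$ and not its negative. I would settle this by insisting that $x\in V_{\lambda^{\mathbb{Q}_p}}$; that membership determines in which direction $x$ shifts degrees. The resulting multiset of segments does not depend on the choice of basis, since the orbit is determined by its Jordan type.
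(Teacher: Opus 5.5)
Your proposal is correct and is essentially the paper's proof. The paper writes $\lambda^{\mathbb{Q}_p}(\Fr)$ and $x$ as block-diagonal matrices, one block per summand $|\cdot|^{s_i}\boxtimes S_{t_i}$, and reads off that the standard basis vectors of the $i$-th block form a single Jordan cell with degrees running over $[s_i+\tfrac12(1-t_i),\, s_i+\tfrac12(t_i-1)]$. The paper silently takes $|\Fr|=p$, and your remark that this normalization is forced by the condition $\Ad(\lambda^{\mathbb{Q}_p}(\Fr))x = px$ correctly justifies that choice.
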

	
	\begin{proof}
		The element $\lambda^{\mathbb{Q}_p}(\Fr)$ is a block diagonal matrix with $r$ blocks whose $i$-th block is 
		\begin{equation*}
			{\small 
			\begin{pmatrix}
				p^{s_i + \frac12 (t_i-1)}\\
				& p^{s_i + \frac12 (t_i-3)}\\
				&& \ddots\\
				&&& p^{s_i + \frac12 (1-t_i)}
			\end{pmatrix}},
		\end{equation*}
		while $x = d \big( \phi^{\mathbb{Q}_p}|_{\SL_2(\BC)} \big) 
		{\small \begin{pmatrix}
				0&1\\
				0&0
		\end{pmatrix}}$ is a block diagonal matrix with $r$ blocks whose $i$-th block is 
		\begin{equation*}
			{\small
			\begin{pmatrix}
				0 & 1\\
				& 0 & 2\\
				& & \ddots & \ddots\\
				& & & 0 & t_i-1\\
				&&&&0
			\end{pmatrix}}.
		\end{equation*}
		Hence the standard basis vectors $e_j$ corresponding to the $i$-th block form a Jordan cell of $x$ and are eigenvectors of $\lambda^{\mathbb{Q}_p}(\Fr)$ corresponding to eigenvalues $p^{s_i + \frac12 (t_i-1)}, p^{s_i + \frac12 (t_i-3)}, \ldots, p^{s_i + \frac12 (1-t_i)}$. So the $i$-th block corresponds to the segment $[s_i + \frac12 (1-t_i), s_i + \frac12 (t_i-1)]$. The lemma follows.		
	\end{proof}
\end{clause}

\begin{clause}[The case of symplectic/orthogonal groups]\label{cls: SpSO param space}
	We now let $H$ be the $p$-adic symplectic group or split special orthogonal group whose dual group $\D H$ is $\SO(N,\BC)$ or $\Sp(N,\BC)$. 
    
    When $H$ is special even orthogonal, we also denote the corresponding full even orthogonal group by $H^{+}$ and its dual group by $\widehat{H}^{+} = O(N, \mathbb{C})$. Define $\Phi(H^{+})_{\lambda^{\mathbb{Q}_p}}$ to be the quotient of $\Phi(H)_{\lambda^{\mathbb{Q}_p}}$ by $\D H^{+}$-conjugation. Let $H'$ be the non-split inner form of $H$ defined by some quadratic form of discriminant one, and $H'^{+}$ be the corresponding full even orthogonal group. Then
    \[
    \Pi_{\rm pure}(\lambda^{\mathbb{Q}_p}, H) = \Pi(H)_{\lambda^{\mathbb{Q}_p}} \bigsqcup \Pi(H')_{\lambda^{\mathbb{Q}_p}} 
    \]
    and we define 
    \[
    \Pi_{\rm pure}(\lambda^{\mathbb{Q}_p}, H^{+}) := \Pi(H^{+})_{\lambda^{\mathbb{Q}_p}} \bigsqcup \Pi(H'^{+})_{\lambda^{\mathbb{Q}_p}} \]
    the set of isomorphism classes of irreducible admissible representations of $H^{+}$ and $H'^{+}$, obtained from $\Pi_{\rm pure}(\lambda^{\mathbb{Q}_p}, H)$ by induction. The local Langlands correspondence is expected to be extended to the disconnected group $H^{+}$ as follows.
    \begin{conjecture}
    \label{conj: LLC full even orthogonal}
    \begin{equation*}
		\Pi_{{\rm pure}}(\lambda^{\mathbb{Q}_p},H^{+}) \bijects \Xi(\lambda^{\mathbb{Q}_p}, H^{+}) := \{ (\phi^{\mathbb{Q}_p}, \epsilon) \mid \phi^{\mathbb{Q}_p} \in \Phi(H^{+})_{\lambda^{\mathbb{Q}_p}}, \, \epsilon \in \D A^{+}_{\phi^{\mathbb{Q}_p}} \},
	\end{equation*}	
    where $A^{+}_{\phi^{\mathbb{Q}_p}}$ and $\D A^{+}_{\phi^{\mathbb{Q}_p}}$ are defined as in the case of connected reductive groups. 
    \end{conjecture}
    We have analogs of Proposition \ref{prop: param vs Vogan}, the pairing (\ref{eq: p-adic pairing}), and the Kazhdan-Lusztig conjecture \ref{conj: LLC/Qp} for $H^+$, namely we have bijections
    \begin{gather}\label{eq: disconnected param vs Vogan}
			\Phi(H^{+})_{\lambda^{\mathbb{Q}_p}} \bijects H^{+}_{\lambda^{\mathbb{Q}_p}} \backslash V_{\lambda^{\mathbb{Q}_p}},\quad
			\phi^{\mathbb{Q}_p} \mapsto x := d \big( \phi^{\mathbb{Q}_p}|_{\SL_2(\BC)} \big) 
			{\small \begin{pmatrix}
					0&1\\
					0&0
            \end{pmatrix}}, 
            \\
            \Pi_{{\rm pure}}(\lambda^{\mathbb{Q}_p}, H^{+}) \bijects \Xi(\lambda^{\mathbb{Q}_p},H^{+}) \bijects \operatorname{Irr} \operatorname{Per} (H^{+}_{\lambda^{\mathbb{Q}_p}} \backslash V_{\lambda^{\mathbb{Q}_p}}), \nonumber
    \end{gather}
    a perfect pairing
    \begin{equation}\label{eq: disconnected p-adic pairing}
		\langle -,- \rangle : K \Rep_{{\rm pure}}(\lambda^{\mathbb{Q}_p}, H^{+}) \times K D^b(H^{+}_{\lambda^{\mathbb{Q}_p}} \backslash V_{\lambda^{\mathbb{Q}_p}}) \aro \BC
	\end{equation}
	defined by 
    \begin{equation*}
			\langle \pi(\xi), \mathcal{P}(\xi') \rangle = (-1)^{\dim C} e(H^{+}_{\xi}) \delta_{\xi, \xi'},
	\end{equation*}
    and
    \begin{conjecture}[Twisted Kazhdan-Lusztig conjecture]\label{conj: disconnected LLC/Qp}
		Under the above pairing we have
	    \begin{equation*}
		\langle M(\xi), \mu(\xi') \rangle = (-1)^{\dim C} e(H^{+}_{\xi}) \delta_{\xi, \xi'}.
	    \end{equation*}        
	\end{conjecture}

	Returing to $H$, we now describe its Vogan variety in terms of that of $\GL_N$. Set $W$ to be the vector space $\BC^N$. Let $\langle -,- \rangle$ be a non-degenerate bilinear form on $W$ that is symmetric if $\D H = \SO(N,\BC)$, or symplectic if $\D H = \Sp(N,\BC)$. Then we may identify $\D H$ with the subgroup of $\SL_N(\BC)$ preserving the form $\langle -,- \rangle$. That is 
	\begin{equation*}
		\D H = \{g \in \GL_N(\BC) \mid g^* = g\inv, \det g = 1\},\quad 
		\D \fh = \{ x \in \fgl_N \mid x^* = -x\} \subset \fgl_N
	\end{equation*}
	where $g^*$ denotes the adjoint operator of $g: W \to W$ under the form $\langle -,- \rangle$, and similarly for $x^*$. Using this, we identify the set of infinitesimal characters (resp. Langlands parameters) of $H$ with a subset of those of $\GL_N$, and identify the Vogan variety of $H$ with 
	\begin{equation*}
		V_{\lambda^{\mathbb{Q}_p}} = V_{\lambda^{\mathbb{Q}_p}}^{GL} \cap \D \fh = \{ x \in E_W \mid x^* = - x\}.
	\end{equation*}
	and 
	\begin{equation*}
		H_{\lambda^{\mathbb{Q}_p}} = H_{\lambda^{\mathbb{Q}_p}}^{GL} \cap \D H = \{ g \in G_W \mid g^* = g\inv, \det g = 1\}.
	\end{equation*}
    We extend the definition of $H_{\lambda^{\mathbb{Q}_p}}^{+}$ to the case when $H$ is symplectic by 
    \begin{equation*}
    H_{\lambda^{\mathbb{Q}_p}}^{+} = 
    H_{\lambda^{\mathbb{Q}_p}} \times \langle -I_N \rangle.
    \end{equation*}    
    Then 
	\begin{equation*}
		H_{\lambda^{\mathbb{Q}_p}}^{+} = \{ g \in G_W \mid g^* = g\inv\}.
	\end{equation*}	
 whenever it is defined.
    
	\begin{lemma}\label{lemma: SpSO param space}
		Let $\lambda^{\mathbb{Q}_p}$ be an infinitesimal character of $H$ that is \textit{integral} in the sense that the eigenvalues of $\lambda^{\mathbb{Q}_p}(\Fr)$ on $W = \BC^N$ are of the form $p^i$ where the $i$'s differ only by integers.
		\begin{enumerate}
			\item The $i$'s are either all in $\BZ$ or all in $\frac12 \BZ \backslash \BZ$.
		\end{enumerate}
		For convenience, we say $\lambda^{\mathbb{Q}_p}(\Fr)$ is \textbf{supported in $\BZ$} (resp. \textbf{supported in $\frac12 \BZ \backslash \BZ$}) if the $i$'s are all in $\BZ$ (resp. all in $\frac12 \BZ \backslash \BZ$).
		\begin{enumerate} 
			\item[(2)] $\langle -,- \rangle$ restricts to a non-degenerate form on $W_i \times W_{-i}$ for any $i$. In particular, $\dim W_i = \dim W_{-i}$.
			
			\item[(3)] Conversely, if an unramified infinitesimal character $\lambda^{\mathbb{Q}_p}$ of $\GL_N(\BQ_p)$ is so that $\lambda^{\mathbb{Q}_p}(\Fr)$ is supported in $\BZ$ or $\frac12 \BZ \backslash \BZ$ and $\dim W_i = \dim W_{-i}$, then $\lambda^{\mathbb{Q}_p}$ comes from an infinitesimal character of a symplectic/orthogonal group.
			
			\item[(4)] We have the following explicit description of $V_{\lambda^{\mathbb{Q}_p}}$ and $H_{\lambda^{\mathbb{Q}_p}}$:
		\end{enumerate}		
		\begin{center}
			\begin{tabular}{ccccc}
				$N$ & $\langle -,- \rangle$ & $\supp\lambda^{\mathbb{Q}_p}(\Fr)$ & $V_{\lambda^{\mathbb{Q}_p}}$ & $H_{\lambda^{\mathbb{Q}_p}}$
				\\ \hline
				odd & symm &  $\BZ$ & ${\displaystyle \prod_{i \le -1} \Hom(W_i,W_{i+1})}$ & ${\displaystyle \prod_{i \le -1} \GL(W_i) \times \SO(W_0)}$ 
				\\
				even & skew & $\frac12\BZ \backslash \BZ$ & ${\displaystyle \prod_{i < -\frac12} \Hom(W_i,W_{i+1}) \times \operatorname{Sym}(W_{-\frac12})}$ & ${\displaystyle \prod_{i \le -\frac12} \GL(W_i)}$
				\\
				even & symm & $\frac12\BZ \backslash \BZ$ & ${\displaystyle \prod_{i < -\frac12} \Hom(W_i,W_{i+1}) \times \operatorname{Skew}(W_{-\frac12})}$ & ${\displaystyle \prod_{i \le -\frac12} \GL(W_i)}$
				\\
				even & skew & $\BZ$ & ${\displaystyle \prod_{i \le -1} \Hom(W_i,W_{i+1})}$ & ${\displaystyle \prod_{i \le -1} \GL(W_i) \times \Sp(W_0)}$
				\\
				even & symm & $\BZ$ & ${\displaystyle \prod_{i \le -1} \Hom(W_i,W_{i+1})}$ & ${\displaystyle \prod_{i \le -1} \GL(W_i) \times \SO(W_0)}$
			\end{tabular}
		\end{center}
    If $H_{\lambda^{\mathbb{Q}_p}} = {\displaystyle \prod_{i \le -1} \GL(W_i) \times \SO(W_0)}$, then $H^{+}_{\lambda^{\mathbb{Q}_p}} = {\displaystyle \prod_{i \le -1} \GL(W_i) \times {O}(W_0)}$. Otherwise, $H^{+}_{\lambda^{\mathbb{Q}_p}} = H_{\lambda^{\mathbb{Q}_p}}$.    
	\end{lemma}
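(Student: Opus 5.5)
The plan is to work entirely with the semisimple element $f := \lambda^{\mathbb{Q}_p}(\Fr)$, viewed as an element of $\D H \subset \GL(W)$. Since $\lambda^{\mathbb{Q}_p}$ is unramified and $H$ is split, $f$ is a semisimple element of $\D H$. Thus $f^* = f\inv$, where $*$ is the adjoint for $\langle -,- \rangle$.

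\textbf{Part (2).} For $v \in W_i$ and $w \in W_j$,
\[
\langle v,w\rangle = \langle f v, f w\rangle = p^{i+j}\langle v,w\rangle .
\]
So $W_i \perp W_j$ unless $i+j=0$. Non-degeneracy of $\langle -,- \rangle$ on $W = \bigoplus_i W_i$ then forces the pairing $W_i \times W_{-i} \to \BC$ to be perfect, hence $\dim W_i = \dim W_{-i}$.

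\textbf{Part (1).} Integrality says all exponents lie in a single coset $i_0 + \BZ$. By (2) the set of exponents is stable under $i \mapsto -i$, so $-i_0 \in i_0 + \BZ$, i.e. $2i_0 \in \BZ$. Hence the exponents lie in $\BZ$ or in $\frac12\BZ \backslash \BZ$.

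\textbf{Part (3).} Choose a form $\langle -,- \rangle$ of the required type on $W$ that pairs $W_i$ with $W_{-i}$ perfectly.
\begin{itemize}
\item For $i \neq 0$, take a hyperbolic pairing between bases of $W_i$ and $W_{-i}$, symmetrized or antisymmetrized as required.
\item For $i = 0$, take any non-degenerate form of the right type on $W_0$.
\end{itemize}
In the symplectic case this needs $\dim W_0$ even. This holds because $N - \dim W_0 = 2\sum_{i>0}\dim W_i$ and $N$ is even.

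With this form, $f = \bigoplus_i p^i \,\mathrm{id}_{W_i}$ satisfies $f^* = f\inv$. Its determinant is $\prod_i p^{i\dim W_i} = 1$, since the exponents cancel in pairs $i,-i$. So $f \in \D H$ and defines an unramified infinitesimal character of $H$.

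\textbf{Part (4).} This is the main computation; the only subtle point is the middle block.
\begin{itemize}
\item For $x \in E_W$, the adjoint of a component $x_i \in \Hom(W_i, W_{i+1})$ is a map $W_{-i-1} \to W_{-i}$ under the identifications $W_{-i} \cong W_i^\vee$ from (2). The condition $x^* = -x$ therefore says that the component in $\Hom(W_{-i-1}, W_{-i})$ is determined by the one in $\Hom(W_i, W_{i+1})$. So only the components with $i \le -1$ (integral case) or $i < -\frac12$ (half-integral case) are free, together with any self-paired component.
\item In the integral case every component lies in a pair $\{i, -i-1\}$ with $i \neq -i-1$, so nothing extra appears.
\item In the half-integral case the component $\Hom(W_{-\frac12}, W_{\frac12}) \cong \Hom(W_{-\frac12}, W_{-\frac12}^\vee)$ is self-paired. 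There $x^* = -x$ becomes a condition on a bilinear form on $W_{-\frac12}$. A short sign check shows it is symmetric when $\langle -,- \rangle$ is skew and skew-symmetric when $\langle -,- \rangle$ is symmetric: the sign of $\langle -,- \rangle$ combines with the sign $-1$ in $x^* = -x$.
\item Similarly, $g \in G_W$ with $g^* = g\inv$ is determined by $g|_{W_i}$ for $i<0$, via $g|_{W_{-i}} = ({}^t g|_{W_i})\inv$. Its $W_0$ component lies in $O(W_0)$ or $\Sp(W_0)$.
\item Imposing $\det g = 1$: the paired blocks contribute determinant $1$, so the condition only cuts $O(W_0)$ down to $\SO(W_0)$. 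It is automatic for $\Sp(W_0)$ and vacuous when $W_0 = 0$. This gives $H_{\lambda^{\mathbb{Q}_p}}$ and also the description of $H^+_{\lambda^{\mathbb{Q}_p}}$.
\end{itemize}
In the symplectic half-integral case, $-I_N$ already lies in $\prod_i \GL(W_i)$, so adjoining it does not change the group, consistent with $H^+_{\lambda^{\mathbb{Q}_p}} = H_{\lambda^{\mathbb{Q}_p}}$.

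\textbf{Filling the table.} Finally, the parity of $N$ and the type of the form determine which rows occur. If $N$ is odd, the form is symmetric and $\dim W_0$ must be odd, which forces support in $\BZ$.
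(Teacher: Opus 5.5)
Your proposal is essentially the paper's proof for (1), (2) and (4). It uses the same orthogonality relation $\langle v,w\rangle=p^{i+j}\langle v,w\rangle$, the same symmetry of the exponents forcing $2i_0\in\BZ$, and the same adjoint computation on $E_W$ and $G_W$, including the sign check that turns the degree $-\frac12$ block into $\operatorname{Sym}$ or $\operatorname{Skew}$. Your hyperbolic-form construction also correctly supplies an argument for (3), which the paper states without proof.

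One caveat applies equally to your argument and the paper's. Since $p^i$ determines $i$ only modulo $\frac{2\pi\sqrt{-1}}{\log p}\BZ$, the step from (2) to $2i_0\in\BZ$ tacitly assumes the exponents are real. For example, $\diag(-p^{1/2},-p^{-1/2})\in\Sp(2,\BC)$ is ``integral'' in the stated sense but violates (1). This is harmless in the paper's applications, where all exponents are real.
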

	
	\begin{proof}
		Since $\lambda^{\mathbb{Q}_p}(\Fr)$ preserves the form $\langle -,- \rangle$, we have $W_i \perp W_j$ if $i \neq -j$. Indeed, if $v_i \in W_i$ and $v_j \in W_j$, then $\langle v_i, v_j \rangle = \langle \lambda^{\mathbb{Q}_p}(\Fr)v_i, \lambda^{\mathbb{Q}_p}(\Fr) v_j \rangle = p^{i+j} \langle v_i, v_j \rangle$ which can be nonzero only when $i=-j$. Since $\langle -,-\rangle$ is non-degenerate on $W$, its restriction to $W_i \times W_{-i}$ must be non-degenerate. Hence for any eigenvalue $p^i$ of $\lambda^{\mathbb{Q}_p}(\Fr)$, $p^{-i}$ is also an eigenvalue. Since all $i$'s differ by integers, this forces $i \in \BZ$ or $\frac12 \BZ \backslash \BZ$, and if one $i$ is in $\BZ$ (resp. $\frac12 \BZ \backslash \BZ$), so are the other $i$'s. This justifies the first two statements.
		
		The adjoint of a homogeneous degree one map $x \in E_W$ is again of degree one, and the adjoint of $g_i \in \GL(W_i)$ is an element $g_i^* \in \GL(W_{-i})$. As a result,
		\begin{multline*}
			V_{\lambda^{\mathbb{Q}_p}} = \{ x \in E_W \mid x_i^* = - x_{-i-1}\} \\
			\cong 
			\begin{cases}
				\prod_{i \le -1} \Hom(W_i,W_{i+1}) & \text{if } i \in \BZ,\\
				\prod_{i < -\frac12} \Hom(W_i, W_{i+1}) \times \{f: W_{-\frac12} \to W_{\frac12} \mid f^* = -f\} & \text{if } i \in \frac12\BZ\backslash \BZ,
			\end{cases}
		\end{multline*}
		\begin{multline*}
			H_{\lambda^{\mathbb{Q}_p}} = \{ g = (g_i) \in G_W \mid g_i^* = g_{-i}\inv, \det g_0 = 1\} \\
			\cong
			\begin{cases}
				\prod_{i \le -1} \GL(W_i) \times \{f \in \GL(W_0) \mid f^* = f\inv, \det f = 1\} & \text{if } i \in \BZ,\\
				\prod_{i \le -\frac12} \GL(W_i) & \text{if } i \in \frac12 \BZ \backslash \BZ.
			\end{cases}
		\end{multline*}
		It remains to simplify the conditions $f^* = -f$ and $f^* = f\inv$. For $f^* = f\inv$, note that if $\langle -,- \rangle$ is symmetric (resp. skew-symmetric), then $f^* = f\inv$ means $f \in O(W_0)$ (resp. $f\in \Sp(W_0)$). Consider the condition $f^* = -f$. For each such $f$ we may construct a bilinear form on $W_{-\frac12}$ by $(v,v') := \langle f(v), v' \rangle$. Using adjoints, we see that
		\begin{multline*}
			(v,v') = \langle f(v), v' \rangle = \langle v, f^*(v') \rangle = - \langle v, f(v') \rangle 
			\\
			= 
			\begin{cases}
				- \langle f(v'),v \rangle = - (v',v) & \text{if $\langle -,- \rangle$ is symmetric},\\
				\langle f(v'),v \rangle = (v',v) & \text{if $\langle -,- \rangle$ is skew-symmetric}.
			\end{cases}
		\end{multline*} 
		Hence we obtain a linear map
		\begin{equation}\label{eq: Sym vs ad}
			\{f: W_{-\frac12} \to W_{\frac12} \mid f^* = -f\} \aro 
			\begin{cases}
				\operatorname{Skew}(W_{-\frac12}) & \text{if $\langle -,- \rangle$ is symmetric},\\
				\operatorname{Sym}(W_{-\frac12}) & \text{if $\langle -,- \rangle$ is skew-symmetric},
			\end{cases}
		\end{equation}
		which is easily shown to be an isomorphism. This completes the proof.
	\end{proof}
\end{clause}

\subsection{The full rank part and the geometric comparison}
\label{subsec: comparison}

We now turn to a geometric relation of the parameter spaces $\cX_{\lambda^{\mathbb{R}}}$ and $[H_{\lambda^{\mathbb{Q}_p}} \backslash V_{\lambda^{\mathbb{Q}_p}}]$.

\begin{clause}[The case of $\GL_N$]\label{cls: geom comparison GLn}
	Let $\lambda^{\mathbb{Q}_p}$ be an integral infinitesimal character of $\GL_N(\BQ_p)$. The \textbf{full rank part} of $V_{\lambda^{\mathbb{Q}_p}}^{GL} = E_W$ is the $H_{\lambda^{\mathbb{Q}_p}}^{GL} = G_W$-stable open subspace
	\begin{equation*}
		E_W^{reg} = \prod_i \Hom(W_i, W_{i+1})_{reg},
	\end{equation*}
	where a linear map $f: W_k \to W_{k+1}$ is said to be regular if it has maximal possible rank. We would like to identify the quotient $[G_W \backslash E_W^{reg}]$ with the ABV space of a real group $\GL_n(\BC)$.
	
	Suppose $\lambda^{\mathbb{Q}_p}$ satisfies the following condition: 
	\begin{assumption}\label{assump: lambdap}
		There is a number $i_0$ so that 
		\begin{itemize}
			\item $p^{i_0}$ is an eigenvalue of $\lambda^{\mathbb{Q}_p}(\Fr)$ on $W = \BC^N$;
			\item $\forany i \le i_0$, $\dim W_i \le \dim W_{i+1}$;
			\item $\forany i \ge i_0$, $\dim W_i \ge \dim W_{i+1}$.
		\end{itemize}
	\end{assumption}	
	Consider the space
	\begin{equation*}
		E_W^{reg,\diamond} = \prod_{i \neq i_0} \Hom(W_i, W_{i+1})_{reg}
	\end{equation*}
	equipped with the natural $G_W$-action and the map
	\begin{equation*}
		\varphi: E_W^{reg} \surjects E_W^{reg,\diamond}
	\end{equation*}
	obtained by forgetting the $\Hom(W_{i_0},W_{i_0+1})_{reg}$ component. The space $E_W^{reg,\diamond}$ is equal to the product $E_W^{reg,<} \times E_W^{reg,>}$, where
	\begin{equation*}
		E_W^{reg,<} = \prod_{i < i_0} \Hom(W_i, W_{i+1})_{reg},\quad
		E_W^{reg,>} = \prod_{i > i_0} \Hom(W_i, W_{i+1})_{reg}.
	\end{equation*}
	Similarly, $G_W = G_W^< \times G_W^>$, where
	\begin{equation*}
		G_W^< = \prod_{i \le i_0} \GL(W_i),\quad
		G_W^> = \prod_{i > i_0} \GL(W_i)
	\end{equation*}
	
	Let $x^\diamond = (x^<,x^>) \in E_W^{reg,\diamond} = E_W^{reg,<} \times E_W^{reg,>}$ be any point, and write $G_W(x^\diamond) = G_W^<(x^<) \times G_W^>(x^>)$ for its stabilizer in $G_W$. Then by the above assumption on $\lambda^{\mathbb{Q}_p}$, $x^\diamond_i: W_i \to W_{i+1}$ is injective for $i < i_0$ and is surjective for $i > i_0$. Hence we obtain a filtration (i.e. a partial flag) $\operatorname{Fil}_{i_0}$ on $W_{i_0}$ obtained by taking images of the $W_i$'s, $i < i_0$ along the injective maps $(x^<)^{i_0-i}: W_i \to W_{i_0}$. Similarly, we have a filtration $\operatorname{Fil}_{i_0+1}$ on $W_{i_0+1}$ obtained by taking kernels of the surjective maps $(x^>)^{i-(i_0+1)}: W_{i_0+1} \to W_i$ for $i \ge i_0+1$. Thus we obtain parabolic subgroups $P_{i_0} \subset \GL(W_{i_0})$, $P_{i_0+1} \subset \GL(W_{i_0+1})$ as the stabilizers of the filtrations $\operatorname{Fil}_{i_0}$, $\operatorname{Fil}_{i_0+1}$, respectively. By choosing a basis of the vector spaces $W_{i_0}$ and $W_{i_0+1}$, we may identify $\GL(W_{i_0})$ and $\GL(W_{i_0+1})$ with $\GL_n(\BC)$, where $n =  \dim W_{i_0} = \dim W_{i_0+1}$, and identify $P_{i_0}$ and $P_{i_0+1}$ as parabolic subgroups of $\GL_n(\BC)$.
	
	\begin{proposition}[{\cite[Thm 3.1.5]{DHXZ:GLn}}]\label{prop: geom comparison GLn}~
		\begin{enumerate}
			\item $E_W^{reg,\diamond}$ is a single $G_W$-orbit. More precisely, $E_W^{reg,<}$ (resp. $E_W^{reg,>}$) is a single $G_W^<$- (resp. $G_W^>$-) orbit.
			
			\item The fiber $\varphi\inv(x^\diamond)$ is isomorphic to $\GL_n(\BC)$. Under this isomorphism, the action $G_W^<(x^<) \acts \varphi\inv(x^\diamond)$ is identified with $P_{i_0} \acts \GL_n(\BC)$ by right multiplication by inverse, and the action $G_W^>(x^>) \acts \varphi\inv(x^\diamond)$ is identified with $P_{i_0+1} \acts \GL_n(\BC)$ by left multiplication.
			
			\item We have an isomorphism
			\begin{equation*}
				E_W^{reg} \cong G_W \times_{G_W(x^\diamond)} \varphi\inv(x^\diamond)
			\end{equation*}
			and hence isomorphisms of stacks
			\begin{multline*}
				[G_W \backslash E_W^{reg}] 
				\cong [G_W \backslash (G_W \times_{G_W(x^\diamond)} \varphi\inv(x^\diamond))]
				\cong [G_W(x^\diamond) \backslash \varphi\inv(x^\diamond)]\\
				\cong [(P_{i_0} \times P_{i_0+1}) \backslash \GL_n(\BC)]
				\cong [\Delta \GL_n(\BC) \backslash (\GL_n(\BC)/P_{i_0} \times \GL_n(\BC)/P_{i_0+1})].
			\end{multline*}
		\end{enumerate}
	\end{proposition}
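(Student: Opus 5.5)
The plan is to handle $E_W^{reg,<}$ and $E_W^{reg,>}$ separately, compute the fiber of $\varphi$ directly, and then obtain the stack isomorphisms by the standard induction/restriction formalism for a group acting transitively on a base.

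For (1), I will work on the $<$ side; the $>$ side is dual, obtained by replacing injective maps with surjective ones, or by taking transposes. By Assumption~\ref{assump: lambdap}, for $i<i_0$ a regular map $x_i\colon W_i\to W_{i+1}$ is injective. Starting from the smallest index $i$ with $W_i\neq 0$, I build inductively a basis of $W_{i+1}$ extending the image of the chosen basis of $W_i$. Any two tuples $(x_i)_{i<i_0}$ of injective maps are then conjugate by an element of $G_W^<$: the image filtrations have the same dimension data, and compatible bases can be chosen step by step. The same argument applies for $i>i_0$ with surjections $W_i\to W_{i+1}$, building bases from the top index downward. Since $G_W=G_W^<\times G_W^>$ acts factorwise, $E_W^{reg,\diamond}$ is a single orbit.

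For (2), the fiber $\varphi^{-1}(x^\diamond)$ is the set of regular maps $W_{i_0}\to W_{i_0+1}$. Since $\dim W_{i_0}=\dim W_{i_0+1}=n$, this set is the set of isomorphisms, which is isomorphic to $\GL_n(\BC)$ after fixing bases. An element $g$ of the stabilizer $G_W^<(x^<)$ acts only through its component $g_{i_0}\in\GL(W_{i_0})$, sending $y$ to $y\,g_{i_0}^{-1}$. I will check that the image of $G_W^<(x^<)\to\GL(W_{i_0})$ is exactly $P_{i_0}$:

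\begin{itemize}
\item The image lies in $P_{i_0}$ because stabilizing $x^<$ forces $g_{i_0}$ to preserve the images of the $W_i$, $i<i_0$.
\item Conversely, any $p\in P_{i_0}$ determines each $g_i$ uniquely by restriction along the injections, since every $W_i$ embeds into $W_{i_0}$.
\end{itemize}

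In particular $G_W^<(x^<)\cong P_{i_0}$. Dually, $G_W^>(x^>)\cong P_{i_0+1}$ acting by left multiplication $y\mapsto g_{i_0+1}y$. Here the filtration by kernels is preserved, and each $g_i$ for $i>i_0$ is induced on the quotients. The step I expect to need the most care is this identification of stabilizers, in particular showing that the projection is injective, so that there is no extra unipotent or finite kernel. I will handle it via the uniqueness of restriction and quotient maps along the injections and surjections.

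For (3), I will use that $\varphi$ is $G_W$-equivariant onto a homogeneous space $G_W/G_W(x^\diamond)$ by (1). The map $G_W\times_{G_W(x^\diamond)}\varphi^{-1}(x^\diamond)\to E_W^{reg}$, $[g,y]\mapsto g\cdot y$, is then a bijective equivariant morphism. Its inverse is algebraic because $G_W\to G_W/G_W(x^\diamond)$ admits local sections, being a quotient by a closed subgroup. The stack isomorphisms then follow formally:

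\begin{itemize}
\item $[G\backslash(G\times_K F)]\cong[K\backslash F]$ gives the second isomorphism.
\item Part (2) gives $[(P_{i_0}\times P_{i_0+1})\backslash\GL_n(\BC)]$.
\item The last identification is the standard $[(P\times Q)\backslash G]\cong[\Delta G\backslash(G/P\times G/Q)]$, given by $g\mapsto(g^{-1}P,\,Q)$ up to matching the sides of the actions.
\end{itemize}
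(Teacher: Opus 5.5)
Your proposal is correct and follows essentially the route the paper has in mind: the paper defers the proof to \cite[Thm 3.1.5]{DHXZ:GLn}, but Remark~\ref{rmk: stabilizer as matrices} uses exactly your bases adapted along the injections and identifies $G_W^<(x^<)\cong P_{i_0}$ by projecting onto the degree-$i_0$ factor, and the stack statements are then the induction equivalence. Two small touch-ups. First, $G_W\to G_W/G_W(x^\diamond)$ has local sections in general only \'etale-locally, which is enough for your argument (here they are even Zariski-local, since $P_{i_0}\times P_{i_0+1}$ is special). Second, the last identification is given by $y\mapsto (yP_{i_0},\,P_{i_0+1})$, which is equivariant along $(p,q)\mapsto q$.
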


    \begin{remark}[Explicit description of $G_W^<(x^<)$]\label{rmk: stabilizer as matrices}
        We may choose a basis $\{\alpha_{i,1},\ldots, \alpha_{i,\dim W_i}\}$ for each $W_i$ so that $x^<(\alpha_{i,j}) = \alpha_{i+1,j}$ for $i < i_0$. Write $W_i^\circ = \operatorname{span}\{\alpha_{i, 1+\dim W_{i-1}}, \ldots, \alpha_{i, \dim W_i}\}$, a complement to $x^<(W_{i-1})$ inside $W_i$, so that $W_i = x^<(W_{i-1}) \oplus W_i^\circ$. Let $i_{st}$ be the smallest index so that $W_{i_{st}} \neq 0$. Then the stabilizer $G_W^<(x^<)$ can be described as
        \begin{multline*}
            G_W^<(x^<) = 
            \left\{ \left(
            g_{i_{st}},
            \begin{pmatrix}
                g_{i_{st}} & *_{i_{st}, i_{st}+1}\\
                0 & g_{i_{st}+1}
            \end{pmatrix},
            \begin{pmatrix}
                g_{i_{st}} & *_{i_{st}, i_{st}+1} & *_{i_{st}, i_{st}+2}\\
                0 & g_{i_{st}+1} & *_{i_{st}+1, i_{st}+2}\\
                0 & 0 & g_{i_{st}+2}
            \end{pmatrix}
            ,\ldots,
            \vphantom{\begin{pmatrix}
				g_{i_{st}} & *_{i_{st}, i_{st}+1} & \cdots & *_{i_{st}, i_0}\\
				0 & g_{i_{st}+1} & \ddots & \vdots\\
				\vdots & \ddots & \ddots &  *_{i_0-1,i_0}\\
				0 & \cdots & 0 & g_{i_0}
			\end{pmatrix}}\right.\right.\\
            \left.\left. \ldots,
            \begin{pmatrix}
				g_{i_{st}} & *_{i_{st}, i_{st}+1} & \cdots & *_{i_{st}, i_0}\\
				0 & g_{i_{st}+1} & \ddots & \vdots\\
				\vdots & \ddots & \ddots &  *_{i_0-1,i_0}\\
				0 & \cdots & 0 & g_{i_0}
			\end{pmatrix}
            \right) \mid 
            g_i \in \GL(W_i^\circ), *_{jk} \text{ arbitrary} \right\}.
        \end{multline*}
        Here, if $W_i^\circ = 0$ (i.e. $W_{i-1}$ and $W_i$ have the same dimension), we delete the row and the column containing $g_i$. The isomorphism $G_W^<(x^<) \bij P_{i_0}$ is simply the projection onto the last factor. 

        There is a similar description for $G_W^>(x^>)$.
    \end{remark}
    
    \begin{remark}
    The full rank part above was first introduced and studied in the first author's thesis \cite{Deng:2016} by taking unions of orbits indexed by multisegments of parabolic type. A special case named
    symmetric multisegments together with the construction of the map $\varphi$ was studied in \cite[Section 4]{Deng:2023}.
    \end{remark}
    
	Note that the stack $\cX_{\lambda^{\mathbb{R}}}^{GL} = [\Delta \GL_n(\BC) \backslash (\GL_n(\BC)/P_{i_0} \times \GL_n(\BC)/P_{i_0+1})]$ is the ABV space of $\GL_n(\BC)$ (viewed as a real group) for some integral infinitesimal character $\lambda^\BR$. Hence we obtain an open immersion
	\begin{equation*}
		\cX_{\lambda^{\mathbb{R}}}^{GL} \cong [G_W \backslash E_W^{reg}] \injects [G_W \backslash E_W].
	\end{equation*}
\end{clause}

\begin{clause}[A variant]\label{cls: geom comparison GLn variant}
	There is a variant of the above construction. Suppose $\lambda^{\mathbb{Q}_p}$ satisfies the following strengthened condition:
	\begin{assumption}\label{assump: lambdap strong}
		There is a number $i_0$ so that
		\begin{itemize}
			\item $p^{i_0}$ is an eigenvalue of $\lambda^{\mathbb{Q}_p}(\Fr)$ on $W = \BC^N$;
			\item $\forany i \le i_0+1$, $\dim W_i \le \dim W_{i+1}$;
			\item $\forany i \ge i_0$, $\dim W_i \ge \dim W_{i+1}$.
		\end{itemize}
	\end{assumption}
	Then $\dim W_{i_0} = \dim W_{i_0+1} = \dim W_{i_0+2}$. Consider the space
	\begin{equation*}
		E_W^{reg,\diamond} = \prod_{i \neq i_0, i_0+1} \Hom(W_i, W_{i+1})_{reg}
	\end{equation*}
	together with the projection
	\begin{equation*}
		\varphi: E_W^{reg} \surjects E_W^{reg,\diamond}.
	\end{equation*}
	obtained by forgetting both $\Hom(W_{i_0},W_{i_0+1})_{reg}$ and $\Hom(W_{i_0+1},W_{i_0+2})_{reg}$. For any point $x^\diamond \in E_W^{reg,\diamond}$, we again obtain a filtration $\operatorname{Fil}_{i_0}$ of $W_{i_0}$ and a filtration $\operatorname{Fil}_{i_0+2}$ of $W_{i_0+2}$, and hence parabolic subgroups $P_{i_0}$, $P_{i_0+2} \subseteq \GL_n(\BC)$. 
	
	\begin{proposition}~
		\begin{enumerate}
			\item $E_W^{reg,\diamond}$ is a single $G_W$-orbit.
			\item The fiber $\varphi\inv(x^\diamond)$ is isomorphic to $\GL_n(\BC) \times \GL_n(\BC)$. Under this isomorphism, the action $G_W(x^\diamond) \acts \varphi\inv(x^\diamond)$ is identified with
			\begin{gather*}
				(P_{i_0} \times \GL_n(\BC) \times P_{i_0+2}) \acts (\GL_n(\BC) \times \GL_n(\BC)),\\
				(p_{i_0}, g, p_{i_0+2}) \cdot (g_1,g_2) = (p_{i_0+2}g_1 g\inv, g g_2 p_{i_0}\inv).
			\end{gather*}
			\item We have an isomorphism
			\begin{multline*}
				[G_W \backslash E_W^{reg}] 
				\cong [G_W \backslash (G_W \times_{G_W(x^\diamond)} \varphi\inv(x^\diamond))]
				\cong [G_W(x^\diamond) \backslash \varphi\inv(x^\diamond)]\\
				\cong [(P_{i_0} \times \GL_n(\BC) \times P_{i_0+2}) \backslash (\GL_n(\BC) \times \GL_n(\BC))]\\
				\cong [\Delta \GL_n(\BC) \backslash (\GL_n(\BC)/P_{i_0} \times \GL_n(\BC)/P_{i_0+2})].
			\end{multline*}
		\end{enumerate}
	\end{proposition}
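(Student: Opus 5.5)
The plan is to reduce everything to Proposition~\ref{prop: geom comparison GLn}, applied separately to the parts of $E_W$ on either side of the middle, and then analyze the two middle components by hand.

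\textbf{Part (1).} Write $E_W^{reg,\diamond} = E_W^{reg,<}\times E_W^{reg,>}$, where $E_W^{reg,<}=\prod_{i<i_0}\Hom(W_i,W_{i+1})_{reg}$ and $E_W^{reg,>}=\prod_{i>i_0+1}\Hom(W_i,W_{i+1})_{reg}$. Similarly, $G_W = G_W^{<}\times \GL(W_{i_0+1})\times G_W^{>}$, with $G_W^<=\prod_{i\le i_0}\GL(W_i)$ and $G_W^>=\prod_{i\ge i_0+2}\GL(W_i)$. The dimension hypotheses say that the maps in $E_W^{reg,<}$ are injective and those in $E_W^{reg,>}$ are surjective. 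Part (1) of Proposition~\ref{prop: geom comparison GLn} (the statement about $E_W^{reg,<}$, resp. $E_W^{reg,>}$, being a single orbit) therefore applies verbatim to each factor. The middle factor $\GL(W_{i_0+1})$ acts trivially on $E_W^{reg,\diamond}$. Hence $E_W^{reg,\diamond}$ is a single $G_W$-orbit, and
\[
G_W(x^\diamond)=G_W^<(x^<)\times \GL(W_{i_0+1})\times G_W^>(x^>).
\]

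\textbf{Part (2).} Since $\dim W_{i_0}=\dim W_{i_0+1}=\dim W_{i_0+2}=n$, regular maps between these spaces are exactly isomorphisms. Fix bases and identify each of the three spaces with $\BC^n$. Then
\[
\varphi^{-1}(x^\diamond)=\Hom(W_{i_0},W_{i_0+1})_{reg}\times \Hom(W_{i_0+1},W_{i_0+2})_{reg}\cong \GL_n(\BC)\times\GL_n(\BC),
\]
and I send the pair $(a,b)$ to $(g_1,g_2)=(b,a)$.

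An element $(h_{i_0},g,h_{i_0+2})$ acts by $a\mapsto g\,a\,h_{i_0}^{-1}$ and $b\mapsto h_{i_0+2}\,b\,g^{-1}$. By Remark~\ref{rmk: stabilizer as matrices}, projecting $G_W^<(x^<)$ to its last factor is an isomorphism onto $P_{i_0}$; symmetrically, $G_W^>(x^>)\cong P_{i_0+2}$ by projection to the $\GL(W_{i_0+2})$ factor. The remaining components of the stabilizer act trivially on the fiber. This yields exactly the formula $(p_{i_0},g,p_{i_0+2})\cdot(g_1,g_2)=(p_{i_0+2}g_1g^{-1},\,g g_2 p_{i_0}^{-1})$.

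\textbf{Part (3).} The first two isomorphisms follow formally, as in Proposition~\ref{prop: geom comparison GLn}(3). Because $E_W^{reg,\diamond}$ is a homogeneous space and $\varphi$ is $G_W$-equivariant, we get $E_W^{reg}\cong G_W\times_{G_W(x^\diamond)}\varphi^{-1}(x^\diamond)$, and induction equivalence then gives the first two isomorphisms.

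For the last isomorphism, the middle $\GL_n(\BC)$ acts freely on $\GL_n\times\GL_n$. Its quotient is identified with $\GL_n(\BC)$ via $(g_1,g_2)\mapsto g_1g_2$, on which $P_{i_0}\times P_{i_0+2}$ acts by $m\mapsto p_{i_0+2}\,m\,p_{i_0}^{-1}$. This gives $[(P_{i_0+2}\times P_{i_0})\backslash \GL_n(\BC)]$. The standard identification $[P'\backslash G/P]\cong[\Delta G\backslash(G/P\times G/P')]$, given by $m\mapsto(m P,\,P')$ up to the diagonal action (precisely $(mP_{i_0},\,eP_{i_0+2})$), finishes the proof.

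\textbf{Main obstacle.} The only point needing care is checking that the stabilizer components in the other degrees genuinely act trivially on the fiber, and that the projections to $P_{i_0}$ and $P_{i_0+2}$ are isomorphisms. This is where I will cite Remark~\ref{rmk: stabilizer as matrices} and its analogue for $G_W^>(x^>)$. A secondary point is tracking the left/right conventions so the formula in (2) matches exactly; the convention $g_1=b$, $g_2=a$ is chosen for that purpose.
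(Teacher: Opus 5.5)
Your proposal is correct and takes essentially the same route as the paper, which only says the argument is almost identical to that of Proposition~\ref{prop: geom comparison GLn}: you split off the injective and surjective outer parts, note that $\GL(W_{i_0+1})$ acts trivially on $E_W^{reg,\diamond}$, identify the stabilizer with $P_{i_0}\times\GL_n(\BC)\times P_{i_0+2}$ by projection, and then quotient out the free action of the middle $\GL_n(\BC)$. One point you should state explicitly is that Assumption~\ref{assump: lambdap strong} implies Assumption~\ref{assump: lambdap} for both $i_0$ and $i_0+1$, so the earlier proposition can be applied with index $i_0$ for the $<$ part and with index $i_0+1$ for the $>$ part.
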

	The proof is almost identical to \ref{prop: geom comparison GLn}.
\end{clause}

\begin{clause}[The case of symplectic/orthogonal groups]\label{cls: geom comparison}
	Let us return to the setup of \ref{cls: SpSO param space}. We are going to describe an analogous statement to Proposition \ref{prop: geom comparison GLn}.
	
	Let $\lambda^{\mathbb{Q}_p}$ be an integral infinitesimal character of $H$. Viewing it as an infinitesimal character of $\GL_N(\BQ_p)$ via the inclusion $\D H \inj \GL_N(\BC)$, we assume that $\lambda^{\mathbb{Q}_p}$ satisfies Assumption \ref{assump: lambdap}. Note that since $\dim W_i = \dim W_{-i}$ for any $i$ by Lemma \ref{lemma: SpSO param space}, if $\lambda^{\mathbb{Q}_p}(\Fr)$ is supported in $\BZ$ (resp. $\frac12 \BZ \backslash \BZ$), the largest possible $\dim W_i$ is achieved by $i = -1,0,1$ (resp. $i = -\frac12, \frac12$), and the number $i_0$ in Assumption \ref{assump: lambdap} can be chosen to be $i_0 = -1$ (resp. $i_0 = -\frac12$). Hence if $\lambda^{\mathbb{Q}_p}(\Fr)$ is supported in $\BZ$, $\lambda^{\mathbb{Q}_p}$ in fact satisfies the stronger Assumption \ref{assump: lambdap strong}.
	
	Set
	\begin{multline*}
		V_{\lambda^{\mathbb{Q}_p}}^{reg} := E_W^{reg} \cap V_{\lambda^{\mathbb{Q}_p}}\\
		\cong
		\begin{cases}
			{\displaystyle \prod_{i \le -1} \Hom(W_i,W_{i+1})_{reg}} & \text{if }\lambda^{\mathbb{Q}_p}(\Fr) \text{ supported in } \BZ
				\\
			{\displaystyle \prod_{i< -\frac12} \Hom(W_i, W_{i+1})_{reg} \times \operatorname{Sym}(W_{-\frac12})_{reg}} & \text{if }\lambda^{\mathbb{Q}_p}(\Fr) \text{ supported in } \frac12 \BZ \backslash \BZ, \quad \langle-,- \rangle \text{ is skew}
				\\
			{\displaystyle \prod_{i< -\frac12} \Hom(W_i, W_{i+1})_{reg} \times \operatorname{Skew}(W_{-\frac12})_{reg}} & \text{if }\lambda^{\mathbb{Q}_p}(\Fr) \text{ supported in } \frac12 \BZ \backslash \BZ, \quad \langle-,- \rangle \text{ is symm}
		\end{cases}
	\end{multline*}
	where $\operatorname{Sym}(W_{-\frac12})_{reg}$ (resp. $\operatorname{Skew}(W_{-\frac12})_{reg}$) denotes the set of invertible symmetric (resp. skew-symmetric) matrices (cf. Lemma~\ref{lemma: SpSO param space}). This is an $H_{\lambda^{\mathbb{Q}_p}}$-stable open subvariety of $V_{\lambda^{\mathbb{Q}_p}}$. We then set
	\begin{equation*}
		V_{\lambda^{\mathbb{Q}_p}}^{reg,\diamond} := \varphi(V_{\lambda^{\mathbb{Q}_p}}^{reg})
		=
		\begin{cases}
			{\displaystyle \prod_{i < -1} \Hom(W_i,W_{i+1})_{reg} } & \lambda^{\mathbb{Q}_p}(\Fr) \text{ is supported in } \BZ,\\
			{\displaystyle \prod_{i < -\frac12} \Hom(W_i,W_{i+1})_{reg} } & \lambda^{\mathbb{Q}_p}(\Fr) \text{ is supported in } \frac12\BZ \backslash \BZ,
		\end{cases}
	\end{equation*}
	where $\varphi:	E_W^{reg} \surj E_W^{reg,\diamond}$ is the map defined in \ref{cls: geom comparison GLn} if $\lambda^{\mathbb{Q}_p}(\Fr)$ is supported in $\frac12\BZ \backslash \BZ$, or the one defined in \ref{cls: geom comparison GLn variant} if $\lambda^{\mathbb{Q}_p}(\Fr)$ is supported in $\BZ$. By abuse of notation, we denote the projection $V_{\lambda^{\mathbb{Q}_p}}^{reg} \surj V_{\lambda^{\mathbb{Q}_p}}^{reg,\diamond}$ again by $\varphi$.
	
	As in the $\GL_N$ case, any choice of point $x^\diamond \in V_{\lambda^{\mathbb{Q}_p}}^{reg,\diamond}$ induces a filtration $\operatorname{Fil}_{i_0}$ on $W_{i_0}$, whence a parabolic subgroup $P(\lambda^{\mathbb{Q}_p}) := P_{i_0}$ of $\GL(W_{i_0}) \cong \GL_n(\BC)$, where $n = \dim W_{i_0}$. 
	
	\begin{proposition}\label{prop: full rank part}~
		Suppose $\lambda^{\mathbb{Q}_p}$ satisfies Assumption \ref{assump: lambdap}, and let $n = \dim W_{i_0}$. The space $V_{\lambda^{\mathbb{Q}_p}}^{reg,\diamond}$ is a single $H_{\lambda^{\mathbb{Q}_p}}$-orbit, and hence for any point $x^\diamond \in V_{\lambda^{\mathbb{Q}_p}}^{reg,\diamond}$, we have an isomorphism
		\begin{equation*}
			V_{\lambda^{\mathbb{Q}_p}}^{reg} \simeq  H_{\lambda^{\mathbb{Q}_p}} \times_{H_{\lambda^{\mathbb{Q}_p}}(x^\diamond)} \varphi\inv(x^\diamond)
		\end{equation*}
		and hence
		\begin{equation*}
			[H_{\lambda^{\mathbb{Q}_p}} \backslash V_{\lambda^{\mathbb{Q}_p}}^{reg}] \cong [H_{\lambda^{\mathbb{Q}_p}}(x^\diamond) \backslash \varphi\inv(x^\diamond)].
		\end{equation*}
		The explicit description of the quotient $[H_{\lambda^{\mathbb{Q}_p}}(x^\diamond) \backslash \varphi\inv(x^\diamond)]$ and its alternative description in each case are gathered in the following table. 
		\begin{center}
			\begin{tabular}{ccccc}
				$N$ & $\langle -,- \rangle$ & $\supp\lambda^{\mathbb{Q}_p}(\Fr)$ & $[H_{\lambda^{\mathbb{Q}_p}}(x^\diamond) \backslash \varphi\inv(x^\diamond)]$ & alternative description 
				\\ \hline
				odd & symm &  $\BZ$ & $[(P(\lambda^{\mathbb{Q}_p}) \times \SO(n,\BC)) \backslash \GL_n(\BC)]$ & $[\SO(n,\BC) \backslash \GL_n(\BC) / P(\lambda^{\mathbb{Q}_p})]$ 
				\\
				even & skew & $\frac12\BZ \backslash \BZ$ & $[P(\lambda^{\mathbb{Q}_p}) \backslash \operatorname{Sym}_n(\BC)_{reg}]$ & $[{O}(n,\BC) \backslash \GL_n(\BC) / P(\lambda^{\mathbb{Q}_p})]$
				\\
				even & symm & $\frac12\BZ \backslash \BZ$ & $[P(\lambda^{\mathbb{Q}_p}) \backslash \operatorname{Skew}_n(\BC)_{reg}]$ & $[\Sp(n,\BC) \backslash \GL_n(\BC) / P(\lambda^{\mathbb{Q}_p})]$
				\\
				even & skew & $\BZ$ & $[(P(\lambda^{\mathbb{Q}_p}) \times \Sp(n,\BC)) \backslash \GL_n(\BC)]$ & $[\Sp(n,\BC) \backslash \GL_n(\BC) / P(\lambda^{\mathbb{Q}_p})]$ 
				\\
				even & symm & $\BZ$ & $[(P(\lambda^{\mathbb{Q}_p}) \times \SO(n,\BC)) \backslash \GL_n(\BC)]$ & $[\SO(n,\BC) \backslash \GL_n(\BC) / P(\lambda^{\mathbb{Q}_p})]$ 
			\end{tabular}
		\end{center}
	\end{proposition}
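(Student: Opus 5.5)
The plan is to reduce everything to the $\GL_N$ statements (Proposition~\ref{prop: geom comparison GLn} and its variant). For the self-dual constraint we use the explicit form of $V_{\lambda^{\mathbb{Q}_p}}$ and $H_{\lambda^{\mathbb{Q}_p}}$ from Lemma~\ref{lemma: SpSO param space}, where only the components with indices $i\le i_0$ survive.

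\emph{Transitivity.} The key observation is that $V_{\lambda^{\mathbb{Q}_p}}^{reg,\diamond}=\prod_{i<i_0}\Hom(W_i,W_{i+1})_{reg}$ carries an action of $\prod_{i\le i_0}\GL(W_i)$. This group is a factor of $H_{\lambda^{\mathbb{Q}_p}}$, with the $\SO(W_0)$ or $\Sp(W_0)$ factor acting trivially when the support is $\BZ$ and $i_0=-1$. The action is exactly the $G_W^<$-action on $E_W^{reg,<}$ of the $\GL$ case, which is transitive by Proposition~\ref{prop: geom comparison GLn}(1), since $\dim W_i\le\dim W_{i+1}$ for $i\le i_0$. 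The statements $V^{reg}\simeq H_{\lambda^{\mathbb{Q}_p}}\times_{H_{\lambda^{\mathbb{Q}_p}}(x^\diamond)}\varphi^{-1}(x^\diamond)$ and the stack isomorphism then follow formally, by the same argument as in the $\GL$ case: the map $(h,v)\mapsto hv$ is an isomorphism because $\varphi$ is $H_{\lambda^{\mathbb{Q}_p}}$-equivariant onto a homogeneous space.

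\emph{Identifying stabilizer and fibre.} By Remark~\ref{rmk: stabilizer as matrices}, $H_{\lambda^{\mathbb{Q}_p}}(x^\diamond)=G_W^<(x^<)\times(\text{middle factor})$, and projection to $\GL(W_{i_0})$ identifies $G_W^<(x^<)$ with $P(\lambda^{\mathbb{Q}_p})$. We then treat the two supports separately.

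\textbf{Support $\BZ$ ($i_0=-1$).} The fibre is $\Hom(W_{-1},W_0)_{reg}\cong \GL_n(\BC)$ after choosing bases, which is possible since $\dim W_{-1}=\dim W_0=n$. Here $P_{i_0}$ acts by right multiplication by the inverse, and $\SO(W_0)$ or $\Sp(W_0)$ acts by left multiplication. This gives $[(P\times \SO(n))\backslash\GL_n]$, respectively $[(P\times\Sp(n))\backslash\GL_n]$, and the alternative description is just rewriting a two-sided quotient. For $N$ odd with symmetric form, and $N$ even with symmetric form, the middle group is $\SO(W_0)$ by Lemma~\ref{lemma: SpSO param space}. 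We must check that the surviving factor is really only $\Hom(W_{-1},W_0)$: the component $\Hom(W_0,W_1)$ is determined as $-x_{-1}^*$, so the full-rank condition on it is automatic.

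\textbf{Support $\frac12\BZ\setminus\BZ$ ($i_0=-\frac12$).} The fibre is $\operatorname{Sym}(W_{-\frac12})_{reg}$ or $\operatorname{Skew}(W_{-\frac12})_{reg}$ via \eqref{eq: Sym vs ad}. Here $g\in\GL(W_{-\frac12})$ acts on $f$ by $g^{*-1}fg^{-1}$, which translates to the congruence action $B\mapsto {}^tg^{-1}Bg^{-1}$ on bilinear forms. So the quotient is $[P\backslash\operatorname{Sym}_n(\BC)_{reg}]$, and Lemma~\ref{lemma: sym/skew as ABV spaces} converts it to $[O(n)\backslash\GL_n/P]$, or to $[\Sp(n)\backslash\GL_n/P]$ in the skew case. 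Regularity of $f$ is exactly invertibility of the form.

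\emph{Main obstacle.} The real difficulty is bookkeeping: we must make sure the stabilizer in $H_{\lambda^{\mathbb{Q}_p}}$ of $x^\diamond$ acts on the fibre exactly through $P(\lambda^{\mathbb{Q}_p})$ plus the middle factor. The unipotent part of $G_W^<(x^<)$, the $*$ entries, acts through its image in $\GL(W_{i_0})$, so it lies in $P$. Injectivity of the projection from Remark~\ref{rmk: stabilizer as matrices} is what guarantees the identification of stacks rather than just of orbit sets.
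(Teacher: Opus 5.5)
Your proposal is correct and follows essentially the same route as the paper. In both, transitivity and the stabilizer $P(\lambda^{\mathbb{Q}_p})$ come from the $G_W^<$-action in Proposition~\ref{prop: geom comparison GLn} and Remark~\ref{rmk: stabilizer as matrices}, with the $\SO(W_0)$ or $\Sp(W_0)$ factor acting trivially in the integral case; the fibre is then identified with $\GL_n(\BC)$, or with $\operatorname{Sym}_n(\BC)_{reg}$ or $\operatorname{Skew}_n(\BC)_{reg}$ under congruence, and converted via Lemma~\ref{lemma: sym/skew as ABV spaces}. Your extra checks fill in details the paper leaves implicit: that $x_0=-x_{-1}^*$ is automatically regular, the explicit congruence formula, and that injectivity of the projection to $\GL(W_{i_0})$ is what makes the identification hold at the level of stacks.
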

	
	\begin{proof}
		The proof is similar to the $\GL_N$ case. We include details here for completeness. 
		
		Suppose first that $\lambda^{\mathbb{Q}_p}(\Fr)$ is supported in $\frac12\BZ \backslash \BZ$. Recall that in this case $$V_{\lambda^{\mathbb{Q}_p}}^{reg,\diamond} = \prod_{i < -\frac12} \Hom(W_i,W_{i+1})_{reg}, \ H_{\lambda^{\mathbb{Q}_p}} = \prod_{i \le -\frac12} \GL(W_i)$$ 
        and the fiber $\varphi\inv(x^\diamond)$ is identified with either $\operatorname{Sym}(W_{-\frac12})_{reg}$ or $\operatorname{Skew}(W_{-\frac12})_{reg}$ depending on $\langle -,- \rangle$. This is almost identical to the situation $G_W^< \acts E_W^{reg,<}$ studied in the $\GL_N$ case, except the left multiplcation action of the parabolic subgroup $P_{i_0}$ on the fiber $\GL_n(\BC)$ in the $\GL_N$ case is replaced by the congruent action of $P_{i_0}$ on $\operatorname{Sym}(W_{-\frac12})_{reg}$ or $\operatorname{Skew}(W_{-\frac12})_{reg}$. Hence by Proposition \ref{prop: geom comparison GLn}, $V_{\lambda^{\mathbb{Q}_p}}^{reg,\diamond}$ is a single $H_{\lambda^{\mathbb{Q}_p}}$-orbit and the stabilizer $H_{\lambda^{\mathbb{Q}_p}}(x^\diamond)$ can be identified with the parabolic subgroup $P(\lambda^{\mathbb{Q}_p}) = P_{i_0}$ of $\GL(W_{-\frac12})$. Upon choosing a basis of $W_{-\frac12}$, we may identify $\operatorname{Sym}(W_{-\frac12})_{reg}$ (resp. $\operatorname{Skew}(W_{-\frac12})$) with the space $\operatorname{Sym}_n(\BC)_{reg}$ (resp. $\operatorname{Skew}_n(\BC)_{reg}$) which, by Lemma \ref{lemma: sym/skew as ABV spaces}, is the same as $O(n,\BC) \backslash \GL_n(\BC)$ (resp. $\Sp(n,\BC) \backslash \GL_n(\BC)$). This provides the desired description of the quotient $[H_{\lambda^{\mathbb{Q}_p}}(x^\diamond) \backslash \varphi\inv(x^\diamond)]$.
		
		Suppose now $\lambda^{\mathbb{Q}_p}(\Fr)$ is supported in $\BZ$. In this situation $V_{\lambda^{\mathbb{Q}_p}}^{reg,\diamond} = \prod_{i < -1} \Hom(W_i,W_{i+1})_{reg}$, $H_{\lambda^{\mathbb{Q}_p}} = \prod_{i \le -1} \GL(W_i) \times S\operatorname{Isom}(W_0)$ where $S\operatorname{Isom}(W_0)$ is the group of isometries preserving the form $\langle -,- \rangle$ on $W_0$ with determinant $1$,
        and the fiber $\varphi\inv(x^\diamond)$ is simply $\Hom(W_{-1},W_0)_{reg} \cong \GL_n(\BC)$. Evidently $S\operatorname{Isom}(W_0)$ acts trivially on $V_{\lambda^{\mathbb{Q}_p}}^{reg,\diamond}$, so it remains to consider the action $\prod_{i \le -1} \GL(W_i) \acts \prod_{i < -1} \Hom(W_i,W_{i+1})_{reg}$ which again is covered by the study of the $\GL_N$ case. Hence the stabilizer of a point $x^\diamond$ is $P(\lambda^{\mathbb{Q}_p}) \times S\operatorname{Isom}(W_0)$, as required.
	\end{proof}

    \begin{corollary}
     In the above setup, suppose $H$ is special even orthogonal. Then we also have isomorphisms
     \begin{equation*}
			V_{\lambda^{\mathbb{Q}_p}}^{reg} \simeq  H^{+}_{\lambda^{\mathbb{Q}_p}} \times_{H^{+}_{\lambda^{\mathbb{Q}_p}}(x^\diamond)} \varphi\inv(x^\diamond),
            \qquad
            [H^{+}_{\lambda^{\mathbb{Q}_p}} \backslash V_{\lambda^{\mathbb{Q}_p}}^{reg}] \cong [H^{+}_{\lambda^{\mathbb{Q}_p}}(x^\diamond) \backslash \varphi\inv(x^\diamond)],
		\end{equation*}
        and we have the following explicit descriptions
		\begin{center}
			\begin{tabular}{ccccc}
				$N$ & $\langle -,- \rangle$ & $\supp\lambda^{\mathbb{Q}_p}(\Fr)$ & $[H^{+}_{\lambda^{\mathbb{Q}_p}}(x^\diamond) \backslash \varphi\inv(x^\diamond)]$ & alternative description 
				\\ \hline
				even & symm & $\frac12\BZ \backslash \BZ$ & $[P(\lambda^{\mathbb{Q}_p}) \backslash \operatorname{Skew}_n(\BC)_{reg}]$ & $[\Sp(n,\BC) \backslash \GL_n(\BC) / P(\lambda^{\mathbb{Q}_p})]$
				\\
				even & symm & $\BZ$ & $[(P(\lambda^{\mathbb{Q}_p}) \times {O}(n,\BC)) \backslash \GL_n(\BC)]$ & $[{O}(n,\BC) \backslash \GL_n(\BC) / P(\lambda^{\mathbb{Q}_p})]$ 
			\end{tabular}
		\end{center}     
    \end{corollary}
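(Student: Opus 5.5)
The plan is to rerun the proof of Proposition~\ref{prop: full rank part} with $H_{\lambda^{\mathbb{Q}_p}}$ replaced by $H^{+}_{\lambda^{\mathbb{Q}_p}}$. By Lemma~\ref{lemma: SpSO param space}, the difference between the two groups is only at $W_0$, and it only occurs when $\lambda^{\mathbb{Q}_p}(\Fr)$ is supported in $\BZ$. There $H^{+}_{\lambda^{\mathbb{Q}_p}} = \prod_{i\le -1}\GL(W_i)\times O(W_0)$, while in the half-integral case $H^{+}_{\lambda^{\mathbb{Q}_p}} = H_{\lambda^{\mathbb{Q}_p}}$. I would therefore split into these two cases, as the table does.

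\emph{Half-integral case.} The groups coincide, so the first row of the table is copied verbatim from the corresponding (even, symm, $\frac12\BZ\backslash\BZ$) row of Proposition~\ref{prop: full rank part}. The isomorphisms
\[
V_{\lambda^{\mathbb{Q}_p}}^{reg} \simeq H^{+}_{\lambda^{\mathbb{Q}_p}}\times_{H^{+}_{\lambda^{\mathbb{Q}_p}}(x^\diamond)}\varphi^{-1}(x^\diamond), \qquad [H^{+}_{\lambda^{\mathbb{Q}_p}}\backslash V^{reg}_{\lambda^{\mathbb{Q}_p}}] \cong [H^{+}_{\lambda^{\mathbb{Q}_p}}(x^\diamond)\backslash \varphi^{-1}(x^\diamond)]
\]
are then literally those already proved.

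\emph{Integral case.} I would first check that $V^{reg,\diamond}_{\lambda^{\mathbb{Q}_p}} = \prod_{i<-1}\Hom(W_i,W_{i+1})_{reg}$ is still a single $H^{+}_{\lambda^{\mathbb{Q}_p}}$-orbit. This is immediate, since it is already a single orbit under the subgroup $H_{\lambda^{\mathbb{Q}_p}}$. The factor $O(W_0)$ acts trivially on $V^{reg,\diamond}_{\lambda^{\mathbb{Q}_p}}$, because $W_0$ does not appear in those Hom spaces. Hence the stabilizer is $H^{+}_{\lambda^{\mathbb{Q}_p}}(x^\diamond) = G^{<}_W(x^{<})\times O(W_0) \cong P(\lambda^{\mathbb{Q}_p})\times O(n,\BC)$, using Proposition~\ref{prop: geom comparison GLn} and Remark~\ref{rmk: stabilizer as matrices} for the $\GL$ part.

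The general fact that a $G$-equivariant surjection onto a homogeneous space $G/S$ gives $X\cong G\times_S(\text{fiber})$, and hence $[G\backslash X]\cong[S\backslash \text{fiber}]$, then yields both displayed isomorphisms. It remains to identify the action of $P(\lambda^{\mathbb{Q}_p})\times O(W_0)$ on the fiber $\varphi^{-1}(x^\diamond) = \Hom(W_{-1},W_0)_{reg}\cong \GL_n(\BC)$. Here $P(\lambda^{\mathbb{Q}_p})\subset \GL(W_{-1})$ acts by precomposition with the inverse, i.e.\ right multiplication by the inverse. The group $O(W_0)$ acts by postcomposition, i.e.\ left multiplication. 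I would emphasize one subtlety: the $W_1$-component is determined by the $W_{-1}$-component through $x_i^* = -x_{-i-1}$, so the fiber really is just $\Hom(W_{-1},W_0)_{reg}$. This gives $[(P(\lambda^{\mathbb{Q}_p})\times O(n,\BC))\backslash \GL_n(\BC)] \cong [O(n,\BC)\backslash \GL_n(\BC)/P(\lambda^{\mathbb{Q}_p})]$.

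I expect the main point of care to be verifying that the identification of $W_0$ with $\BC^n$ carries the restricted symmetric form to the standard one, so that $O(W_0)\cong O(n,\BC)$. This holds because the form is nondegenerate on $W_0\times W_0$ by Lemma~\ref{lemma: SpSO param space}(2). Everything else is bookkeeping identical to the connected case.
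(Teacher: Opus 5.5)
Your proposal is correct and takes the same approach as the paper. The corollary is the proof of Proposition~\ref{prop: full rank part} rerun with $S\operatorname{Isom}(W_0)$ replaced by $\operatorname{Isom}(W_0)=O(W_0)$ in the integral case. The half-integral case is unchanged because $H^{+}_{\lambda^{\mathbb{Q}_p}}=H_{\lambda^{\mathbb{Q}_p}}$ there; the paper makes the same substitution explicitly in Lemma~\ref{lemma: full rank part P}.
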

	
	\begin{corollary}\label{cor: geom comparison}
		In the above setup, let $n = \dim W_{i_0}$. There exists an integral infinitesimal character $\lambda^{\mathbb{R}}$ of the quasisplit real unitary group $G = U(\frac n2, \frac n2)$ or $U(\frac{n-1}2, \frac{n+1}2)$ so that 
		\begin{itemize}
			\item $P(\lambda^{\mathbb{Q}_p}) = \D P(\lambda^{\mathbb{R}})$, and
			\item there exist natural maps
			\begin{equation}\label{eq: geometric correspondence 1}
			[H_{\lambda^{\mathbb{Q}_p}} \backslash V_{\lambda^{\mathbb{Q}_p}}] \xhookleftarrow[open]{\iota_{geom}} [H_{\lambda^{\mathbb{Q}_p}} \backslash V_{\lambda^{\mathbb{Q}_p}}^{reg}] \xrightarrow{\theta} \cX_{\lambda^{\mathbb{R}}}
			\end{equation}
			described in the table below.
		\end{itemize}
		
		\begin{center}
        {\small
			\begin{tabular}{c|ccc|cc|c}
				& $N$ & $\langle -,- \rangle$ & $\supp \lambda^{\mathbb{Q}_p}(\Fr)$ & $n$ & $\lambda^{\mathbb{R}}$ &  $\theta: [H_{\lambda^{\mathbb{Q}_p}} \backslash V_{\lambda^{\mathbb{Q}_p}}^{reg}] \to \cX_{\lambda^{\mathbb{R}}}$
				\\ \hline
				(a) & odd & symm &  $\BZ$ & odd & $\BZ^n$ &
				$[\SO(n,\BC) \backslash \GL_n(\BC) / P(\lambda^{\mathbb{Q}_p})] \to [O(n,\BC) \backslash \GL_n(\BC) / \D P(\lambda^{\mathbb{R}})]$
				\\ \hline
				(b) & \multirow{2}{*}{even} & \multirow{2}{*}{skew} & \multirow{2}{*}{$\frac12\BZ \backslash \BZ$} & odd & $\BZ^n$ & 
				\multirow{2}{*}{$[O(n,\BC) \backslash \GL_n(\BC) / P(\lambda^{\mathbb{Q}_p})] = [O(n,\BC) \backslash \GL_n(\BC) / \D P(\lambda^{\mathbb{R}})]$}
				\\
				(c) &&&& even & $(\frac12 \BZ \backslash \BZ)^n$
				\\ \hline
				(d) & even & symm & $\frac12\BZ \backslash \BZ$ & even & $\BZ^n$ &
				$[\Sp(n,\BC) \backslash \GL_n(\BC) / P(\lambda^{\mathbb{Q}_p})] = [\Sp(n,\BC) \backslash \GL_n(\BC) / \D P(\lambda^{\mathbb{R}})]$
				\\ \hline
				(e) & even & skew & $\BZ$ & even & $\BZ^n$ & 
				$[\Sp(n,\BC) \backslash \GL_n(\BC) / P(\lambda^{\mathbb{Q}_p})] = [\Sp(n,\BC) \backslash \GL_n(\BC) / \D P(\lambda^{\mathbb{R}})]$ 
				\\ \hline
				(f) & even & symm & $\BZ$ & even & $(\frac12 \BZ \backslash \BZ)^n$ &
				$[\SO(n,\BC) \backslash \GL_n(\BC) / P(\lambda^{\mathbb{Q}_p})] \to [O(n,\BC) \backslash \GL_n(\BC) / \D P(\lambda^{\mathbb{R}})]$
			\end{tabular}
            }
		\end{center}
        Here $\cX_{\lambda^{\mathbb{R}}}$ is the ABV space of $G$ with infinitesimal character $\lambda^{\mathbb{R}}$, and the map $\theta$ is either an isomorphism or a finite 2-to-1 map. 
        \begin{enumerate}
            \item If $H$ is not special even orthogonal, the maps $\iota_{geom}$ and $\theta$ induce an inclusion of conjugacy classes of L-parameters, which we again denote by $\iota_{geom}$
            \begin{equation*}
                \iota_{geom}: \Phi(G)_{\lambda^{\mathbb{R}}} \injects \Phi(H)_{\lambda^{\mathbb{Q}_p}}.
            \end{equation*}
            The composition $\theta_* \circ \iota_{geom}^*$ defines a map
            \begin{equation*}
                \cR: K \operatorname{Per}(H_{\lambda^{\mathbb{Q}_p}} \backslash V_{\lambda^{\mathbb{Q}_p}}) \aro K \operatorname{Per}(\cX_{\lambda^{\mathbb{R}}})
            \end{equation*}
            whose adjoint 
            \begin{equation*}
                \cR^*: K \Rep_{{\rm pure}}(\lambda^{\mathbb{R}}, G)  \aro  K \Rep_{{\rm pure}}(\lambda^{\mathbb{Q}_p}, H)
            \end{equation*}
            under the pairings \ref{thm: LLC/R}, \ref{conj: LLC/Qp} \ref{conj: disconnected LLC/Qp} sends standard representations to standard representations up to some signs and restricts to a map on irreducible objects (ignoring the signs)
            \begin{equation*}
                \tilde \iota_{geom}: \Pi_{{\rm pure}}(\lambda^{\mathbb{R}}, G) \aro \Pi_{{\rm pure}}(\lambda^{\mathbb{Q}_p}, H).
            \end{equation*}
            Moreover, $\tilde \iota_{geom}$ is injective if $H$ is not symplectic; if $H$ is symplectic, $\tilde \iota_{geom}$ induces an injection on the quotient that identifies the irreducible representations of $U(p,q)$ with $U(q,p)$.
        
            \item If $H$ is even orthogonal, we have maps 
            \begin{equation}\label{eq: geometric correspondence 2}
        		[H^{+}_{\lambda^{\mathbb{Q}_p}} \backslash V_{\lambda^{\mathbb{Q}_p}}] \xhookleftarrow[open]{\iota_{geom}} [H^{+}_{\lambda^{\mathbb{Q}_p}} \backslash V_{\lambda^{\mathbb{Q}_p}}^{reg}] \xrightarrow{\theta^{+}} \cX_{\lambda^{\mathbb{R}}}
            \end{equation}
            where $\theta^+$ is an isomorphism. In the same way these maps induce
            \begin{gather*}
                \iota_{geom}: \Phi(G)_{\lambda^{\mathbb{R}}} \injects \Phi(H^{+})_{\lambda^{\mathbb{Q}_p}},\\
                \cR^{+}: K \operatorname{Per}(H^{+}_{\lambda^{\mathbb{Q}_p}} \backslash V_{\lambda^{\mathbb{Q}_p}}) \aro K \operatorname{Per}(\cX_{\lambda^{\mathbb{R}}}),\\
                \cR^{+*}: K \Pi_{{\rm pure}}(\lambda^{\mathbb{R}}, G)  \aro  \Pi_{{\rm pure}}(\lambda^{\mathbb{Q}_p}, H^{+}),\\
                \tilde \iota_{geom}: \Pi_{{\rm pure}}(\lambda^{\mathbb{R}}, G) \injects \Pi_{{\rm pure}}(\lambda^{\mathbb{Q}_p}, H^{+}),
            \end{gather*}
            where $\cR^{+*}$ sends standard representations to standard representations up to some signs and $\tilde \iota_{geom}$ is injective.
        \end{enumerate}

	\end{corollary}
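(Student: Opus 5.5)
The plan is to assemble the statement from Proposition~\ref{prop: full rank part}, its corollary for $H^+$, and Lemmas~\ref{lemma: Upq param space} and~\ref{lemma: sym/skew as ABV spaces}, and then transport sheaves along the resulting maps of stacks.

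\emph{Choice of $\lambda^{\mathbb{R}}$.} Given $\lambda^{\mathbb{Q}_p}$, let $n=\dim W_{i_0}$ and let $\operatorname{Fil}_{i_0}$ be the filtration determined by $x^\diamond$. Its successive jumps have dimensions $\dim W_i-\dim W_{i-1}$ for $i\le i_0$. I would take $\lambda^{\mathbb{R}}$ to be dominant, constant on the blocks of sizes $\dim W_i-\dim W_{i-1}$ in decreasing order, and strictly decreasing from one block to the next. This forces $\D P(\lambda^{\mathbb{R}})=P(\lambda^{\mathbb{Q}_p})$. The entries should be chosen in $\BZ$ or in $\frac12\BZ\setminus\BZ$ so that, by Lemma~\ref{lemma: Upq param space}, $\D K$ is the group listed in the table; for instance, in case (a), $n$ odd forces $\lambda^{\mathbb{R}}\in\BZ^n$ and hence $\D K=O(n)$. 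The parity of $n$ in each row is read off from $N$ and the support of $\lambda^{\mathbb{Q}_p}(\Fr)$:
\begin{itemize}
\item When the support is in $\BZ$ and $N$ is odd, $W_0$ is odd-dimensional, so $n$ is odd.
\item When the support is in $\frac12\BZ\setminus\BZ$, skew forms force $n$ to be arbitrary, while symmetric-to-skew forms (Skew$_n$ regular) force $n$ even.
\end{itemize}

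\emph{Construction of $\theta$.} For rows (b)--(e), $\theta$ is the composite of the isomorphism of Proposition~\ref{prop: full rank part} with the isomorphisms of Lemma~\ref{lemma: sym/skew as ABV spaces}, so it is an isomorphism. For rows (a) and (f), $\theta$ is induced by the inclusion $\SO(n)\subset O(n)$. The resulting map $[\SO\backslash \GL/P]\to[O\backslash\GL/P]$ is a $\BZ/2$-torsor, hence finite étale of degree $2$. Replacing $H_{\lambda^{\mathbb{Q}_p}}$ by $H^+_{\lambda^{\mathbb{Q}_p}}$ and using the corollary preceding this statement makes $\theta^+$ an isomorphism. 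The open immersion $\iota_{geom}$ is inclusion of the open stable subset $V^{reg}$.

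\emph{Orbits.} Orbits of $\cX_{\lambda^{\mathbb{R}}}$ are images of orbits in $V^{reg}$, which are orbits of $V$. In case (a), $-I\in O(n)\setminus\SO(n)$ acts trivially on $\GL_n/P$ since $n$ is odd, so every $O(n)$-orbit is a single $\SO(n)$-orbit. Composing $\theta^{-1}$ on orbits with $\iota_{geom}$ then gives the injection $\Phi(G)_{\lambda^{\mathbb{R}}}\injects\Phi(H)_{\lambda^{\mathbb{Q}_p}}$, using Propositions~\ref{prop: orbit vs L-param} and~\ref{prop: param vs Vogan}. The case (f) with $H^+$ is handled the same way.

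\emph{Sheaves and the adjoint $\cR^*$.} Restriction to an open set and pushforward along a finite étale map are both t-exact, so $\cR=\theta_*\iota_{geom}^*$ is defined on $K\operatorname{Per}$. Write $C'$ for the orbit in $V^{reg}$ lying over an orbit $C$ of $\cX_{\lambda^{\mathbb{R}}}$.
\begin{itemize}
\item $\iota_{geom}^*\mu(\xi)$ equals $\mu(\xi)$ if the orbit of $\xi$ meets $V^{reg}$, and is $0$ otherwise.
\item $\theta_*$ of a standard object $j_{C'!}\cV[\dim C']$ equals $j_{C!}(\theta_*\cV)[\dim C]$, because $\dim C=\dim C'$ ($\theta$ is étale, or an isomorphism). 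Here $\theta_*\cV$ is a local system, either simple or a sum of two simple ones.
\end{itemize}
Dualizing with Theorem~\ref{thm: LLC/R} and Conjectures~\ref{conj: LLC/Qp} and~\ref{conj: disconnected LLC/Qp}, $\cR^*M(\xi)$ is $\pm M(\xi')$, where $\xi'$ is the parameter on $C'$ whose local system $\cV'$ contains $\cV$ under restriction along $\theta$. The sign comes from $(-1)^{\dim}$ and the Kottwitz signs.

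\emph{Simple objects and injectivity.} Applying the same reasoning to simple objects uses that $\iota_{geom}^*$ and $\theta_*$ preserve intermediate extensions (open restriction, and finite étale pushforward). Hence $\cR^*$ sends irreducible representations to $\pm$ irreducible representations, which defines $\tilde\iota_{geom}$. Injectivity comes from the fact that restriction of local systems along an étale map is injective on simple objects, up to the twist by the sign character of $O/\SO$. In case (d), with $H$ even orthogonal and $\D K=\Sp$, one uses $H^+$; there $\theta^+$ is an isomorphism.

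\emph{Main obstacle.} The hardest step is the symplectic case (a), where $\theta$ is $2$-to-$1$. There the pushforward $\theta_*$ of the local system $\cV'$ on $C'$ splits as $\cV\oplus(\cV\otimes\sgn)$, where $\sgn$ is the determinant character of $O(n)$. I need to show that twisting by $\sgn$ corresponds exactly to exchanging $U(p,q)$ with $U(q,p)$. I would do this by computing the pure inner form attached to $(\phi,\epsilon)$ via the central character of $\epsilon$, checked on $Z(\D G)=\C^\times$, where $-I$ is the generator mapping to $\det$; this should match the statement that $\tilde\iota_{geom}$ is injective only on the quotient identifying $U(p,q)$ with $U(q,p)$. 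I expect the bookkeeping of Kottwitz signs to be routine.
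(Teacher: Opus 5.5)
Your assembly matches how the corollary is meant to follow from Proposition~\ref{prop: full rank part}, its corollary for $H^+$, and Lemmas~\ref{lemma: Upq param space} and~\ref{lemma: sym/skew as ABV spaces}; the paper records no further argument. The following parts are all fine: the choice of $\lambda^{\mathbb{R}}$ with the block sizes of $\operatorname{Fil}_{i_0}$ and the parity dictated by Lemma~\ref{lemma: Upq param space}; the construction of $\theta$; the observation in case (a) that $-I_n\in O(n)\setminus \SO(n)$ acts trivially, so orbits do not split; and the transport of standard and simple objects along an open restriction followed by an isomorphism or a finite \'etale map. One small correction: $\dim C$ and $\dim C'$ are dimensions of orbits in different spaces ($\cI(\lambda^{\mathbb{R}})\times X_{\lambda^{\mathbb{R}}}$ versus $V_{\lambda^{\mathbb{Q}_p}}$), so they are not equal. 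Their difference is independent of the orbit, however, so it only feeds into the overall sign.

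The step that does not go through as you propose is the last one in case (a). The fibre of $\tilde\iota_{geom}$ through $\pi(\phi,\epsilon)$ is $\{\pi(\phi,\epsilon),\pi(\phi,\epsilon\otimes\det)\}$. Evaluating $\epsilon$ on $Z(\D G)^{\operatorname{Gal}(\BC/\BR)}=\{\pm I_n\}$ only recovers the central invariant of the pure inner form, which for $n$ odd depends only on $p \bmod 2$. This shows the two members of a fibre live on pure inner forms with opposite central invariant. That already gives injectivity on the union of the $\Pi(U(p,q))_{\lambda^{\mathbb{R}}}$ with $p\equiv\frac{n-1}{2}\pmod 2$, which is all that Theorem~\ref{thm: comparison of A-packets} uses. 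It cannot show that the second member is a representation of $U(q,p)$ rather than of another $U(p',q')$ with the same invariant; for $n=3$ it does not separate $U(1,2)$ from $U(3,0)$. Nor can it show that this member is the same representation of $U(I_{p,q})=U(-I_{p,q})$ regarded on the other pure inner form.

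The missing input is the compatibility of the real correspondence with this central twist: $\pi(\phi,\epsilon\otimes\det)$ is the transport of $\pi(\phi,\epsilon)$ to $U(q,p)$. For discrete series you can read this off Adams' parametrization recalled in Appendix~\ref{sec: Langlands}. Conjugating by the permutation $\sigma$ that exchanges the two blocks carries $\pi_{w\mu}$ on $U(p,q)$ to $\pi_{\sigma w\mu}$ on $U(q,p)$. Since $\sigma^{-1}x_{q,p}=-x_{p,q}$, this gives $\epsilon\mapsto-\epsilon$, which is the twist by $\det$. The statement then extends to all of $\Pi_{\rm pure}(\lambda^{\mathbb{R}},G)$ through the Langlands classification, because both the swap and the twist by $\det$ are compatible with inducing from Levi subgroups $\prod GL_{m}(\BC)\times U(p',q')$.
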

	
	Observe that all the identification of quotient stacks that have appeared above are induction equivalences, i.e. they are all of the form $[G_2 \backslash X] \cong [G_1 \backslash Y]$ for some algebraic groups $G_1 \supseteq G_2$ and some $G_2$-variety $X$, where $Y = G_1 \times_{G_2} X$. In this setting, it is shown in \cite[Prop 20.2]{ABV:1992} that the equivalence of categories $\operatorname{Per}(G_2 \backslash X) \cong \operatorname{Per}(G_1 \backslash Y)$ preserves microlocal multiplicities. As a result, the map $\tilde \iota_{geom}$ matches ABV packets on both sides. In more detail, we have the following theorem.
	
	\begin{theorem}
    \label{thm: ABV-packet}
		In the above setup, suppose $\iota_{geom}(\phi^\BR) = \phi^{\mathbb{Q}_p}$. If $H$ is odd orthogonal, then 
		\begin{equation*}
			\tilde \iota_{geom}: \Pi_{\phi^{\mathbb{R}}}^{\rm ABV, pure}(G) \bijects \Pi_{\phi^{\mathbb{Q}_p}}^{\rm ABV, pure}(H).
		\end{equation*}
        If $H$ is even orthogonal, then  
         \begin{equation*}
			\tilde \iota_{geom}: \Pi_{\phi^{\mathbb{R}}}^{\rm ABV, pure}(G) \bijects \Pi_{\phi^{\mathbb{Q}_p}}^{\rm ABV, pure}(H^{+}).
		\end{equation*}
        If $H$ is symplectic, then          
		\[
			\tilde \iota_{geom}:
			\bigsqcup_{\substack{p + q = n \\ p \equiv \frac{n-1}{2} \, {\rm mod} \, 2 }} \Pi_{\phi^{\mathbb{R}}}^{\rm ABV}(U(p,q)) 
			\bijects
			\Pi_{\phi^{\mathbb{Q}_p}}^{\rm ABV, pure}(H).
		\]
	\end{theorem}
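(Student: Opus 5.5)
The plan is to reduce the statement to a comparison of characteristic cycles, and to check that these are transported faithfully along the correspondence \eqref{eq: geometric correspondence 1} (resp. \eqref{eq: geometric correspondence 2}).

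\emph{Step 1: the ABV-packet on the $p$-adic side only sees the open part.} By definition, $\pi(\xi)\in\Pi^{\rm ABV,pure}_{\phi^{\BQ_p}}(H)$ iff $m_{C_{\phi^{\BQ_p}}}(\mathcal P(\xi))\neq 0$. Here $C=C_{\phi^{\BQ_p}}$ lies in the open subset $V^{reg}_{\lambda^{\BQ_p}}$, since $\phi^{\BQ_p}$ is in the image of $\iota_{geom}$. The multiplicity $m_C$ is local: it is computed on a neighborhood of a generic point of $C$ via vanishing cycles. Hence $m_C(\mathcal P)=m_C(\iota_{geom}^*\mathcal P)$. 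If $\mathcal P(\xi)$ is supported on an orbit closure not meeting $V^{reg}$, its restriction vanishes and it contributes nothing. Otherwise $\iota_{geom}^*\mathcal P(\xi)=\mathcal P(\xi|_{reg})$ is again simple, namely the IC extension of the same local system on $C_\xi\cap V^{reg}$. But then $C\subset\overline{C_\xi}$ with $C\subset V^{reg}$ forces $C_\xi\subset V^{reg}$, because $V^{reg}$ is open and a union of orbits and $\overline{C_\xi}\supset C$. Thus every $\xi$ in the packet has $C_\xi$ in the regular part, and so lies in the image of $\tilde\iota_{geom}$. This is the key point ensuring surjectivity.

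\emph{Step 2: transport through $\theta$.} The isomorphism $[H_{\lambda^{\BQ_p}}\backslash V^{reg}]\cong[H(x^\diamond)\backslash\varphi^{-1}(x^\diamond)]$ is an induction equivalence, and so are the identifications with $[K\backslash \GL_n(\BC)/P]$ and, via Lemma~\ref{lemma: sym/skew as ABV spaces}, with $\cX_{\lambda^\BR}$. By \cite[Prop 20.2]{ABV:1992} these equivalences preserve microlocal multiplicities. So in cases (b)–(e), and for $H^+$ in the even orthogonal case via $\theta^+$, we get
\[m_{C_{\phi^\BR}}(\mathcal P(\xi^\BR))=m_{C_{\phi^{\BQ_p}}}(\mathcal P(\xi)).\]
Here $\xi$ corresponds to $\xi^\BR$ under $\tilde\iota_{geom}$, by Corollary~\ref{cor: geom comparison}. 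Combined with Step 1 and the injectivity of $\tilde\iota_{geom}$, this gives the bijection in the orthogonal cases. The odd orthogonal case (a) requires $\SO(n)\to O(n)$ with $n$ odd. There $O(n)=\SO(n)\times\{\pm1\}$ with $-1$ central and acting trivially on $\GL_n/P$, so the stacks differ only by a trivial $\mu_2$-gerbe. Perverse sheaves and characteristic cycles correspond once one restricts to local systems on which $-1$ acts compatibly; this matches the identification of $\D A_{\phi^\BR}$ with $\D A_{\phi^{\BQ_p}}$. For case (f) we use $H^+$ as stated.

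\emph{Step 3: the symplectic case, which is the main obstacle.} Here $H_{\lambda^{\BQ_p}}$ and $H^+_{\lambda^{\BQ_p}}=H_{\lambda^{\BQ_p}}\times\langle -I_N\rangle$ differ by a central $\pm1$ acting trivially on $V$. Simple perverse sheaves on $[H^+\backslash V]$ are those on $[H\backslash V]$ tensored with a sign character of $\langle -I_N\rangle$, and microlocal multiplicities are unaffected. On the real side the two characters of $Z(O(N,\BC))$ correspond, via the isomorphism $A_{\phi^\BR}\cong A^+_{\phi^{\BQ_p}}$, to the central element $-1\in O(n)$. Through the local Langlands correspondence this distinguishes $U(p,q)$ from $U(q,p)$. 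I would check, using Proposition~\ref{prop: orbit vs L-param} and the Kottwitz-type description of pure inner forms in Lemma~\ref{lemma: Upq param space}, that the value of the character at the central element $-I_n$ is $(-1)^{p-\frac{n-1}{2}}$ or similar. That is, exactly one member of each pair $\{U(p,q),U(q,p)\}$ satisfies $p\equiv\frac{n-1}2\pmod 2$. Then $\tilde\iota_{geom}$ restricted to that half is injective, and by Steps 1–2 it is onto the $p$-adic ABV packet. Pinning down this parity normalization of the central character is the delicate step: I would test it first for $n=1$, and then reduce the general case to the rank-one Cartan computation.
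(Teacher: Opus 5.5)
Your proposal is correct and takes essentially the paper's route. The paper's proof is only the remark that every identification in Corollary~\ref{cor: geom comparison} is an induction equivalence, so microlocal multiplicities are preserved by \cite[Prop 20.2]{ABV:1992}. Your Step 1 (every orbit contributing to the packet of $\phi^{\BQ_p}$ lies in the open set $V^{reg}_{\lambda^{\BQ_p}}$) and your central-character parity check in Step 3 spell out what the paper leaves implicit. That parity check agrees with the normalization $\epsilon_\pi(-I_n)=(-1)^{\frac{p-q+1}{2}}=(-1)^{p-\frac{n-1}{2}}$ for $n$ odd recorded in the Adams--Johnson discussion.

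One correction: case (a) is the symplectic case $H=Sp(N-1)$, with $\D H=SO(N,\mathbb{C})$ and $N$ odd, not an odd orthogonal case. Your Step 2 remark on $SO(n)\to O(n)$ is therefore the geometric half of Step 3 and should be moved there. The odd orthogonal cases are (b), (c), (e), where $\theta$ is already an isomorphism.
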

\end{clause}

\subsection{Explicit correspondence}
\label{subsec: explicit correspondence}

The relation between real and $p$-adic Langlands parameter spaces described in Corollary \ref{cor: geom comparison} is quite flexible. Namely, for an integral infinitesimal character $\lambda^{\mathbb{R}}$ of $G$, there are different split $p$-adic classical groups $H$ and infinitesimal characters $\lambda^{\mathbb{Q}_p}$ for which $\tilde \iota_{geom}$ can be constructed. In the introduction, we have constructed for each integral $\lambda^{\mathbb{R}}$ a family of  $(H,\lambda^{\mathbb{Q}_p})$ and we will show that they fall into the cases of Corollary \ref{cor: geom comparison}. Let us recall our construction here. Let $\lambda^{\mathbb{R}} = (\lambda_1,\ldots, \lambda_n)$ be an integral dominant infinitesimal character of $G$. We choose $\delta \in \mathbb{Z}$ such that $\lambda_n > (1 - \delta)/2$. Let $\tilde{\lambda}_i = \lambda_i + (\delta - 1)/2$ for $1 \leqslant i \leqslant n$. Then $\tilde{\lambda}_n > 0$.

\begin{clause}[The assignment $\lambda^{\mathbb{R}} \mapsto \lambda^{\mathbb{Q}_p}$]\label{cls: lambda to lambdap}
	We describe an assignment of infinitesimal characters that fit into the cases (a), (c), (e) (resp. (b), (d), (f)) of \ref{cor: geom comparison} if $\delta$ is odd (resp. even). Let 
	\begin{equation*}
		N = \sum_{x \in [-\tilde{\lambda}_1,\tilde{\lambda}_1]} (\text{the number of $\tilde{\lambda}_i$'s with } \tilde{\lambda}_i \ge |x|) = \sum_{i = 1}^{n} (2\tilde{\lambda}_i + 1) =  \sum_{i = 1}^{n} (2\lambda_i + \delta).	\end{equation*}
	Here, as in \ref{cls: GLNQp param space}, $[-\tilde{\lambda}_1,\tilde{\lambda}_1]$ denotes the segment $\{-\tilde{\lambda}_1, -\tilde{\lambda}_1+1,\ldots, \tilde{\lambda}_1\}$. To $\lambda^{\mathbb{R}}$ we attach the unramified integral infinitesimal character $\lambda^{\mathbb{Q}_p}$ of $\GL_N(\BQ_p)$ so that, as multisets,
	\begin{equation*}
		\big\{ \text{eigenvalues of } \lambda^{\mathbb{Q}_p}(\Fr) \big\}
		= \sum_{x \in [-\tilde{\lambda}_1,\tilde{\lambda}_1]} (\text{the number of $\tilde{\lambda}_i$'s with } \tilde{\lambda}_i \ge |x|) \cdot [p^x]
	\end{equation*}
	where $c \cdot [d]$ denotes the multiset $\{\underbrace{d,\ldots,d}_{c \text{ times}}\}$. 
	
	Notice that the multiplicities of $[p^x]$ are symmetric with respect to $x=0$ and increase as $x$ gets closer to $0$. Hence $\lambda^{\mathbb{Q}_p}$ comes from an infinitesimal character of a symplectic/special orthogonal group $H$ and satisfies Assumption \ref{assump: lambdap}. The conditions in each case is collected in the following table. 
\begin{center}
			\begin{tabular}{c|c|c|c|c|c|c}
				& $n$ & $\lambda^{\mathbb{R}}$ & $\delta$ & $N$ &$H$ & parity
				\\ \hline
				(a) & odd & $\mathbb{Z}^n$ & odd & odd & $Sp(N-1)$ & good
				\\ \hline
				(b) & odd & $\mathbb{Z}^n$ & even & even & $SO(N+1)$ & good
				\\ \hline
				(c) & even & $(\frac{1}{2}\mathbb{Z} \backslash \mathbb{Z})^n$ & odd & even & $SO(N+1)$ & good
				\\ \hline
                (d) & even & $\mathbb{Z}^n$ & even & even & $O(N)$ & bad
				\\ \hline
                (e) & even & $\mathbb{Z}^n$ & odd & even & $SO(N+1)$ & bad
				\\ \hline
                (f) & even & $(\frac{1}{2}\mathbb{Z} \backslash \mathbb{Z})^n$ & even & even & $O(N)$ & good
                \end{tabular}
		\end{center}
Here in the even orthogonal case, we will take $H$ to be the split full even orthogonal group, which has been denoted by $H^{+}$ in the previous subsection.

\end{clause}

\begin{clause}[Explicit map between L-parameters]
	In the introduction, we have defined a map
	\begin{equation}\label{eq: iota}
		\iota: \Phi(G)_{\lambda^{\mathbb{R}}} \aro \Phi(H)_{\lambda^{\mathbb{Q}_p}},\quad
		\phi^\BR \mapsto \phi^{\mathbb{Q}_p}
	\end{equation}
	where if 
	\begin{equation}\label{eq: iota a}
		BC(\phi^\BR) = \bigoplus_{i = 1}^n (z/\bar z)^{t_i} (z \bar z)^{s_i}
	\end{equation}
	then
	\begin{equation}\label{eq: iota b}
		{\rm std}_{\D{H}} \circ \phi^{\mathbb{Q}_p} := \bigoplus_{i = 1}^n | \cdot |_{W_{\BQ_p}}^{s_i} \boxtimes S_{2t_i+\delta}.
	\end{equation}
	On the other hand, from Corollary \ref{cor: geom comparison} we have a map on L-parameters constructed from geometry
	\begin{equation*}
		\iota_{geom}: \Phi(G)_{\lambda^{\mathbb{R}}} \aro \Phi(H)_{\lambda^{\mathbb{Q}_p}}.
	\end{equation*}
	Both $\iota$ and $\iota_{geom}$ lift to maps on irreducible representations
	\begin{equation*}
		\tilde \iota, \; \tilde \iota_{geom}: \Pi_{{\rm pure}}(\lambda^{\mathbb{R}}, G) \aro \Pi_{{\rm pure}}(\lambda^{\mathbb{Q}_p}, H)
	\end{equation*}
    (See Subsection~\ref{subsec: main} for  the definition of $\tilde{\iota}$).
	
	\begin{proposition}\label{prop: iota vs iotageom}
		Fixing a pairing $\langle -,- \rangle$ on $W=\C^N$ which induces an embedding of $\D H$ into $\GL_N(\C)$ , a point $x^\diamond$ in $V_{\lambda^{\mathbb{Q}_p}}^{reg,\diamond}$, and a basis on $W$, we have 
		\begin{equation*}
			\iota = \iota_{geom}, \quad \tilde \iota = \tilde \iota_{geom}.
		\end{equation*}
	\end{proposition}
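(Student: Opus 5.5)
Since both maps are defined explicitly, I would prove $\iota = \iota_{geom}$ by computing both images on a convenient representative of each $\D K$-orbit, and then deduce $\tilde\iota = \tilde\iota_{geom}$ by checking that the two induced identifications of component groups agree.

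\textbf{Step 1: a representative on the real side.} Let $\phi^\BR \in \Phi(G)_{\lambda^{\mathbb{R}}}$. After conjugation I write $\phi^\BR(z) = z^{\lambda^{\mathbb{R}}}\bar z^{\mu}$, as in Lemma \ref{lemma: Sym vs ad}. By that lemma its point of $\operatorname{Sym}_n(\BC)_{reg}/\D P(\lambda^{\mathbb{R}})$ (good parity) is the permutation matrix $\dot s$ of an involution $s$ with $s\lambda^{\mathbb{R}} = -\mu$; in bad parity it is a signed skew permutation matrix $\dot s$ of a fixed-point-free involution. I read off $BC(\phi^\BR)$ from $s$:
\begin{itemize}
\item a fixed point $i$ of $s$ gives the summand $(z/\bar z)^{\lambda_i}$, i.e. $t=\lambda_i$, $s_i=0$;
\item a $2$-cycle $(i\,j)$ gives the summands $(z/\bar z)^{(\lambda_i+\lambda_j)/2}(z\bar z)^{\pm(\lambda_i-\lambda_j)/2}$.
\end{itemize}
So $\iota(\phi^\BR)$ consists of $S_{2\lambda_i+\delta}$ for each fixed point, and of $|\cdot|^{\pm(\lambda_i-\lambda_j)/2}\boxtimes S_{\lambda_i+\lambda_j+\delta}$ for each $2$-cycle. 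By Lemma \ref{lemma: multiseg vs L-param}, in segment language these are the symmetric segment $[-\tilde\lambda_i,\tilde\lambda_i]$ and the pair of segments $[-\tilde\lambda_j,\tilde\lambda_i]$, $[-\tilde\lambda_i,\tilde\lambda_j]$.

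\textbf{Step 2: the corresponding point on the $p$-adic side.} I build an explicit point $x\in V^{reg}_{\lambda^{\mathbb{Q}_p}}$ lying over $x^\diamond$ whose fiber coordinate is $\dot s$, and compute its Jordan cells. Using the basis of Remark \ref{rmk: stabilizer as matrices}, index the chain basis vectors so that the $i$-th basis vector of $W_{i_0}$ starts a chain of $x^<$ at degree $-\tilde\lambda_i$ (up to the $\pm\frac12$ shift of $i_0$); the dimension count defining $\lambda^{\mathbb{Q}_p}$ in \ref{cls: lambda to lambdap} makes this possible. The positive-degree half is determined by skew-adjointness $x^*=-x$ with respect to $\langle-,-\rangle$, which pairs $W_k$ with $W_{-k}$.

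The middle map is then given by $\dot s$ via \eqref{eq: Sym vs ad} in the half-integral case, or by $\Hom(W_{-1},W_0)$ together with its adjoint in the integral case. Hence $x$ sends the chain ending at basis vector $i$ into the chain dual to basis vector $s(i)$. Concatenating, the chain starting at $-\tilde\lambda_i$ continues up to $\tilde\lambda_{s(i)}$. This produces exactly the segments $[-\tilde\lambda_i,\tilde\lambda_{s(i)}]$, matching Step 1, so $\iota=\iota_{geom}$ on parameters. I would treat cases (a)–(f) uniformly, noting only the difference between the skew case (signed permutations) and the integral case (an extra copy of $W_0$).

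\textbf{Step 3: component groups and the main obstacle.} To get $\tilde\iota=\tilde\iota_{geom}$, I must check that the isomorphism $A_{\phi^\BR}\cong A_{\phi^{\mathbb{Q}_p}}$ (or $A^+$) induced by $\theta$ agrees with the one from the introduction. On the real side, the reductive quotient of the stabilizer of $\dot s$ in $\D P(\lambda^{\mathbb{R}})$ is $\prod O(l_i)\times\prod GL(l_j)$. Here $O(l_i)$ acts on the block of fixed points of $s$ having equal $\lambda$, and $GL(l_j)$ acts on paired blocks. On the $p$-adic side, the same group acts on the corresponding Jordan cells, because the stabilizer in $H_{\lambda^{\mathbb{Q}_p}}(x^\diamond)$ projects isomorphically onto $P_{i_0}$. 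I will verify directly that these two actions coincide, and that in the symplectic case the determinant condition cuts out $S_{\phi^{\mathbb{Q}_p}}\subset S^+_{\phi^{\mathbb{Q}_p}}$.

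I expect the main obstacle to be this compatibility of component groups up to the canonical (inner) identifications. It involves tracking the sign normalizations in the local Langlands correspondence, together with the $\pm$ choices in the signed skew permutations and the $O$ versus $SO$ distinction. I would handle it by reducing to a single block $l_i$, where the comparison is between two explicit copies of $O(l_i)$ acting on the same vector space.
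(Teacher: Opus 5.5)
Your proposal follows the paper's proof: you send $\phi^{\BR}\mapsto\dot s$ via Lemma~\ref{lemma: Sym vs L-param}, read off the Jordan cells $[-\tilde\lambda_i,\tilde\lambda_{s(i)}]$ of the resulting $x$ (via $x_0x_{-1}=-{}^thh=-\dot s$ in the integral case, resp.\ the matrix $\dot s$ of $x_{-\frac12}$ in the half-integral case), and then compare component groups factor by factor, which is only needed in good parity since both groups are trivial in bad parity. The one thing you must add in Step 3 is the normalization $s\in\mathfrak{I}_{\lambda^{\mathbb{R}}}$ from Lemma~\ref{lemma: Weyl group parametrization-surjective}, meaning no $2$-cycle of $s$ lies inside a block of equal $\lambda_i$; without it such $2$-cycles also contribute to the $O(l_i)$ factors, so your description of these factors as acting on fixed points fails, whereas with it the paper checks that $\varepsilon_j=\diag(1,\dots,-1,\dots,1)$ at a fixed point $j$ goes to $(x_{-1}\varepsilon_j x_{-1}^{-1},\varepsilon_j)$, which acts nontrivially only on the cell $[-\tilde\lambda_j,\tilde\lambda_j]$.
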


    In order to match the maps, it would be convenient to choose a basis of $W$ that plays well with $x^\diamond$.

    \begin{clause}[Explicit basis of $W_i$]\label{cls: basis of W}
			In the setup of \ref{cls: lambda to lambdap}, the eigenvalues of $\lambda^{\mathbb{Q}_p}(\Fr)$ are of the form $p^k$ where $k$ is in the segment
			\begin{equation*}
				I := [- \tilde{\lambda}_1, \tilde{\lambda}_1] = \{- \tilde{\lambda}_1, -\tilde{\lambda}_1 +1, \ldots, \tilde{\lambda}_1\}.
			\end{equation*}
            Fix an element $x^\diamond \in V_{\lambda^{\mathbb{Q}_p}}^{reg,\diamond}$ and view it as an element of $E_W^{reg,\diamond}$ via the inclusion $V_{\lambda^{\mathbb{Q}_p}} \inj E_W$. As explained in \ref{cls: geom comparison GLn}, we may decompose $x^\diamond = (x^<,x^>)$, and $x^<$ induces a filtration $\operatorname{Fil}_{i_0}$ on $W_{i_0}$, where $i_0 = -1$ if $\lambda^{\mathbb{Q}_p}(\Fr)$ is supported in $\BZ$, and $i_0 = -\frac12$ if $\lambda^{\mathbb{Q}_p}(\Fr)$ is supported in $\frac12\BZ \backslash \BZ$. Namely, the pieces in the filtration $\operatorname{Fil}_{i_0}$ are the images of the injective maps $(x^<)^{i_0-i}: W_i \to W_{i_0}$. 
			
			We may then choose a basis $\{\alpha_1,\ldots, \alpha_n\}$ of $W_{i_0}$ so that its subsets form bases of the subspaces in the filtration $\operatorname{Fil}_{i_0}$. More precisely, $\{\alpha_1,\ldots, \alpha_{\dim W_i}\}$ is a basis for the subspace $\Im((x^<)^{i_0-i}: W_i \to W_{i_0})$. By means of the injective map $(x^<)^{i_0-i}$, we obtain a basis for each $W_i$ ($i \le i_0$) which we denote again by the $\alpha_j$'s. Then $x^<$ sends basis vectors to basis vectors. In particular, $\{\alpha_1,\ldots, \alpha_i\}$ is a basis for $W_{-\tilde{\lambda}_i}$ if $\tilde{\lambda}_i > \tilde{\lambda}_{i+1}$.		
			
			Using the perfect pairing $\langle -,- \rangle$ on each $W_i \times W_{-i}$, we obtain bases of the $W_{-i}$'s dual to the ones on $W_i$, and since $(x^>)^* = -x^<$, $x^>$ sends a basis vector to a basis vector or to zero. Write $\{\alpha_1^*,\ldots, \alpha_n^*\}$ for the dual basis on $W_{-i_0}$ and identify $\{\alpha_1^*,\ldots, \alpha_i^*\}$ with the basis of $W_{\tilde{\lambda}_i}$ in case $\tilde{\lambda}_i > \tilde{\lambda}_{i+1}$.

            If $\lambda^{\mathbb{Q}_p}(\Fr)$ is supported in $\BZ$, we also choose a basis $\{\beta_1, \cdots, \beta_n\}$ of $W_0$ such that
            \[
            \langle\beta_i, \beta_j\rangle = \begin{cases} 
            \delta_{ij} & \text{$\lambda^{\mathbb{R}}$ good parity,}
            \\(-1)^{i+1}\delta_{i, n+1-j} & \text{$\lambda^{\mathbb{R}}$ bad parity.}            \end{cases}
            \]
            \end{clause}
    
	\begin{proof}[Proof of $\iota = \iota_{geom}$]
		We first verify $\iota = \iota_{geom}$. Note that two L-parameters of $H$ are $\D H$-conjugate if and only if they are $\GL_N(\BC)$-conjugate when viewed as maps into $\GL_N(\BC)$. Similarly, two points in the Vogan variety $V_{\lambda^{\mathbb{Q}_p}}$ are $H_{\lambda^{\mathbb{Q}_p}}$-conjugate if and only if their image under the inclusion $V_{\lambda^{\mathbb{Q}_p}} \inj V_{\lambda^{\mathbb{Q}_p}}^{GL} = E_W$ are $H_{\lambda^{\mathbb{Q}_p}}^{GL} = G_W$-conjugate. Hence it suffices to show that the following diagram commutes for any $\phi^\BR \in \Phi(G)_{\lambda^{\mathbb{R}}}$
		\begin{equation*}
			\begin{tikzcd}
				\Phi(G)_{\lambda^{\mathbb{R}}} \ar[r, "\iota", "(\ref{eq: iota})"'] \ar[d, "\text{\ref{prop: orbit vs L-param}}"']
				& \Phi(H)_{\lambda^{\mathbb{Q}_p}} \ar[r, "\text{\ref{lemma: multiseg vs L-param}}"']
				& G_W \backslash E_W
				\\
				\D K \backslash \D G / \D P(\lambda^{\mathbb{R}}) \ar[r, "\text{\ref{lemma: sym/skew as ABV spaces}}"']
				& \operatorname{S}_n(\BC)_{reg}/ \D P(\lambda^{\mathbb{R}}) \ar[r, "\text{\ref{cor: geom comparison}}"', "\iota_{geom}"]
				& H_{\lambda^{\mathbb{Q}_p}} \backslash V_{\lambda^{\mathbb{Q}_p}} \ar[u]
			\end{tikzcd}
			\quad
			\begin{tikzcd}
				\phi^\BR \ar[r, mapsto] \ar[d, mapsto]
				& \phi^{\mathbb{Q}_p} \ar[r, mapsto]
				& O_\bm
				\\
				h \ar[r, mapsto]
				& \dot s \ar[r, mapsto]
				& x \ar[u, mapsto]
			\end{tikzcd}.
		\end{equation*}
		Here $\operatorname{S}_n$ denotes $\operatorname{Sym}_n$ or $\operatorname{Skew}_n$, depending on whether $\lambda$ is of good parity or bad parity.
		
		Suppose $BC(\phi^\BR)$ is of the form (\ref{eq: iota a}). Tracing through the top path, $\phi^{\mathbb{Q}_p}$ is given by (\ref{eq: iota b}), and by Lemma \ref{lemma: multiseg vs L-param} the orbit $O_\bm \in G_W \backslash E_W$ corresponds the multisegment
		\begin{equation}\label{eqn: assign-multisegment}
			\bm = \big\{ [s_i + \tfrac12(1-(2t_i+\delta)), s_i + \tfrac12((2t_i+\delta)-1)] \big\}_{i=1}^r
			= \big\{ [s_i - t_i -(\delta - 1)/2, s_i + t_i + (\delta - 1)/2] \big\}_{i=1}^r
		\end{equation}
		On the other hand, by Lemma \ref{lemma: Sym vs L-param}, the left vertical arrow sends $\phi^\BR$ to $h$ which is then sent to the matrix $\dot s$. If $\lambda^{\mathbb{R}}$ has good parity, then $\dot s$ is a permutation matrix so that the corresponding permutation $s$ satisfies
        (cf. Lemma \ref{lemma: Sym vs L-param})
		\begin{equation}\label{eq: slambda=mu}
			s \cdot (s_1 + t_1, \ldots, s_n + t_n) = (-s_1 + t_1, \ldots, -s_n + t_n),
		\end{equation}
		and $h$ is any matrix so that ${}^t h h = \dot s$. If $\lambda^{\mathbb{R}}$ has bad parity, $\dot s$ is a signed skew symmetric permutation matrix satisfying the above equation, and $h$ is so that ${}^t h J\inv h = \dot s$. To proceed, we need to trace through the definition of $\iota_{geom}$ more carefully. 
		
		Suppose $\lambda^{\mathbb{Q}_p}(\Fr)$ is supported in $\BZ$ and $\lambda^{\mathbb{R}}$ has good parity (resp. bad parity). Then $\iota_{geom}(\dot s)$ is the $H_{\lambda^{\mathbb{Q}_p}}$-orbit of the map $x = (x^<, x_{-1}, x_0, x^>) \in V_{\lambda^{\mathbb{Q}_p}} \subset E_W$ where $x^\diamond = (x^<, x^>)$ is the element fixed in this proposition, and $x_0 \circ x_{-1}: W_{-1} \to W_1$ corresponds, under the isomorphisms $W_{-1} \cong \BC^n \cong W_1$ induced by the bases chosen in \ref{cls: basis of W}, to the map $(-{}^t h) h = -\dot s : \BC^n \to \BC^n$ (resp. $-{}^th J^{-1}h = -
        \dot{s}: \BC^n \to \BC^n$).
        By (\ref{eq: slambda=mu}), we have
		\begin{equation*}
			-\dot s \cdot ( s_1 - t_1, \ldots, s_n - t_n) = (s_1 + t_1, \ldots, s_n + t_n)
		\end{equation*}
		which implies the multisegment corresponding to $x$ is $\bm$. Hence the diagram commutes.
		
		Suppose $\lambda^{\mathbb{Q}_p}(\Fr)$ is supported in $ \frac{1}{2}\mathbb{Z}\backslash \mathbb{Z}$ and $\lambda^{\mathbb{R}}$ has good parity (resp. bad parity). Then $\iota_{geom}(\dot s)$ is the $H_{\lambda^{\mathbb{Q}_p}}$-orbit of $x = (x^<, x_{-\frac12}, x^>)$ where $x_{-\frac12}: W_{-\frac12} \to W_{\frac12}$ corresponds to $\dot s$ under the identification
		\begin{equation} 
			\{f: W_{-\frac12} \to W_{\frac12} \mid f^* = -f\} \bijects
			\operatorname{Sym}(W_{-\frac12}) \bijects
			\operatorname{Sym}_n(\BC),
		\end{equation}
        \begin{equation} 
			\Big({\rm resp.} \quad \{f: W_{-\frac12} \to W_{\frac12} \mid f^* = -f\} \bijects
			\operatorname{Skew}(W_{-\frac12}) \bijects
	     \operatorname{Skew}_n(\BC) \Big)
		\end{equation}
		where the first map is (\ref{eq: Sym vs ad}) sending $f$ to the symmetric (resp. skew-symmetric) form $(v,v') = \langle f(v), v' \rangle$ and the second map takes the matrix of the form $(-,-)$. Therefore the matrix of $x_{-\frac12}$ with respect to the bases of $W_{-\frac12}$ and $W_{\frac12}$ chosen in \ref{cls: basis of W} is equal to $\dot s$, and by the same argument as in the previous case, the multisegment corresponding to $x$ is $\bm$. Thus the diagram commutes. This shows that $\iota = \iota_{geom}$.
	\end{proof}

	\begin{proof}[Proof of $\tilde{\iota} = \tilde{\iota}_{geom}$]
        We now turn to the proof of the equality $\tilde{\iota} = \tilde{\iota}_{geom}$.
        In the bad parity case (cf. (d) (e)) the component groups $A_{\phi^{\BR}}$ and $A_{\phi^{\BQ_p}}$ are both trivial, hence there is nothing to prove. Consider the good parity cases. We need to show that the isomorphism $A_{\phi^{\mathbb{R}}} \cong A^{+}_{\phi^{\mathbb{Q}_p}}$ for $H$ symplectic (resp. $A_{\phi^{\mathbb{R}}} \cong A_{\phi^{\mathbb{Q}_p}}$ for $H$ orthogonal) defined combinatorially in \textsection \ref{subsec: main} is the same as the one given by \eqref{eq: geometric correspondence 2} (resp. \eqref{eq: geometric correspondence 1}). 
        
        Recall from Lemma \ref{lemma: Sym vs L-param} that $\phi^\BR$ corresponds to an involution $s$ in the Weyl group $W^{\D G}$ so that $BC(\phi^\BR) = z^{\lambda^{\mathbb{R}}} \bar z^{-s\lambda^{\mathbb{R}}}$. Viewing $W^{\D G}$ as the symmetric group, in fact $s$ can be chosen to satisfy $s(i) = i$ if and only if $\lambda_i = \lambda_{s(i)}$, and such $s$ is unique up to conjugation by $W^{\D M(\lambda^{\mathbb{R}})}$, see Lemma \ref{prop: Weyl group parametrization} . Taking $\dot s\in \GL_n(\BC)$ to be a permutation matrix lifting $s$ and taking $h\in \GL_n(\BC)$ so that $\dot s = {}^t h h$, it follows from the proof of Lemma \ref{lemma: Sym vs L-param} that $\phi^{\mathbb{R}}$ corresponds to the element $(y,\D P(\lambda^{\mathbb{R}}))$ in the ABV space, where $y=\Ad(h)^{-1}(J^{-1}\tau)\in \cI(\lambda^{\mathbb{R}})$. Recall also that we have identified
        \[
            A_{\phi^{\mathbb{R}}} \cong (\mathbb{Z}/2\mathbb{Z})^{I^{+}}, 
        \]
        where $I^+$ is defined in (\ref{eq: index set}). Our first task is to describe $I^+$ in terms of the element $s$.
        
        \begin{lemma}\label{lem: characterization-I-plus}
            The set $I^{+}$ is in bijection with the set $\{\lambda_j | 1 \leqslant j \leqslant n, s(j)=j\}$. More precisely, writing
            \[
            BC(\phi^{\BR})=z^{\lambda^{\mathbb{R}}} \bar z^{-s\lambda^{\mathbb{R}}}
            =\bigoplus_{j=1}^n (z/\bar{z})^{\frac{\lambda_j+\lambda_{s(j)}}{2}}(z\bar{z})^{\frac{\lambda_j-\lambda_{s(j)}}{2}},
            \]
            the irreducible components of $BC(\phi^\BR)$ indexed by $\lambda_j$ is given by
            \begin{equation*}
            	\bigoplus_{\substack{i : s(i) = i\\\lambda_i=\lambda_j}} (z/\bar{z})^{\frac{\lambda_i+\lambda_{s(i)}}{2}}(z\bar{z})^{\frac{\lambda_i-\lambda_{s(i)}}{2}}
            	= \bigoplus_{\substack{i : s(i) = i\\\lambda_i=\lambda_j}} (z/\bar{z})^{\lambda_j}.
            \end{equation*}
        \end{lemma}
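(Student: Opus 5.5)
We start from the decomposition of $BC(\phi^\BR)=z^{\lambda^{\mathbb{R}}}\bar z^{-s\lambda^{\mathbb{R}}}$ displayed in the statement. Writing $t_j=\frac{\lambda_j+\lambda_{s(j)}}{2}$ and $s_j=\frac{\lambda_j-\lambda_{s(j)}}{2}$, the summand indexed by $j$ is the character $(z/\bar z)^{t_j}(z\bar z)^{s_j}$. Recall that, under good parity, $I^+=\{i\in K\mid s_i=0\}$: it indexes the distinct conjugate self-dual characters, i.e. those with trivial $(z\bar z)$-part. So we must show two things. First, $s_j=0$ exactly when $\lambda_j=\lambda_{s(j)}$. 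Second, the distinct characters with $s_j=0$ are in bijection with the distinct values $\lambda_j$ at fixed points $j$ of $s$, with the stated grouping.

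The first step is immediate from the formula: $s_j=0$ if and only if $\lambda_j=\lambda_{s(j)}$. In that case the character is $(z/\bar z)^{\lambda_j}$.

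Next we use the normalization of $s$ fixed just before the lemma, namely $s(i)=i$ if and only if $\lambda_i=\lambda_{s(i)}$ (Lemma~\ref{prop: Weyl group parametrization}). With it, the condition $s_j=0$ becomes exactly $s(j)=j$. Hence the conjugate self-dual summands of $BC(\phi^\BR)$ are precisely the $(z/\bar z)^{\lambda_j}$ with $s(j)=j$.

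Two such characters $(z/\bar z)^{\lambda_i}$ and $(z/\bar z)^{\lambda_j}$ are isomorphic if and only if $\lambda_i=\lambda_j$, since characters $z\mapsto (z/\bar z)^t$ with $t\in\frac12\BZ$ are distinct for distinct $t$. Grouping the isomorphic components therefore identifies $I^+$ with the set of values $\{\lambda_j\mid s(j)=j\}$. The isotypic component attached to the value $\lambda_j$ is the sum over $\{i: s(i)=i,\ \lambda_i=\lambda_j\}$, as claimed.

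The remaining point to check is that no non-fixed index contributes to these isotypic pieces. This holds because for $s(i)\neq i$ we have $s_i\neq 0$, so that summand lies in $J\sqcup J^\vee$ rather than $I^+$. The only real content is the normalization of $s$, which is taken from the cited lemma, so no serious obstacle is expected.
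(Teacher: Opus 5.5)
Your proposal is correct and follows essentially the same route as the paper. Both identify $I^+$ with the summands whose $(z\bar z)$-exponent vanishes, i.e.\ $\lambda_j=\lambda_{s(j)}$, and then use the chosen normalization of $s$ ($s(j)=j$ iff $\lambda_j=\lambda_{s(j)}$) to turn this into the fixed-point condition; your remarks on the distinctness of the characters $(z/\bar z)^{t}$ and on non-fixed indices landing in $J\sqcup J^\vee$ just make explicit what the paper's short proof leaves implicit.
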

        \begin{proof}[Proof of Lemma \ref{lem: characterization-I-plus}]
			 From the definition (\ref{eq: index set}), $I^{+}$ is indexed by the set
			 \[
				\Big\{ \frac{\lambda_j+\lambda_{s(j)}}{2}| 1 \leqslant j \leqslant n, \lambda_j-\lambda_{s(j)}=0 \Big\}
				=\{\lambda_j|1 \leqslant j \leqslant n, \lambda_j =\lambda_{s(j)}\}.
			 \]
			 We have chosen our $s$ so that $\lambda_j = \lambda_{s(j)}$ if and only if $s(j) = j$. So $I^+$ is indexed by the set $\{\lambda_j |1 \leqslant j \leqslant n, s(j)=j\}$.
        \end{proof}


		Let us choose any $j$ with $s(j) = j$. We first construct an explicit element in the generator of $\BZ/2\BZ \subset A_{\phi^\BR}$ indexed by $\lambda_j$. Define $\varepsilon_j \in GL_{n}(\mathbb{C})$ by $\varepsilon_j(e_i) = (-1)^{\delta_{ij}}e_i$ for the standard basis $\{e_1, \cdots, e_n\}$, i.e. $\varepsilon_j$ is the diagonal matrix with $1$'s on the diagonal except a $-1$ on the $j$-th entry. Note that 
		\[
			y=\Ad(h)^{-1}(J^{-1}\tau)={\dot s}^{-1}J^{-1}\tau
		\]
		and $s(j) = j$ implies $\dot s = (\dot s)\inv$ commutes with $\varepsilon_j = \varepsilon_j\inv = {}^t\varepsilon_j$. Hence we have 
		\[
			\Ad(\varepsilon_j) y
			= \varepsilon_j{\dot s}^{-1} {}^t\varepsilon_j J^{-1}\tau
			= y
		\]
		and $\varepsilon_j$ indeed centralizes $(y, \D P(\lambda^{\mathbb{R}}))$. By construction $\varepsilon_j$ lies in the block orthogonal subgroup of $\GL_n(\BC)$ supported on the rows and columns indexed by the $i$'s with $s(i) = i$ and $\lambda_i=\lambda_j$. In view of the description of centralizer
		\[
		S_{\phi^{\mathbb{R}}} \cong \prod_{i \in I^{+}} O(l_i, \mathbb{C}) \times \prod_{j \in J} GL(l_j)
		\] 
		given in \textsection \ref{subsec: main}, $\varepsilon_j$ lies in the factor $O(l_i, \mathbb{C})$ indexed by the element $i \in I^+$ corresponding to $\lambda_j$, and because of the $-1$ entry in $\varepsilon_j$ it descends to the nontrivial element of the component group of $O(l_i, \mathbb{C})$.
		
		We now find the corresponding element in $A^{+}_{\phi^{\mathbb{Q}_p}}$ (resp. $A_{\phi^{\mathbb{Q}_p}}$) for $H$ symplectic (resp. orthogonal) by tracing through the geometric construction. Through \eqref{eq: geometric correspondence 2} (resp. \eqref{eq: geometric correspondence 1}), $\phi^{\mathbb{R}}$ corresponds to the $H_{\lambda^{\mathbb{Q}_p}}$-orbit of certain $x \in V^{reg}_{\lambda^{\mathbb{Q}_p}} \cap \varphi^{-1}(x^\diamond)$.  In case $\lambda^{\mathbb{Q}_p}(\Fr)$ is supported in $\BZ$, as discussed above we have 
		$x= (x^<, x_{-1}, x_0, x^>)\in V_{\lambda^{\mathbb{Q}_p}}$, where $x^\diamond = (x^<, x^>)$ is the element fixed in \ref{cls: basis of W}, and $x_{-1}: W_{-1} \to W_0$ corresponds, under the isomorphisms $W_{-1} \cong \BC^n \cong W_0$ induced by the bases chosen in \ref{cls: basis of W}, to the map $h: \BC^n \to \BC^n$. Moreover, we have 
		$x_0=-{}^t h$ and 
		\[
		-\dot s =-{}^t h h=x_0x_{-1}.
		\]
		Identifying $P(\lambda^{\mathbb{Q}_p}) = \widehat{P}(\lambda^{\mathbb{R}})$ (cf. Corollary \ref{cor: geom comparison}), we have 
		\[
		H^{+}_{\lambda^{\mathbb{Q}_p}}(x^\diamond )\cong \widehat{P}(\lambda^{\mathbb{R}}) \times \operatorname{Isom}(W_0) .
		\]
		We claim that under the isomorphism
		\begin{multline}\label{eq: ABV vs slice again}
			[\D G \backslash (\D G \cdot y \times \D G/ \D P(\lambda^{\mathbb{R}}))] \cong [O(n,\BC) \backslash \GL_n(\BC)/ \D P(\lambda^{\mathbb{R}})] \\
			\cong [(\D P(\lambda^{\mathbb{R}}) \times \operatorname{Isom}(W_0)) \backslash \varphi\inv(x^\diamond)] = [H^{+}_{\lambda^{\mathbb{Q}_p}}(x^\diamond) \backslash \varphi\inv(x^\diamond)]
		\end{multline}
		the element $\varepsilon_j$ stabilizing $(y,\D P(\lambda^{\mathbb{R}}))$ corresponds to the element $\tilde{\varepsilon}_j=(x_{-1} \circ \varepsilon_j \circ x^{-1}_{-1}, \varepsilon_j) \in \operatorname{Isom}(W_0) \times \widehat{P}(\lambda^{\mathbb{R}})$ stabilizing $x$. To see this, first observe that under the first map in (\ref{eq: ABV vs slice again}),
		\begin{equation*}
			(y, \D P(\lambda^{\mathbb{R}}))
			\mapsto h  
		\end{equation*}
        where $h \in \D{G}$ satisfies that $\Ad(h) (y) = J\inv \tau$. The corresponding maps on the stabilizers are
		\begin{equation*}
			Z_{\D G}(y, \D P(\lambda^{\mathbb{R}})) \xrightarrow[\cong]{\Ad h} Z_{\D G}(J\inv \tau, h \D P(\lambda^{\mathbb{R}}))
			= Z_{O(n,\BC)}( h \D P(\lambda^{\mathbb{R}}))
			\xrightarrow[\cong]{k \mapsto (k, h\inv k h)} Z_{O(n,\BC) \times \D P(\lambda^{\mathbb{R}})}(h)
		\end{equation*}
		under which 
		\begin{equation*}
			\varepsilon_j \mapsto h \varepsilon_j h\inv \mapsto (h \varepsilon_j h\inv, \varepsilon_j) = (x_{-1} \circ \varepsilon_j \circ x_{-1}^{-1}, \varepsilon_j) = \tilde \varepsilon_j.
		\end{equation*}
		This proves the claim.		
		
		Finally, we check that $\tilde \varepsilon_j$ lies in the component $\BZ/2\BZ \subset A^{+}_{\phi^{\mathbb{Q}_p}}$ (resp. $A_{\phi^{\mathbb{Q}_p}}$) indexed by the irreducible factor $1_{W_{\mathbb{Q}_p}} \boxtimes S_{2\lambda_j + \delta}$ in ${\rm std}_{\D{H}} \circ \phi^{\BQ_p}$. To do so, we decompose $x$ as a direct sum of quiver subrepresentations corresponding to the multisegment $\bm$ in (\ref{eqn: assign-multisegment}). Then $\tilde{\varepsilon}_j$ acts nontrivially only on the irreducible quiver subrepresentation indexed by the segment $[-\tilde{\lambda}_j, \tilde{\lambda}_j]$ which corresponds to the factor $1_{W_{\mathbb{Q}_p}} \boxtimes S_{2\lambda_j + \delta}$. Therefore the isomorphism $A_{\phi^\BR} \cong A^{+}_{\phi^{\BQ_p}}$ (resp. $A_{\phi^{\mathbb{Q}_p}}$) induced by geometry agrees with the one in \textsection \ref{subsec: main}, and the proof is complete in case when $\lambda^{\mathbb{Q}_p}(\Fr)$ is supported in $\BZ$.
		
		
		We continue to consider the case when $\lambda^{\mathbb{Q}_p}(\Fr)$ is supported in $\frac{1}{2}\mathbb{Z}/\mathbb{Z}$.
		In this case
		\[
			H_{\lambda^{\mathbb{Q}_p}}(x^\diamond ) \cong \D P(\lambda^{\mathbb{R}}).
		\]
	    Here $\iota_{geom}(\dot s)$ is the $H_{\lambda^{\mathbb{Q}_p}}$-orbit of $x = (x^<, x_{-\frac12}, x^>)$ where $x_{-\frac12}: W_{-\frac12} \to W_{\frac12}$ corresponds to $\dot s$ under the identification
		\begin{equation*} 
			\{f: W_{-\frac12} \to W_{\frac12} \mid f^* = -f\} \bijects
			\operatorname{Sym}(W_{-\frac12}) \bijects
			\operatorname{Sym}_n(\BC).
		\end{equation*}
		We argued in the proof of $\iota = \iota_{geom}$ that the matrix of $x_{-\frac12}$ with respect to the bases of $W_{-\frac12}$ and $W_{\frac12}$ chosen is equal to $\dot s$.
		Now the element $\varepsilon_j\in \D P(\lambda^{\mathbb{R}})=P(\lambda^{\mathbb{Q}_p})$ gives rise to 
		an element which stabilizes $x$. As in the previous case,
		$\varepsilon_j$ induces the generator in $A^{+}_{\phi^{\mathbb{Q}_p}}$ (resp. $A_{\phi^{\mathbb{Q}_p}}$) indexed by $1_{W_{\mathbb{Q}_p}} \boxtimes S_{2\lambda_j + \delta}$. This finishes the proof.

	\end{proof}	
\end{clause}

\begin{clause}[The assignment $\psi^\BR \mapsto \psi^{\mathbb{Q}_p}$]\label{cls: psiR to psip}
	We now turn to the assignment of A-parameters. Let $\psi^\BR$ be an A-parameter for $G$ of infinitesmial character $\lambda^{\mathbb{R}}$. Its base change to $\BC$ can be written in the form
	\begin{equation*}
		BC(\psi^\BR) = \bigoplus_{i=1}^r (z/\bar z)^{\frac{k_i}2} \boxtimes S_{m_i}
	\end{equation*}
	where $k_i \in \BZ$, $m_i \in \BZ_{\ge 1}$, $S_{m_i}$ is the irreducible $m_i$-dimensional representation of $\SL_2(\BC)$. As multisets 
	\begin{equation*}
		\{\lambda_1,\ldots,\lambda_n\} = \bigsqcup_{i=1}^r \{ \frac12(k_i+m_i-1), \frac12 (k_i + m_i -3), \ldots, \frac12 (k_i + 1 - m_i) \}.
	\end{equation*}
    Since $\lambda_n > (1 -\delta)/2$ by our assumption, then
    $k_i + \delta > m_i$ for each $i$. 
	
	To $\psi^\BR$ we attach the A-parameter $\psi^{\mathbb{Q}_p}$ of $H$ such that
	\begin{equation*}
		{\rm std}_{\D{H}} \circ \psi^{\mathbb{Q}_p} = \bigoplus_{i = 1}^{r} 1_{W_{\mathbb{Q}_p}} \boxtimes S_{a_i} \boxtimes S_{b_i}
	\end{equation*}
	where $b_i = m_i$ and $a_i = k_i + \delta$. It is straightforward to check that the infinitesimal character $\lambda^{\mathbb{Q}_p}$ of $\psi^{\mathbb{Q}_p}$ is the one attached to $\lambda^{\mathbb{R}}$ in \ref{cls: lambda to lambdap}. Moreover, it is easy to see that
    \(
    \iota(\phi_{\psi^{\mathbb{R}}}) = \phi_{\psi^{\mathbb{Q}_p}}.
    \)

\end{clause}
\section{Proof of Theorem~\ref{thm: comparison of A-packets} and \ref{thm: comparison of endoscopic maps}}\label{sec: reduction}

From now on, we work under the setup of \textsection \ref{subsec: explicit correspondence} in the good parity case. In this section we prove our main theorems \ref{thm: comparison of A-packets} and \ref{thm: comparison of endoscopic maps} in the regular case, i.e., $\lambda^{\mathbb{R}}$ is regular. We then explain how we may reduce the singular case to the regular case, modulo a comparison result which will be proven in the next section.

\subsection{Regular case}\label{subsec: regular-case}

In this subsection. we verify Theorem~\ref{thm: comparison of A-packets} and Theorem~\ref{thm: comparison of endoscopic maps} in the case that $\lambda^{\mathbb{R}}$ is regular, i.e., $\lambda_1 > \lambda_2 > \cdots > \lambda_n$.

\begin{clause}[Adams-Johnson packet over $\BR$]\label{cls: A-J packet}
	On the real side, the A-packet $\Pi_{\psi^{\mathbb{R}}}(U(p,q))$ agrees with the packet constructed by Adams-Johnson (see \cite{AMR:2018}). We first recall this story, and then give an alternative parameterization of the packet in a way that matches the parameterization on the $p$-adic side. 

    Recall 
\[
    BC(\psi^{\mathbb{R}}) = \bigoplus_{i = 1}^{r} \, (z/\bar{z})^{\frac{k_i}{2}} \boxtimes S_{m_i}, \quad z \in \mathbb{C}^{\times}
\]
where $k_i \in \mathbb{Z}, m_i \in \mathbb{Z}_{> 0}$ such that $n \equiv k_i + m_i$ mod $2$. Let $A_i = \frac{m_i - 1}{2} + \frac{k_i}{2}$ and $B_i = - \frac{m_i - 1}{2} + \frac{k_i}{2}$, then the $[B_i, A_i]$'s are disjoint in this case. Let us arrange the subscripts so that $A_i > A_{i+1}$. The Arthur packet $\Pi_{\psi^{\mathbb{R}}}(U(p,q))$ is cohomological and can be constructed as follows. Let 
	\[
	G^{an} := U(n, 0) = \{g \in GL_n(\mathbb{C}) \, | \, {}^t\bar{g}g = I_n\},
	\]
	and $T$ be the subgroup of diagonal matrices and
	\[
	L := U(m_1, 0) \times \cdots \times U(m_r, 0)
	\]
	be the subgroup of block matrices. Define its inner form $G' := U(p,q)$ by twisting the rational structure of $G^{an}$ by $I_{p,q}$. Then $G'_{\mathbb{C}} = G^{an}_{\mathbb{C}}$ and $T$ is a subgroup of $G'$ under this identification. Let $K' = U(p) \times U(q)$ in $G'$. Let $Q_{\mathbb{C}}$ be the standard complex parabolic subgroup of $G'_{\mathbb{C}}$ with Levi factor $L_{\mathbb{C}}$ and denote its Lie algebra by $\mathfrak{q}$. Consider the A-parameter of $L$
	\[
	\psi^{\mathbb{R}}_{L}: W_{\mathbb{R}} \times SL_2(\mathbb{C}) \rightarrow \L{L},
	\]
	such that $SL_2(\mathbb{C})$ is sent to the principle $SL_2(\mathbb{C})$ of $\D{L}$ and 
	\[
	\psi^{\mathbb{R}}_{L}(z) = \psi^{\mathbb{R}}(z) (\frac{z}{\bar{z}})^{-\delta_{\D{G}} + \delta_{\D{L}}}, \quad z \in \mathbb{C}^{\times}, 
	\]
    where $\delta_{\D{G}}$ (resp. $\delta_{\D{L}}$) denotes the half sum of positive coroots in the $\D{G}$ (resp. $\D{L}$). The associated A-packet consists of a single character 
	\[
	\Lambda_{L, \psi^{\mathbb{R}}}: (t_1, \cdots, t_r) \mapsto {\rm det}(t_1)^{a_1} \cdots {\rm det}(t_r)^{a_r},
	\]
	where $t_i \in U(m_i, 0)$ and
	\[
	a_i = \frac{k_i + m_i - n}{2} + \sum_{j < i} m_j \in \mathbb{Z}.
	\]
	Here $a_1 \geqslant a_2 \geqslant \cdots \geqslant a_r$. For any $w \in W(G'_\mathbb{C}, T_\mathbb{C})$, let $L_w$ be the subgroup of $G'$ with $L_{w}(\mathbb{C}) = w(L(\mathbb{C}))$. We translate the character $\Lambda_{L, \psi^{\mathbb{R}}}$ to $L_w$, denoted by $w \Lambda_{L, \psi^{\mathbb{R}}}$. Define
	\[
	\pi(\psi^{\mathbb{R}}, w) := \mathcal{R}_{w(\mathfrak{q})}^{\frac{1}{2} {\rm dim}(K' \cap L_w \backslash K')} (w \Lambda_{L, \q^{\mathbb{R}}}).
	\]
	This is a cohomological induction in the good range, also denoted by $A_{w(\mathfrak{q})}(w \Lambda_{L, \q^{\mathbb{R}}})$ in the literature. Hence, $\pi(\psi^{\mathbb{R}}, w)$ is an irreducible representation of $G'$.
	
	\begin{theorem}[Adams-Johnson \cite{AJ:1987}, Arancibia-M{\oe}glin-Renard \cite{AMR:2018}]
		There is a bijection
		\begin{align*}
			W(G', T) \backslash W(G'_\mathbb{C}, T_\mathbb{C}) / W(L_\mathbb{C}, T_\mathbb{C}) \xrightarrow{\simeq} \Pi_{\psi^{\mathbb{R}}}(G') \\
			w \mapsto \pi(\psi^{\mathbb{R}}, w).
		\end{align*}
	\end{theorem}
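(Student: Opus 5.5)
The statement is a citation of Adams--Johnson together with Arancibia--M{\oe}glin--Renard, so the plan is to assemble it from three ingredients: well-definedness, injectivity, and the identification of the image with $\Pi_{\psi^{\mathbb{R}}}(G')$. We do not reprove cohomological induction theory. We work throughout with the fixed $\theta$-stable Cartan $T \subset K'$ (compact, since $T$ is diagonal in $U(p)\times U(q)$), so that every $w(\mathfrak{q})$ is a $\theta$-stable parabolic containing $\mathfrak{t}_{\mathbb{C}}$.

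\textbf{Step 1: well-definedness on double cosets.} We check that $\pi(\psi^{\mathbb{R}},w)$ depends only on the double coset.
\begin{enumerate}
\item For the right action, take $w' = w u$ with $u \in W(L_\mathbb{C},T_\mathbb{C})$. Then $u$ normalizes $\mathfrak{q}$ and fixes the character $\Lambda_{L,\psi^{\mathbb{R}}}$, since it is a character of $L$. Hence the data $(w(\mathfrak{q}), w\Lambda_{L,\psi^{\mathbb{R}}})$ are unchanged.
\item For the left action, $W(G',T)$ is realized by elements of $N_{K'}(T)$. Conjugating by such an element transports $(w(\mathfrak{q}), w\Lambda)$ to an isomorphic pair, and cohomological induction is compatible with $K'$-conjugation. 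Hence the resulting modules are isomorphic.
\end{enumerate}

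\textbf{Step 2: irreducibility and injectivity.} We use the explicit exponents $a_i$ and the disjointness of the segments $[B_i,A_i]$ to check that $w\Lambda_{L,\psi^{\mathbb{R}}}$ lies in the good range for $w(\mathfrak{q})$. Concretely, the infinitesimal character $\lambda^{\mathbb{R}}$, which is regular here, is dominant for the $\mathfrak{u}$-roots. By Vogan--Zuckerman \cite{VZ:1984} (see also \cite{KV:1995}), $A_{w(\mathfrak{q})}(w\Lambda)$ is then irreducible and nonzero.

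For injectivity we compare lowest $K'$-types. The lowest $K'$-type of $A_{w(\mathfrak{q})}(w\Lambda)$ has highest weight $w\Lambda|_{\mathfrak t} + 2\rho(\mathfrak{u}\cap\mathfrak{p})$, and it determines the $K'$-conjugacy class of the pair $(w(\mathfrak{q}), w\Lambda)$. In the good range, the Vogan--Zuckerman theorem states that two such modules are isomorphic iff the pairs $(w(\mathfrak q),w\Lambda)$ are $K'$-conjugate. Since the infinitesimal character is regular, $K'$-conjugacy of pairs amounts to $w' \in W(G',T)\,w\,W(L_\mathbb{C},T_\mathbb{C})$. This gives injectivity.

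\textbf{Step 3: the image is the Arthur packet.} This is the main obstacle, because it is a genuinely deep comparison rather than a formal consequence of Steps 1 and 2. The plan is to invoke \cite{AMR:2018}, which proves that the Adams--Johnson packet satisfies the same stable and endoscopic character identities that Mok and M{\oe}glin--Renard use to characterize $\Pi_{\psi^{\mathbb{R}}}(U(p,q))$. Linear independence of characters then forces equality of the two packets as sets. Concretely, we would:
\begin{enumerate}
\item identify the stable combination $\sum_w \pm \pi(\psi^{\mathbb{R}},w)$ with the stable distribution attached to $\psi^{\mathbb{R}}$, using Johnson's character formula for $A_{\mathfrak q}(\lambda)$;
\item compare the endoscopic transfers with those of the packets on the endoscopic groups $U(p_1,q_1)\times U(p_2,q_2)$, which are again of Adams--Johnson type.
\end{enumerate}
Finally, since the union over the double cosets is exactly the Adams--Johnson packet, and the map is injective by Step 2, the map is a bijection onto $\Pi_{\psi^{\mathbb{R}}}(G')$.
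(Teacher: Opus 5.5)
Your outline matches the paper in substance, since the paper gives no argument of its own for this statement. It imports it wholesale from Adams--Johnson (construction, irreducibility, parametrization by the double coset) and from Arancibia--M{\oe}glin--Renard (identification with the packets of Mok and M{\oe}glin--Renard). Your Steps~1 and~3 are a fair summary of those two inputs. One small correction in Step~3: endoscopic groups are quasi-split, so the comparison is with packets on quasi-split $U(a)\times U(b)$, $a+b=n$, not on $U(p_1,q_1)\times U(p_2,q_2)$.

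The real problem is the injectivity argument in Step~2. You claim that, in the good range, the lowest $K'$-type (or the module itself) determines the $K'$-conjugacy class of the pair $(\mathfrak q,\Lambda)$. That is false. What Vogan--Zuckerman show is that, for trivial coefficients, $A_{\mathfrak q}$ depends only on $\mathfrak u\cap\mathfrak p$. Here is a counterexample:
\begin{itemize}
\item In $U(2,1)$, take the $\theta$-stable parabolic $\mathfrak q$ with Levi $\mathfrak k$ and a $\theta$-stable Borel $\mathfrak b\subset\mathfrak q$.
\item Then $\mathfrak u_{\mathfrak b}\cap\mathfrak p=\mathfrak u_{\mathfrak q}\cap\mathfrak p$, and both $(\mathfrak q,0)$ and $(\mathfrak b,0)$ are in the good range.
\item So $A_{\mathfrak q}\cong A_{\mathfrak b}$ is one and the same discrete series, with Harish-Chandra parameter $\rho$, although the two pairs are not $K'$-conjugate.
\end{itemize}
More generally, induction in stages gives $A_{\mathfrak q_1}(\Lambda|_{L_1})\cong A_{\mathfrak q_2}(\Lambda)$ whenever $\mathfrak q_1\subset\mathfrak q_2$ and $\mathfrak u_1\cap\mathfrak l_2\subset\mathfrak k$.

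Within the single family $\{(w(\mathfrak q),w\Lambda)\}$ at a fixed regular infinitesimal character, the injectivity you want is true. However, it is part of the content of the Adams--Johnson theorem, not a formal consequence of Vogan--Zuckerman, so Step~2 as written is unjustified.

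If you want an actual argument rather than a citation, the paper supplies one. Theorem~\ref{Langlands-sub} computes the complete Langlands parameter of $\pi(\psi^{\mathbb R};\underline p,\underline q)$. It is proved in Appendix~\ref{sec: Langlands} directly from the Knapp--Vogan description of the Langlands quotient of $A_{\mathfrak q}(\Lambda)$, independently of the bijection. From that parameter:
\begin{itemize}
\item The non-tempered part of $\phi^{\mathbb R}$ determines $l_i=\min\{p_i,q_i\}$. This works because the centers $k_i/2$ of the disjoint segments are distinct.
\item When $l_i<m_i/2$, the value of $\epsilon_-$ on $(z/\bar z)^{B_i+l_i}$ is $\eta_i$, which determines $\operatorname{sgn}(p_i-q_i)$.
\end{itemize}
Hence $(\underline p,\underline q)$ is recovered from $(\phi^{\mathbb R},\epsilon)$. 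Injectivity on double cosets then follows from injectivity of the Langlands--Shelstad correspondence.
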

	
	We now give a more combinatorial description of the double quotient in this theorem. Let us identify $W(G'_\mathbb{C}, T_\mathbb{C})$ with the symmetric group $S_{n}$. For any $w \in W(G'_\mathbb{C}, T_\mathbb{C})$, let 
	\[
	p_i := \sharp \Big\{\sum_{ j < i} m_j < a \leqslant \sum_{j \leqslant i} m_j  \, | \, w(a) \leqslant p \Big\}, \quad q_i := m_i - p_i
	\]
	for $1 \leqslant i \leqslant r$. Let $\underline{p} = (p_1, \cdots, p_r)$ and $\underline{q} = (q_1, \cdots, q_r)$. This induces a bijection 
	\[
	W(G', T) \backslash W(G'_\mathbb{C}, T_\mathbb{C}) / W(L_\mathbb{C}, T_\mathbb{C}) 
    \xrightarrow{\simeq} 
    \Big\{(\underline{p}, \underline{q}) \in \mathbb{Z}_{\geqslant 0 }^{r} \times \mathbb{Z}_{\geqslant 0 }^{r} \, | \, p_i + q_i = m_i, \sum_i p_i = p, \sum_i q_i = q \Big\}.
	\]
	Note if $w$ is sent to $(\underline{p}, \underline{q})$, 
	\[
	L_w \cong U(p_1, q_1) \times \cdots \times U(p_r, q_r).
	\]
	By taking disjoint union over $(p,q)$ such that $p + q = n$, we get a bijection
	\[
	\Big\{(\underline{p}, \underline{q}) \in \mathbb{Z}_{\geqslant 0 }^{r} \times \mathbb{Z}_{\geqslant 0 }^{r} \, | \, p_i + q_i = m_i \Big\} \xrightarrow{\simeq} \Pi_{\psi^{\mathbb{R}}}^{\rm pure}(G).
	\]
	
	In order to compare with the $p$-adic side, we shall give a different parametrization of $\Pi_{\psi^{\mathbb{R}}}^{\rm pure}(G)$. For each pair $(\underline{p}, \underline{q})$, let $l_i = {\rm min} \{p_i, q_i\}$ and $\eta_i = (-1)^{1+ \sum_{j> i} m_j} \operatorname{sgn}(p_i - q_i)$ if $p_i \neq q_i$ and is arbitrary in $\{\pm 1\}$ otherwise. Let $\ul = (l_1,\ldots, l_r)$ and $\ueta = (\eta_1,\ldots,\eta_r)$. Then the assignment $(\underline{p},\underline{q}) \mapsto (\ul, \ueta)$ defines a bijection
	\[
	\Big\{(\underline{p}, \underline{q}) \, | \, p_i + q_i = m_i \Big\} \xrightarrow{\simeq} \Big\{ (\ul, \ueta) \in \mathbb{Z}_{\geqslant 0}^{r} \times \{\pm 1\}^r \, | \, l_i \in [0, \lfloor m_i/2 \rfloor ] \Big\} / \sim,
	\]
	where the equivalence relation is given by $(\ul, \ueta) \sim (\ul', \ueta')$ if $l_i = l'_i$ for all $i$ and $\eta_i = \eta'_i$ unless $l_i = m_i/2$. We denote by $\pi(\q^{\mathbb{R}}; \underline{p}, \underline{q})$ (resp. $\pi(\psi^{\mathbb{R}}; \ul, \ueta)$) the representation in $\Pi_{\psi^{\mathbb{R}}}^{\rm pure}(G)$ corresponding to $(\underline{p}, \underline{q})$ (resp. $(\ul, \ueta)$). 
	
	For the $\psi^\BR$'s under consideration, the endoscopy map \eqref{eq: endoscopy real} lands in the subset of irreducible representations
	\[
	\Pi_{\psi^{\mathbb{R}}}^{\rm pure}(G) \rightarrow \D{A}_{\psi^{\mathbb{R}}}, \quad \pi \mapsto \epsilon_{\pi}.
	\]
	We can identify both $A_{\psi^{\mathbb{R}}}$ and $\D{A}_{\psi^{\mathbb{R}}}$ with $(\mathbb{Z}/2\mathbb{Z})^{Jord(\psi^{\mathbb{R}})}$, where $Jord(\psi^{\mathbb{R}})$ is the set of irreducible components of $BC(\psi^{\mathbb{R}})$. Suppose $\pi = \pi(\q^{\mathbb{R}}; \underline{p}, \underline{q}) = \pi(\psi^{\mathbb{R}}; \ul, \ueta)$. Then $\epsilon_{\pi} = (\epsilon_i)_{i=1}^{r}$, where
	\begin{align}
		\label{eq: character}
		\epsilon_{i} = (-1)^{m_i(1 + \sum_{j > i} m_j) + m_i(m_i - 1)/2 + q_i} = \eta^{m_i}_i (-1)^{(m_i - 2l_i)(m_i - 1 - 2l_i)/2}.
	\end{align}
	The restriction of $\epsilon_{\pi}$ to $Z(\D{G})^{\operatorname{Gal}(\BC/\BR)} = \{\pm I_n\}$ is determined by 
	\[
	\epsilon_{\pi}(-I_n) = \prod_{i = 1}^{r} \epsilon_i = \begin{cases} (-1)^{\frac{p-q}{2}} \quad & n \text{ even, } \\
		(-1)^{\frac{p - q + 1}{2}} \quad & n \text{ odd. }
	\end{cases}
	\]
	For $(\ul, \ueta)$, we define $\epsilon_{\ul, \ueta} := (\epsilon_i)_{i=1}^{r}$ by \eqref{eq: character}.

	Finally, one may calculate the complete Langlands parameter of $\pi(\psi^{\mathbb{R}}; \ul, \ueta)$. This is known to experts, and we present a detailed calculation in Appendix \ref{sec: Langlands}.
	
	\begin{theorem}[Vogan-Zuckerman \cite{VZ:1984}, Knapp-Vogan \cite{KV:1995}]
		\label{Langlands-sub}
        Let $(\phi^{\mathbb{R}}, \epsilon)$ be the complete Langlands parameter of $\pi(\psi^{\mathbb{R}}; \ul, \ueta)$. Then
        \[
        BC(\phi^{\mathbb{R}}) = \bigoplus_{i = 1}^{r} \bigoplus_{j = 0 }^{l_i-1} \Big((\frac{z}{\bar{z}})^{\frac{k_i}{2}} (z\bar{z})^{\frac{m_i - 1}{2} - j} \oplus(\frac{z}{\bar{z}})^{\frac{k_i}{2}} (z\bar{z})^{-\frac{m_i - 1}{2} + j}\Big) \bigoplus BC(\phi^{\mathbb{R}}_{-})         
        \]
        where 
		\[
		BC(\phi^{\mathbb{R}}_{-}) = \bigoplus_{i = 1}^{r} \bigoplus_{j = 0}^{A_i - B_i - 2l_i} (\frac{z}{\bar{z}})^{B_i + l_i + j},
		\]
        and $\epsilon$ corresponds to $\epsilon_{-}$ under the bijection
        \(
        \D A_{\phi^{\mathbb{R}}} \rightarrow \D A_{\phi^{\mathbb{R}}_{-}},
        \)
		where $\epsilon_{-}$ alternates over $(\frac{z}{\bar{z}})^{B_i + l_i + j}$ for $j \in [0, A_i - B_i - 2l_i]$ and takes value $\eta_i$ at $j = 0$.
	\end{theorem}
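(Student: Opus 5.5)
The plan is to compute the Langlands quotient data of the good-range cohomologically induced module $\pi(\psi^{\mathbb{R}};\ul,\ueta)=A_{w(\mathfrak{q})}(w\Lambda_{L,\psi^{\mathbb{R}}})$ by reduction in stages. This is carried out in Appendix~\ref{sec: Langlands}. First, we use the fact that in the good range, cohomological induction is compatible with induction in stages: $A_{w(\mathfrak{q})}(w\Lambda)$ is obtained by cohomologically inducing, from $L_w\cong\prod_i U(p_i,q_i)$, the one-dimensional characters $\det^{a_i}$ of $U(p_i,q_i)$. Hence it suffices to determine two things: the Langlands parameter of each one-dimensional character of $U(p_i,q_i)$, and how the Langlands parameter changes under good-range cohomological induction.

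For the second point we invoke the standard result of Vogan--Zuckerman and Knapp--Vogan. In the good range, the Langlands parameter of $\mathcal{R}_{\mathfrak{q}}(Z)$ is obtained from that of $Z$ by shifting the infinitesimal character by $\rho(\mathfrak{u})$, while keeping the same $\theta$-stable Cartan and the same real/imaginary structure. On the dual side, this means that the base change is shifted by the character $(z/\bar z)^{\delta_{\D G}-\delta_{\D L}}$. This is exactly undone by the normalization in the definition of $\psi^{\mathbb{R}}_L$. As a result, the $U(p_i,q_i)$ block contributes to $BC(\phi^{\mathbb{R}})$ precisely the $W_{\mathbb{C}}$-part of the block's Langlands parameter, twisted so that its infinitesimal character becomes the segment $[B_i,A_i]$. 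The component characters are transported through the same procedure, and the signs are compared using the Kottwitz sign and the Adams--Johnson normalization of $\epsilon$ in \eqref{eq: character}.

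For the first point, we consider the trivial (twisted) character of $U(p_i,q_i)$ with $l_i=\min(p_i,q_i)$. It is the Langlands quotient of a principal series induced from the maximally split Cartan, whose split part has rank $l_i$. Explicitly, this is the quotient of a representation induced from $GL_1(\mathbb{C})^{l_i}\times U(|p_i-q_i|,0)$, with exponents $\rho$ of the largest possible real part. This produces the pairs $(z/\bar z)^{k_i/2}(z\bar z)^{\pm(\frac{m_i-1}{2}-j)}$ for $0\le j<l_i$. The remaining compact part $U(p_i-l_i,q_i-l_i)$, which is anisotropic in effect, contributes a discrete-series (one-dimensional) parameter $\phi^{\mathbb{R}}_-$ whose characters are $(z/\bar z)^{B_i+l_i+j}$. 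The middle $m_i-2l_i$ exponents remain on the compact torus.

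The main obstacle I expect is the component character $\epsilon_-$. Since $\phi^{\mathbb{R}}_-$ is essentially a discrete parameter of a compact group $U(m_i-2l_i,0)$ (up to the form), its character on $A_{\phi^{\mathbb{R}}_-}$ is determined by Shelstad's normalization through the Harish-Chandra parameter. One must check that this character alternates along consecutive exponents and takes the value $\eta_i$ at the lowest exponent. I would verify this by computing $\epsilon$ for a one-dimensional character of $U(a,b)$ using the Kottwitz-sign convention in the pairing, together with the choice of Whittaker datum. Sign bookkeeping is then tracked through \eqref{eq: character}, where the factor $(-1)^{1+\sum_{j>i}m_j}$ in $\eta_i$ accounts for the $\rho(\mathfrak{u})$-shift between blocks. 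The case $p_i=q_i$, in which $\eta_i$ is arbitrary and $\phi^{\mathbb{R}}_-$ has no block-$i$ part, is consistent and is checked separately.
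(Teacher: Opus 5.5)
Your computation of $BC(\phi^{\mathbb{R}})$ follows the paper's route. Knapp--Vogan's Corollary 11.219, which the paper applies directly, is exactly your ``induction in stages with a $\rho(\mathfrak u)$-shift,'' packaged as Langlands data: $A$ is the product of maximal split tori of the $U(p_d,q_d)$, $\nu$ is the restricted $\rho$ of $\mathfrak l'$, and $\xi=\pi_\mu\boxtimes\chi$ has Harish-Chandra parameter $(\Lambda'+\delta'_G)|_{\mathfrak t'}$. It gives the same non-tempered pairs and the same exponents $(z/\bar z)^{B_d+l_d+j}$.

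The gap is in the component character $\epsilon_-$.

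\emph{The block-by-block reduction fails.} You treat $\phi^{\mathbb{R}}_-$ as ``essentially a discrete parameter of a compact group $U(m_i-2l_i,0)$'' and plan to compute $\epsilon_-$ one block at a time from one-dimensional characters. But $\pi_\mu$ is a single discrete series of the factor $U(p-l,q-l)$ of $M$. That group is noncompact as soon as some blocks have $p_d>q_d$ and others $p_d<q_d$. The character $\epsilon_-\in\D{A}_{\phi^{\mathbb{R}}_-}\cong\{\pm1\}^{n-2l}$ is Shelstad's character of that one discrete series, and it is not a product of block-wise characters. Its value on the $d$-th block depends on where the indices $\MI_d$ sit inside all of $\MI$; for instance, the value at the start of $\Delta_d$ is $(-1)^{n-(m_1+\cdots+m_{d-1})}\operatorname{sgn}(p_d-q_d)$. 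Establishing this global dependence is precisely what has to be proved.

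\emph{The proposed sign tools do not apply.} The Kottwitz sign enters only the ABV pairing, not the Langlands correspondence. The formula \eqref{eq: character} is the endoscopic character on $A_{\psi^{\mathbb{R}}}$ of the Arthur packet. Using it to pin down the character on $A_{\phi^{\mathbb{R}}}$ would require an a priori relation between Arthur-packet and $L$-packet characters, which is not available. Matching such data is what the paper's regular-case comparison does, with this theorem as input. Likewise, the claim that $(-1)^{1+\sum_{j>i}m_j}$ ``accounts for the $\rho(\mathfrak u)$-shift'' is asserted, not derived.

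\emph{The missing ingredient} is an explicit parametrization of the discrete series $L$-packet across all pure inner forms. The paper uses Adams' description in $\mathcal X_1(z)$: $\pi_{w\mu}\mapsto(-1)^n w^{-1}x_{p,q}$ and $\epsilon\mapsto\epsilon\, x_{\bp}$. This gives $\epsilon_i=(-1)^{n-i}$ or $(-1)^{n+1-i}$ according as $w(i)\le p$ or not. Apply it to $\mu=\sigma\mu_{\Std}$, with $\sigma$ the order-preserving bijection $\Delta_d\to\MI_d$. One finds that $\epsilon_-$ alternates on each $\Delta_d$ and equals $\eta_d$ at the end of $\Delta_d$. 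That end is the $j=0$ exponent $B_d+l_d$, because the indexing $\Delta_d\to[B_d+l_d,A_d-l_d]$ is order-reversing.
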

\end{clause}

We now turn to the $p$-adic side. Note in the even orthogonal case,  $H$ denotes the full even orthogonal group. Recall
\[
{\rm std}_{\D{H}} \circ \psi^{\mathbb{Q}_p} = \bigoplus_{i = 1}^{r} 1_{W_{\mathbb{Q}_p}} \boxtimes S_{a_i} \boxtimes S_{b_i}
\]
where $a_i = k_i + \delta$ and $b_i = m_i$. Our assumption on $\delta$ implies that $a_i > b_i$. Let $\widetilde{A}_i := (a_i + b_i)/2 - 1 = A_i + (\delta - 1)/2$ and $\widetilde{B}_i := (a_i - b_i)/2 = B_i + (\delta - 1)/2$. Since the $[B_i, A_i]$'s are disjoint and so are $[\widetilde{B}_i, \widetilde{A}_i]$, then $\psi^{\mathbb{Q}_p}$ has \textit{discrete diagonal restriction} in the sense of Moeglin \cite{Moeglin:2009}.

\begin{theorem}[Moeglin \cite{Moeglin:2009}]\,
\begin{enumerate}
\item
If $H$ is orthogonal, there is a bijection
\begin{align}
\label{eq: DDR even}
\Pi_{\psi^{\mathbb{Q}_p}}^{\rm pure}(H) \rightarrow \Big\{ (\ul, \ueta) \in \mathbb{Z}_{\geqslant 0}^{r} \times \{\pm 1\}^r \, | \, l_i \in [0, \lfloor m_i/2\rfloor ] \Big\} / \sim.
\end{align}
If $H$ is symplectic, there is a bijection
\begin{align}
\label{eq: DDR odd}
\Pi_{\psi^{\mathbb{Q}_p}}^{\rm pure}(H) \rightarrow \Big\{ (\ul, \ueta) \in \mathbb{Z}_{\geqslant 0}^{r} \times \{\pm 1\}^r \, | \, l_i \in [0, \lfloor m_i/2 \rfloor], \prod_{i} \epsilon_i = 1 \Big\} / \sim.
\end{align}
\item 
Let $\pi(\psi^{\mathbb{Q}_p}; \ul, \ueta)$ be the representation corresponding to $(\ul, \ueta)$, and $(\phi^{\mathbb{Q}_p}, \epsilon)$ be its complete Langlands parameter. Then
\[
{\rm std}_{\D{H}} \circ \phi^{\mathbb{Q}_p} = \bigoplus_{i = 1}^{r} \bigoplus_{j = 0}^{l_i - 1} \Big(|\cdot|_{W_{\mathbb{Q}_p}}^{\frac{b_i - 1}{2} - j} \boxtimes S_{a_i} \oplus |\cdot|_{W_{\mathbb{Q}_p}}^{-\frac{b_i - 1}{2} + j} \boxtimes S_{a_i}\Big) \bigoplus ({\rm std}_{\D{H}_{-}} \circ \phi^{\mathbb{Q}_p}_{-})
\]
where $H_{-}$ is of the same type as $H$ and
\[
{\rm std}_{\D{H}_{-}} \circ \phi^{\mathbb{Q}_p}_{-} = \bigoplus_{i = 1}^{r} \bigoplus_{j = 0}^{A_i - B_i - 2l_i} 1_{W_{\mathbb{Q}_p}} \boxtimes S_{2(B_i + l_i + j) + \delta} 
\]
and $\epsilon$ corresponds to $\epsilon_{-}$ under the bijection
        \(
        \D A_{\phi^{\mathbb{Q}_p}} \rightarrow \D A_{\phi^{\mathbb{Q}_p}_{-}},
        \)
where $\epsilon_{-}$ alternates over $1_{W_{\mathbb{Q}_p}} \boxtimes S_{2(B_i + l_i + j) + \delta}$ for $j \in [0, A_i - B_i - 2l_i]$ and takes value $\eta_i$ at $j = 0$.
\item
The endoscopic map has image in the subset of irreducible representations
\[
\Pi_{\psi^{\mathbb{Q}_p}}^{\rm pure}(H) \rightarrow \D{A}_{\psi^{\mathbb{Q}_p}}, \quad \pi \mapsto \epsilon_{\pi}
\]
and $\epsilon_{\pi} = \epsilon_{\ul, \ueta}$.

\end{enumerate}
\end{theorem}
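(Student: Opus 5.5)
We would follow Moeglin's construction of packets with discrete diagonal restriction \cite{Moeglin:2009} \cite{Moeglin1:2011}, and verify (1)--(3) together by induction on the number $r$ of Jordan blocks. The key input is that $\psi^{\mathbb{Q}_p}$ is trivial on $W_{\mathbb{Q}_p}$ and the segments $[\widetilde{B}_i,\widetilde{A}_i]$ are pairwise disjoint. First we pass to a \emph{dominant} parameter $\psi_{\gg}$: for $i<r$ we replace each block $S_{a_i}\boxtimes S_{b_i}$ by $S_{a_i+2T_i}\boxtimes S_{b_i}$ with $T_1\gg T_2\gg\cdots\gg 0$. For $\psi_{\gg}$, Moeglin shows that every element of the packet is a subrepresentation of an induced representation of the form
\[
\times_{j=0}^{l_i-1}\langle |\cdot|^{-\frac{b_i-1}{2}+j},\ldots \rangle_{S_{a_i}} \rtimes \pi_-,
\]
where $\pi_-$ lies in a packet of $H_-$ whose blocks are all elementary. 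Here ``elementary'' means $b=1$ after the unfolding into the segments $\{1\boxtimes S_{2(B_i+l_i+j)+\delta}\}_{j}$, so these are discrete series packets. We then recover the original packet by applying the Jacquet functors $\mathrm{Jac}_{x}$ that lower the shifts $T_i$ step by step. Disjointness of the segments guarantees that each such Jacquet functor sends irreducibles to irreducibles (or to $0$) and is injective on the packet. This is what yields the parametrization by $(\ul,\ueta)$ with $0\le l_i\le\lfloor m_i/2\rfloor$, with $\eta_i$ irrelevant when $2l_i=m_i$.

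For part (2), we compute the Langlands data of $\pi(\psi^{\mathbb{Q}_p};\ul,\ueta)$. The Speh-type factors contribute the non-tempered pieces $|\cdot|^{\pm(\frac{b_i-1}{2}-j)}\boxtimes S_{a_i}$, $0\le j<l_i$. The tempered part $\pi_-$ is a discrete series whose $L$-parameter is the multiplicity-free sum $\bigoplus 1\boxtimes S_{2(B_i+l_i+j)+\delta}$. By Moeglin's description of discrete series via alternating characters on Jordan blocks, its character $\epsilon_-$ alternates along each chain and starts with $\eta_i$. We would then check that the shifting Jacquet functors preserve this Langlands quotient description, using the standard-module (Langlands classification) argument.

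For part (3), we use Arthur's character identity. We compare the stable distribution attached to $\psi^{\mathbb{Q}_p}$ with its endoscopic transfers, through the compatibility of Moeglin's construction with the twisted trace formula normalization \cite{Arthur:2013} \cite{Moeglin1:2011}. This gives $\epsilon_\pi=\epsilon_{\ul,\ueta}$, with the explicit sign $\eta_i^{m_i}(-1)^{(m_i-2l_i)(m_i-1-2l_i)/2}$. In the symplectic case the restriction of $\epsilon_\pi$ to $Z(\D H)$ must be trivial for the split group, and $H$ has no other pure inner form; this forces $\prod_i\epsilon_i=1$ and produces \eqref{eq: DDR odd}. In the orthogonal case both $H$ and $H'$ occur, and the value of $\prod_i\epsilon_i$ sorts the elements between them, giving \eqref{eq: DDR even}.

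The main obstacle is step (3), namely pinning down the sign normalization. Moeglin's parametrization is a priori defined via Jacquet modules, and one must match it with Arthur's Whittaker-normalized $\epsilon$. This requires tracking the sign $(-1)^{(m_i-2l_i)(m_i-1-2l_i)/2}$ through the Aubert involution and the dominant-to-original reduction. I would first handle the case $r=1$, where the packet can be compared directly with the tempered packet of the Aubert dual, and then propagate to general $r$ using the multiplicativity of the sign under the disjoint-segment decomposition.
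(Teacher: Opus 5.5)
The paper does not prove this statement. It quotes it from Moeglin \cite{Moeglin:2009}, with \cite{MoeglinRenard:2018} for the members living on the non-quasisplit form $H'$, and your sketch does not give an independent proof of its substantive part.

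\textbf{Part (3) and the bijection in (1).} What must be established is that Moeglin's explicitly defined modules $\pi(\psi^{\mathbb{Q}_p};\ul,\ueta)$ are exactly the members of Arthur's packet, pairwise distinct, with $\epsilon_\pi=\epsilon_{\ul,\ueta}$ in Arthur's Whittaker normalization. Exhaustion and injectivity in (1), and all of (3), rest on this. Your step (3) (``comparing the stable distribution with its endoscopic transfers gives $\epsilon_\pi=\epsilon_{\ul,\ueta}$'') only restates the claim, and the reduction you propose for it fails.
\begin{itemize}
\item For $r=1$ and $b\ge 2$, neither $1\boxtimes S_a\boxtimes S_b$ nor its Aubert dual $1\boxtimes S_b\boxtimes S_a$ is tempered: that would need $b=1$, resp.\ $a=1$, while $a>b$. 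So there is no tempered packet to compare with.
\item The only tempered members of $\Pi_{\psi^{\mathbb{Q}_p}}$ are the discrete series with $\ul=0$. These do not see the sign $(-1)^{(m-2l)(m-1-2l)/2}$ for $l>0$.
\item There is no mechanism to pass from $r=1$ to general $r$. Neither $H$ nor the packet factors over Jordan blocks, and the endoscopic identity attached to $s\in A_{\psi^{\mathbb{Q}_p}}$ involves an endoscopic group $H_1\times H_2$ among whose factors all blocks are distributed.
\end{itemize}
Without invoking Moeglin's theorem, parts (1) and (3) remain unproved.

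\textbf{The symplectic constraint.} Your explanation is wrong. $Z(\D{H})=Z(SO(N,\mathbb{C}))$ is trivial for $N$ odd, so it imposes nothing. The condition $\prod_i\epsilon_i=1$ comes instead from $S_{\psi^{\mathbb{Q}_p}}\subsetneq S^+_{\psi^{\mathbb{Q}_p}}$. Characters of $A_{\psi^{\mathbb{Q}_p}}$ are the characters of $A^+_{\psi^{\mathbb{Q}_p}}\cong A_{\psi^{\mathbb{Q}_p}}\times Z(O(N,\mathbb{C}))$ that are trivial on $-I_N$, which is $(-1,\ldots,-1)$ in $S^+_{\psi^{\mathbb{Q}_p}}\cong\{\pm1\}^r$. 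Replacing $\ueta$ by $-\ueta$ multiplies $\prod_i\epsilon_i$ by $(-1)^{\sum_i m_i}=(-1)^n=-1$, since $n$ is odd here. Hence exactly one of each pair survives.

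\textbf{The detour through $\psi_{\gg}$.} It is misplaced. $\psi^{\mathbb{Q}_p}$ already has discrete diagonal restriction, so the embedding $\pi(\psi^{\mathbb{Q}_p};\ul,\ueta)\hookrightarrow\times_i\times_{j<l_i}\langle\cdots\rangle\rtimes\pi_-$ holds for $\psi^{\mathbb{Q}_p}$ itself. The $\psi_{\gg}$/Jacquet-functor mechanism is what the paper uses in the singular case (Theorem~\ref{thm: singular p-adic A-packet}), where the present theorem is the \emph{input}. Here it only moves the same statement to another DDR parameter. It also needs the composite Jacquet functors to be irreducible-or-zero and injective on the packet, which is again a theorem of Moeglin, not a consequence of disjointness.

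\textbf{Part (2).} Granting the embedding, your argument is sound. For $j<l_i\le m_i/2$ the exponents $-\frac{b_i-1}{2}+j$ are negative, and segments from different blocks are nested, so one gets a Langlands subrepresentation. The character then transfers through $A_{\phi^{\mathbb{Q}_p}}\cong A_{\phi^{\mathbb{Q}_p}_-}$. You still need that the Moeglin--Tadi\'c character of $\pi_-$ agrees with Arthur's, which is once more a result of Moeglin.
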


Comparing with the parametrization of $\Pi_{\psi^{\mathbb{R}}}^{\rm pure}(G)$ in \ref{cls: A-J packet}, we immediately obtain:

\begin{corollary}
If $H$ is orthogonal, $\tilde{\iota}(\pi(\psi^{\mathbb{R}}; \ul, \ueta)) = \pi(\psi^{\mathbb{Q}_p}; \ul, \ueta)$. If $H$ is symplectic, then $\tilde{\iota}(\pi(\psi^{\mathbb{R}}; \ul, \ueta)) = \pi(\psi^{\mathbb{Q}_p}; \ul, \ueta')$, where $\ueta' = \pm \ueta$ such that $(\ul, \ueta')$ appear in \eqref{eq: DDR odd}.
\end{corollary}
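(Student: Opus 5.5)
The proof amounts to comparing the two explicit descriptions of complete Langlands parameters, Theorem~\ref{Langlands-sub} on the real side and part (2) of Moeglin's theorem on the $p$-adic side. We then check that the explicit map $\tilde{\iota}$ of \S\ref{subsec: main} carries one to the other. Fix $(\ul,\ueta)$ and let $(\phi^{\mathbb{R}},\epsilon)$ be the complete Langlands parameter of $\pi(\psi^{\mathbb{R}};\ul,\ueta)$.

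\textbf{Step 1: match the L-parameters.} Write $BC(\phi^{\mathbb{R}})$ in the form \eqref{eq: real L-parameter}. The pairs $(\frac{z}{\bar z})^{k_i/2}(z\bar z)^{\pm(\frac{m_i-1}{2}-j)}$ have $t=k_i/2$ and $s=\pm(\frac{m_i-1}{2}-j)$. By definition $\iota$ sends each of them to $|\cdot|^{\pm(\frac{b_i-1}{2}-j)}\boxtimes S_{k_i+\delta}=|\cdot|^{\pm(\frac{b_i-1}{2}-j)}\boxtimes S_{a_i}$, using $b_i=m_i$ and $a_i=k_i+\delta$. The summands $(\frac{z}{\bar z})^{B_i+l_i+j}$ of $BC(\phi^{\mathbb{R}}_-)$ have $s=0$ and $t=B_i+l_i+j$, so $\iota$ sends them to $1\boxtimes S_{2(B_i+l_i+j)+\delta}$. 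Summing over $i$ and $j$, $\iota(\phi^{\mathbb{R}})$ is exactly the $\phi^{\mathbb{Q}_p}$ of Moeglin's part (2). This also shows that $\iota$ commutes with the passage $\phi\mapsto\phi_-$, since it simply drops the non-self-dual $J\sqcup J^\vee$ blocks on both sides.

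\textbf{Step 2: match the characters.} The identification $A_{\phi^{\mathbb{R}}}\cong A_{\phi^{\mathbb{Q}_p}}$ (orthogonal case), or $\cong A^+_{\phi^{\mathbb{Q}_p}}$ (symplectic case), from \S\ref{subsec: main} is indexed by $I^+$. Under Step 1, the factor $(\frac{z}{\bar z})^{B_i+l_i+j}$ of $I^+$ corresponds to the factor $1\boxtimes S_{2(B_i+l_i+j)+\delta}$. The same holds for $\phi_-$, and the reductions $\D A_{\phi}\to\D A_{\phi_-}$ on both sides are the restrictions to the $I^+$-part, so they are compatible. On both sides $\epsilon_-$ is defined by the same rule: it alternates along $j\in[0,A_i-B_i-2l_i]$ and takes the value $\eta_i$ at $j=0$. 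Hence the pulled-back character $\epsilon$ goes to the $p$-adic $\epsilon$ under the canonical map $\D A_{\phi^{\mathbb{R}}}\to\D A_{\phi^{\mathbb{Q}_p}}$. This gives $\tilde\iota(\pi(\psi^{\mathbb{R}};\ul,\ueta))=\pi(\psi^{\mathbb{Q}_p};\ul,\ueta)$ in the orthogonal case. One should also check that the equivalence relation $\sim$ is the same on both sides; it is, since both are defined by the same rule on $(\ul,\ueta)$.

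\textbf{Step 3: the symplectic case.} This is the main subtlety. The map $\D A_{\phi^{\mathbb{R}}}=\D A^+_{\phi^{\mathbb{Q}_p}}\to\D A_{\phi^{\mathbb{Q}_p}}$ restricts characters from $A^+=A\times Z(O(N,\mathbb{C}))$, so $\epsilon$ and $\epsilon\cdot\chi$ have the same image, where $\chi$ is the nontrivial character through the central $-I_N$. I will check that twisting by $-I_N$ amounts to changing $\ueta$ to $-\ueta$: $-I_N$ is the product of the generators over all of $I^+$, and its image in $A_{\phi_-}$ flips the value at $j=0$ in each alternating block. Here one must verify that the relevant parities work out, using $a_ib_i$ odd in the symplectic case. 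So the $p$-adic representation attached to $\tilde\iota(\pi(\psi^{\mathbb{R}};\ul,\ueta))$ is $\pi(\psi^{\mathbb{Q}_p};\ul,\pm\ueta)$. Exactly one sign choice is allowed in \eqref{eq: DDR odd}, namely the one with $\prod_i\epsilon_i=1$. Changing $\ueta$ to $-\ueta$ multiplies $\prod_i\epsilon_i$ by $(-1)^{\sum m_i}=(-1)^n=-1$ for $n$ odd, so exactly one of the two sign choices is admissible. This yields the stated $\ueta'$. The delicate bookkeeping is the claim that the central twist acts as a global sign flip on $\ueta$ at the level of the alternating characters.
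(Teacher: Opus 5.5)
Your proposal is correct and follows the paper's route. The paper's proof is exactly the comparison of Theorem~\ref{Langlands-sub} with part (2) of Moeglin's theorem through the explicit formula for $\tilde\iota$; your Steps 1--3 spell that comparison out, including the $A^{+}_{\phi^{\mathbb{Q}_p}}$ versus $A_{\phi^{\mathbb{Q}_p}}$ bookkeeping that the paper leaves implicit.

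One correction in Step 3 concerns the parity you invoke. It is not true that $a_ib_i$ is odd when $\D H=SO(N,\mathbb{C})$: both $a_i,b_i$ even is allowed, e.g.\ $k_i=3$, $m_i=2$.

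The parity you actually need is that every self-dual constituent $1\boxtimes S_{2(B_i+l_i+j)+\delta}$ of $\phi^{\mathbb{Q}_p}$ is odd-dimensional, which holds because $B_i\in\mathbb{Z}$ and $\delta$ is odd. Hence each generator of $A^{+}_{\phi^{\mathbb{Q}_p}}$ has determinant $-1$. The character trivial on $A_{\phi^{\mathbb{Q}_p}}$ and nontrivial on $-I_N$ is therefore $-1$ on every generator, and this is what sends $\ueta$ to $-\ueta$. The identity $-I_N=\prod_{I^+}\varepsilon_i$ alone does not give this.
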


This proves Theorem~\ref{thm: comparison of A-packets} and Theorem~\ref{thm: comparison of endoscopic maps} in the regular case.

\subsection{Singular case}
\label{subsec: singular-case}

In this subsection, we detail the strategy for proving Theorem \ref{thm: comparison of A-packets} and Theorem~\ref{thm: comparison of endoscopic maps} in the case of singular infinitesimal characters.

Let us index the irreducible components of $BC(\psi^{\mathbb{R}})$ such that if $A_i > A_j$ and $B_i > B_j$ then $i < j$. Then we can define $\pi(\psi^{\mathbb{R}}; \ul, \ueta)$ for any $(\ul, \ueta)$ by cohomological induction as in the regular case, and Trapa \cite{Trapa:2001} proved that $\pi(\psi^{\mathbb{R}}; \ul, \ueta)$ is irreducible or zero. In order to compare with the $p$-adic side, we shall describe these representations as images of translation functors applied to representations in the Adams-Johnson packets, following the same idea as in \cite{Trapa:2001} and \cite{MR:2019}. 

Let us choose $t_i \in \mathbb{Z}_{\geqslant 0}$ such that the shifted intervals $[B_i + t_i, A_i + t_i]$ are disjoint and $A_{i} + t_{i} > A_{i+1} + t_{i+1}$. Let $\psi^{\mathbb{R}}_{\gg} \in \Psi(G)$ be defined by
\[
BC(\psi^{\mathbb{R}}_{\gg}) = \bigoplus_{i = 1}^{r} \, (z/\bar{z})^{\frac{k_i}{2} + t_i} \boxtimes S_{m_i}.
\]
Then the infinitesimal character $\lambda^{\mathbb{R}}_{\gg}$ of $\psi_\gg^{\mathbb{R}}$ is regular. Let $\psi^{\mathbb{Q}_p}_{\gg} \in \Psi(H_{\gg})$ be the Arthur parameter associated with $\psi^{\mathbb{R}}_{\gg}$ as in \ref{cls: psiR to psip}. It follows from the previous subsection that Theorem \ref{thm: comparison of A-packets} and Theorem~\ref{thm: comparison of endoscopic maps} hold for the A-parameters $\psi^{\mathbb{R}}_{\gg}$ and $\psi_{\gg}^{\mathbb{Q}_p}$. To relate $\Pi_{\psi_{\gg}^{\mathbb{R}}}^{\rm pure}(G)$ with $\Pi_{\psi^{\mathbb{R}}}^{\rm pure}(G)$, we shall apply the translation functors. For any $\lambda \in \mathbb{C}^{n}$, we define the translation functor
\[
\mathcal{T}_{i} := \psi^{\lambda - e_i}_{\lambda}
\]
where $e_1, \cdots, e_n$ is the standard basis of $\mathbb{C}^{n}$. We have suppressed $\lambda$ in this notation, which should be determined by the context.

\begin{proposition}
\label{prop: singular by translation}
We have
\begin{multline*}
    \pi(\psi^{\mathbb{R}}; \ul, \ueta) 
    = \circ_{i = 1}^{r} \, (\mathcal{T}_{1 + \sum_{j < i} m_j} \circ \cdots \circ \mathcal{T}_{\sum_{j \leqslant i} m_j})^{t_i} \, \pi(\psi^{\mathbb{R}}_{\gg}; \ul, \ueta)\\
    = (\mathcal{T}_1 \circ \mathcal{T}_2 \circ \cdots \circ  \mathcal{T}_{m_1})^{t_1} \circ (\mathcal{T}_{m_1+1} \circ \mathcal{T}_{m_1+2} \circ \cdots \circ \mathcal{T}_{m_1+m_2})^{t_2} \circ \cdots \\
    \cdots \circ (\mathcal{T}_{1+\sum_{j<r} m_j} \circ \mathcal{T}_{2+\sum_{j<r} m_j} \circ \cdots \circ \mathcal{T}_{\sum_{j \le r} m_j})^{t_r} \, \pi(\psi^{\mathbb{R}}_{\gg}; \ul, \ueta).
\end{multline*}
\end{proposition}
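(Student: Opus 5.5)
The natural approach is to view both sides as cohomologically induced modules and push the translation functors through cohomological induction. By definition (following Trapa), $\pi(\psi^{\mathbb{R}}; \ul, \ueta)$ is $\mathcal{R}_{w(\mathfrak{q})}^{S}(w\Lambda)$ for the $\theta$-stable parabolic $w(\mathfrak{q})$ determined by $(\ul,\ueta)$. Here $\Lambda$ is the one-dimensional character of $L_w\cong\prod_i U(p_i,q_i)$ attached to $\psi^{\mathbb{R}}$. Likewise $\pi(\psi^{\mathbb{R}}_{\gg}; \ul, \ueta)$ is the same cohomological induction from the same $\mathfrak{q}$ and the same $w$, but with the character $\Lambda_{\gg}=\Lambda\otimes\bigotimes_i \det_i^{t_i}$. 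Thus the two modules differ only by tensoring the inducing character with $\det^{t_i}$ on the $i$-th block.

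The key step is the standard compatibility of translation with cohomological induction (Knapp--Vogan, Ch.~VII). Let $F$ be a finite-dimensional representation of $G$ with extremal weight $-e_a$, namely the dual standard representation. Its $\mathfrak{u}$-filtration has $\mathfrak{l}$-subquotients that are characters of the $L$-blocks. Hence $\mathcal{R}^S_{\mathfrak{q}}(Z)\otimes F$ has a filtration with subquotients $\mathcal{R}^S_{\mathfrak{q}}(Z\otimes F_k)$. Projecting to the infinitesimal character $\lambda-e_a$ picks out exactly one subquotient, provided $\lambda$ stays in the chamber prescribed by the ordering. I would arrange the translations so that each step $\mathcal{T}_a$ lowers the $a$-th coordinate of the infinitesimal character by one. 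I would apply them in the order displayed: within block $i$, first $\mathcal{T}_{\sum_{j\le i}m_j}$ and last $\mathcal{T}_{1+\sum_{j<i}m_j}$, repeated $t_i$ times, and blocks $r, r-1,\dots,1$ from the innermost composition outward. Each full sweep of block $i$ then replaces $\det_i^{s}$ by $\det_i^{s-1}$ in the inducing character, and keeps the infinitesimal character dominant (possibly singular) at every intermediate stage. Because the intervals are lowered one coordinate at a time from the bottom end, no two coordinates cross. This is exactly why the inner composition runs from the largest index to the smallest.

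At each step one then has $\mathcal{T}_a\,\mathcal{R}^S_{\mathfrak{q}}(Z)=\mathcal{R}^S_{\mathfrak{q}}(Z\otimes\chi)$ for the appropriate character $\chi$. Iterating gives the identity at the level of (virtual) modules. Combined with vanishing of $\mathcal{R}^j$ for $j\neq S$ in the weakly fair range, it also gives the identity at the level of actual modules, including the case where the result is zero.

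The main obstacle is checking that the intermediate parameters stay in the weakly fair range and that the translation does not leave the chamber. Otherwise the exactness and the uniqueness of the surviving subquotient fail, and Zuckerman's functor $\psi^{\lambda-e_a}_\lambda$ is not simply "tensor then project". I would handle this with the ordering hypothesis ($A_i>A_j$ and $B_i>B_j$ imply $i<j$) together with the choice of $t_i$ making $A_i+t_i$ strictly decreasing. At an intermediate stage, when block $i$ has been partially lowered, I would verify the inequality $\langle \lambda_{\mathrm{int}}+\rho(\mathfrak{u}),\alpha\rangle\ge0$ for roots $\alpha$ of $\mathfrak{u}$. Here $\lambda_{\mathrm{int}}$ denotes the intermediate infinitesimal character, i.e.\ weak fairness in Trapa's sense. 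I would then invoke Trapa's irreducible-or-zero result to conclude.
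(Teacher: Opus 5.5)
Your outline has a genuine gap, and it sits exactly where the singular case differs from the regular one.

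First, a concrete error. As an $\mathfrak{l}=\prod_i\mathfrak{gl}_{m_i}$-module, the $\mathfrak{u}$-filtration of the dual standard representation has subquotients $(\mathbb{C}^{m_i})^*$. These are $m_i$-dimensional, not characters. So a single $\mathcal{T}_a$ can at best send $\mathcal{R}^S_{\mathfrak q}(Z)$, with $Z$ a character, to $\mathcal{R}^S_{\mathfrak q}(Z\otimes\wedge^k(\mathbb{C}^{m_i})^*)$. These intermediate modules are not of the form $A_{\mathfrak q}(\lambda)$. Neither the weakly fair vanishing theorem nor Trapa's irreducible-or-zero result applies to them. Only a full sweep of block $i$ returns to a character $Z\otimes\det_i^{-1}$.

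Second, and more seriously, your claim that the infinitesimal character stays dominant and that no two coordinates cross is false whenever $\lambda^{\mathbb{R}}$ is singular. The final segments $[B_i,A_i]$ overlap, so as block $i$ descends its entries must pass values already occupied by blocks $c>i$. At the step where the entry being lowered equals the current bottom $B_c$ of such a block, the piece $\mathcal{R}^S_{\mathfrak q}(Z\otimes(\mathbb{C}^{m_c})^*)$ has the same infinitesimal character as the intended piece. The projection therefore does not isolate a single subquotient, and you give no reason for the extra piece to vanish.

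For example, take $n=8$ with final segments $[\tfrac{1}{2},\tfrac{11}{2}]$ and $[\tfrac{5}{2},\tfrac{7}{2}]$ (an admissible indexing), and $t_1=4$, $t_2=0$. At the start of the third sweep, block $1$ sits at $[\tfrac{5}{2},\tfrac{15}{2}]$. Lowering its entry $\tfrac{5}{2}$ to $\tfrac{3}{2}$ gives the same multiset as lowering block $2$'s entry $\tfrac{5}{2}$ to $\tfrac{3}{2}$. Weak fairness holds throughout this example (both $k_i$ equal $6$ at the end), so checking $\langle\lambda_{\mathrm{int}}+\rho(\mathfrak u),\alpha\rangle\ge0$ cannot repair the argument. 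The naive translation principle for cohomological induction needs the weakly \emph{good} range, which is exactly what is lost here.

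Handling this is the content of Trapa's Lemma 3.13. The paper invokes it (Proposition~\ref{prop: Trapa translation}) for the translations $\mathcal{T}_{[1,m_1+\cdots+m_i]}$, which shift the first $i$ blocks down together. It then converts that composition into the displayed product of single $\mathcal{T}_a$'s purely formally, in the Grothendieck group, via coherent families. The two tools are Corollary~\ref{cor: factorization of translation} ($\mathcal{T}_{[a,b]}=\mathcal{T}_a\circ\cdots\circ\mathcal{T}_b$ when $\lambda_a>\cdots>\lambda_b$) and the commutation Lemma~\ref{lemma: switch order}; neither requires identifying intermediate modules. To complete your approach you would need Trapa's lemma, or an equivalent coherent-continuation argument for the Euler characteristics, in place of step-by-step projection.
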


\begin{proof}
This follows from Prop~\ref{prop: Trapa translation} and Lemma~\ref{lemma: comparison of translation}.
\end{proof}

There is a canonical surjective homomorphism 
\[
A_{\psi^{\mathbb{R}}_{\gg}} \rightarrow A_{\psi^{\mathbb{R}}}
\]
which sends the generator of $A_{\psi^{\mathbb{R}}_{\gg}}$ indexed by $i$ to the generator of $A_{\psi^{\mathbb{R}}}$ indexed by the irreducible component $(z/\bar{z})^{\frac{k_i}{2}} \boxtimes S_{m_i}$ in $BC(\psi^{\mathbb{R}})$ (cf. \eqref{eq: real centralizer}). This induces an inclusion
\[
\D{A}_{\psi^{\mathbb{R}}} \hookrightarrow \D{A}_{\psi_{\gg}^{\mathbb{R}}}.
\]
For any $(\ul, \ueta)$, let $\epsilon_{\ul, \ueta}$ define an element in $\D{A}_{\psi_{\gg}^{\mathbb{R}}}$.

\begin{theorem}[Moeglin-Renard \cite{MR:2019}]
\label{thm: singular real A-packet}\,
\begin{enumerate}

\item $\pi(\psi^{\mathbb{R}}; \ul, \ueta) = 0$ if $\epsilon_{\ul, \ueta} \notin \D{A}_{\psi^{\mathbb{R}}}$.

\item 
\(
\Pi_{\psi^{\mathbb{R}}}^{\rm pure}(G)
\)
consists of irreducible constituents of $\{ \pi(\psi^{\mathbb{R}}; \ul, \ueta) \}_{(\ul, \ueta)}$. 

\item
For any $\pi \in \Pi_{\psi^{\mathbb{R}}}^{\rm pure}(G)$, 
\[
\epsilon_{\pi} = \bigoplus_{(\ul, \ueta)}  {\rm mult}(\pi, \pi(\psi^{\mathbb{R}}; \ul, \ueta)) \, \epsilon_{\ul, \ueta}.
\]
Moreover, $\epsilon_{\pi}$ is irreducible.
\end{enumerate}
\end{theorem}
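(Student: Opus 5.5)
The statement is attributed to Moeglin--Renard, and my plan follows their strategy: realize the singular packet through translation from a regular Adams--Johnson packet, and control the characters $\epsilon$ through the endoscopic character identities.

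\textbf{Setup.} By Proposition~\ref{prop: singular by translation}, each $\pi(\psi^{\mathbb{R}}; \ul, \ueta)$ equals $\mathcal{T}\,\pi(\psi^{\mathbb{R}}_{\gg}; \ul, \ueta)$. Here $\mathcal{T}$ denotes the composite of the translation functors $\mathcal{T}_i$ that move the regular infinitesimal character $\lambda^{\mathbb{R}}_{\gg}$ to $\lambda^{\mathbb{R}}$. I will work with the stable virtual characters
\[
\Theta_{\psi,s} := \sum_{\pi \in \Pi^{\rm pure}_{\psi}} e(G_\pi)\, \epsilon_\pi(s_\psi s)\, \pi .
\]
These are characterized by the endoscopic transfer identities on the pure inner forms.

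\textbf{Step 1: regular case.} For $\psi^{\mathbb{R}}_{\gg}$, the packet is the Adams--Johnson packet. The identity for $\Theta_{\psi_{\gg},s}$ is known (Adams--Johnson, Arancibia--Moeglin--Renard), with $\epsilon_{\pi(\psi_{\gg};\ul,\ueta)} = \epsilon_{\ul,\ueta}$ as in \eqref{eq: character}.

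\textbf{Step 2: translation commutes with endoscopy.} Translation functors commute with transfer. Concretely, the translation of $\Theta_{\psi_{\gg}, s}$ to $\lambda^{\mathbb{R}}$ is the transfer of the corresponding translate on the endoscopic group $G_s = U(a)\times U(b)$. That translate is the stable character attached to the parameter obtained by translating each $U(\cdot)$ block, so it is the stable distribution for $(\psi,s)$ whenever $s$ lies in the image of the surjection $A_{\psi_{\gg}} \to A_{\psi}$. Hence
\[
\Theta_{\psi,\bar s} = \sum_{(\ul,\ueta)} e\, \epsilon_{\ul,\ueta}(s_{\psi_{\gg}}s)\, \pi(\psi^{\mathbb{R}};\ul,\ueta).
\]

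\textbf{Step 3: extracting the three claims.} Fourier inversion over $A_\psi$ then gives (2) and (3). The multiplicity formula in (3) follows directly. For (1), I would exploit the fact that $s$ and $s'$ with the same image in $A_\psi$ give the same left-hand side. Averaging over the kernel of $A_{\psi_{\gg}} \to A_\psi$ then shows that the sum of the terms with $\epsilon_{\ul,\ueta} \notin \D A_\psi$ vanishes. Since all $\pi(\psi^{\mathbb{R}};\ul,\ueta)$ are irreducible or zero (Trapa) and pairwise distinct when nonzero, each such term must itself be zero. This is precisely (1).

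\textbf{Main obstacle.} The hardest point is the irreducibility of $\epsilon_\pi$ in (3), equivalently multiplicity one, together with the distinctness of the nonzero $\pi(\psi^{\mathbb{R}};\ul,\ueta)$. My first approach would be to compute lowest $K$-types, or to use Trapa's explicit description of the nonvanishing cohomologically induced modules. This should show that two distinct $(\ul,\ueta)$ with characters in $\D A_\psi$ give non-isomorphic modules, which forces each $\epsilon_\pi$ to be a single character $\epsilon_{\ul,\ueta}$.
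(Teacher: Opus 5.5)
The paper does not prove this statement; it is quoted from Moeglin--Renard. The surrounding text only indicates that their argument, like yours, passes through translation from the Adams--Johnson packet of $\psi^{\mathbb{R}}_{\gg}$. Granting your Step~2, the coefficient comparison in Step~3 does give (2) and the formula for $\epsilon_\pi$ in (3).

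\textbf{The gap: irreducibility of $\epsilon_\pi$.} The last assertion of (3) is not proved, and that is where the content lies. Since Trapa gives ``irreducible or zero'', the formula reads
\[
\epsilon_\pi=\bigoplus_{(\ul,\ueta):\ \pi(\psi^{\mathbb{R}};\ul,\ueta)\cong\pi}\epsilon_{\ul,\ueta}.
\]
So irreducibility of $\epsilon_\pi$ is \emph{equivalent} to this: the nonzero modules $\pi(\psi^{\mathbb{R}};\ul,\ueta)$ attached to inequivalent $(\ul,\ueta)$ on a fixed $U(p,q)$ are pairwise non-isomorphic. Nothing in Steps~1--3 yields this. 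The endoscopic identities are perfectly compatible with a coincidence, which would simply make $\epsilon_\pi$ a sum of two characters.

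What you offer in its place is a hope rather than an argument. In the weakly fair range the bottom-layer weight $\Lambda+2\rho(\mathfrak{u}\cap\mathfrak{p})$ need not be $K$-dominant, so it need not be a $K$-type of the module at all. Lowest $K$-types do not in general separate irreducibles with the same infinitesimal character. Trapa's result, as used in this paper, gives vanishing or irreducibility, not non-isomorphism. A non-coincidence theorem for this specific family is precisely the multiplicity-one content of the Moeglin--Renard result, and your proposal does not supply it.

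\textbf{Your argument for (1).} It invokes the very distinctness you have not established, but it does not need it. After averaging over $\ker(A_{\psi^{\mathbb{R}}_{\gg}}\to A_{\psi^{\mathbb{R}}})$, apply Fourier inversion on $A_{\psi^{\mathbb{R}}_{\gg}}$. For each $\chi\notin\D{A}_{\psi^{\mathbb{R}}}$ this gives $\sum_{\epsilon_{\ul,\ueta}=\chi}e\,\pi(\psi^{\mathbb{R}};\ul,\ueta)=0$. The Kottwitz sign $e$ is constant on each pure inner form, and different forms span independent summands of the Grothendieck group. So on each $U(p,q)$ this is a vanishing sum of effective classes, and every term is $0$.

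\textbf{Step~2.} It is asserted rather than proved. The composite $\mathcal{T}$ passes through infinitesimal characters that are singular for $G$ along roots joining the two factors of $G_s$. At such points translation on $G$ and on $G_s$ do not match term by term, so the commutation has to be phrased through coherent continuation. You then still need that the coherent continuation of the stable character of $\psi^{\mathbb{R}}_{\gg}$ on $G_s$ is the stable character of $\psi^{\mathbb{R}}$. These stable characters are defined by twisted endoscopy from $\mathrm{Res}_{\mathbb{C}/\mathbb{R}}GL$. So this requires the corresponding statement for Arthur-normalized twisted characters of the non-generic representations $\pi^{GL}_{\psi}$. That is a genuine input from Moeglin--Renard, not a formal consequence of ``translation commutes with transfer''.
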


There is a similar construction in the $p$-adic setting. Note the rank of $H_{\gg}$ is bigger than that of $H$. To relate $\Pi_{\psi_{\gg}^{\mathbb{Q}_p}}^{\rm pure}$ with $\Pi_{\psi^{\mathbb{Q}_p}}^{\rm pure}(H)$, we shall apply the Jacquet functor. 

\begin{definition}
	Let $H'$ be a pure inner form of $H$ and $P'$ be a standard maximal parabolic subgroup of $H'$ with Levi factor $GL_1(\BQ_p) \times H'_{-}$, where $H'_{-}$ is of the same type as $H'$. For any finite length smooth representation $\pi$ of $H'$ and any $x \in \mathbb{R}$, we define 
	\(
		\mathcal{D}_{x} \pi
    \)
    to be the projection of the Jacquet module ${\rm Jac}_{P'} \pi$ to the $|\cdot |^{x}$-primary component for the action of $GL_1(\mathbb{Q}_p)$.	
\end{definition}

Define
\begin{multline*} 
\pi(\psi^{\mathbb{Q}_p}; \ul, \ueta) 
:= \circ_{i = 1}^{r} \Big((\mathcal{D}_{\widetilde{A}_i + 1} \circ \cdots \circ \mathcal{D}_{\widetilde{B}_i + 1}) \circ \cdots \circ (\mathcal{D}_{\widetilde{A}_i + t_i} \circ \cdots \circ \mathcal{D}_{\widetilde{B}_i + t_i})\Big) \pi(\psi^{\mathbb{Q}_p}_{\gg}; \ul, \ueta)\\
= \Big((\mathcal{D}_{\widetilde{A}_1 + 1} \circ \cdots \circ \mathcal{D}_{\widetilde{B}_1 + 1}) \circ \cdots \circ (\mathcal{D}_{\widetilde{A}_1 + t_1} \circ \cdots \circ \mathcal{D}_{\widetilde{B}_1 + t_1})\Big) \circ \cdots\\
\cdots \circ \Big((\mathcal{D}_{\widetilde{A}_r + 1} \circ \cdots \circ \mathcal{D}_{\widetilde{B}_r + 1}) \circ \cdots \circ (\mathcal{D}_{\widetilde{A}_r + t_r} \circ \cdots \circ \mathcal{D}_{\widetilde{B}_r + t_r})\Big) \pi(\psi^{\mathbb{Q}_p}_{\gg}; \ul, \ueta).
\end{multline*}
Similar to $\psi^\BR$, there is a canonical surjective homomorphism
\[
A_{\psi^{\mathbb{Q}_p}_{\gg}} \rightarrow A_{\psi^{\mathbb{Q}_p}}
\]
by matching the irreducible components as in the real case, which induces an inclusion
\[
\D{A}_{\psi^{\mathbb{Q}_p}} \hookrightarrow \D{A}_{\psi_{\gg}^{\mathbb{Q}_p}}.
\]

\begin{theorem}[Moeglin \cite{Moeglin:2009}] \label{thm: singular p-adic A-packet}\,
\begin{enumerate}

\item $\pi(\psi^{\mathbb{Q}_p}; \ul, \ueta) = 0$ if $\epsilon_{\ul, \ueta} \notin \D{A}_{\psi^{\mathbb{Q}_p}}$.

\item 
\(
\Pi_{\psi^{\mathbb{Q}_p}}^{\rm pure}(H)
\)
consists of irreducible constituents of $\{ \pi(\psi^{\mathbb{Q}_p}; \ul, \ueta) \}_{(\ul, \ueta)}$.

\item For any $\pi \in \Pi_{\psi^{\mathbb{Q}_p}}^{\rm pure}(H)$, 
\[
\epsilon_{\pi} = \bigoplus_{(\ul, \ueta)}  {\rm mult}(\pi, \pi(\psi^{\mathbb{R}}; \ul, \ueta)) \, \epsilon_{\ul, \ueta}.
\]
Moreover, $\epsilon_{\pi}$ is irreducible.

\end{enumerate}
\end{theorem}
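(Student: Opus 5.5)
This statement is attributed to Moeglin \cite{Moeglin:2009}, so the plan is mainly to assemble her results on parameters with discrete diagonal restriction and show they take exactly the form stated. The guiding idea is to mirror Theorem~\ref{thm: singular real A-packet} on the $p$-adic side. Translation functors on the real side correspond to the partial Jacquet functors $\mathcal{D}_x$, and the regular parameter $\psi^{\mathbb{Q}_p}_{\gg}$ plays the role of the Adams--Johnson parameter $\psi^{\mathbb{R}}_{\gg}$.

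\emph{Step 1: the shifted parameter.} The segments $[\widetilde{B}_i + t_i, \widetilde{A}_i + t_i]$ are disjoint and strictly decreasing. Hence $\psi^{\mathbb{Q}_p}_{\gg}$ has discrete diagonal restriction, and by the preceding theorem of Moeglin its pure packet is $\{\pi(\psi^{\mathbb{Q}_p}_{\gg}; \ul, \ueta)\}$, with $\epsilon_\pi = \epsilon_{\ul,\ueta}$ and explicit Langlands data.

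\emph{Step 2: removing the shifts.} One passes from $\psi^{\mathbb{Q}_p}_{\gg}$ to $\psi^{\mathbb{Q}_p}$ one block at a time, in the order $i = r, r-1, \ldots, 1$ prescribed by the composition. Each elementary step replaces the Jordan block $(\widetilde{A}_i + s, \widetilde{B}_i + s)$ by $(\widetilde{A}_i + s - 1, \widetilde{B}_i + s - 1)$. For each such step I would invoke Moeglin's result that the packet of the shifted-down parameter consists of the nonzero irreducible constituents of
\[
\mathcal{D}_{\widetilde{A}_i + s} \circ \cdots \circ \mathcal{D}_{\widetilde{B}_i + s}
\]
applied to the members of the packet before the shift. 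The same result gives that the characters of the component group are transported along the canonical surjection of component groups. Iterating and tracking multiplicities then yields (2) and the character formula in (3).

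\emph{Step 3: vanishing.} For (1): when two blocks are merged into blocks of equal type, the surjection $A_{\psi_{\gg}} \to A_{\psi}$ identifies their generators. Moeglin's computation of Jacquet modules shows that the composite $\mathcal{D}$ kills every $\pi(\psi^{\mathbb{Q}_p}_{\gg}; \ul, \ueta)$ whose character does not factor through this quotient. In the symplectic case one must also carry the central-character constraint $\prod_i \epsilon_i = 1$ through each step.

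\emph{Main obstacle.} The hardest part is irreducibility of $\epsilon_\pi$ in (3), meaning that no $\pi$ occurs in two $\pi(\psi^{\mathbb{Q}_p}; \ul, \ueta)$ with different characters. For this I would use Moeglin's multiplicity-one theorem for $p$-adic A-packets \cite{Moeglin1:2011}, which makes $\epsilon_\pi$ a single character. A secondary issue is to confirm that Moeglin's normalization of $\mathcal{D}_x$ and of $\epsilon$ agrees with the one used here, particularly for the full orthogonal group $H^{+}$.
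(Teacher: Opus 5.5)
Your proposal matches the paper's treatment: the paper gives no argument of its own and imports this statement from Moeglin \cite{Moeglin:2009}, with the irreducibility of $\epsilon_\pi$ being her multiplicity-one theorem \cite{Moeglin1:2011} (cf.\ the remark after Theorem~\ref{thm: comparison of endoscopic maps}), so your outline is a fair account of how those results fit together. One correction to your gloss: irreducibility of $\epsilon_\pi$ also rules out $\pi$ occurring in two inequivalent $\pi(\psi^{\mathbb{Q}_p}; \ul, \ueta)$ with the \emph{same} character, not only with different characters, and multiplicity one rules this case out too.
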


Our goal is to show that

\begin{theorem}
\label{thm: comparison singular}
If $H$ is orthogonal, $\tilde{\iota}(\pi(\psi^{\mathbb{R}}; \ul, \ueta)) = \pi(\psi^{\mathbb{Q}_p}; \ul, \ueta)$. If $H$ is symplectic, then $\tilde{\iota}(\pi(\psi^{\mathbb{R}}; \ul, \ueta)) = \pi(\psi^{\mathbb{Q}_p}; \ul, \ueta')$, where $\ueta' = \pm \ueta$ such that $\epsilon_{\ul, \ueta'} \in \D{A}_{\psi^{\mathbb{Q}_p}}$. 
\end{theorem}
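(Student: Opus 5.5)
Our plan is to reduce Theorem~\ref{thm: comparison singular} to the regular case already settled in \S\ref{subsec: regular-case}, by showing that $\tilde\iota$ intertwines each elementary translation functor $\mathcal{T}_k$ on the real side with the corresponding Jacquet derivative $\mathcal{D}_x$ on the $p$-adic side. Proposition~\ref{prop: singular by translation} writes $\pi(\psi^{\mathbb{R}}; \ul, \ueta)$ as an iterated composition of the $\mathcal{T}_k$ applied to $\pi(\psi^{\mathbb{R}}_{\gg}; \ul, \ueta)$. By definition, $\pi(\psi^{\mathbb{Q}_p}; \ul, \ueta)$ is the matching composition of the $\mathcal{D}_x$ applied to $\pi(\psi^{\mathbb{Q}_p}_{\gg}; \ul, \ueta)$. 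The regular-case corollary gives $\tilde\iota_{\gg}(\pi(\psi^{\mathbb{R}}_{\gg}; \ul, \ueta)) = \pi(\psi^{\mathbb{Q}_p}_{\gg}; \ul, \ueta)$, or the same with $\ueta'$ in the symplectic case. It therefore suffices to prove a one-step identity: for suitable $\lambda$ and index $k$,
\[
\tilde\iota_{\lambda - e_k} \circ \mathcal{T}_k = \mathcal{D}_{x_k} \circ \tilde\iota_{\lambda}
\]
as maps $K\Pi_{\rm pure}(\lambda, G) \to K\Pi_{\rm pure}(\lambda^{\mathbb{Q}_p}_{\lambda - e_k}, H_-)$, where $x_k = \tilde\lambda_k$. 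Subtracting $e_k$ lowers $\tilde\lambda_k$ by one, which removes one copy each of $p^{\pm\tilde\lambda_k}$ from the $p$-adic infinitesimal character. This is exactly the effect of $\mathcal{D}_{\tilde\lambda_k}$, passing from $H$ to a group $H_-$ of rank one less. The order of the composites in Proposition~\ref{prop: singular by translation} and in the definition of $\pi(\psi^{\mathbb{Q}_p}; \ul, \ueta)$ is arranged so that at each step the derivative index equals the current $\tilde\lambda_k$, namely $\widetilde B_i + s$ up to $\widetilde A_i + s$, and we check this bookkeeping first.

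To prove the one-step identity, we pass to the geometric side via the pairings \eqref{eq: pairing real} and \eqref{eq: pairing p-adic}. By Proposition~\ref{prop: iota vs iotageom}, $\tilde\iota = \tilde\iota_{geom}$, and up to the signs built into the pairings this is the adjoint $\mathcal{R}^*$ of $\mathcal{R} = \theta_* \iota^*$. Transposing, the identity becomes a commutative square of sheaf functors:
\[
\mathcal{R}_{\lambda} \circ {}^t\mathcal{D}_{x_k} = {}^t\mathcal{T}_k \circ \mathcal{R}_{\lambda - e_k}.
\]
Here ${}^t\mathcal{T}_k$ is the pull-push along the correspondence of partial flag varieties $X_{\lambda} \leftarrow \D G/\D B' \to X_{\lambda - e_k}$, as in Theorem~\ref{thm: translation vs pushpull}. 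The transpose ${}^t\mathcal{D}_{x_k}$ is Lusztig induction from the Levi $GL_1 \times H_-$, as in Conjecture~\ref{conj: adjoint of derivative}. Lusztig induction is pull-push through the variety of pairs consisting of an isotropic line $\ell \subset W_{-x_k}$ and an $x \in V_{\lambda^{\mathbb{Q}_p}}$ preserving the resulting flag.

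We then restrict this induction diagram to the full rank loci $V^{reg}$. Under the explicit descriptions in Proposition~\ref{prop: full rank part} and Corollary~\ref{cor: geom comparison}, we expect the following. When $x$ is regular, the choice of isotropic line compatible with $x$ amounts to refining the filtration $\operatorname{Fil}_{i_0}$ at a single step. That refinement is precisely the fibration $\D G/\D P(\lambda) \leftarrow \D G/(\D P(\lambda)\cap \D P(\lambda - e_k)) \to \D G/\D P(\lambda - e_k)$, equivariantly for $\D K$. This identification should be an induction equivalence, hence compatible with $\theta$ and with the open inclusions.

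The \textbf{main obstacle} is that Lusztig induction does not commute with restriction to the open regular locus in general. Points of the induction space lying over $V^{reg}_{\lambda - e_k}$ could map outside $V^{reg}_{\lambda}$, and the reverse failure is also possible. We would first try to show that the relevant square is Cartesian. The argument would be by rank counting using Assumption~\ref{assump: lambdap}: removing the extreme eigenvalues $p^{\pm\tilde\lambda_k}$ keeps the dimension profile unimodal, and maximal rank maps restrict and extend compatibly. Base change then gives the commutation. If that fails, we would instead verify the identity on standard sheaves $\mu(\xi)$ by computing stalks, and then use Theorem~\ref{thm: LLC/R} and Conjecture~\ref{conj: LLC/Qp} to transfer the result to standard modules.

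Finally, we handle the signs and component groups. The signs $(-1)^{\dim C}e(\cdot)$ and the symplectic two-to-one ambiguity must be tracked: the $\eta\mapsto\pm\eta$ choice for $\ueta'$ is fixed at the regular stage and preserved by each step. We also check that the vanishing and nonvanishing cases agree on both sides, using Theorem~\ref{thm: singular real A-packet}(1) and Theorem~\ref{thm: singular p-adic A-packet}(1), which state the same condition $\epsilon_{\ul,\ueta}\in \D A_\psi$.
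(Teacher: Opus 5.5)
Your reduction follows the paper's own route. Proposition~\ref{prop: singular by translation}, the definition of $\pi(\psi^{\mathbb{Q}_p};\ul,\ueta)$ and the regular case reduce everything to the one-step intertwining of Theorem~\ref{thm: translation and derivative}. Via $\tilde\iota=\tilde\iota_{geom}=\cR^*$ (up to sign), Theorem~\ref{thm: translation vs pushpull} and Conjecture~\ref{conj: adjoint of derivative}, that intertwining becomes $\cR\circ\Ind_{\D P}^{\D H}=Rp_{2*}p_1^*\circ\cR$, which is proved by restricting Lusztig induction to full-rank loci. The problem is in how you propose to carry out this last step.

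\textbf{Which square must be Cartesian.} Put $V^{\D P,reg}:=V^{reg}_{\lambda^{\mathbb{Q}_p}}\cap\D{\fp}$. The only square that must be Cartesian is the one pairing $q_2$ with the open immersion $\iota_{geom}$, and it is Cartesian by this definition. Base change then gives $\iota_{geom}^*\circ Rq_{2*}=R(q_2|_{V^{\D P,reg}})_*\circ(\text{restriction})$. On the pullback side you only need $q_1(V^{\D P,reg})\subset V^{reg}_{\lambda_M^{\mathbb{Q}_p}}$, which is a rank count, because pullbacks compose along any commutative square. The square over $V^{reg}_{\lambda_-^{\mathbb{Q}_p}}$, which is what your worry about points ``lying over $V^{reg}_{\lambda-e_k}$'' concerns, is genuinely \emph{not} Cartesian. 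An $x\in V^{\D P}$ with regular image in $\D{\fm}$ is non-regular as soon as $x(\alpha_j)\in x(W^-_{-\tilde\lambda_j})$; this is the condition $v^\sharp\neq0$ in the paper. So an attempt to prove Cartesianness on that side would fail, and it is not needed; neither is your stalk-computation fallback.

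\textbf{Orientation of the line.} Since $x$ raises degree, a line $\ell\subset W_{-\tilde\lambda_j}$ stable under $x$ is killed by $x$. That never happens on the full-rank locus, where $x$ is injective in negative degrees, so the induction space you describe restricts to the empty set there. The parabolic in Conjecture~\ref{conj: adjoint of derivative} instead fixes $W''=\BC\alpha_j^*$ in degree $+\tilde\lambda_j$. By skewness this is the condition $x(W_{-\tilde\lambda_j-1})\subset W^-_{-\tilde\lambda_j}$, a hyperplane in $W_{-\tilde\lambda_j}$, and it is this condition that refines $\operatorname{Fil}_{i_0}$ by one step.

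\textbf{The missing slice computation.} The real content, which your ``we expect'' paragraph only asserts, is Lemma~\ref{lemma: full rank part P}.
\begin{enumerate}
\item $H^{\D P}_{\lambda_M^{\mathbb{Q}_p}}$ acts transitively on $\varphi(V^{\D P,reg})$. Using the free blocks $B$, $C$ of the stabilizer (Remark~\ref{rmk: stabilizer as matrices}), one moves $y^\diamond(\alpha_j)=v^\flat+v^\sharp$ to $x^\diamond(\alpha_j)$. Hence all three stacks in the restricted correspondence are quotients of the same slice $F$.
\item From the explicit matrix form of the stabilizers, projection to $\GL(W_{i_0})$ carries $H_{\lambda^{\mathbb{Q}_p}}(x^\diamond)$, $H^{\D P}_{\lambda_M^{\mathbb{Q}_p}}(x^\diamond)$ and $H_{\lambda_-^{\mathbb{Q}_p}}(\bar x^\diamond)$ isomorphically onto $\D P(\lambda^{\BR})$, $\D P(\lambda^{\BR},\lambda^{\BR}_-)$ and $\D P(\lambda^{\BR}_-)$, compatibly with the action on $F$. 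When $\lambda^{\mathbb{Q}_p}(\Fr)$ is supported in $\BZ$, each group carries an extra isometry-group factor of $W_0$.
\end{enumerate}
Only after these two facts does the restricted Lusztig correspondence become literally $p_2$ and $p_1$.

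\textbf{The closing vanishing check.} Your final step misreads Theorems~\ref{thm: singular real A-packet}(1) and~\ref{thm: singular p-adic A-packet}(1). They give only a sufficient condition for vanishing, so they cannot show that the nonvanishing conditions on the two sides agree. That agreement, between Trapa's and Xu's conditions, is a \emph{consequence} of the intertwining, not an extra input. The intertwining already gives the equality, including the zero cases.
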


Theorem~\ref{thm: comparison singular} implies Theorem~\ref{thm: comparison of A-packets} and Theorem~\ref{thm: comparison of endoscopic maps} in the singular case. Moreover, it shows that the nonvanishing conditions of $\pi(\psi^{\mathbb{R}}; \ul, \ueta)$ and $\pi(\psi^{\mathbb{Q}_p}; \ul, \ueta)$ are equivalent. The former was given by P.Trapa \cite{Trapa:2001} and the latter was given by the third named author \cite{Xu:Comb} (also see \cite{Atobe:2022}). The combinatorics involved in these conditions are very complicated and also very different in their appearances. Before our work, the second named author  \cite{Hang:2025} has already proved that they are equivalent by a direct comparison. This was a hard task, and it provided us with strong confidence in our theorem at the beginning.

By Theorem \ref{thm: singular real A-packet} and \ref{thm: singular p-adic A-packet}, $\Pi_{\psi^{\mathbb{R}}}^{\rm pure}(G)$ (resp. $\Pi_{\psi^{\mathbb{Q}_p}}^{\rm pure}(H)$) is obtained from the regular case by applying translation functors (resp. Jacquet functors). Hence Theorem \ref{thm: comparison singular} follows if the map $\tilde \iota$ intertwines translation functors and Jacquet functors. By Proposition \ref{prop: iota vs iotageom} $\tilde \iota$ agrees with the map $\tilde \iota_{geom}$ defined geometrically, and the latter agrees with the map $\cR^*$ on the set of simple objects up to signs, see Corollary \ref{cor: geom comparison}. Therefore we are reduced to proving the following theorem, which also includes the bad parity case. It will be proved in the next section.

\begin{theorem}
	\label{thm: translation and derivative}
	Let $\lambda^{\mathbb{R}} = (\lambda_1, \cdots, \lambda_n)$ be a dominant integral infinitesimal character of $G$. Suppose there exists $1 \leqslant j \leqslant n$ such that $\lambda_{j} > \lambda_{j + 1}$ (set $\lambda_{n+1} = 1 + (1 - \delta)/2$). Then the following diagram commutes
	\begin{align*}
		\xymatrix{K\Pi_{{\rm pure}}(\lambda^{\mathbb{R}}, G) \ar[r]^{\mathcal{R}^{*}} \ar[d]_{\psi^{\lambda^{\mathbb{R}}_{-}}_{\lambda^{\mathbb{R}}}} & K\Pi_{{\rm pure}}(\lambda^{\BQ_p}, H) \ar[d]^{\mathcal{D}_{\tilde{\lambda}_j}} \\
			K\Pi_{{\rm pure}}(\lambda^{\mathbb{R}}_{-}, G) \ar[r]^{\mathcal{R}^{*}} & K\Pi_{{\rm pure}}(\lambda_{-}^{\BQ_p}, H_{-}),
		}
	\end{align*}
	where $\lambda^{\mathbb{R}}_{-} = \lambda^{\mathbb{R}} - e_j$, and $\lambda_-^{\BQ_p}$ is the image of $\lambda^{\mathbb{R}}_-$ under the assignment \ref{cls: lambda to lambdap}.
\end{theorem}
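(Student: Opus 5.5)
The strategy is to prove the transposed statement on Grothendieck groups of perverse sheaves and then use the perfect pairings to return to representations. Under the pairings \eqref{eq: real pairing} and \eqref{eq: p-adic pairing}, the top and bottom arrows are the adjoints of $\mathcal{R}=\theta_*\circ\iota_{geom}^*$. So it suffices to find geometric operators $T^{\rm geom}$ and $D^{\rm geom}$ that are transposes of the two vertical arrows, and to prove
\[
T^{\rm geom}\circ \mathcal{R}_{-} = \mathcal{R}\circ D^{\rm geom}
\]
as maps $K{\rm Per}(H_{-,\lambda_-^{\BQ_p}}\backslash V_{\lambda_-^{\BQ_p}})\to K{\rm Per}(\cX_{\lambda^{\mathbb{R}}})$. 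The signs $(-1)^{\dim C}e(\cdot)$ in the pairings must be tracked throughout.

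For the real side I would invoke Theorem~\ref{thm: translation vs pushpull}. It says the transpose of $\psi^{\lambda^{\mathbb{R}}_-}_{\lambda^{\mathbb{R}}}$ is the pull–push along the natural projection of partial flag varieties $X_{\lambda^{\mathbb{R}}_-}\to X_{\lambda^{\mathbb{R}}}$. Here $\lambda_j>\lambda_{j+1}$ guarantees that $\D P(\lambda^{\mathbb{R}})\subseteq \D P(\lambda^{\mathbb{R}}_-)$, or the reverse, with the appropriate shift. For the $p$-adic side I would use Conjecture~\ref{conj: adjoint of derivative} (to be proved in \cite{CDX:adjunction}). It identifies the transpose of $\mathcal{D}_{\tilde\lambda_j}$ with Lusztig induction from the Levi $\GL_1\times \D H_-$. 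Concretely, this is the correspondence
\[
V_{\lambda_-^{\BQ_p}}\times\{0\}\ \leftarrow\ \tilde V\ \rightarrow\ V_{\lambda^{\BQ_p}},
\]
where $\tilde V$ consists of pairs $(x,L)$ with $x\in V_{\lambda^{\BQ_p}}$ and $L$ an $x$-stable isotropic line in $W_{\tilde\lambda_j}$ (or its dual $W_{-\tilde\lambda_j}$), and the left map takes the induced element on $L^\perp/L$. The passage from $\lambda^{\mathbb{R}}$ to $\lambda^{\mathbb{R}}_-$ removes one copy each of $p^{\pm\tilde\lambda_j}$ from the multiset in \ref{cls: lambda to lambdap}, so $N$ drops by $2$ and $H_-$ has the same type.

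The core step is a base change computation on the full rank loci. I would show that the preimage of $V^{reg}_{\lambda^{\BQ_p}}$ in $\tilde V$ maps into $V^{reg}_{\lambda_-^{\BQ_p}}$. I would then show that, after the slice identifications of Proposition~\ref{prop: full rank part} and Corollary~\ref{cor: geom comparison}, the restricted correspondence becomes the real pull–push correspondence between $[\D K\backslash \GL_n/\D P(\lambda^{\mathbb{R}})]$ and $[\D K\backslash \GL_n/\D P(\lambda^{\mathbb{R}}_-)]$. The key observation is that for $x$ of full rank, an $x$-stable line in $W_{\tilde\lambda_j}$ corresponds, via the injective maps $(x^<)$ into $W_{i_0}$, to a choice refining $\operatorname{Fil}_{i_0}$ by one step. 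This is exactly a point of the fiber of $\GL_n/\D P(\lambda^{\mathbb{R}}_-)\to \GL_n/\D P(\lambda^{\mathbb{R}})$. Proper base change, together with the fact that restriction to an open substack commutes with the correspondence, then gives the identity above. The argument is uniform across cases (a)–(f), with the $2$-to-$1$ map $\theta$ handled by pushing forward through the $\SO\subset O$ cover.

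The main obstacle I expect is that the Lusztig induction diagram does \emph{not} preserve the full rank loci in general. Points $(x,L)$ with $x$ non-regular can map onto regular points of $V_{\lambda_-^{\BQ_p}}$, and vice versa, so $\iota^*$ need not commute with induction naively. To handle this I would first check set-theoretically that $q^{-1}(V^{reg}_{\lambda^{\BQ_p}})=p^{-1}(V^{reg}_{\lambda_-^{\BQ_p}})$. Here the dimension monotonicity in Assumption~\ref{assump: lambdap} and the choice $\tilde\lambda_{n+1}=1/2$ for $j=n$ should force $L$ to lie in the image filtration. The boundary case $j=n$, where the new index is $\tilde\lambda_n=\tfrac12$ or $1$ and the middle block $W_{i_0}$ itself shrinks, needs separate treatment. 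The remaining bookkeeping is matching cohomological shifts and the signs $(-1)^{\dim C}e(H_\xi)$, which I expect to agree because the relative dimensions on the two sides coincide.
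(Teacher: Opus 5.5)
Your overall plan is the paper's: transpose, use Theorem~\ref{thm: translation vs pushpull} on the real side and Conjecture~\ref{conj: adjoint of derivative} on the $p$-adic side, then compare on full rank loci via slices. There is, however, a step that would fail. It is exactly the one you single out to get around the ``main obstacle'': the equality $q^{-1}(V^{reg}_{\lambda^{\BQ_p}})=p^{-1}(V^{reg}_{\lambda_-^{\BQ_p}})$ is false.

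A counterexample already occurs for $n=1$, $\delta=1$, $\lambda^{\mathbb R}=(2)$, $j=1$. Then $N=5$ and every $W_k$ with $|k|\le 2$ is a line, so $L=W_2$ is forced. Hence $\tilde V=V_{\lambda^{\BQ_p}}\cong\Hom(W_{-2},W_{-1})\times\Hom(W_{-1},W_0)\cong\BC^2$ and $p(a,b)=b$. This gives $p^{-1}(\BC^\times)=\BC\times\BC^\times\neq(\BC^\times)^2$.

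Your ``vice versa'' is also wrong: a regular $x$ always lands in the regular locus. Indeed, the injection $x_{-\tilde\lambda_j}$ stays injective on $W^-_{-\tilde\lambda_j}$. Also, $x(L)=0$ makes $x_{\tilde\lambda_j}$ factor through $W_{\tilde\lambda_j}/L$, while $W_{\tilde\lambda_j-1}\to W_{\tilde\lambda_j}/L$ stays surjective.

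The repair, which is what the paper does, is to use only this inclusion. The square over the open substack on the proper side is Cartesian by definition, since $V^{\D P,reg}_{\lambda_M^{\BQ_p}}=V^{reg}_{\lambda^{\BQ_p}}\cap\D\fp$. This gives $\iota_{geom}^*Rq_{2*}=Rq^{reg}_{2*}\iota'^*$. Then $\iota'^*q_1^*u^*=((uq_1)^{reg})^*\iota_{geom,-}^*$ is just functoriality of pullback, so the square on the $q_1$ side need not be Cartesian (the paper notes it is not).

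Second, the real-side operator is not pull--push along a single projection. If $\lambda_{j-1}=\lambda_j=\lambda_{j+1}+1$, the simple root $\alpha_{j-1}$ lies in $\D P(\lambda^{\mathbb R})$ but not in $\D P(\lambda^{\mathbb R}_-)$, and the simple root $\alpha_j$ the other way round. So neither parabolic contains the other. Theorem~\ref{thm: translation vs pushpull} is a correspondence through $\D G/\D P(\lambda^{\mathbb R},\lambda^{\mathbb R}_-)$, where $\D P(\lambda^{\mathbb R},\lambda^{\mathbb R}_-)=\D P(\lambda^{\mathbb R})\cap\D P(\lambda^{\mathbb R}_-)$.

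Your key observation must be corrected accordingly. For regular $x$, the lines $L\subseteq\ker x_{\tilde\lambda_j}$ form a $\mathbb P^{m-1}$, with $m$ the multiplicity of $\lambda_j$. On it $H_{\lambda^{\BQ_p}}(x^\diamond)$ acts, through its $\D P(\lambda^{\mathbb R})$-factor, transitively with stabilizer $\D P(\lambda^{\mathbb R},\lambda^{\mathbb R}_-)$. So this is the fiber of $\GL_n/\D P(\lambda^{\mathbb R},\lambda^{\mathbb R}_-)\to\GL_n/\D P(\lambda^{\mathbb R})$, not of a map $\GL_n/\D P(\lambda^{\mathbb R}_-)\to\GL_n/\D P(\lambda^{\mathbb R})$, which need not exist.

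You then still have to check the other leg. The induced map to $[H_{\lambda_-^{\BQ_p}}(\bar x^\diamond)\backslash\varphi_-^{-1}(\bar x^\diamond)]\cong[\D P(\lambda^{\mathbb R}_-)\backslash F]$ must be the identity on the slice $F$, together with the inclusion $\D P(\lambda^{\mathbb R},\lambda^{\mathbb R}_-)\hookrightarrow\D P(\lambda^{\mathbb R}_-)$. This is the content of the paper's Lemma~\ref{lemma: full rank part P}(2), done with explicit stabilizer matrices.

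Finally, $j=n$ needs no separate treatment. The convention gives $\tilde\lambda_{n+1}=1$, not $\tfrac12$. Hence $\tilde\lambda_n-1\ge|i_0|$ and $W_{i_0}$ does not shrink.
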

\section{Comparison of translation and derivative}
\label{sec: translation and derivative}

The purpose of this section is to prove Theorem~\ref{thm: translation and derivative}, and thus complete the proof of Theorem~\ref{thm: comparison of A-packets} and \ref{thm: comparison of endoscopic maps}. 

Theorem~\ref{thm: translation and derivative} is equivalent to the commutativity of the diagram
\begin{align*}
\xymatrix{K{\rm Per}(H_{\lambda_{-}^{\mathbb{Q}_p}} \backslash V_{\lambda_{-}^{\mathbb{Q}_p}}) \ar[r]^{\mathcal{R}} \ar[d]_{\mathcal{D}^{*}_{\tilde{\lambda}_j}} & K{\rm Per}(\D{K} \backslash X_{\lambda^{\mathbb{R}}_{-}}) \ar[d]^{(\psi^{\lambda^{\mathbb{R}}_{-}}_{\lambda^{\mathbb{R}}})^*} \\
K{\rm Per}(H_{\lambda^{\mathbb{Q}_p}} \backslash V_{\lambda^{\mathbb{Q}_p}}) \ar[r]^{\mathcal{R}} & K{\rm Per}(\D{K} \backslash X_{\lambda^{\mathbb{R}}})
}
\end{align*}
where $\cD_{\tilde{\lambda}_j}^*$ (resp. $(\psi_{\lambda^{\mathbb{R}}}^{\lambda^{\mathbb{R}}_-})^*$) is the adjoint operator to $\cD_{\tilde{\lambda}_j}$ (resp. $\psi_{\lambda^{\mathbb{R}}}^{\lambda^{\mathbb{R}}_-}$) under the pairing \ref{eq: p-adic pairing} (resp. \ref{eq: real pairing}). 

First, we would like to give a geometric construction of $(\psi_{\lambda^{\mathbb{R}}}^{\lambda^{\mathbb{R}}_-})^*$. Let $\D{P}(\lambda^{\mathbb{R}}, \lambda^{\mathbb{R}}_{-}) := \D{P}(\lambda^{\mathbb{R}}_{-}) \cap \D{P}(\lambda^{\mathbb{R}})$. There are natural projections
\[
X_{\lambda^{\mathbb{R}}_{-}} = \D{G}/\D{P}(\lambda^{\mathbb{R}}_{-}) \xleftarrow{p_1} \D{G}/ \D{P}(\lambda^{\mathbb{R}}, \lambda^{\mathbb{R}}_{-}) \xrightarrow{p_2} \D{G}/\D{P}(\lambda^{\mathbb{R}}) = X_{\lambda^{\mathbb{R}}}.
\]

\begin{theorem}\label{thm: translation vs pushpull}
We have
\[
(\psi_{\lambda^{\mathbb{R}}}^{\lambda^{\mathbb{R}}_-})^* = Rp_{2 *} \circ p^{*}_1
\]
as maps from $K{\rm Per}(\D{K} \backslash X_{\lambda^{\mathbb{R}}_{-}})$ to $K{\rm Per}(\D{K} \backslash X_{\lambda^{\mathbb{R}}})$.
\end{theorem}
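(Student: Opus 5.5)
Since both sides are linear maps on Grothendieck groups and the pairing \eqref{eq: real pairing} is perfect, I will check the identity on a basis. The convenient basis on the sheaf side is the standard objects $\mu(\xi)$, $\xi\in\Xi(\lambda^{\mathbb{R}}_-,G)$. On the representation side I use the standard modules $M(\xi)$. Theorem~\ref{thm: LLC/R} (Vogan duality) says these are dual up to the signs $(-1)^{\dim C}e(G_\xi)$. So it suffices to show, for $\xi\in\Xi(\lambda^{\mathbb{R}},G)$ and $\xi'\in\Xi(\lambda^{\mathbb{R}}_-,G)$,
\[
\langle \psi^{\lambda^{\mathbb{R}}_-}_{\lambda^{\mathbb{R}}} M(\xi),\ \mu(\xi')\rangle=\langle M(\xi),\ Rp_{2*}p_1^*\mu(\xi')\rangle .
\]

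\textbf{Representation side.} I will compute $\psi^{\lambda^{\mathbb{R}}_-}_{\lambda^{\mathbb{R}}}M(\xi)$ by the standard coherent continuation formula. A standard module is induced from a (limit of) discrete series on a Cartan, and its Harish-Chandra parameter is $\lambda'$ with $\lambda'$ conjugate to $\lambda^{\mathbb{R}}$. Translation to the wall $\lambda^{\mathbb{R}}_-=\lambda^{\mathbb{R}}-e_j$ sends $M(\xi)$ to the standard module with parameter $\lambda'-w(e_j)$ and the same discrete data, where $w$ is the Weyl element conjugating $\lambda^{\mathbb{R}}$ to $\lambda'$. By Zuckerman this is a possibly zero limit of standard modules, in the Grothendieck group. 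Since $\lambda^{\mathbb{R}}_-$ is still dominant, this is the push from $X_{\lambda^{\mathbb{R}}}$ to $X_{\lambda^{\mathbb{R}}_-}$ on the level of parameters. Via Proposition~\ref{prop: orbit vs L-param}, the output is: $M(\xi)\mapsto M(\xi_-)$ where $\xi_-$ is the image orbit/local system, or $0$ when the translated limit vanishes.

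\textbf{Sheaf side.} I will compute $Rp_{2*}p_1^*\mu(\xi')$ in $K$ by base change. Here $p_1^*\mu(\xi')$ is the extension by zero of $p_1^*\cV$ from $p_1^{-1}(C')$, up to shift. Then $Rp_{2*}$ of a $!$-extension is computed stalkwise. At a point $x$ of an orbit $C\subset X_{\lambda^{\mathbb{R}}}$, the stalk is the compactly supported cohomology (proper map, so $*=!$) of $p_2^{-1}(x)\cap p_1^{-1}(C')$ with coefficients in $\cV$. The fiber $p_2^{-1}(x)$ is $\D P(\lambda^{\mathbb{R}})/\D P(\lambda^{\mathbb{R}},\lambda^{\mathbb{R}}_-)$. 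This is a projective space, or the relevant Grassmannian for moving one index across a block, and it carries the action of the stabilizer $\D K_x$. So the stalk Euler characteristic is a sum over $\D K_x$-orbits on this small flag variety, i.e.\ a rank-one computation for $O(n)$ or $Sp(n)$ acting on a projective space of lines.

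\textbf{Matching and main obstacle.} I expect $p_2^{-1}(x)\cap p_1^{-1}(C')$ to be either empty or an affine space bundle (contributing $\pm1$ with the relevant local system restriction) exactly when $\xi'=\xi_-$. When it is nonempty but not of this form, I expect it to have Euler characteristic zero, for example a $\mathbb{C}^\times$-orbit, or a fiber where the local system is nontrivial on a $\mathbb{C}^\times$. The vanishing cases should match Zuckerman's vanishing of limits of discrete series (compact simple root condition) and the local-system restriction should match the $\epsilon$ bookkeeping. Checking the signs $(-1)^{\dim C}$ versus $(-1)^{\dim C'}$ plus the fiber shift is mechanical.

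The main obstacle is this orbit-by-orbit Euler-characteristic computation for the $\D K$-action on the fibers, including local systems, and its exact match with the Zuckerman translation of standard modules. I would attempt it via the explicit parametrization of Lemma~\ref{lemma: Sym vs L-param}: orbits are involutions $s$, and fiber points are lines in a block. As a shortcut, I would first try to cite the general ABV result that coherent continuation is adjoint to pull-push along $\D G/\D B\to\D G/\D P$ \cite{ABV:1992}. Then I would factor $p_2\circ p_1^{-1}$ through the full flag variety and reduce the claim to the two parabolic cases, where it is essentially their Proposition on translation to walls.
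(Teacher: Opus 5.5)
Your reduction to standard objects via Theorem~\ref{thm: LLC/R} is fine. The main computation, however, rests on a false premise. Write $\lambda=\lambda^{\mathbb{R}}$ and $\lambda_-=\lambda^{\mathbb{R}}_-$.

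You treat $\psi^{\lambda_-}_{\lambda}$ as a translation to a wall, sending each $M(\xi)$ to a single standard module or to $0$. That holds only when $W_\lambda\subseteq W_{\lambda_-}$, i.e.\ when $j$ is alone in its block of $\lambda$. In that case $\D P(\lambda)\subseteq\D P(\lambda_-)$, so $\D P(\lambda,\lambda_-)=\D P(\lambda)$ and $p_2$ is the identity. The projective-space fibers you analyze on the sheaf side then never occur.

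When $p_2$ is nontrivial ($\lambda_{j-1}=\lambda_j$), the step $\lambda\to\lambda_-$ moves off a wall. The coherent-family formula then gives $\sum_{s\in W_\lambda/(W_\lambda\cap W_{\lambda_-})}\Theta(s\lambda_-)$, a signed combination of several standard modules.

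Concretely, take $U(1,1)$, $\lambda=(\frac12,\frac12)$, $j=2$. Then $X_\lambda$ is a point with $\D K=O(2,\BC)$, and $p_2$ is $\mathbb{P}^1\to\mathrm{pt}$. For the open orbit $C'\cong\BC^\times$ the ordinary Euler characteristic is $0$. But stalks live in $K(\operatorname{Rep}\pi_0 O(2,\BC))$, and the reflection acts by $-1$ on $H^1_c(C')$ and trivially on $H^2_c(C')$. Hence two of the four standard sheaves on $C'$ push forward to $\pm([\mathrm{triv}]-[\mathrm{sgn}])$, a combination of both standard sheaves on the point. Dually, each limit of discrete series translates to a combination of standard modules involving both discrete series of $U(1,1)$.

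So your predicted matching fails: it is not true that the intersection is an affine bundle exactly when $\xi'=\xi_-$ and has Euler characteristic zero otherwise. In any case, the orbit-by-orbit computation, which is the whole content of this route, is left undone.

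Your closing shortcut is the right idea and is essentially the paper's proof. The missing ingredient is on the representation side. You must factor the translation itself into a pure translation off a wall followed by a pure translation onto a wall, matching the correspondence $X_{\lambda_-}\leftarrow\D G/\D P(\lambda,\lambda_-)\rightarrow X_\lambda$. The paper does this as follows:
\begin{itemize}
\item It takes $\lambda'=\lambda+e_1+\cdots+e_{j-1}$ and notes that $\D P(\lambda')=\D P(\lambda,\lambda_-)$.
\item It proves $\psi^{\lambda_-}_{\lambda}=\psi^{\lambda_-}_{\lambda'}\circ\psi^{\lambda'}_{\lambda}$ in the Grothendieck group by a coherent-family computation (Lemma~\ref{lemma: second decomposition}).
\item It then invokes $(\psi^{\lambda'}_{\lambda})^*=Rp_{2*}$ and $(\psi^{\lambda_-}_{\lambda'})^*=p_1^*$ from \cite[Proposition 2.2.1]{DHXZ:GLn}.
\end{itemize}
If you prefer to pass through $\D G/\D B$, both sides acquire the factor $|W_\lambda\cap W_{\lambda_-}|$. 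On the representation side it comes from $\psi^{\lambda_-}_{\rho}\circ\psi^{\rho}_{\lambda}=|W_\lambda\cap W_{\lambda_-}|\,\psi^{\lambda_-}_{\lambda}$ for a regular dominant integral $\rho$. On the sheaf side it comes from the Euler characteristic of the fibers of $\D G/\D B\to\D G/\D P(\lambda,\lambda_-)$. The factor cancels because the Grothendieck groups are torsion-free. Neither this factorization nor the scalar appears in your proposal.
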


\begin{proof}
	By Lemma \ref{lemma: second decomposition}, we have a decomposition $\psi_{\lambda^{\mathbb{R}}}^{\lambda^{\mathbb{R}}_-} = \psi_{\lambda'}^{\lambda^{\mathbb{R}}_-} \circ \psi_{\lambda^{\mathbb{R}}}^{\lambda'}$ as maps on the Grothendieck groups, where 
	\begin{equation*}
		\lambda' = \lambda^{\mathbb{R}} + (e_1 + \cdots + e_{j-1}).
	\end{equation*}
	It is straightforward to check that $\D P(\lambda') = \D P(\lambda^{\mathbb{R}},\lambda^{\mathbb{R}}_-)$. Hence it suffices to show that $(\psi_{\lambda^{\mathbb{R}}}^{\lambda'})^* = R p_{2*}$ and $(\psi_{\lambda'}^{\lambda^{\mathbb{R}}_{-}})^* = p_1^*$. This follows from \cite[Proposition 2.2.1]{DHXZ:GLn}. 
\end{proof}






Next we propose a geometric construction of the adjoint operator $\cD_{\tilde{\lambda}_j}^*$ of $\cD_{\tilde{\lambda}_j}$ using an induction procedure introduced by Lusztig \cite{Lusztig:1995}. 

Fix an element $x^\diamond \in V_{\lambda^{\BQ_p}}^{reg,\diamond}$. Recall the graded vector space $W = \BC^N$ equipped with a pairing $\langle -,- \rangle$ and the graded basis adapted to $x^\diamond$ introduced in \ref{cls: basis of W}. We decompose $W = W' \oplus W^- \oplus W''$ as $I=[-\tilde{\lambda}_1,\tilde{\lambda}_1]$-graded vector spaces, where $W'$ (resp. $W''$) is $1$-dimensional, concentrated at degree $-\tilde{\lambda}_j$ (resp. $\tilde{\lambda}_j$) and spanned by the vector $\alpha_j$ (resp. $\alpha_j^*$), and $W^-$ is spanned by the rest of the basis vectors. In other words,
\begin{gather*}
	W_k' = 
	\begin{cases}
		\BC \alpha_j \subset W_{-\tilde{\lambda}_j} & \text{if } k = -\tilde{\lambda}_j\\
		0 & \text{otherwise}
	\end{cases},\quad
	W_k'' = 
	\begin{cases}
		\BC \alpha_j^* \subset W_{\tilde{\lambda}_j} & \text{if } k = \tilde{\lambda}_j\\
		0 & \text{otherwise}
	\end{cases},
	\\
	W_k^- = 
	\begin{cases}
		\operatorname{span}\{\alpha_1,\ldots, \alpha_{j-1}\} \subset W_{-\tilde{\lambda}_j} & \text{if } k = -\tilde{\lambda}_j\\
		\operatorname{span}\{\alpha_1^*,\ldots, \alpha_{j-1}^*\} \subset W_{\tilde{\lambda}_j} & \text{if } k = \tilde{\lambda}_j\\
		W_k &\text{otherwise}
	\end{cases}
\end{gather*}
Then $\D H_-$ may be identified with the subgroup of $\GL(W^-)$ preserving the form $\langle -,- \rangle$. We set $\D P \subset \D H$ be the parabolic subgroup stabilizing the flag $0 \subset W'' \subset W'' \oplus W^- \subset W$, and let $\D M \subset \D P$ be the Levi that stabilizes the subspace $W^-$. Write $\D N$ for the radical of $\D P$, so that $\D P = \D M \D N$, and write $P = MN$ for the corresponding subgroup of $H$. Then we have natural isomorphisms
\begin{equation*}
	M \cong \GL_1 \times H_-,\quad
	\D M \cong \GL_1(\BC) \times \D H_-
\end{equation*}
through which we obtain the infinitesimal character 
\begin{equation*}
	\lambda_M^{\BQ_p} = (|\cdot|^{\tilde{\lambda}_j}, \lambda_-^{\BQ_p})
\end{equation*}
of $M$.

We may take the Vogan varieties for the groups $P$, $M$, and $H_-$, respectively, equipped with corresponding action of centralizers of infinitesimal characters. They can (and will) be identified as follows: $\D \fp$, $\D \fm$, $\D \fh_{-}$ are Lie algebras of $\D P$, $\D M$, $\D H_{-}$ respectively,
\begin{itemize}
	\item $V_{\lambda_M^{\BQ_p}}^{\D P} = V_{\lambda^{\BQ_p}} \cap \D \fp$
	
	\item $H_{\lambda_M^{\BQ_p}}^{\D P} = H_{\lambda^{\BQ_p}} \cap \D P$
	
	\item $V_{\lambda_M^{\BQ_p}} =$ image of $V_{\lambda_M^{\BQ_p}}^{\D P} \subset \D \fp$ under the quotient $\D \fp \surj \D \fm$
	
	\item $H_{\lambda_M^{\BQ_p}} =$ image of $H_{\lambda_M^{\BQ_p}}^{\D P} \subset \D P$ under the quotient $\D P \surj \D M$
	
	\item $V_{\lambda_-^{\BQ_p}} =$ image of $V_{\lambda_M^{\BQ_p}} \subset \D \fm$ under the quotient $\D \fm \cong \fgl_1 \times \D \fh_- \surj \D \fh_-$
	
	\item $H_{\lambda_-^{\BQ_p}} =$ image of $H_{\lambda_M^{\BQ_p}} \subset \D M$ under the quotient $\D M \cong \GL_1(\BC) \times \D H_- \surj \D H_-$.
\end{itemize}
Note that $H_{\lambda_M^{\BQ_p}}$ is the Levi factor of $H_{\lambda_M^{\BQ_p}}^{\D P}$. Hence we have an action $H_{\lambda_M^{\BQ_p}}^{\D P} \surj H_{\lambda_M^{\BQ_p}} \acts V_{\lambda_M^{\BQ_p}}$. Moreover, $V_{\lambda_M^{\BQ_p}}$ is isomorphic to $V_{\lambda_-^{\BQ_p}}$, since the forgotten $\fgl_1$-factor contains no nonzero nilpotent elements. These objects fit into the following commutative diagram
\begin{equation}\label{diag:L-ind}
	\begin{tikzcd}[row sep=small]
		\D H
		& \D P \ar[l, hook] \ar[r, two heads]
		& \D M \ar[r, two heads]
		& \D H_-
		\\
		H_{\lambda^{\BQ_p}} \ar[d, phantom, "\racts" {rot90, description}] \ar[u, hook]
		& H_{\lambda_M^{\BQ_p}}^{\D P} \ar[d, phantom, "\racts" {rot90, description}] \ar[l, hook]   \ar[u, hook] \ar[r, two heads] 
		& H_{\lambda_M^{\BQ_p}} \ar[d, phantom, "\racts" {rot90, description}] \ar[r, two heads] \ar[u, hook]
		& H_{\lambda_-^{\BQ_p}} \ar[d, phantom, "\racts" {rot90, description}] \ar[u, hook]
		\\
		V_{\lambda^{\BQ_p}} \ar[d, hook]
		& V_{\lambda_M^{\BQ_p}}^{\D P} \ar[l, hook, "q_2"'] \ar[r, two heads, "q_1"] \ar[d, hook] 
		& V_{\lambda_M^{\BQ_p}} \ar[d, hook] \ar[r, "u", "\cong"'] \ar[d, hook]
		& V_{\lambda_-^{\BQ_p}} \ar[d, hook]
		\\
		\D \fh
		& \D \fp \ar[l, hook] \ar[r, two heads]
		& \D \fm \ar[r, two heads]
		& \D \fh_-
	\end{tikzcd}
\end{equation}
and hence we obtain morphisms of quotient stacks
\begin{equation*}
	[H_{\lambda^{\BQ_p}} \backslash V_{\lambda^{\BQ_p}}]
	\xleftarrow{q_2} [H_{\lambda_M^{\BQ_p}}^{\D P} \backslash V_{\lambda_M^{\BQ_p}}^{\D P}]
	\xrightarrow{q_1} [H_{\lambda_M^{\BQ_p}} \backslash V_{\lambda_M^{\BQ_p}}]
	\xrightarrow{u} [H_{\lambda_-^{\BQ_p}} \backslash V_{\lambda_-^{\BQ_p}}].
\end{equation*}
The \textbf{Lusztig induction} is the functor
\begin{equation*}
	\Ind_{\D P}^{\D H} := Rq_{2*} \circ q_1^* \circ u^* : D^b(H_{\lambda_-^{\BQ_p}} \backslash V_{\lambda_-^{\BQ_p}}) \aro D^b(H_{\lambda^{\BQ_p}} \backslash V_{\lambda^{\BQ_p}})
\end{equation*}
Note that each step of the composition preverses semisimple complexes. Indeed, the pullback $u^*$ is an inflation functor along a connected normal subgroup; the map $q_1$ is smooth; the stack pushforward $Rq_{2*}$ is defined as the induction equivalence $[H_{\lambda_M^{\BQ_p}}^{\D P} \backslash V_{\lambda_M^{\BQ_p}}^{\D P}] \cong [H_{\lambda^{\BQ_p}} \backslash (H_{\lambda^{\BQ_p}} \times_{H_{\lambda_M^{\BQ_p}}^{\D P}} V_{\lambda_M^{\BQ_p}}^{\D P})]$ followed by (derived) sheaf theoretic pushforward along the proper map $H_{\lambda^{\BQ_p}} \times_{H_{\lambda_M^{\BQ_p}}^{\D P}} V_{\lambda_M^{\BQ_p}}^{\D P} \to V_{\lambda^{\BQ_p}}$, where the decomposition theorem applies. As a result, we may restrict $\Ind_{\D P}^{\D H}$ to semisimple complexes:
\begin{equation*}
	\Ind_{\D P}^{\D H} :
	\operatorname{SPer}(H_{\lambda_-^{\BQ_p}} \backslash V_{\lambda_-^{\BQ_p}}) \aro \operatorname{SPer}(H_{\lambda^{\BQ_p}} \backslash V_{\lambda^{\BQ_p}}).
\end{equation*}
The proof of the following conjecture is work in progress \cite{CDX:adjunction}.

\begin{conjecture}
\label{conj: adjoint of derivative}
	On the level of Grothendieck groups, we have $\cD_{\tilde{\lambda}_j}^* = \Ind_{\D P}^{\D H}$.
\end{conjecture}

Under this conjecture, Theorem \ref{thm: translation and derivative} is equivalent to the following statement.

\begin{theorem}
	Let $\lambda^{\mathbb{R}} = (\lambda_1, \cdots, \lambda_n)$ be a dominant integral infinitesimal character of $G$. Suppose there exits $1 \leqslant j \leqslant n$ such that $\lambda_{j} > \lambda_{j + 1}$ (set $\lambda_{n+1} = 1 + (1 - \delta)/2$). Then the following diagram commutes
	\begin{equation*}
		\begin{tikzcd}
			K\operatorname{Per}(H_{\lambda_-^{\BQ_p}} \backslash V_{\lambda_-^{\BQ_p}}) \ar[r, "\cR"] \ar[d, "\Ind_{\D P}^{\D H}"']
			& K\operatorname{Per}(\D K \backslash X_{\lambda^{\mathbb{R}}_-}) \ar[d, "Rp_{2*} \circ p_1^*"]
			\\
			K\operatorname{Per}(H_{\lambda^{\BQ_p}} \backslash V_{\lambda^{\BQ_p}}) \ar[r, "\cR"]
			& K\operatorname{Per}(\D K \backslash X_\lambda^{\mathbb{R}}),
		\end{tikzcd}
	\end{equation*}
    where $\lambda^{\mathbb{R}}_{-} = \lambda^{\mathbb{R}} - e_j$, and $\lambda_-^{\BQ_p}$ is the image of $\lambda^{\mathbb{R}}_-$ under the assignment \ref{cls: lambda to lambdap}.
    \end{theorem}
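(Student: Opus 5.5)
Since $\cR = R\theta_*\circ\iota_{geom}^*$, I will obtain the commutativity as an identity of functors on derived categories, which then descends to Grothendieck groups. The plan has two parts. First, restrict Lusztig induction to the full rank open part and compare it there with a geometric induction on the real side. Second, match that real side induction with $Rp_{2*}\circ p_1^*$ through the identifications of Proposition~\ref{prop: full rank part} and Corollary~\ref{cor: geom comparison}.

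For the first part, I apply base change to the cartesian square obtained by pulling back $q_2$ (after the induction equivalence, a proper map $H_{\lambda^{\BQ_p}}\times_{H^{\D P}_{\lambda_M^{\BQ_p}}}V^{\D P}_{\lambda_M^{\BQ_p}}\to V_{\lambda^{\BQ_p}}$) along the open inclusion $V^{reg}_{\lambda^{\BQ_p}}\subset V_{\lambda^{\BQ_p}}$. This gives
\[
\iota_{geom}^*\circ \Ind_{\D P}^{\D H} = R q^{reg}_{2*}\circ (q_1^{reg})^*\circ u^*,
\]
where the superscript $reg$ denotes the preimage of $V^{reg}_{\lambda^{\BQ_p}}$ in $V^{\D P}_{\lambda_M^{\BQ_p}}$.

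The key geometric check comes next. I claim that a point $x\in V^{\D P}_{\lambda_M^{\BQ_p}}$, i.e. one preserving the flag $0\subset W''\subset W''\oplus W^-\subset W$, is full rank in $W$ only if its image in $V_{\lambda_-^{\BQ_p}}$ is full rank in $W^-$. Given this, the preimage maps into $V^{reg}_{\lambda_-^{\BQ_p}}$, and $u^*$ may be replaced by its restriction, which yields $\iota_{geom}^*\circ u^* = u^*\circ \iota_{geom,-}^*$ on the relevant diagram.

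To verify the claim I use the explicit basis of \ref{cls: basis of W}. The dimension profile of $W^-$ is that of $W$ with one fewer vector in degrees $-\tilde\lambda_j$ and $\tilde\lambda_j$. The hypothesis $\lambda_j>\lambda_{j+1}$ is exactly what keeps the dimension profile of $W^-$ unimodal, so Assumption~\ref{assump: lambdap} still holds for $\lambda^{\BQ_p}_-$ and the rank conditions are compatible.

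For the second part, I compute $[H^{\D P}_{\lambda_M^{\BQ_p}}\backslash V^{\D P,reg}_{\lambda_M^{\BQ_p}}]$ via the slice $\varphi^{-1}(x^\diamond)$, as in the proof of Proposition~\ref{prop: full rank part}. The expected answer is
\[
[\D K\backslash \GL_n(\BC)/\D P(\lambda^{\mathbb{R}},\lambda^{\mathbb{R}}_-)].
\]
The reasoning is as follows. Fixing $x^\diamond$ compatible with the flag, the stabilizer intersected with $\D P$ becomes $P(\lambda^{\BQ_p})\cap P(\lambda^{\BQ_p}_-)$ in $\GL(W_{i_0})$: the extra condition is stability of the line (or hyperplane) determined by $\alpha_j$ in the filtration. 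By Remark~\ref{rmk: stabilizer as matrices} and the choice of basis, this intersection equals $\D P(\lambda^{\mathbb{R}})\cap\D P(\lambda^{\mathbb{R}}_-)$.

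Under these identifications, $q_2^{reg}$ becomes $p_2$ and $q_1^{reg}$ composed with $u$ becomes $p_1$, both compatible with $\theta$ and $\theta_-$. In the symplectic/$\SO$ case the $2$-to-$1$ map $\theta$ commutes with these maps because the $\SO$ versus $O$ factor sits in $W_0$, which is untouched. Proper base change then gives $R\theta_*\circ Rq^{reg}_{2*}=Rp_{2*}\circ R\theta'_*$ and $\theta'^*$-compatibility for $p_1$, which closes the square.

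I expect the main obstacle to be this explicit identification of the $\D P$-stabilizer with $\D P(\lambda^{\mathbb{R}},\lambda^{\mathbb{R}}_-)$. The boundary case $j=n$, with the convention $\lambda_{n+1}=1+(1-\delta)/2$, needs separate care: there $\tilde\lambda_n=1$ or $\tfrac12$, so the removed vectors sit next to $i_0$, and when $\tilde\lambda_n=\tfrac12$ the removal changes the $\operatorname{Sym}/\operatorname{Skew}$ fiber itself rather than only the filtration. I would treat this case by hand first, checking directly that the restricted fiber is the subvariety of forms whose matrix in the basis of \ref{cls: basis of W} has the block shape corresponding to $\D P(\lambda^{\mathbb{R}}_-)$.
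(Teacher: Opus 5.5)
Your strategy is the paper's. You use base change along the Cartesian square given by the open full-rank locus. You observe that $q_1$ sends $V^{\D P,reg}_{\lambda_M^{\BQ_p}}$ into the full-rank part of $V_{\lambda_M^{\BQ_p}}\cong V_{\lambda_-^{\BQ_p}}$. You then identify the three slices with the same $F$, and the three stabilizers with $\D P(\lambda^{\BR})$, $\D P(\lambda^{\BR},\lambda^{\BR}_-)$ and $\D P(\lambda^{\BR}_-)$; this is the paper's Lemma~\ref{lemma: full rank part P}. Your explicit handling of $\theta$, by base change along the $O/SO$-torsor, is more detailed than the paper's one-line finish and is fine.

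\textbf{A missing step.} You pass from $[H^{\D P}_{\lambda_M^{\BQ_p}}\backslash V^{\D P,reg}_{\lambda_M^{\BQ_p}}]$ to the slice $[H^{\D P}_{\lambda_M^{\BQ_p}}(x^\diamond)\backslash\varphi_P^{-1}(x^\diamond)]$ "as in Proposition~\ref{prop: full rank part}". That passage requires $V^{\D P,reg,\diamond}_{\lambda_M^{\BQ_p}}$ to be a single $H^{\D P}_{\lambda_M^{\BQ_p}}$-orbit. This is not covered by that proposition, and computing the stabilizer of $x^\diamond$ alone does not give it: a priori $(H_{\lambda^{\BQ_p}}\cdot x^\diamond)\cap\D{\mathfrak p}$ could split into several $\D P$-orbits. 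The paper proves transitivity as part (1) of the lemma:
\begin{itemize}
\item Points lying over $\bar x^\diamond$ differ from $x^\diamond$ only in $y^\diamond(\alpha_j)=v^\flat+v^\sharp$, where full rank forces $v^\sharp\neq 0$.
\item The block upper-triangular stabilizer of $\bar x^\diamond$, with $B$ and $C$ arbitrary in degree $-\tilde\lambda_j+1$, moves this vector to $x^\diamond(\alpha_j)$.
\item Transitivity of $H_{\lambda_-^{\BQ_p}}$ on $V^{reg,\diamond}_{\lambda_-^{\BQ_p}}$ then lifts along the surjection $H^{\D P}_{\lambda_M^{\BQ_p}}\to H_{\lambda_-^{\BQ_p}}$.
\end{itemize}
You need to supply this argument.

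\textbf{The boundary case is vacuous.} Your worry about $j=n$ rests on a misreading of the convention. Setting $\lambda_{n+1}=1+(1-\delta)/2$ gives $\tilde\lambda_{n+1}=1$, so the hypothesis $\lambda_j>\lambda_{j+1}$ forces $\tilde\lambda_j>1$ for every allowed $j$. Hence $\tilde\lambda_j\ge 3/2$, and $\tilde\lambda_j\ge 2$ when $\lambda^{\BQ_p}(\Fr)$ is supported in $\BZ$. Consequently $W'$ and $W''$ never meet the forgotten central components, and $\varphi_P^{-1}(x^\diamond)=\varphi^{-1}(x^\diamond)$ holds literally.

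This is also what makes your key claim work. The $\D{\mathfrak p}$-condition forces $x_{-\tilde\lambda_j-1}$ to land in $W^-_{-\tilde\lambda_j}$, and the maps in the relevant negative degrees are injective, so they stay injective after restriction. This is sharper than your unimodality remark.

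The exclusion matters: your proposed treatment of $\tilde\lambda_j=\tfrac12$ would fail. There the induced form on $W^-_{-1/2}$ is the restriction of a nondegenerate form to a hyperplane, which can be degenerate, so the fiber would not be $F$.
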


\begin{proof}
	The theorem is proven by unraveling the relations between the full rank parts of the Vogan varieties in (\ref{diag:L-ind}) and the slices within. 
	
	Write $\varphi: V_{\lambda^{\BQ_p}}^{reg} \to V_{\lambda^{\BQ_p}}^{reg,\diamond}$ for the projection defined in \ref{cls: geom comparison}. Define
	\begin{itemize}
		\item $V_{\lambda_M^{\BQ_p}}^{\D P,reg} := V_{\lambda^{\BQ_p}}^{reg} \cap \D \fp$,
		
		\item $V_{\lambda_M^{\BQ_p}}^{\D P,reg,\diamond} := \varphi(V_{\lambda_M^{\BQ_p}}^{\D P,reg})$.
	\end{itemize}
	We write $\varphi_P$ for the projection $V_{\lambda_M^{\BQ_p}}^{\D P,reg} \surj V_{\lambda_M^{\BQ_p}}^{\D P,reg,\diamond}$, which is the restriction of $\varphi$. Let $V_{\lambda_-^{\BQ_p}}^{reg}$ be the full rank part of $V_{\lambda_-^{\BQ_p}}$. Define $V_{\lambda_-^{\BQ_p}}^{reg,\diamond}$ and the projection map $\varphi_-$ by forgetting the same degrees as $\varphi$, that is, if $\varphi$ forgets the component $\Hom(W_{-\frac12}, W_{\frac12})$ (resp. $\Hom(W_{-1}, W_{0}) \times \Hom(W_{0}, W_{1})$) in $V_{\lambda^{\BQ_p}}$, then $\varphi_-$ forgets the component $\Hom(W_{-\frac12}^-, W_{\frac12}^-)$ (resp. $\Hom(W_{-1}^-, W_{0}^-) \times \Hom(W_{0}^-, W_{1}^-)$) in $V_{\lambda_-^{\BQ_p}}$. Define $\varphi_M: V_{\lambda_M^{\BQ_p}}^{reg} \to  V_{\lambda_M^{\BQ_p}}^{reg,\diamond}$ similarly. Then the isomorphism $u: V_{\lambda_M^{\BQ_p}} \bij V_{\lambda_-^{\BQ_p}}$ identifies $\varphi_M$ with $\varphi_-$. These objects fit into the commutative diagram
	\begin{equation*}
		\begin{tikzcd}
			V_{\lambda^{\BQ_p}} 
			& V_{\lambda_M^{\BQ_p}}^{\D P} \ar[l, hook, "q_2"'] \ar[r, two heads, "q_1"]
			& V_{\lambda_M^{\BQ_p}} \ar[r, "u", "\cong"']
			& V_{\lambda_-^{\BQ_p}}
			\\
			V_{\lambda^{\BQ_p}}^{reg} \ar[u, hook] \ar[d, two heads, "\varphi"']
			& V_{\lambda_M^{\BQ_p}}^{\D P,reg} \ar[l, hook] \ar[r, two heads] \ar[u, hook] \ar[d, two heads, "\varphi_P"']
			& V_{\lambda_M^{\BQ_p}}^{reg} \ar[r, "\cong"] \ar[u, hook] \ar[d, two heads, "\varphi_M"']
			& V_{\lambda_-^{\BQ_p}}^{reg} \ar[u, hook] \ar[d, two heads, "\varphi_-"']
			\\
			V_{\lambda^{\BQ_p}}^{reg,\diamond} 
			& V_{\lambda_M^{\BQ_p}}^{\D P,reg,\diamond} \ar[l, hook] \ar[r, "q_1^\diamond"]
			& V_{\lambda_M^{\BQ_p}}^{reg,\diamond} \ar[r, "\cong"]
			& V_{\lambda_-^{\BQ_p}}^{reg,\diamond}
		\end{tikzcd}.
	\end{equation*}
	Here $q_1$ is just the restriction of the quotient map $\D \fp \surj \D \fm$, and $q_1^\diamond$ is the same as $q_1$ degree-wise. In this diagram, only the middle top square is not Cartesian. 
    
    We now look at the slices, i.e. the fibers of the $\varphi$'s. By definition of $\D P$, the element $x^\diamond$ we fixed lies in $V_{\lambda_M^{\BQ_p}}^{\D P,reg,\diamond}$. Write $\bar x^\diamond = q_1^\diamond(x^\diamond) \in V_{\lambda_M^{\BQ_p}}^{reg,\diamond}$, and use the same notation for its image in $V_{\lambda_-^{\BQ_p}}^{reg,\diamond}$. Then the fibers $\varphi\inv(x^\diamond)$, $\varphi_P\inv(x^\diamond)$, $\varphi_M\inv(\bar x^\diamond)$ and $\varphi_-\inv(\bar x^\diamond)$ are all identified by the maps $q_2$, $q_1$, and $u$ with either
    \begin{equation*}
		\{(f_{-1},f_0) \in \Hom(W_{-1}, W_{0})_{reg} \times \Hom(W_{0}, W_{1})_{reg} \mid f_0^* = -f_{-1}\}
	\end{equation*}    	
	if $\lambda^{\BQ_p}(\Fr)$ is supported on $\BZ$, or 
    \begin{equation*}
		\{f_{-\frac12} \in \Hom(W_{-\frac12},W_{\frac12})_{reg} \mid f_{-\frac12}^*=-f_{-\frac12}\}
	\end{equation*}   
	if $\lambda^{\BQ_p}(\Fr)$ is supported on $\frac12\BZ\backslash \BZ$. By Proposition \ref{prop: full rank part} these fibers can be further identified with $\operatorname{Sym}_n(\BC)_{reg}$, $\operatorname{Skew}_n(\BC)_{reg}$, or $\GL_n(\BC)$, which we denote by $F$ for convenience.
	
	Now we would like to write the full rank parts as induction spaces. For $V_{\lambda^{\BQ_p}}^{reg}$, $V_{\lambda_M^{\BQ_p}}^{reg}$ and $V_{\lambda_-^{\BQ_p}}^{reg}$, Proposition \ref{prop: full rank part} applies (with a slight modification for $V_{\lambda_M^{\BQ_p}}^{reg}$ regarding the $\GL_1$-factor). For $V_{\lambda_M^{\BQ_p}}^{\D P,reg}$, we have a similar result:
	
	\begin{lemma}\label{lemma: full rank part P}~
		\begin{enumerate}
			\item $V_{\lambda_M^{\BQ_p}}^{\D P,reg,\diamond}$ forms a single $H_{\lambda_M^{\BQ_p}}^{\D P}$-orbit. As a result,
			\begin{equation*}
				[H_{\lambda_M^{\BQ_p}}^{\D P} \backslash V_{\lambda_M^{\BQ_p}}^{\D P,reg}] 
				\cong 
				[H_{\lambda_M^{\BQ_p}}^{\D P}(x^\diamond) \backslash \varphi_P\inv(x^\diamond)] .
			\end{equation*}
			
			\item Consider the diagram
			\begin{equation*}
				\begin{tikzcd}[row sep=small]
					H_{\lambda^{\BQ_p}}(x^\diamond) \ar[d, phantom, "\racts" {rot90, description}]
					& H_{\lambda_M^{\BQ_p}}^{\D P}(x^\diamond) \ar[d, phantom, "\racts" {rot90, description}] \ar[l, hook] \ar[r]
					& H_{\lambda_-^{\BQ_p}}(\bar x^\diamond) \ar[d, phantom, "\racts" {rot90, description}]
					\\
					\varphi\inv(x^\diamond)
					& \varphi_P\inv(x^\diamond) \ar[l, "q_2"', "\cong"] \ar[r, "u \, \circ \, q_1", "\cong"']
					& \varphi_-\inv(\bar x^\diamond)
				\end{tikzcd}
			\end{equation*}
            \begin{itemize}
            \item If $\lambda^{\BQ_p}(\Fr)$ is supported on $\frac12\BZ\backslash\BZ$, the diagram is isomorphic to 			\begin{equation*}
				\begin{tikzcd}[row sep=small]
					\D P(\lambda^{\mathbb{R}}) \ar[d, phantom, "\racts" {rot90, description}]
					& \D P(\lambda^{\mathbb{R}},\lambda^{\mathbb{R}}_-) \ar[d, phantom, "\racts" {rot90, description}] \ar[l, hook] \ar[r, hook]
					& \D P(\lambda^{\mathbb{R}}_-) \ar[d, phantom, "\racts" {rot90, description}]
					\\
					F
					& F \ar[l, equal] \ar[r, equal]
					& F
				\end{tikzcd}
			\end{equation*}
			\item If $\lambda^{\BQ_p}(\Fr)$ is supported on $\BZ$ and $H$ is not even orthogonal, the diagram is isomorphic to			
            \begin{equation*}
				\begin{tikzcd}[row sep=small]
					\D P(\lambda^{\mathbb{R}}) \times S\operatorname{Isom}(W_0) \ar[d, phantom, "\racts" {rot90, description}]
					& \D P(\lambda^{\mathbb{R}},\lambda^{\mathbb{R}}_-) \times S\operatorname{Isom}(W_0) \ar[d, phantom, "\racts" {rot90, description}] \ar[l, hook] \ar[r, hook]
					& \D P(\lambda^{\mathbb{R}}_-) \times S\operatorname{Isom}(W_0) \ar[d, phantom, "\racts" {rot90, description}]
					\\
					F
					& F \ar[l, equal] \ar[r, equal]
					& F
				\end{tikzcd}
			\end{equation*}
			 where $S\operatorname{Isom}(W_0)$ stands for the isometry group of $(W_0, \langle-,- \rangle)$ of determinant $1$. For $H$ even orthogonal, one just replaces $S\operatorname{Isom}$ by $\operatorname{Isom}$. 
            \end{itemize}
		\end{enumerate}
	\end{lemma}
	
	Assume this for the moment. As a consequence, if $\lambda^{\BQ_p}(\Fr)$ is supported on $\frac12\BZ\backslash\BZ$, we have a commutative diagram 
	\begin{equation*}
		{\small
			\begin{tikzcd}[row sep=small, column sep=small]
				{[\D P(\lambda^{\mathbb{R}}) \backslash F]} \ar[d, dash, "\cong"]
				& {[\D P(\lambda^{\mathbb{R}},\lambda^{\mathbb{R}}_-) \backslash F]} \ar[l, "p_2"'] \ar[rr, "p_1"] \ar[d, dash, "\cong"]
				&& {[\D P(\lambda^{\mathbb{R}}_-) \backslash F]} \ar[d, dash, "\cong"]
				\\
				{[H_{\lambda^{\BQ_p}}(x^\diamond) \backslash \varphi\inv(x^\diamond)]} \ar[d, dash, "\cong"]
				& {[H_{\lambda_M^{\BQ_p}}^{\D P}(x^\diamond) \backslash \varphi_P\inv(x^\diamond)]} \ar[l] \ar[r] \ar[d, dash, "\cong"]
				& {[H_{\lambda_M^{\BQ_p}}(\bar x^\diamond) \backslash \varphi_M\inv(\bar x^\diamond)]} \ar[r] \ar[d, dash, "\cong"]
				& {[H_{\lambda_-^{\BQ_p}}(\bar x^\diamond) \backslash \varphi_-\inv(\bar x^\diamond)]} \ar[d, dash, "\cong"]
				\\
				{[H_{\lambda^{\BQ_p}} \backslash V_{\lambda^{\BQ_p}}^{reg}]} \ar[d, hook, "\iota_{geom}"']
				& {[H_{\lambda_M^{\BQ_p}}^{\D P} \backslash V_{\lambda_M^{\BQ_p}}^{\D P,reg}]} \ar[l] \ar[r] \ar[d, hook]
				& {[H_{\lambda_M^{\BQ_p}} \backslash V_{\lambda_M^{\BQ_p}}^{reg}]} \ar[r] \ar[d, hook]
				& {[H_{\lambda_-^{\BQ_p}} \backslash V_{\lambda_-^{\BQ_p}}^{reg}]} \ar[d, hook, "\iota_{geom,-}"]
				\\
				{[H_{\lambda^{\BQ_p}} \backslash V_{\lambda^{\BQ_p}}]} \ar[ur, phantom, "\text{\small (Cartesian)}" {description}]
				& {[H_{\lambda_M^{\BQ_p}}^{\D P} \backslash V_{\lambda_M^{\BQ_p}}^{\D P}]} \ar[l, "q_2"'] \ar[r, "q_1"]
				& {[H_{\lambda_M^{\BQ_p}} \backslash V_{\lambda_M^{\BQ_p}}]} \ar[r, "u"]
				& {[H_{\lambda_-^{\BQ_p}} \backslash V_{\lambda_-^{\BQ_p}}]}
			\end{tikzcd}
		},
	\end{equation*}
	If $\lambda^{\BQ_p}(\Fr)$ is supported on $\BZ$, then the groups in the top row will be replaced by their products with $S\operatorname{Isom}(W_0)$ or $\operatorname{Isom}$. As a result, by base change, 
	\begin{equation*}
		\iota_{geom}^* \circ \Ind_{\D P}^{\D H} 
		= \iota_{geom}^* \circ Rq_{2*} \circ q_1^* \circ u^*
		\cong Rp_{2*} \circ p_1^* \circ \iota_{geom,-}^*.
	\end{equation*}
	Combined with the map $\theta$ in Corollary \ref{cor: geom comparison}, this completes the proof of the theorem.
	
	It remains to prove Lemma \ref{lemma: full rank part P}.

	For part (1), consider the commutative diagram
	\begin{equation*}
		\begin{tikzcd}[row sep =small]
			H_{\lambda_M^{\BQ_p}}^{\D P} \ar[d, phantom, "\racts" {rot90, description}] \ar[r, two heads]
			& H_{\lambda_-^{\BQ_p}} \ar[d, phantom, "\racts" {rot90, description}]
			\\
			V_{\lambda_M^{\BQ_p}}^{\D P, reg,\diamond} \ar[r, two heads, "u \, \circ \, q_1^\diamond"]
			& V_{\lambda_-^{\BQ_p}}^{reg,\diamond}
		\end{tikzcd}.
	\end{equation*}
	Recall that the action on the right is transitive by Proposition \ref{prop: full rank part}. Hence part (1) of Lemma \ref{lemma: full rank part P} follows if $H_{\lambda_M^{\BQ_p}}^{\D P}(\bar x^\diamond)$ acts transitively on the fiber $(u \circ q_1^{\diamond})^{-1}(\bar x^\diamond)$. Viewing $H_{\lambda_M^{\BQ_p}}(\bar x^\diamond)$ as a subgroup of $H_{\lambda_M^{\BQ_p}}^{\D P}(\bar x^\diamond)$, it suffices to have $H_{\lambda_M^{\BQ_p}}(\bar x^\diamond)$ acting transitively.
	
	Different elements $y^\diamond$ in the fiber $(u \circ q_1^{\diamond})^{-1}(\bar x^\diamond)$ only differ at the degree $-\tilde{\lambda}_j, \tilde{\lambda}_j - 1$ components. It suffices to consider the negative degrees by Lemma~\ref{lemma: SpSO param space}. At degree $-\tilde{\lambda}_j$, $y^\diamond$ is the sum of two parts: its restriction to $W_{-\tilde{\lambda}_j}^-$ which agrees with $x^\diamond|_{W_{-\tilde{\lambda}_j}^-} = \bar x^\diamond|_{W_{-\tilde{\lambda}_j}^-}$, and its restriction to the line $W_{-\tilde{\lambda}_j}' = \BC \alpha_j$. Hence $y^\diamond$ is completely determined by $y^\diamond(\alpha_j)$, and the condition that $y^\diamond_j$ has full rank implies $y^\diamond(\alpha_j) \not\in x^\diamond(W_{-\tilde{\lambda}_j}^-)$. To express this differently, let us write $\alpha_{i,j} \in W_i$ instead of $\alpha_j$ for the basis vectors chosen in \ref{cls: basis of W}, to avoid confusions. We may decompose
	\begin{equation*}
		W_{-\tilde{\lambda}_j+1} = W_{-\tilde{\lambda}_j+1}^\flat \oplus W_{-\tilde{\lambda}_j+1}^\sharp
	\end{equation*}
	where $W_{-\tilde{\lambda}_j+1}^\flat = x^\diamond(W_{-\tilde{\lambda}_j}^-)$ is the span of the first $j-1$ basis vectors $\alpha_{-\tilde{\lambda}_j+1,1},\ldots,\alpha_{-\tilde{\lambda}_j+1,j-1}$, and $W_{-\tilde{\lambda}_j+1}^\sharp$ is spanned by the remaining basis vectors $\alpha_{-\tilde{\lambda}_j+1,j},\ldots$. Decompose $y^\diamond(\alpha_{-\tilde{\lambda}_j,j}) = v^\flat + v^\sharp$ accordingly. Then $y^\diamond$ having full rank implies $v^\sharp \neq 0$. The above discussion can be summarized in the following picture
	\begin{equation*}
		\begin{tikzcd}[row sep=small]
			W_{-\tilde{\lambda}_j} \ar[d, equal]
			& W_{-\tilde{\lambda}_j+1} \ar[d, equal]
			&&&[-2ex]
			\\
			W_{-\tilde{\lambda}_j}^- \ar[r, "y^\diamond =x^\diamond", "\sim"'] \ar[d, phantom, "\oplus" description] 
			& W_{-\tilde{\lambda}_j+1}^\flat \ar[d, phantom, "\oplus" description] 
			&
			& v^\flat \ar[d, phantom, "+" description]
			\\
			W_{-\tilde{\lambda}_j}' \ar[ur, "y^\diamond"'] \ar[r]
			& W_{-\tilde{\lambda}_j+1}^\sharp
			& \alpha_{-\tilde{\lambda}_j,j} \ar[ur, mapsto] \ar[r, mapsto]
			& v^\sharp \ar[r, phantom, "\neq0" description]
			& {}
		\end{tikzcd}.
	\end{equation*}
	
	We now claim that $y^\diamond$ and $x^\diamond$ is in the same $H_{\lambda_M^{\BQ_p}}(\bar x^\diamond)$-orbit. To see this, recall from Remark \ref{rmk: stabilizer as matrices} that the restriction of $H_{\lambda_M^{\BQ_p}}(\bar x^\diamond)$ to $W_{-\tilde{\lambda}_j}^- \oplus W_{-\tilde{\lambda}_j+1}$ is
	\begin{multline*}
		\left\{ 
		\left((\bar{x}^{\diamond}_{-\tilde{\lambda}_j})^{-1} \circ  A \circ  \bar{x}^\diamond_{-\tilde{\lambda}_j}, 
		\begin{pmatrix}
			A  & B\\
			& C
		\end{pmatrix} \right) \mid
		A \in \GL(W_{-\tilde{\lambda}_j}^\flat), B \in \Hom(W_{-\tilde{\lambda}_j}^\sharp, W_{-\tilde{\lambda}_j}^\flat), C \in \GL(W_{-\tilde{\lambda}_j+1}^\sharp) \right\} 
	\end{multline*}
    where $(\bar{x}^{\diamond}_{-\tilde{\lambda}_j})^{-1}$ is taken on $W_{-\tilde{\lambda}_j}^\flat$. Since $C$ can be any arbitrary element in $\GL(W_{-\tilde{\lambda}_j+1}^\sharp)$, there exists a choice of $C$ so that $C v^\sharp = x^\diamond(\alpha_{-\tilde{\lambda}_j,j}) = \alpha_{-\tilde{\lambda}_j+1,j}$, the $j$-th basis vector of $W_{-\tilde{\lambda}_j+1}$. Since $B$ can be arbitrary, there exists one so that $B v^\sharp = -v^\flat$. Then, taking $A$ to be the identity matrix, we have
	\begin{equation*}
		\begin{pmatrix}
			\operatorname{id} & B\\
			& C
		\end{pmatrix} \cdot y^\diamond(\alpha_{-\tilde{\lambda}_j,j}) = v^\flat + B v^\sharp + C v^\sharp = x^\diamond(\alpha_{-\tilde{\lambda}_j,j})
	\end{equation*}
	and consequently $g \cdot y^\diamond = x^\diamond$ for any element $g \in H_{\lambda_M^{\BQ_p}}(\bar x^\diamond)$ whose restriction to $W_{-\tilde{\lambda}_j} \oplus W_{-\tilde{\lambda}_j+1}$ equals $\left( \operatorname{id}, 
	\begin{pmatrix}
		\operatorname{id} & B\\
		& C
	\end{pmatrix} \right)$.
	Therefore any $y^\diamond \in (q_1^{\diamond})^{-1}(\bar x^\diamond)$ is in the same orbit as $x^\diamond$, and part (1) follows.
	
	Now we look at part (2). For simplicity, let us work in the case where $\lambda^{\BQ_p}(\Fr)$ is supported on $\frac12\BZ \backslash \BZ$; the other case is similar. The most straightforward proof is by looking directly at the matrix descriptions of the stabilizers as in Remark \ref{rmk: stabilizer as matrices} using the basis \ref{cls: basis of W}. Recall from Remark \ref{rmk: stabilizer as matrices} that for $i < 0$ the subspace 
    \begin{equation*}
        W_i^\circ := \operatorname{span}\{\alpha_{i,1+\dim W_{i-1}}, \ldots, \alpha_{i,\dim W_i}\} \subset W_i
    \end{equation*}
    a complement to $x^\diamond(W_{i-1}) \subset W_i$. Define $W_i^{-,\circ}$ similarly obtaining a complement to $\bar x^\diamond(W_{i-1}^-)$ in $W_i^-$. 
    If we just look at degrees $-\tilde{\lambda}_j$ and $-\tilde{\lambda}_j+1$, these spaces can be nicely described by their basis vectors: 
    \begin{align*}
    	W_{-\tilde \lambda_j-1} = W_{-\tilde \lambda_j-1}^-:&\quad 
    	\alpha_1, \ldots, \alpha_d \quad (d := \dim W_{-\tilde \lambda_j-1})
    	\\
    	W_{-\tilde \lambda_j} = W_{-\tilde \lambda_j}^- \oplus \BC \alpha_j :&\quad
    	\overbrace{\underbrace{ \alpha_1, \ldots, \alpha_d}_{\bar x^\diamond(W_{-\tilde \lambda_j-1}^-)}}^{x^\diamond(W_{-\tilde \lambda_j-1})},
    	\overbrace{\underbrace{ \alpha_{d+1}, \ldots, \alpha_{j-1} }_{W_{-\tilde \lambda_j}^{-,\circ}}, \alpha_j }^{W_{-\tilde \lambda_j}^\circ}
    	\\
    	W_{-\tilde \lambda_j+1} = W_{-\tilde \lambda_j+1}^-:&\quad
    	\lefteqn{\overbrace{\phantom{\alpha_1, \ldots, \alpha_d, \alpha_{d+1}, \ldots \alpha_{j-1}, \alpha_jj}}^{x^\diamond(W_{-\tilde \lambda_j})}}
    	\underbrace{ \alpha_1, \ldots, \alpha_d, \alpha_{d+1}, \ldots, \alpha_{j-1} }_{\bar x^\diamond(W_{-\tilde \lambda_j}^-)} , 
    	\underbrace{ \alpha_j,
    	\overbrace{ \alpha_{j+1}, \ldots }^{W_{-\tilde \lambda_j+1}^\circ} }_{W_{-\tilde \lambda_j+1}^{-,\circ}}.
    \end{align*}
    Then, at degrees $-\tilde{\lambda}_j-1$, $-\tilde{\lambda}_j$ and $-\tilde{\lambda}_j+1$, an element of $H_{\lambda^{\BQ_p}}(x^\diamond)$ has the following form:
	\begin{equation*}
		\left(\ldots,
		\begin{pmatrix}
			\ddots & \vdots\\
			& g_{-\tilde{\lambda}_j-1}
		\end{pmatrix},
		\begin{pmatrix}
			\ddots & \vdots & \vdots & \vdots\\
			& g_{-\tilde{\lambda}_j-1} & * & *\\ 
			&& g_{-\tilde{\lambda}_j}^{11} & g_{-\tilde{\lambda}_j}^{12}\\
			&& g_{-\tilde{\lambda}_j}^{21} & g_{-\tilde{\lambda}_j}^{22}
		\end{pmatrix},
		\begin{pmatrix}
			\ddots & \vdots & \vdots & \vdots & \vdots\\
			& g_{-\tilde{\lambda}_j-1} & * & * & *\\
			&& g_{-\tilde{\lambda}_j}^{11} & g_{-\tilde{\lambda}_j}^{12} & *\\
			&& g_{-\tilde{\lambda}_j}^{21} & g_{-\tilde{\lambda}_j}^{22} & *\\
			&&&& g_{-\tilde{\lambda}_j+1}
		\end{pmatrix}, 
		\ldots \right)
	\end{equation*}
	where $g_i \in \GL(W_i^\circ)$, 
	\begin{equation*}
		g_{-\tilde{\lambda}_j} = 
		\begin{pmatrix}
			g_{-\tilde{\lambda}_j}^{11} & g_{-\tilde{\lambda}_j}^{12}\\
			g_{-\tilde{\lambda}_j}^{21} & g_{-\tilde{\lambda}_j}^{22}
		\end{pmatrix}
        \in \GL(W_{-\tilde{\lambda}_j}^\circ)
	\end{equation*}
	is a block matrix with $g_{-\tilde{\lambda}_j}^{11} \in \GL(W_{-\tilde{\lambda}_j}^{-,\circ})$ and $g_{-\tilde{\lambda}_j}^{22} \in \GL(\BC\alpha_{-\tilde{\lambda}_j,j})$. Each $*$ in each matrix is arbitrary but has to equal to the corresponding entry in the next matrix. Similarly, an element of $H_{\lambda_M^{\BQ_p}}^{\D P}(x^\diamond)$ is of the form
	\begin{equation*}
		\left(\ldots,
		\begin{pmatrix}
			\ddots & \vdots\\
			& g_{-\tilde{\lambda}_j-1}
		\end{pmatrix},
		\begin{pmatrix}
			\ddots & \vdots & \vdots & \vdots\\
			& g_{-\tilde{\lambda}_j-1} & * & *\\ 
			&& g_{-\tilde{\lambda}_j}^{11} & g_{-\tilde{\lambda}_j}^{12}\\
			&&& g_{-\tilde{\lambda}_j}^{22}
		\end{pmatrix},
		\begin{pmatrix}
			\ddots & \vdots & \vdots & \vdots & \vdots\\
			& g_{-\tilde{\lambda}_j-1} & * & * & *\\
			&& g_{-\tilde{\lambda}_j}^{11} & g_{-\tilde{\lambda}_j}^{12} & *\\
			&& & g_{-\tilde{\lambda}_j}^{22} & *\\
			&&&& g_{-\tilde{\lambda}_j+1}
		\end{pmatrix}, 
		\ldots \right).
	\end{equation*}
	The missing $g_{-\tilde{\lambda}_j}^{21}$ is a result of the condition $g_{-\tilde{\lambda}_j}(W_{-\tilde{\lambda}_j}^-) \subset W_{-\tilde{\lambda}_j}^-$ coming from $\D P$. Finally, an element of $H_{\lambda_-^{\BQ_p}}(\bar x^\diamond)$ has the form
	\begin{equation*}
		\left(\ldots,
		\begin{pmatrix}
			\ddots & \vdots\\
			& g_{-\tilde{\lambda}_j-1}
		\end{pmatrix},
		\begin{pmatrix}
			\ddots & \vdots & \vdots \\
			& g_{-\tilde{\lambda}_j-1} & * \\ 
			&& g_{-\tilde{\lambda}_j}'
		\end{pmatrix},
		\begin{pmatrix}
			\ddots & \vdots & \vdots & \vdots & \vdots\\
			& g_{-\tilde{\lambda}_j-1} & * & * & *\\
			&& g_{-\tilde{\lambda}_j}' & * & *\\
			&& & g_{-\tilde{\lambda}_j+1}^{11} & g_{-\tilde{\lambda}_j+1}^{12}\\
			&&& g_{-\tilde{\lambda}_j+1}^{21} & g_{-\tilde{\lambda}_j+1}^{22}
		\end{pmatrix}, 
		\ldots \right)
	\end{equation*}
	where $g_{-\tilde{\lambda}_j}' \in \GL(W_{-\tilde{\lambda}_j}^{-,\circ})$, and
	\begin{equation*}
		g_{-\tilde{\lambda}_j+1} = 
		\begin{pmatrix}
			g_{-\tilde{\lambda}_j+1}^{11} & g_{-\tilde{\lambda}_j+1}^{12}\\
			g_{-\tilde{\lambda}_j+1}^{21} & g_{-\tilde{\lambda}_j+1}^{22}
		\end{pmatrix} \in \GL(W_{-\tilde{\lambda}_j+1}^{-,\circ})
	\end{equation*}
	is a block matrix with $g_{-\tilde{\lambda}_j+1}^{22} \in \GL(W_{-\tilde{\lambda}_j+1}^\circ)$ and $g_{-\tilde{\lambda}_j+1}^{11} \in \GL(\BC \alpha_{-\lambda_j+1,j})$. From these explicit descriptions, it is clear that the projection $\pi_{-\frac12}: \prod_{i \le -\frac12} \GL(W_i) \surj \GL(W_{-\frac12})$ (resp. $\pi_{-\frac12}^-: \prod_{i \le -\frac12} \GL(W_i^-) \surj \GL(W_{-\frac12}^-) = \GL(W_{-\frac12})$) restricts to injections on these stablizer groups, and
	\begin{equation*}
		\pi_{-\frac12}\big( H_{\lambda^{\BQ_p}}(x^\diamond) \big) \cap \pi_{-\frac12}^-\big( H_{\lambda_-^{\BQ_p}}(\bar x^\diamond) \big) = \pi_{-\frac12}\big( H_{\lambda_M^{\BQ_p}}^{\D P}(x^\diamond) \big).
	\end{equation*}
	By Proposition \ref{prop: full rank part}, we have isomorphisms
	\begin{equation*}
		\begin{tikzcd}[row sep=small]
			H_{\lambda^{\BQ_p}}(x^\diamond) \ar[d, phantom, "\racts" {rot90, description}] \ar[r, "\pi_{-\frac12}", "\cong"']
			& \D P(\lambda^{\mathbb{R}}) \ar[d, phantom, "\racts" {rot90, description}] 
			\\
			\varphi\inv(x^\diamond) \ar[r, "\cong"]
			& F
		\end{tikzcd} 
		\quad
		\begin{tikzcd}[row sep=small]
			H_{\lambda_-^{\BQ_p}}(\bar x^\diamond) \ar[d, phantom, "\racts" {rot90, description}] \ar[r, "\pi_{-\frac12}^-", "\cong"']
			& \D P(\lambda^{\mathbb{R}}_-) \ar[d, phantom, "\racts" {rot90, description}] 
			\\
			\varphi_-\inv(\bar x^\diamond) \ar[r, "\cong"]
			& F
		\end{tikzcd}.
	\end{equation*}
	Thus $\pi_{-\frac12}$ sends $H_{\lambda_M^{\BQ_p}}^{\D P}(x^\diamond)$ isomorphically onto $\D P(\lambda^{\mathbb{R}}, \lambda^{\mathbb{R}}_-)$ in a way compatible with the action on $F$. This completes the proof of (2).
\end{proof}

\appendix
\section{Factorization of translation functors}\label{sec: factoring translation}

This appendix is inspired by \cite{Trapa:2001}. Let 
\[
G^{an} := U(n,0) = \{g \in GL_n(\mathbb{C}) \, | \, {}^t\bar{g}g = I_n\},
\]
and $T$ be the subgroup of diagonal matrices. Define its inner form $G' := U(p,q)$ by twisting the rational structure of $G^{an}$ by $I_{p,q}$. Then $G'_{\mathbb{C}} = G^{an}_{\mathbb{C}}$ and $T$ is a subgroup of $G'$ under this identification. Denote the complexified Lie algebra of $G^{an}$ and $T$ by $\mathfrak{g}$ and $\mathfrak{t}$ respectively. Identify $\mathfrak{g}$ with $\mathfrak{gl}_n$ and $\mathfrak{t}$ with the Lie subalgebra of diagonal matrices. Choose the Borel subalgebra $\mathfrak{b}$ to be the Lie subalgebra of uppertriangular matrices. Denote by $\Phi^{+}(\mathfrak{g}, \mathfrak{t})$ the set of positive roots associated with $\mathfrak{b}$ and $\Delta(\mathfrak{g}, \mathfrak{t})$ the set of simple roots and $W = W(\mathfrak{g}, \mathfrak{t})$ the Weyl group. Through differentiation we get an inclusion $X^{*}(T) \hookrightarrow \mathfrak{t}^{*}$. Denote its image by $\Lambda$. Let $e_i \in \mathfrak{t}^{*}$ be the image of the character
\[
\chi_i: T \rightarrow \mathbb{S}^1, \quad (t_1, \cdots, t_n) \mapsto t_i.
\]
They give a basis of $\mathfrak{t}^{*}$. Denote the $i$-th coefficient of $\lambda \in \mathfrak{t}^*$ by $\lambda_i$. For $\lambda \in \mathfrak{t}^{*}$, we say $\lambda$ is integral if $\lambda_i - \lambda_j \in \mathbb{Z}$ for all $1 \leqslant i, j \leqslant n$, and $\lambda$ is dominant if $\lambda_{i} - \lambda_{i+1} \geqslant 0$ for all $1\leqslant i < n$. 

Since $W$ acts on $\mathfrak{t}^{*}$ faithfully by permuting the $n$ coordinates, we can identify $W$ with $S_n$ by letting $e_{w(i)} := w(e_i)$ for $w \in W$. Suppose $\lambda \in \mathfrak{t}^*$. Let $W_{\lambda}$ be the stabilizer of $\lambda$ in $W$. We denote the translation functor $\psi^{\lambda - e_i}_{\lambda}$ by $\mathcal{T}_{i}$. We have suppressed $\lambda$ in $\mathcal{T}_{i}$, which should be determined in the context. We will
frequently use the following property of translation functors:
\begin{align}
\label{eq: Weyl conjugate}
\psi^{\lambda}_{\mu} = \psi^{w(\lambda)}_{w(\mu)}
\end{align}
for any $\lambda, \mu \in \mathfrak{t}^*$ and $w \in W$. 

\begin{lemma}
\label{lemma: translation by coherent family}
Suppose $\lambda \in \mathfrak{t}^{*}$ is integral and
\[
\mu = \lambda - (e_{j_1} + \cdots + e_{j_r}), \quad j_1 < \cdots < j_r.
\]
For any coherent family $\Theta$ of virtual representations of $G'$ over $\lambda + \Lambda$,
\[
\psi^{\mu}_{\lambda} (\Theta(\lambda)) = \sum_{s \in W_{\lambda}/W_{\lambda} \cap W_{\mu}} \Theta(s(\mu)).
\]
\end{lemma}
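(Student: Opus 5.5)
The plan is to use the standard description of translation functors on coherent families (Zuckerman, Vogan; cf.\ \cite{KV:1995}). Recall that for $\nu = \mu - \lambda \in \Lambda$, the translation functor $\psi^{\mu}_{\lambda}$ is defined by tensoring with the finite dimensional representation $F$ of extremal weight $\nu$ and then projecting to the generalized infinitesimal character $\mu$. For a coherent family $\Theta$ we have
\[
\Theta(\lambda) \otimes F = \sum_{\gamma \in \Delta(F)} \Theta(\lambda + \gamma),
\]
where $\Delta(F)$ is the multiset of weights of $F$. We choose $F = \bigwedge^r(\mathbb{C}^n)^{*}$, whose extremal weight is $-(e_{j_1}+\cdots+e_{j_r})$. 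It is minuscule, so its weights are exactly the $W$-orbit $\{-(e_{k_1}+\cdots+e_{k_r}) : k_1<\cdots<k_r\}$, each with multiplicity one. This choice avoids any multiplicity bookkeeping.

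Next we project to infinitesimal character $\mu$. Since $\Theta(\lambda+\gamma)$ has infinitesimal character $\lambda+\gamma$, the term survives exactly when $\lambda + \gamma \in W\mu$. The key step is the combinatorial claim that, for integral $\lambda$ and $\gamma$ ranging over the weights above, the condition $\lambda+\gamma \in W\mu$ holds if and only if $\lambda + \gamma = s(\mu)$ for some $s \in W_\lambda$. Because $s\lambda = \lambda$, we have $s\mu = \lambda - s(e_{j_1}+\cdots+e_{j_r})$, so the weights of this form are precisely $\gamma = -s(e_{j_1}+\cdots+e_{j_r})$. These are parametrized by $W_\lambda/(W_\lambda \cap W_{\sum e_{j_k}})$, and this quotient equals $W_\lambda/(W_\lambda\cap W_\mu)$ because $W_\lambda \cap W_\mu = W_\lambda \cap W_{\mu-\lambda}$.

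To prove the claim, compare multisets of coordinates. We need
\[
\{\lambda_i - [i\in S]\}_i = \{\lambda_i - [i \in S_0]\}_i
\]
as multisets, where $S_0 = \{j_1,\dots,j_r\}$. Group the coordinates by the value $c$ of $\lambda_i$ and induct on $c$ from the top. The entries with value $c$ on the left come from indices with $\lambda_i = c$ and $i \notin S$, together with indices with $\lambda_i = c+1$ and $i \in S$. Matching these counts level by level, starting from the largest value, forces $|S \cap \{\lambda_i = c\}| = |S_0 \cap \{\lambda_i = c\}|$ for every $c$. That equality says exactly that $S = s(S_0)$ for some $s \in W_\lambda$.

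The last step is the projection itself. Projecting onto the generalized infinitesimal character $\mu$ is compatible with the coherent family decomposition on the level of characters, since $\Theta(\lambda+\gamma)$ is a virtual character with infinitesimal character $\lambda+\gamma$. Summing the surviving terms gives $\sum_{s} \Theta(s\mu)$, as stated.

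I expect the main obstacle to be the level-by-level counting argument in the claim. In particular, one must check carefully that integrality of $\lambda$ makes the induction over the values $c$ well defined, and that it genuinely forces the equality of counts rather than allowing cancellation between adjacent levels.
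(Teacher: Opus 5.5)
Your proposal is correct and follows the paper's proof essentially verbatim: you tensor with the minuscule representation $\bigwedge^r(\mathbb{C}^n)^*$ (the paper's $L^\vee$), apply the coherent family identity, keep exactly the terms with $\lambda-\nu\in W\cdot\mu$, and identify these with $W_\lambda\cdot\nu_0\cong W_\lambda/(W_\lambda\cap W_\mu)$. Your level-by-level count supplies the step the paper dismisses as ``not hard to see,'' and it goes through without the cancellation you worry about: with $a_c=|S\cap\{\lambda_i=c\}|$ and $b_c=|S_0\cap\{\lambda_i=c\}|$, the multiset equality gives $a_c-b_c=a_{c+1}-b_{c+1}$ for every $c$, and since this difference vanishes for large $c$, it vanishes for all $c$.
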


\begin{proof}
Let $L := \bigwedge^{r} ({\rm Std})$, the irreducible representation of $G'$ of extremal weight $\nu_0 := e_{j_1} + \cdots + e_{j_r}$ with respect to $T$. 
Note the set of weights $wt(L) = W \cdot \nu_0$. Hence all weights of $L$ have multiplicity one. By the definition of coherent family, we have
\[
L^{\vee} \otimes \Theta(\lambda) = \sum_{\nu' \in wt(L^{\vee})}  \Theta(\lambda+ \nu') = \sum_{\nu \in wt(L)}  \Theta(\lambda - \nu).
\]
Then 
\[
\psi^{\mu}_{\lambda} (\Theta(\lambda)) = \sum_{\substack{\nu \in wt(L): \\ \lambda - \nu \in W \cdot \mu }}  \Theta(\lambda - \nu).
\]
It is not hard to see that for any $\nu \in wt(L)$, $\lambda - \nu \in W \cdot \mu$ if and only if $\nu \in W_{\lambda} \cdot \nu_{0}$. Moreover, if $s_1(\nu_0) = s_2(\nu_0)$ for $s_1, s_2 \in W_{\lambda}$, then $s_2^{-1}s_1 \in W_{\mu} \cap W_{\lambda}$. Therefore,
\[
\psi^{\mu}_{\lambda} (\Theta(\lambda)) = \sum_{s \in W_{\lambda}/W_{\lambda} \cap W_{\mu}} \Theta(\lambda - s(\nu_0)) = \sum_{s \in W_{\lambda}/W_{\lambda} \cap W_{\mu}} \Theta(s(\lambda - \nu_0)) = \sum_{s \in W_{\lambda}/W_{\lambda} \cap W_{\mu}} \Theta(s(\mu)).
\]

\end{proof}



\begin{proposition}
\label{prop: factorization of translation}
Suppose $\lambda \in \mathfrak{t}^{*}$ is integral and 
\[
\mu = \lambda - (e_{j_1} + \cdots + e_{j_r}), \quad j_1 < \cdots < j_r.
\]
If $\lambda_{j_1} \geqslant \cdots \geqslant \lambda_{j_r}$, then
\[
m \cdot \psi^{\mu}_{\lambda} = \mathcal{T}_{j_1} \circ \cdots \circ \mathcal{T}_{j_r}
\]
in the Grothendieck group,
where $m = |{\rm Stab}_{S_{r}}(\lambda_{j_1}, \cdots, \lambda_{j_r})|$ and $S_r$ is the symmetric group acting by permuting the $\lambda_{j_i}$'s.
\end{proposition}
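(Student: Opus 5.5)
We prove the identity by evaluating both sides on coherent families, using Lemma~\ref{lemma: translation by coherent family} at each step. Every irreducible (hence every virtual) representation with infinitesimal character $\lambda$ has the form $\Theta(\lambda)$ for some coherent family $\Theta$ on $\lambda + \Lambda$. Hence it suffices to show
\[
m \sum_{s \in W_\lambda/W_\lambda\cap W_\mu} \Theta(s\mu) = \mathcal{T}_{j_1}\circ\cdots\circ\mathcal{T}_{j_r}\,\Theta(\lambda)
\]
for every coherent family $\Theta$. The left side comes directly from the lemma applied to $\psi^\mu_\lambda$.

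For the right side we proceed by induction on the number of factors. Set $\mu^{(k)} := \lambda - (e_{j_k}+\cdots+e_{j_r})$, so that $\mu^{(r+1)}=\lambda$ and $\mu^{(1)} = \mu$. Then $\mathcal{T}_{j_k}$ acts from infinitesimal character $\mu^{(k+1)}$ to $\mu^{(k)}$. Applying the lemma with $r=1$ to a general Weyl translate gives
\[
\mathcal{T}_{j}\,\Theta(w\nu) = \sum_{s\in W_{\nu}/W_\nu\cap W_{\nu - e_{j}}}\Theta\big(s(\nu-e_{j})\big),
\]
after using \eqref{eq: Weyl conjugate} and coherence to move $w$ around. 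Here we must be careful: in the lemma the coherent family is evaluated at $\lambda$ itself. Since $\Theta(w\nu)$ is again the value at $\nu$ of the coherent family $w^{-1}\Theta$, and the translation functor only depends on the $W$-orbit, the lemma applies to each summand.

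Iterating, $\mathcal{T}_{j_1}\circ\cdots\circ\mathcal{T}_{j_r}\Theta(\lambda)$ becomes a sum over sequences $(s_r,\dots,s_1)$ with $s_k$ ranging over coset representatives of $W_{\mu^{(k+1)}}/(W_{\mu^{(k+1)}}\cap W_{\mu^{(k)}})$ for the appropriate translated weights. Each sequence contributes the term $\Theta(s_1\cdots s_r\,\mu)$, up to the reindexing needed because translated weights are conjugate to the $\mu^{(k)}$.

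The key combinatorial step is to identify this multiset of endpoints with $m$ copies of $\{s\mu : s\in W_\lambda/W_\lambda\cap W_\mu\}$. Concretely, $W_\lambda$ is a product of symmetric groups on the level sets of $\lambda$, and $s\mu$ is determined by which positions within each level set receive the $-1$ shifts. The hypothesis $\lambda_{j_1}\ge\cdots\ge\lambda_{j_r}$ means we subtract from the smaller entries first, i.e.\ from the last level sets first. The condition ensures each subtraction $e_{j_k}$ acts on a coordinate whose value $\lambda_{j_k}$ (possibly already lowered) never collides with a lowered coordinate of a different level set, so the stabilizers behave like those of $\lambda$ with partial shifts.

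Within a level set of $\lambda$ of size $a$ from which $b$ of the $j$'s are drawn, successive single translations choose, one at a time, which of the not-yet-lowered coordinates of that value is lowered. The first step has $a$ choices, the next $a-1$, and so on, giving an ordered selection of $b$ out of $a$. The unordered selections are exactly the cosets $W_\lambda/W_\lambda\cap W_\mu$ restricted to that block, each counted $b!$ times. Taking the product over blocks gives multiplicity $\prod b! = |{\rm Stab}_{S_r}(\lambda_{j_1},\dots,\lambda_{j_r})| = m$.

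The main obstacle is justifying this counting rigorously. At intermediate stages the weight $\mu^{(k+1)}$ has a stabilizer that splits the original level set into already-lowered and not-yet-lowered coordinates. We also need to check that the lowered value $\lambda_{j}-1$ does not coincide with another level $\lambda_{j'}$ in a way that enlarges the stabilizer. The ordering hypothesis is what controls this: if $\lambda_{j}-1=\lambda_{j'}$ with $j'$ among those still to be lowered, that merger only happens after $j'$ has already been processed. We would handle this by tracking the stabilizers explicitly as Young subgroups and comparing them with Lemma~\ref{lemma: translation by coherent family} applied to the full $\bigwedge^r$ case.
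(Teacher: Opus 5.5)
Your argument is correct, but it is organized differently from the paper's. The paper first uses \eqref{eq: Weyl conjugate} to reduce to the case where $\lambda$ and $\mu$ are both dominant and $j_1<\cdots<j_r$. It then inducts on $r$. The induction hypothesis replaces $\mathcal{T}_{j_2}\circ\cdots\circ\mathcal{T}_{j_r}$ by $m'\,\psi^{\lambda'}_{\lambda}$, where $\lambda'=\lambda-(e_{j_2}+\cdots+e_{j_r})$. Lemma~\ref{lemma: translation by coherent family} is then applied to $\psi^{\lambda'}_\lambda$ and to $\mathcal{T}_{j_1}$. Dominance of $\mu$ is used to see that $W_{\lambda'}/W_{\lambda'}\cap W_\mu$ has representatives $(j,j_1)$ lying in $W_\lambda$. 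The double sum therefore collapses to $\#\{i:\lambda_{j_i}=\lambda_{j_1}\}$ copies of $\psi^\mu_\lambda$.

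You instead unroll the whole composition using only the case $r=1$ of the lemma, transported to Weyl translates via the coherent family $\eta\mapsto\Theta(w\eta)$. The resulting terms are indexed by sequences of distinct positions $(p_1,\dots,p_r)$ with $\lambda_{p_i}=\lambda_{j_i}$, each contributing $\Theta(\lambda-\sum_i e_{p_i})$. Grouping by the underlying set gives $\prod_c b_c!=m$ copies of each point of $W_\lambda\cdot\mu$, where $b_c=\#\{i:\lambda_{j_i}=c\}$. Your route needs no dominance normalization. It also shows directly that only the order of the values $\lambda_{j_i}$ matters, so it yields Corollary~\ref{cor: factorization of translation} with no extra work. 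The paper's induction, by contrast, keeps the bookkeeping to a single coset computation per step.

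Two small corrections.

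First, your displayed formula should read $\mathcal{T}_j\Theta(w\nu)=\sum_{s}\Theta\bigl(ws(\nu-e_j)\bigr)$; as written, the $w$ is lost.

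Second, the main obstacle is easier than you suggest, and your description of it is not quite right. Stabilizers of the intermediate weights can be strictly larger than those of $\lambda$ with partial shifts. For example, with $\lambda=(2,2,1)$ and $j_1=1$, $j_2=2$, the first step gives $(2,1,1)$, whose stabilizer contains the transposition of positions $2$ and $3$. This is harmless, because the $W_\nu$-orbit of $\nu-e_q$ is just $\{\nu-e_p:\nu_p=\nu_q\}$. So only positions sharing the current value of the coordinate being lowered matter. The hypothesis $\lambda_{j_1}\ge\cdots\ge\lambda_{j_r}$ guarantees that every previously lowered coordinate has current value at most $\lambda_{j_k}-1$. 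Hence the positions available when lowering $j_k$ are exactly the not-yet-lowered ones with $\lambda$-value $\lambda_{j_k}$, which is all your count uses.
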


\begin{proof}
We can choose $w \in W$ such that $w(\lambda), w(\mu)$ are both dominant and $w(j_1) < \cdots < w(j_r)$. Then it is equivalent to show that
\[
m' \cdot \psi^{w(\mu)}_{w(\lambda)} = \mathcal{T}_{w(j_1)} \circ \cdots \circ \mathcal{T}_{w(j_r)},
\]
where $m' = |{\rm Stab}_{S_{r}}(w(\lambda)_{w(j_1)}, \cdots, w(\lambda)_{w(j_r)})| = m$. Therefore, we will assume $\lambda, \mu$ are dominant and prove the formula by induction on $r$. If $r = 1$, the statement is trivial. Let 
\[
\lambda' = \lambda - (e_{j_2} + \cdots + e_{j_r}).
\]
By induction we can assume 
\[
m' \cdot \psi^{\lambda'}_{\lambda} = \mathcal{T}_{j_2} \circ \cdots \circ \mathcal{T}_{j_r},
\]
where $m' = |{\rm Stab}_{S_{r-1}}(\lambda_{j_2}, \cdots, \lambda_{j_r})|$. Then
\[
\mathcal{T}_{j_1} \circ \cdots \circ \mathcal{T}_{j_r} = m' \cdot (\mathcal{T}_{j_1} \circ \psi^{\lambda'}_{\lambda}).
\]
For any irreducible representation $\pi$ of $G'$ of infinitesimal character $\chi_\lambda$, we take a coherent family over $\lambda + \Lambda$ such that $\Theta(\lambda) = \pi$. By Lemma~\ref{lemma: translation by coherent family}, we have
\[
\psi^{\lambda'}_{\lambda}(\pi) = \psi^{\lambda'}_{\lambda} (\Theta(\lambda)) = \sum_{s \in W_{\lambda}/W_{\lambda} \cap W_{\lambda'}} \Theta(s(\lambda')),
\]
and
\begin{align*}
\mathcal{T}_{j_1} (\Theta(s(\lambda'))) 
&= \psi_{\lambda'}^{\mu} (\Theta(s(\lambda'))) = \psi_{s(\lambda')}^{s(\mu)} (\Theta(s(\lambda'))) = \sum_{t \in W_{s(\lambda')}/W_{s(\lambda')} \cap W_{s'(\mu)}} \Theta(ts(\mu)) \\
& = \sum_{t \in W_{s(\lambda')}/W_{s(\lambda')} \cap W_{s'(\mu)}} \Theta(s(s^{-1}ts)(\mu)) = \sum_{t \in W_{\lambda'}/W_{\lambda'} \cap W_{\mu}} \Theta(st(\mu)).
\end{align*}
Hence
\[
\mathcal{T}_{j_1} \circ \psi^{\lambda'}_{\lambda}(\pi) = \sum_{s \in W_{\lambda}/W_{\lambda} \cap W_{\lambda'}} \mathcal{T}_{j_1} (\Theta(s(\lambda'))) = \sum_{s \in W_{\lambda}/W_{\lambda} \cap W_{\lambda'}} \, \sum_{t \in W_{\lambda'}/W_{\lambda'} \cap W_{\mu}} \Theta(st(\mu)).
\]
Note
\(
W_{\lambda'}/W_{\lambda'} \cap W_{\mu} 
\)
can be represented by $\{(j, j_{1}) \in S_n \, | \, j \leqslant j_{1}, \lambda_{j} = \lambda_{j_1} \} \subseteq W_{\lambda}$.
Then the natural inclusion  
\[ 
W_{\lambda'} \cap W_{\lambda} / W_{\lambda'} \cap W_{\lambda} \cap W_{\mu} \rightarrow W_{\lambda'} / W_{\lambda'} \cap W_{\mu}
\]
is a bijection. Hence
\begin{multline*}
\mathcal{T}_{j_1} \circ \psi^{\lambda'}_{\lambda}(\pi) = \sum_{s \in W_{\lambda}/W_{\lambda'} \cap W_{\lambda} \cap W_{\mu}} \Theta(s(\mu)) \\
=  |\frac{W_{\lambda} \cap W_{\mu}}{W_{\lambda'} \cap W_{\lambda} \cap W_{\mu}}| \sum_{s \in W_{\lambda}/W_{\lambda} \cap W_{\mu}} \Theta(s(\mu)) = |\frac{W_{\lambda} \cap W_{\mu}}{W_{\lambda'} \cap W_{\lambda} \cap W_{\mu}}| \psi_{\lambda}^{\mu} (\pi).
\end{multline*}
Since
\[
|\frac{W_{\lambda} \cap W_{\mu}}{W_{\lambda'} \cap W_{\lambda} \cap W_{\mu}}| = |\{1 \leqslant i \leqslant r \, | \, \lambda_{j_i} = \lambda_{j_1}\}|,
\]
then
\[
m = m'  \cdot |\frac{W_{\lambda} \cap W_{\mu}}{W_{\lambda'} \cap W_{\lambda} \cap W_{\mu}}|. 
\]
This finishes the proof.
\end{proof}

It can be slightly generalized as follows.
\begin{corollary}
\label{cor: factorization of translation}
Suppose $\lambda \in \mathfrak{t}^{*}$ is integral and
\[
\mu = \lambda - (e_{j_1} + \cdots + e_{j_r}), \quad j_1 < \cdots < j_r.
\]
Then for any $\sigma \in S_r$ such that $\lambda_{j_{\sigma(1)}} \geqslant \cdots \geqslant \lambda_{j_{\sigma(r)}}$, 
\[
m \cdot \psi^{\mu}_{\lambda} = \mathcal{T}_{j_{\sigma(1)}} \circ \cdots \circ \mathcal{T}_{j_{\sigma(r)}}
\]
in the Grothendieck group, where $m = |{\rm Stab}_{S_{r}}(\lambda_{j_1}, \cdots, \lambda_{j_r})|$.
\end{corollary}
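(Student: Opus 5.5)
The plan is to reduce to Proposition~\ref{prop: factorization of translation} by a Weyl group relabeling. Translation functors depend only on the pair of infinitesimal characters up to the diagonal $W$-action, by \eqref{eq: Weyl conjugate}. The functors $\mathcal{T}_i$ themselves, however, depend on the current infinitesimal character, which is suppressed in the notation. So I will first make precise what the composite $\mathcal{T}_{j_{\sigma(1)}} \circ \cdots \circ \mathcal{T}_{j_{\sigma(r)}}$ means. The rightmost factor is applied at $\lambda$. Each subsequent factor is applied at $\lambda$ minus the $e_{j}$'s already subtracted. The end result lands at $\mu$ no matter what order is used, so both sides of the claimed identity are maps on Grothendieck groups from infinitesimal character $\lambda$ to $\mu$.

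Next I will choose a permutation $w \in S_n$ that carries the positions $j_{\sigma(1)}, \ldots, j_{\sigma(r)}$ to an increasing sequence, i.e.\ $w(j_{\sigma(1)}) < \cdots < w(j_{\sigma(r)})$, and fixes or arbitrarily reorders the other indices. Set $\lambda' = w(\lambda)$ and $\mu' = w(\mu)$. Then $\mu' = \lambda' - (e_{w(j_{\sigma(1)})} + \cdots + e_{w(j_{\sigma(r)})})$, and since $\lambda'_{w(k)} = \lambda_k$, the hypothesis becomes $\lambda'_{w(j_{\sigma(1)})} \geqslant \cdots \geqslant \lambda'_{w(j_{\sigma(r)})}$. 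This is exactly the hypothesis of Proposition~\ref{prop: factorization of translation}, with increasing indices $j'_i := w(j_{\sigma(i)})$. The proposition therefore gives
\[
m' \cdot \psi^{\mu'}_{\lambda'} = \mathcal{T}_{j'_1} \circ \cdots \circ \mathcal{T}_{j'_r},
\]
where $m' = |{\rm Stab}_{S_r}(\lambda'_{j'_1},\ldots,\lambda'_{j'_r})|$. This equals $|{\rm Stab}_{S_r}(\lambda_{j_1},\ldots,\lambda_{j_r})| = m$, because the stabilizer's order depends only on the multiset of values.

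Finally, I will transport back using \eqref{eq: Weyl conjugate}. On the left, $\psi^{\mu'}_{\lambda'} = \psi^{\mu}_{\lambda}$. On the right, take an intermediate step from $\nu$ to $\nu - e_{j'_i}$, where $\nu = w(\nu_0)$ for the corresponding intermediate $\nu_0$ on the original side. This step is $\psi^{w(\nu_0) - e_{w(j_{\sigma(i)})}}_{w(\nu_0)} = \psi^{w(\nu_0 - e_{j_{\sigma(i)}})}_{w(\nu_0)} = \psi^{\nu_0 - e_{j_{\sigma(i)}}}_{\nu_0}$, which is the $\mathcal{T}_{j_{\sigma(i)}}$ at $\nu_0$. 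So the composites match factor by factor, and the corollary follows.

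The only delicate point is the bookkeeping in this last step. One has to check that \eqref{eq: Weyl conjugate} applies to each factor separately with the same $w$, including when intermediate characters are singular, and that the implicit base points agree. This is immediate from the identity $w(\nu) - e_{w(k)} = w(\nu - e_k)$.
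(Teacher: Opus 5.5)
Your proposal is correct and is essentially the paper's proof. The paper conjugates by $\sigma^{-1}$, viewed as an element of $W$ permuting $\{e_{j_i}\}$; this is the particular choice of your $w$ with $w(j_{\sigma(i)}) = j_i$. It then applies Proposition~\ref{prop: factorization of translation} via \eqref{eq: Weyl conjugate}, and your factor-by-factor check of the intermediate infinitesimal characters makes explicit a step the paper leaves implicit.
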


\begin{proof}
Include $S_r$ in $W$ by permuting $\{e_{j_i}\}_{i=1}^r$. Since $\psi^{\mu}_{\lambda} = \psi^{\sigma^{-1}(\mu)}_{\sigma^{-1}(\lambda)}$, then it is equivalent to show that
\[
m \cdot \psi^{\sigma^{-1}(\mu)}_{\sigma^{-1}(\lambda)} = \mathcal{T}_{j_1} \circ \cdots \circ \mathcal{T}_{j_r}.
\]
Let $\lambda' = \sigma^{-1}(\lambda)$ and $\mu'= \sigma^{-1}(\mu)$. Then 
\[
\mu' = \lambda' - (e_{j_1} + \cdots + e_{j_r}), \quad j_1 < \cdots < j_r.
\]
and 
\(
\lambda'_{j_1} \geqslant \cdots \geqslant \lambda'_{j_r}.
\)
Note $m = |{\rm Stab}_{S_{r}}(\lambda_{j_1}, \cdots, \lambda_{j_r})| = |{\rm Stab}_{S_{r}}(\lambda'_{j_1}, \cdots, \lambda'_{j_r})|$. Then it follows from Proposition~\ref{prop: factorization of translation}.
\end{proof}

\begin{lemma}
\label{lemma: switch order}
Suppose $\lambda \in \mathfrak{t}^{*}$ is integral and
\[
\mu = \lambda - (e_{j_1} + e_{j_2}), \quad j_1 < j_2.
\]
If $|\lambda_{j_1} - \lambda_{j_2}| \neq1$, then
\[
\mathcal{T}_{j_1} \circ \mathcal{T}_{j_2} = \mathcal{T}_{j_2} \circ \mathcal{T}_{j_1}
\]
in the Grothendieck group.
\end{lemma}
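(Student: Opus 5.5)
The plan is to reduce both compositions to multiples of the single translation functor $\psi^{\mu}_{\lambda}$, with $\mu = \lambda - (e_{j_1} + e_{j_2})$, and to show that the multiples agree. We fix an irreducible $\pi$ with infinitesimal character $\chi_\lambda$ and a coherent family $\Theta$ over $\lambda + \Lambda$ with $\Theta(\lambda) = \pi$, exactly as in the proof of Proposition~\ref{prop: factorization of translation}. Then Lemma~\ref{lemma: translation by coherent family}, together with \eqref{eq: Weyl conjugate}, gives for either order $(a,b)$ of $\{j_1, j_2\}$
\[
\mathcal{T}_{a} \circ \mathcal{T}_{b}(\pi) = \sum_{s \in W_{\lambda}/W_\lambda \cap W_{\lambda - e_{b}}} \; \sum_{t \in W_{\lambda - e_{b}}/W_{\lambda - e_b} \cap W_{\mu}} \Theta(st(\mu)).
\]
This is the same manipulation as in that proof. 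It then suffices to compare this double sum with $\psi^\mu_\lambda(\pi) = \sum_{s \in W_\lambda / W_\lambda \cap W_\mu} \Theta(s(\mu))$.

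\emph{Case $\lambda_{j_1} = \lambda_{j_2}$.} Both orderings satisfy the hypothesis of Corollary~\ref{cor: factorization of translation}, with $m = 2$. Hence $\mathcal{T}_{j_1}\circ\mathcal{T}_{j_2} = 2\psi^\mu_\lambda = \mathcal{T}_{j_2} \circ \mathcal{T}_{j_1}$, and this case is done.

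\emph{Case $|\lambda_{j_1} - \lambda_{j_2}| \geq 2$.} By symmetry we may assume $\lambda_{j_1} > \lambda_{j_2}$. Corollary~\ref{cor: factorization of translation} gives $\mathcal{T}_{j_1} \circ \mathcal{T}_{j_2} = \psi^\mu_\lambda$ (here $m=1$). So it remains to show $\mathcal{T}_{j_2}\circ\mathcal{T}_{j_1} = \psi^\mu_\lambda$, the "wrong" order, via the double sum with $b = j_1$. The key observation is the following. Since $\lambda_{j_1} - 1 \neq \lambda_{j_2}$ and $\lambda_{j_1} \neq \lambda_{j_2}$, the level set $\{k : (\lambda - e_{j_1})_k = \lambda_{j_2}\}$ coincides with $\{k : \lambda_k = \lambda_{j_2}\}$. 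Therefore the inner coset space $W_{\lambda - e_{j_1}}/W_{\lambda-e_{j_1}}\cap W_\mu$ can be represented by the transpositions $(k, j_2)$ with $\lambda_k = \lambda_{j_2}$, and these lie in $W_\lambda$. Also, the outer sum runs over transpositions moving $j_1$ within its $\lambda$-level set, and these commute with the inner ones because the two level sets are disjoint. So the products $st$ run exactly once over a set of representatives of $W_\lambda / W_\lambda \cap W_\mu$. The latter is the product of the two level-set coset spaces, since $W_\lambda \cap W_\mu$ is the stabilizer of $j_1$ and of $j_2$ inside their respective blocks. Hence the double sum equals $\psi^\mu_\lambda(\pi)$.

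The main obstacle is this bookkeeping of the cosets: one must verify that no extra coincidence $\lambda_{j_1} - 1 = \lambda_{j_2}$ enlarges a stabilizer, which is exactly what the hypothesis excludes, and that the representatives multiply injectively. Once equality holds on every irreducible $\pi$, it holds on the Grothendieck group.
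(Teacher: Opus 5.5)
Your proposal is correct and follows essentially the paper's route. You handle the equal case with Corollary~\ref{cor: factorization of translation} ($m=2$ for both orders). In the other case you get one order from the same corollary, and you write the other order as the coherent-family double sum. There, the hypothesis $\lambda_{j_1}-1\neq\lambda_{j_2}$ puts the inner coset representatives $(k,j_2)$ inside $W_\lambda$, so the double sum collapses to $\psi^{\mu}_{\lambda}(\pi)$.

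The only difference is cosmetic. The paper first conjugates to make $\lambda,\mu$ dominant and then shows $W_\lambda\cap W_\mu=W_{\lambda'}\cap W_\lambda\cap W_\mu$. You skip the conjugation and match cosets directly through the two disjoint level sets, which is equally valid.
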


\begin{proof}
The case $\lambda_{j_1} = \lambda_{j_2}$ follows from Corollary~\ref{cor: factorization of translation}. Now let us assume $|\lambda_{j_1} - \lambda_{j_2}| > 1$. We can choose $w \in W$ such that $w(\mu), w(\lambda)$ are dominant. Then
\[
w(\mu) = w(\lambda) - (e_{w(j_1)} + e_{w(j_2)}),
\]
and it is equivalent to show that
\[
\psi^{w(\mu)}_{w(\lambda)} = \mathcal{T}_{w(j_1)} \circ \mathcal{T}_{w(j_2)} = \mathcal{T}_{w(j_2)} \circ \mathcal{T}_{w(j_1)}.
\]
Therefore, we will further assume that $\lambda, \mu$ are dominant and $\lambda_{j_1} - \lambda_{j_2} > 1$. By Proposition~\ref{prop: factorization of translation}, we already have
\[
\psi^{\mu}_{\lambda} = \mathcal{T}_{j_1} \circ \mathcal{T}_{j_2}.
\]
It remains to show that
\[
\psi^{\mu}_{\lambda} = \mathcal{T}_{j_2} \circ \mathcal{T}_{j_1}.
\]

Let $\lambda' = \lambda - e_{j_1}$. For any irreducible representation $\pi$ of $G'$ of infinitesimal character $\chi_\lambda$, we take a coherent family over $\lambda + \Lambda$ such that $\Theta(\lambda) = \pi$. Following the proof of Proposition~\ref{prop: factorization of translation}, we get
\[
\mathcal{T}_{j_2} \circ \mathcal{T}_{j_1}(\pi) = \sum_{s \in W_{\lambda}/W_{\lambda} \cap W_{\lambda'}} \, \sum_{t \in W_{\lambda'}/W_{\lambda'} \cap W_{\mu}} \Theta(st(\mu)).
\]
Since $\lambda_{j_1} - \lambda_{j_2} > 1$, 
\(
W_{\lambda'}/W_{\lambda'} \cap W_{\mu} 
\)
can be represented by $\{(j, j_{2}) \in S_n \, | \, j_1 < j \leqslant j_{2}, \lambda_{j} = \lambda_{j_2} \} \subseteq W_{\lambda}$.
Then the natural inclusion  
\[ 
W_{\lambda'} \cap W_{\lambda} / W_{\lambda'} \cap W_{\lambda} \cap W_{\mu} \rightarrow W_{\lambda'} / W_{\lambda'} \cap W_{\mu}
\]
is a bijection. Hence
\begin{multline*}
\mathcal{T}_{j_2} \circ \mathcal{T}_{j_1}(\pi)  = \sum_{s \in W_{\lambda}/W_{\lambda'} \cap W_{\lambda} \cap W_{\mu}} \Theta(s(\mu)) \\
=  |\frac{W_{\lambda} \cap W_{\mu}}{W_{\lambda'} \cap W_{\lambda} \cap W_{\mu}}| \sum_{s \in W_{\lambda}/W_{\lambda} \cap W_{\mu}} \Theta(s(\mu)) = |\frac{W_{\lambda} \cap W_{\mu}}{W_{\lambda'} \cap W_{\lambda} \cap W_{\mu}}| \psi_{\lambda}^{\mu} (\pi).
\end{multline*}
At last, $W_{\lambda} \cap W_{\mu} = W_{\lambda'} \cap W_{\lambda} \cap W_{\mu}$. This finishes the proof.

\end{proof}

We also consider the following generalization of $\mathcal{T}_{i}$. For any segment $[a, b]$ in $\{1, \cdots, n\}$ and $\lambda \in \mathfrak{t}^{*}$, we define 
\[
\mathcal{T}_{[a, b]} := \psi_{\lambda}^{\lambda - \sum_{i = a}^{b} e_i}.
\] 
By Corollary \ref{cor: factorization of translation}, we have 
\begin{align}
\label{eq: factorization of translation}
\mathcal{T}_{[a, b]} = \mathcal{T}_{a} \circ \cdots \circ \mathcal{T}_{b}
\end{align}
when $\lambda_a > \cdots > \lambda_b$. 

\begin{proposition}
\label{prop: Trapa translation}
In the setup of \textsection \ref{subsec: singular-case}, let 
\[
\mathcal{T} := \mathcal{T}^{t_1 - t_2}_{[1, \cdots, m_1]} \circ \mathcal{T}^{t_2 - t_3}_{[1, m_1 + m_2]} \circ \cdots \mathcal{T}^{t_r}_{[1, n]}.
\]
Then
\[
\pi(\psi^{\mathbb{R}}; \ul, \ueta) = \mathcal{T}(\pi(\psi^{\mathbb{R}}_{\gg}; \ul, \ueta)).
\]
\end{proposition}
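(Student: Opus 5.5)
The plan is to prove Proposition~\ref{prop: Trapa translation} by comparing two descriptions of the same cohomologically induced modules. On one side is Trapa's description of $\pi(\psi^{\mathbb{R}}; \ul, \ueta)$ as a weakly fair $A_{\mathfrak q}(\lambda)$ obtained from a good-range one by translation. On the other side is the iterated translation $\mathcal{T}$ built from the blocks $[1, m_1+\cdots+m_i]$.

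The first step is to realize both representations from one coherent family. The module $\pi(\psi^{\mathbb{R}}_{\gg}; \ul, \ueta)$ equals $\mathcal{R}_{w(\mathfrak q)}^{S}(w\Lambda_{L,\psi^{\mathbb{R}}_{\gg}})$. The module $\pi(\psi^{\mathbb{R}}; \ul, \ueta)$ uses the same $\theta$-stable $w(\mathfrak q)$ and the same $L_w \cong \prod U(p_i,q_i)$, with character $w\Lambda_{L,\psi^{\mathbb{R}}}$. By the formula for $a_i$, passing from $\psi^{\mathbb{R}}_{\gg}$ to $\psi^{\mathbb{R}}$ lowers $a_i$ by $t_i$, so the two inducing characters differ by $\prod_i \det(t_i)^{-t_i}$. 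Cohomological induction from the fixed $\mathfrak q$, applied to the family $\Lambda\otimes\det^{\underline{c}}$, gives a coherent family in the parameter $\lambda_{\gg} - \sum_i t_i(e_{1+\sum_{j<i}m_j}+\cdots+e_{\sum_{j\le i}m_j})$. This is the Euler-characteristic version of cohomological induction, which is valid in the weakly fair range as in \cite{Trapa:2001}. Evaluated at $\lambda^{\mathbb{R}}$, it gives $\pi(\psi^{\mathbb{R}}; \ul, \ueta)$, using the vanishing of other degrees in the weakly fair range.

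The second step is to apply Lemma~\ref{lemma: translation by coherent family} to this family. Set $\mu := \lambda^{\mathbb{R}}$ and write $\mu = \lambda^{\mathbb{R}}_{\gg} - \nu$. Since $\lambda^{\mathbb{R}}_{\gg}$ is regular, $W_{\lambda_{\gg}}$ is trivial, so a single translation $\psi^{\mu}_{\lambda_{\gg}}$ returns exactly $\Theta(\mu)$. The difficulty is that $\nu$ is a sum of weights with multiplicities $t_i$, not a single minuscule weight $e_{j_1}+\cdots+e_{j_r}$. I will therefore factor the translation into steps. The step $\mathcal{T}_{[1,\sum_{j\le i} m_j]}$ subtracts $e_1+\cdots+e_{M_i}$ with $M_i=\sum_{j\le i}m_j$. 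Performing it $t_i-t_{i+1}$ times, from $i=r$ down to $i=1$, subtracts exactly $t_i$ from each coordinate in block $i$. Each step is of the form in Lemma~\ref{lemma: translation by coherent family} with $\nu_0=e_1+\cdots+e_{M_i}$.

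The main obstacle is ensuring that every intermediate infinitesimal character stays regular. Then each stabilizer $W_\lambda$ is trivial and the lemma gives $\Theta(\text{next})$ with no extra summands. The ordering $A_i+t_i > A_{i+1}+t_{i+1}$, the disjointness of the shifted intervals, and the convention that $A_i>A_j, B_i>B_j \Rightarrow i<j$ show this. Shifting all of the first $M_i$ coordinates down by one at a time, beginning with the last blocks and then including the earlier ones, keeps the blocks ordered until they reach their final positions. Only at the last stages can coordinates collide, producing singular $\lambda$. I would verify, in the spirit of Lemma~\ref{lemma: switch order} and the proof of Proposition~\ref{prop: factorization of translation}, that a collision occurs only at the target of a step, so the source always has $W_\lambda=1$. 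Then $\psi^{\mu}_{\lambda}\Theta(\lambda)=\Theta(\mu)$ still holds by Lemma~\ref{lemma: translation by coherent family}. Composing these steps gives $\mathcal{T}(\pi(\psi^{\mathbb{R}}_{\gg};\ul,\ueta))=\Theta(\lambda^{\mathbb{R}})=\pi(\psi^{\mathbb{R}};\ul,\ueta)$. One small point remains: possibly reordering $\mathcal{T}$ using \eqref{eq: Weyl conjugate} so that each source is dominant, as Trapa does.
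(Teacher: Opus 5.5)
Your route (one coherent family through both modules, then Lemma~\ref{lemma: translation by coherent family} one step at a time) differs from the paper, which simply invokes \cite[Lemma 3.13]{Trapa:2001}. It breaks exactly at the step you flag. The claim that collisions happen only at the target of the last step, so that every source is regular, is false. Take $n=6$, $[B_1,A_1]=[\tfrac12,\tfrac52]$, $[B_2,A_2]=[-\tfrac12,\tfrac32]$ (compatible with the indexing convention), $t_1=3$, $t_2=0$. Then $\mathcal{T}=\mathcal{T}_{[1,3]}^3$, starting from $(\tfrac{11}2,\tfrac92,\tfrac72,\tfrac32,\tfrac12,-\tfrac12)$. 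The second step lands on $\lambda'=(\tfrac72,\tfrac52,\tfrac32,\tfrac32,\tfrac12,-\tfrac12)$, which is singular and is the source of the third step. This is typical: whenever two intervals overlap in more than one point, the moving block has to pass through the stationary one. At such a source the lemma gives $\sum_{s\in W_{\lambda'}/W_{\lambda'}\cap W_\mu}\Theta(s\mu)$, here $\Theta(\mu)+\Theta((3\,4)\mu)$. Your proposal has no mechanism to remove the extra terms.

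Two ingredients are missing.

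First, your family is only specified on the block-constant shifts $\Lambda\otimes\det^{\underline c}$, while the extra terms live off that sublattice, so you must fix the extension. Take $\Theta$ to be the Euler characteristic of $\mathcal{R}_{w(\mathfrak q)}$ applied to the coherent family of $L$ through the character. This is a coherent family for $G$ by compatibility of $\mathcal{R}$ with tensoring by finite-dimensional modules. It vanishes at any parameter with a repeated entry inside one block.

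Second, you must show every extra $s\mu$ has such a repetition. Write $s\mu=\lambda'-\epsilon'$ with $\epsilon'\in\{0,1\}^n$. Block-regularity forces two things: the entries of each moving block $u$ that are not lowered form a top segment $S_u$, and the lowered entries of each stationary block $v$ form a bottom segment $D_v$. Moreover $\#\{u:z\in S_u\}=\#\{v:z\in D_v\}$ for every value $z$.
\begin{itemize}
\item Suppose some $S_u\neq\emptyset$, and let $Y$ be the largest top of such a block.
\item Choose $v$ with $Y\in D_v$, then choose $u'$ whose $S_{u'}$ contains the bottom of $v$.
\item The current interval of $u'$ then lies weakly below that of $v$ at both ends.
\item Since $u'$ must still move down at least once, $A_{u'}<A_v$ and $B_{u'}<B_v$ with $u'<v$. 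This contradicts the indexing convention.
\end{itemize}
In the example, $(3\,4)\mu$ has first block $(\tfrac52,\tfrac32,\tfrac32)$, so it is killed this way. With these two points your argument proves the proposition; without them it does not.
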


\begin{proof}
The is a reinterpretation of \cite[Lemma 3.13]{Trapa:2001}).
\end{proof}

\begin{lemma}
\label{lemma: comparison of translation}
In the setup of \textsection \ref{subsec: singular-case}, we have
\[
\circ_{i = 1}^{r} \, (\mathcal{T}_{1 + \sum_{j < i} m_j} \circ \cdots \circ \mathcal{T}_{\sum_{j \leqslant i} m_j})^{t_i} \, \pi(\psi^{\mathbb{R}}_{\gg}; \ul, \ueta) =  \mathcal{T}(\pi(\psi^{\mathbb{R}}_{\gg}; \ul, \ueta)).
\]
\end{lemma}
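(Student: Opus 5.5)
The plan is to expand both sides into words in the elementary functors $\mathcal{T}_i$ and then rearrange one word into the other, using Lemma~\ref{lemma: switch order} for the rearrangement. On the left-hand side, block $i$ occupies the coordinates $[1+\sum_{j<i}m_j,\sum_{j\le i}m_j]$. At every stage where its factor $(\mathcal{T}_{1+\sum_{j<i}m_j}\circ\cdots\circ\mathcal{T}_{\sum_{j\le i}m_j})$ is applied, the current infinitesimal character restricted to these coordinates is a string of consecutive, strictly decreasing numbers, because each pass subtracts $1$ from every entry of the block uniformly. So by \eqref{eq: factorization of translation} each such factor equals $\mathcal{T}_{[\text{block } i]}$, and no multiplicity $m$ appears. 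On the right-hand side the factors act from the inside out:
\begin{itemize}
\item first $\mathcal{T}_{[1,n]}^{t_r}$ shifts all coordinates down uniformly;
\item then $\mathcal{T}_{[1,m_1+\cdots+m_{r-1}]}^{t_{r-1}-t_r}$ shifts the first $r-1$ blocks uniformly;
\item and so on for the remaining factors.
\end{itemize}
Because $\lambda^{\mathbb{R}}_{\gg}$ is regular with $A_i+t_i>A_{i+1}+t_{i+1}$, at each stage the coordinates in $[1,M_k]$ (with $M_k=\sum_{j\le k}m_j$) still form a strictly decreasing sequence. Hence Corollary~\ref{cor: factorization of translation} with $m=1$ factors each $\mathcal{T}_{[1,M_k]}$ as $\mathcal{T}_1\circ\cdots\circ\mathcal{T}_{M_k}$.

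After this expansion, both sides are words in which each index of block $i$ occurs exactly $t_i$ times, and within each block the relative order of the letters is the same. The words differ only in how letters from different blocks are interleaved. On the left, all letters of block $r$ act first, then all letters of block $r-1$, and so on. On the right, the letters of the blocks are interleaved pass by pass. I will transform the right-hand word into the left-hand word by a sequence of adjacent transpositions $\mathcal{T}_a\circ\mathcal{T}_b\mapsto\mathcal{T}_b\circ\mathcal{T}_a$, where $a$ and $b$ always lie in different blocks. Concretely, I move every occurrence of a block-$r$ letter to the far right, then every block-$(r-1)$ letter, and so on.

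The main obstacle is justifying each transposition. Lemma~\ref{lemma: switch order} requires $|\lambda_a-\lambda_b|\neq1$ at the current infinitesimal character. I will track the current character explicitly. If $a$ is in block $i$ and $b$ in block $k>i$, then at the point of the swap the coordinate $\lambda_a$ lies in the current shifted interval of block $i$ and $\lambda_b$ in that of block $k$. Since each intermediate character is a partial shift $[B_i+s_i,A_i+s_i]$ with $t_i\ge s_i\ge0$, and the order in which we move letters keeps $s_i\ge s_k$ relative to the target, I expect $\lambda_a-\lambda_b\ge2$ or $\lambda_a=\lambda_b$ in all relevant configurations. If this inequality fails in some configuration, because the intervals in $\psi^{\mathbb{R}}$ itself may overlap or touch, I will fall back on a different argument. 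Both composites can be computed by the coherent family formula in Lemma~\ref{lemma: translation by coherent family}, as in the proof of Proposition~\ref{prop: factorization of translation}. I would then compare the resulting sums $\sum\Theta(s(\mu))$ directly, checking that the two iterated coset sums over stabilizers index the same multiset of Weyl translates of $\mu=\lambda^{\mathbb{R}}$.
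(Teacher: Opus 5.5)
Your route is the paper's: its proof is the single sentence that the lemma follows from \eqref{eq: factorization of translation} and Lemma~\ref{lemma: switch order}. That is, expand $\mathcal{T}$ into a word in the $\mathcal{T}_i$ and reorder it by admissible adjacent swaps. The only step with real content is the one you left as an expectation, namely that every swap satisfies $|\lambda_a-\lambda_b|\neq 1$. It does hold, so the coherent-family fallback is unnecessary. Your stated reason, however, does not prove it. Intervals $[B_i+s_i,A_i+s_i]$ together with $s_i\ge s_k$ only give $\lambda_a-\lambda_b\gtrsim B_i-A_k$, which can be $\le 1$ or negative when the intervals of $\psi^{\mathbb{R}}$ overlap. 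The correct reference point is $\lambda^{\mathbb{R}}_{\gg}$, not $\psi^{\mathbb{R}}$.

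Here is the argument. Label each letter by its block and pass number, and write $\lambda^{\gg}_a$ for the $a$-th coordinate of $\lambda^{\mathbb{R}}_{\gg}$. In the expanded right-hand side, the $\rho$-th passes of all blocks $k$ with $t_k\ge\rho$ form round $\rho$, and within a round the blocks are applied in decreasing index order. So an inverted pair is an occurrence of $\mathcal{T}_a$ from pass $q$ of block $i$, applied before an occurrence of $\mathcal{T}_b$ from pass $p$ of block $k>i$, with $q<p$. Adjacent swaps never reorder letters of the same block. Hence whenever this pair is swapped, in any order of swaps, coordinate $a$ has been lowered exactly $q-1$ times and coordinate $b$ exactly $p-1$ times. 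The current character therefore satisfies
\[
\lambda_a-\lambda_b=(\lambda^{\gg}_a-\lambda^{\gg}_b)+(p-q)\ge 1+1=2 .
\]
Here $\lambda^{\gg}_a-\lambda^{\gg}_b\ge 1$ because $[B_i+t_i,A_i+t_i]$ lies strictly above $[B_k+t_k,A_k+t_k]$: the intervals are disjoint and their tops decrease.

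Thus Lemma~\ref{lemma: switch order} applies at every swap, and the case $\lambda_a=\lambda_b$ never occurs. Two smaller remarks. The left-hand side is already a word in the $\mathcal{T}_i$, so it needs no factorization. And, like the paper, you tacitly use $t_1\ge\cdots\ge t_r$, which is needed for $\mathcal{T}$ to be defined.
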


\begin{proof}
It follows from \eqref{eq: factorization of translation} and Lemma~\ref{lemma: switch order}.
\end{proof}

Suppose $\lambda \in \mathfrak{t}^*$ is integral and $1 \leqslant j \leqslant n$. In the application of the translation functor $\mathcal{T}_j$, we can always assume that $\lambda$ is dominant and  either $\lambda_j > \lambda_{j+1}$ or $j = n$ by taking $W$-conjugation (cf. \eqref{eq: Weyl conjugate}). This is also the setup of Theorem~\ref{thm: translation and derivative}. Let $\lambda_{-} = \lambda - e_{j}$. In order to find $(\psi^{\lambda_{-}}_{\lambda})^*$ (cf. Theorem~\ref{thm: translation vs pushpull}), we need to decompose $\psi^{\lambda_{-}}_{\lambda}$ as follows. 
\begin{lemma}
\label{lemma: second decomposition}
In the above setup, let $\lambda' = \lambda + (e_1 + \cdots + e_{j-1})$. Then
\[
\psi^{\lambda_{-}}_{\lambda} = \psi^{\lambda_{-}}_{\lambda'} \circ \psi^{\lambda'}_{\lambda}
\]
in the Grothendieck group.
\end{lemma}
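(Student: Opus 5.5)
We prove the identity by evaluating both sides on a coherent family, in the same way as Lemma~\ref{lemma: translation by coherent family} and Proposition~\ref{prop: factorization of translation} are proved. Fix an irreducible $\pi$ with infinitesimal character $\chi_\lambda$ and a coherent family $\Theta$ on $\lambda + \Lambda$ with $\Theta(\lambda) = \pi$. Here $\lambda$ is dominant, and either $\lambda_j > \lambda_{j+1}$ or $j = n$. Put
\[
\lambda' = \lambda + (e_1 + \cdots + e_{j-1}).
\]
Then
\[
\lambda_- = \lambda' - (e_1 + \cdots + e_j).
\]
The plan is to compute $\psi^{\lambda'}_\lambda(\pi)$ explicitly first, then apply $\psi^{\lambda_-}_{\lambda'}$ term by term using Lemma~\ref{lemma: translation by coherent family}, and finally compare the result with $\psi^{\lambda_-}_{\lambda}(\pi)$, which Lemma~\ref{lemma: translation by coherent family} also computes.

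\textbf{First step.} Lemma~\ref{lemma: translation by coherent family} is stated for subtraction of $e_i$'s. For the translation to $\lambda'$, which adds $e_1 + \cdots + e_{j-1}$, we redo its proof with $L = \bigwedge^{j-1}(\mathrm{Std})$ in place of its dual. The same argument then gives
\[
\psi^{\lambda'}_\lambda(\Theta(\lambda)) = \sum_{s \in W_\lambda / W_\lambda \cap W_{\lambda'}} \Theta(s\lambda').
\]
The coset space $W_\lambda/W_\lambda\cap W_{\lambda'}$ is small. Since $\lambda$ is dominant and $\lambda_j > \lambda_{j+1}$, the only coordinates of $\lambda$ equal across the cut at $j-1$ are those equal to $\lambda_j$ in positions $\le j-1$, together with position $j$. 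So the cosets are represented by the transpositions $(i,j)$ with $i \le j$ and $\lambda_i = \lambda_j$, exactly as in the proof of Proposition~\ref{prop: factorization of translation}.

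\textbf{Second step.} For each such $s$, we compute $\psi^{\lambda_-}_{\lambda'}(\Theta(s\lambda'))$ by Weyl conjugation \eqref{eq: Weyl conjugate}. This gives
\[
\psi^{\lambda_-}_{\lambda'}(\Theta(s\lambda')) = \psi^{s\lambda_-}_{s\lambda'}(\Theta(s\lambda')) = \sum_{t \in W_{\lambda'}/W_{\lambda'}\cap W_{\lambda_-}} \Theta(st\lambda_-).
\]
We then combine this with the first step into one double sum over $st$. Meanwhile, Lemma~\ref{lemma: translation by coherent family} gives
\[
\psi^{\lambda_-}_\lambda(\pi) = \sum_{s \in W_\lambda/W_\lambda\cap W_{\lambda_-}} \Theta(s\lambda_-).
\]
It remains to show that the multiset $\{st\lambda_-\}$ from the double sum coincides with $\{s\lambda_- : s \in W_\lambda/W_\lambda\cap W_{\lambda_-}\}$, each element appearing exactly once.

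\textbf{Main obstacle.} The hardest part is this final counting of stabilizers and cosets. We must check that the product map
\[
W_\lambda/(W_\lambda\cap W_{\lambda'}) \times W_{\lambda'}/(W_{\lambda'}\cap W_{\lambda_-}) \to W/W_{\lambda_-}, \qquad (s,t) \mapsto st\,W_{\lambda_-},
\]
is a bijection onto the image of $W_\lambda$. Two features of $\lambda'$ should make this work. First, $\lambda'$ raises the entries in positions $1,\dots,j-1$ by one and leaves position $j$ unchanged, so $W_{\lambda'}$ only mixes blocks that could create coincidences with $\lambda_j+1$ or $\lambda_j$. Second, $\lambda_-$ lowers all of positions $1,\dots,j$ back down (position $j$ lowered by one). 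Using these, we will identify $W_{\lambda'}\cap W_{\lambda_-}$ and $W_\lambda\cap W_{\lambda_-}$ explicitly as Young subgroups and compare their orders, as in the cardinality computation at the end of the proof of Proposition~\ref{prop: factorization of translation}. A multiplicity factor could in principle appear, as it does in Corollary~\ref{cor: factorization of translation}. Because the displacement vector $e_1+\cdots+e_{j-1}$ is minuscule and $\lambda_j > \lambda_{j+1}$, the resulting index should equal $1$. This is the point to verify carefully, including the edge cases where $\lambda_{j-1} = \lambda_j$ and where $j = n$.
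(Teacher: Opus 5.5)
Your argument is correct. It differs from the paper's only in how you evaluate $\psi^{\lambda_-}_{\lambda'}\circ\psi^{\lambda'}_{\lambda}$ on $\Theta(\lambda)$.

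The paper quotes a facet principle from Vogan's book. Translation off a wall gives $\psi^{\lambda'}_{\lambda}\Theta(\lambda)=\sum_{s\in W_\lambda/W_{\lambda'}}\Theta(s\lambda')$, and translation onto a wall gives $\psi^{\lambda_-}_{\lambda'}\Theta(s\lambda')=\Theta(s\lambda_-)$. This requires $\lambda$ and $\lambda_-$ to lie in the closure of the facet of $\lambda'$. You instead compute both steps with the minuscule tensor-product formula of Lemma~\ref{lemma: translation by coherent family}, exactly as in the proof of Proposition~\ref{prop: factorization of translation}. Your adaptation to $\bigwedge^{j-1}\mathrm{Std}$ for the upward step is fine. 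This route is self-contained, with no appeal to Vogan's translation principle, because that formula is stated for arbitrary integral weights.

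Both routes end at the same stabilizer identity. The step you flag as the main obstacle is immediate, and minusculeness is not what drives it; nothing gets mixed. Since $\lambda$ is dominant, $\lambda'$ is dominant with $\lambda'_{j-1}>\lambda'_j>\lambda'_{j+1}$ and has no new coincidences. So $W_{\lambda'}$ is the Young subgroup of $W_\lambda$ obtained by splitting $j$ off its block. Likewise $\lambda_-$ is dominant, and its adjacent equalities are those of $\lambda$ except $\lambda_{j-1}=\lambda_j$, plus possibly $\lambda_{-,j}=\lambda_{-,j+1}$ when $\lambda_j=\lambda_{j+1}+1$.

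Two consequences follow:
\begin{itemize}
\item $W_{\lambda'}\subseteq W_{\lambda_-}$, so your $t$-sum has the single term $t=1$.
\item $W_\lambda\cap W_{\lambda_-}=W_{\lambda'}=W_\lambda\cap W_{\lambda'}$, so your $s$-sum is exactly the sum computing $\psi^{\lambda_-}_{\lambda}(\pi)$.
\end{itemize}
This is precisely the paper's remark $|W_\lambda\cap W_{\lambda_-}/W_{\lambda'}|=1$. It handles the cases $\lambda_{j-1}=\lambda_j$, $j=n$, and $j=1$ (where $\lambda'=\lambda$) uniformly, with the obvious reading of the inequalities at the ends.
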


\begin{proof}
For any irreducible representation $\pi$ of $G'$ of infinitesimal character $\lambda$, we take a coherent family $\Theta$ of virtual representations of $G'$ over $\lambda + \Lambda$ such that $\Theta(\lambda) = \pi$. By Lemma~\ref{lemma: translation by coherent family},
\[
\psi^{\lambda_{-}}_{\lambda} (\Theta(\lambda)) = \sum_{s \in W_{\lambda}/W_{\lambda} \cap W_{\lambda_{-}}} \Theta(s(\lambda_{-})).
\]
This expression is equal to $(\psi_{\lambda'}^{\lambda_-} \circ \psi_{\lambda}^{\lambda'})(\Theta(\lambda))$ by the following general fact.
\end{proof}

\begin{proposition} 
Let $F$ be a facet of $\mathfrak{t}^{*}$ and $F', F''$ be two facets in the closure of $F$. Suppose $\lambda \in F, \lambda' \in F', \lambda'' \in F''$ are all integral. For any coherent family $\Theta$ of virtual representations of $G'$ over $\lambda + \Lambda$, we have
\begin{equation}\label{eqn:trans-srs}
\psi^{\lambda''}_{\lambda} \circ \psi^{\lambda}_{\lambda'}(\Theta(\lambda'))= |W_{\lambda'} \cap W_{\lambda''}/W_{\lambda}| \sum_{s \in W_{\lambda'}/W_{\lambda'} \cap W_{\lambda''}} \Theta(s(\lambda''))
\end{equation}
\end{proposition}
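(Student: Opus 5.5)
The plan is to compute both translation functors directly on coherent families, in the same way as Lemma~\ref{lemma: translation by coherent family}, and then count. Two preliminary facts will be used throughout. First, since $F'$ and $F''$ lie in the closure of $F$, we have $W_\lambda \subseteq W_{\lambda'} \cap W_{\lambda''}$, so the index $|W_{\lambda'}\cap W_{\lambda''}/W_{\lambda}|$ makes sense. Second, by \eqref{eq: Weyl conjugate} we may conjugate by $W$ and assume that $F$, and hence $\lambda,\lambda',\lambda''$, lie in the closed dominant chamber.

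I will use two standard formulas for a coherent family $\Theta$ over $\lambda+\Lambda$.
\begin{enumerate}
\item[(a)] Translation onto a wall. If $\nu$ and $\mu$ are integral and $\mu$ lies in the closure of the facet of $\nu$, then $\psi^{\mu}_{\nu}(\Theta(\nu)) = \Theta(\mu)$.
\item[(b)] Translation off a wall. If moreover $\nu$ lies in the closure of the facet of $\mu$ (roles reversed), then
\[
\psi^{\mu}_{\nu}(\Theta(\nu)) = \sum_{s \in W_\nu/W_\mu} \Theta(s\mu).
\]
\end{enumerate}
Both follow from the argument of Lemma~\ref{lemma: translation by coherent family}. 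One tensors with the finite-dimensional representation $L$ of extremal weight equal to the dominant conjugate of $\mu-\nu$ and expands $L \otimes \Theta(\nu) = \sum_{\gamma\in wt(L)} \Theta(\nu+\gamma)$ with multiplicities. One then projects to infinitesimal character $\chi_\mu$, which keeps the terms with $\nu+\gamma \in W\mu$. The key combinatorial input is the standard lemma (Jantzen; see Vogan's book, Lemma 7.2.18): if $\nu$ and $\mu$ lie in a common closed facet arrangement as above, then $\nu+\gamma \in W\mu$ for a weight $\gamma$ of $L$ forces $\gamma = w(\mu-\nu)$ with $w \in W_\nu$. In addition, such extremal weights occur with multiplicity one. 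This gives $\Theta(w\mu)$ for $w \in W_\nu/(W_\nu\cap W_\mu)$. In case (a) this set is $\{1\}$, because $W_\nu \subseteq W_\mu$. In case (b) we have $W_\mu \subseteq W_\nu$, which yields the displayed sum.

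With these in hand the computation is two steps. Apply (b) with $\nu=\lambda'$ and $\mu=\lambda$ to get
\[
\psi^{\lambda}_{\lambda'}(\Theta(\lambda')) = \sum_{t\in W_{\lambda'}/W_\lambda}\Theta(t\lambda).
\]
For each term, write $\psi^{\lambda''}_{\lambda} = \psi^{t\lambda''}_{t\lambda}$ by \eqref{eq: Weyl conjugate}. Since $t\lambda''$ lies in the closure of the facet $tF$ of $t\lambda$, formula (a) gives $\psi^{\lambda''}_{\lambda}(\Theta(t\lambda)) = \Theta(t\lambda'')$. Hence the left side of \eqref{eqn:trans-srs} equals $\sum_{t\in W_{\lambda'}/W_\lambda}\Theta(t\lambda'')$. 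The term $\Theta(t\lambda'')$ depends only on the coset $t(W_{\lambda'}\cap W_{\lambda''})$, and the fibres of the projection $W_{\lambda'}/W_\lambda \to W_{\lambda'}/(W_{\lambda'}\cap W_{\lambda''})$ all have cardinality $|W_{\lambda'}\cap W_{\lambda''}/W_\lambda|$. Regrouping the sum gives \eqref{eqn:trans-srs}.

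The main obstacle I expect is justifying (a) for the non-dominant facet $tF$. In that case $\Theta(t\lambda)$ is merely a virtual character, not an irreducible module with dominant parameter. My approach is to note that the weight lemma is purely combinatorial and $W$-equivariant. Since the coherent-family identity $L\otimes\Theta(\nu)=\sum_\gamma\Theta(\nu+\gamma)$ holds for every $\nu$ in $\lambda+\Lambda$, the projection argument goes through verbatim after conjugating by $t$. The only point to check is that integrality and the facet-closure relation are preserved under $W$.
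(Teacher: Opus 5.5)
Your proof is correct and follows the paper's argument exactly. You translate off the wall via $\psi^{\lambda}_{\lambda'}(\Theta(\lambda'))=\sum_{t\in W_{\lambda'}/W_\lambda}\Theta(t\lambda)$, then push each term onto the wall using $\psi^{\lambda''}_{\lambda}=\psi^{t\lambda''}_{t\lambda}$ to get $\Theta(t\lambda'')$, and finally regroup over $W_{\lambda'}/(W_{\lambda'}\cap W_{\lambda''})$, whereas the paper simply cites Vogan's Proposition 7.2.22 and Lemma 7.3.1 for the two translation formulas that you rederive from the coherent-family expansion and the Jantzen--Vogan weight lemma.
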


\begin{proof}
    By a slight modification of \cite[Proposition 7.2.22(b)]{Vogan:1981}, 
	\begin{equation*}
		\psi^{\lambda}_{\lambda'}(\Theta(\lambda'))
		= \sum_{s \in W_{\lambda'}/W_{\lambda}} \Theta(s \lambda).
	\end{equation*}
	Moreover, by Lemma 7.3.1 and Proposition 7.2.22(a) of \textit{loc. cit.},
	\begin{equation*}
		\psi_{\lambda}^{\lambda''}(\Theta(s \lambda))
		= \psi_{s\lambda}^{s\lambda''}(\Theta( s \lambda)) 
		= \Theta(s \lambda'').
	\end{equation*}
	Therefore
	\begin{equation*}
		\psi^{\lambda''}_{\lambda} \circ \psi^{\lambda}_{\lambda'}(\Theta(\lambda'))
		= \sum_{s \in W_{\lambda'}/W_{\lambda}} \Theta(s \lambda'').
	\end{equation*}
	Since $\Theta(s \lambda'') = \Theta(st\lambda'')$ for any $t \in W_{\lambda'} \cap W_{\lambda''}$, the right hand side is equal to the right side of (\ref{eqn:trans-srs}). 
\end{proof}

To complete the proof of Lemma~\ref{lemma: second decomposition}, it suffices to notice that $|W_{\lambda} \cap W_{\lambda_{-}}/W_{\lambda'}| = 1$.
\section{Langlands parameter for Adams-Johnson packet}\label{sec: Langlands}
	In this section we will prove Theorem \ref{Langlands-sub}.
	The construction of $\pi(\psi^\R; \underline p, \underline q)$ uses cohomological induction $A_\LAq(\Lambda)$. We will begin by reviewing the general result in
	\cite[Corollary 11.219]{KV:1995} for obtaining the Langlands quotient parameter of $A_\LAq(\Lambda)$ in the good range. Then we will implement this result for $\pi(\psi^\R; \underline p, \underline q)$ in the rest of this appendix. In particular, we will follow \cite{Adams:2011} for obtaining the complete Langlands parameters of discrete series representations of $U(p,q)$.

\subsection{Langlands quotient parameter for $A_\LAq(\Lambda)$}\label{subsec: KV}
	In this subsection we review the calculation of the Langlands quotient parameter for $A_\LAq(\Lambda)$ in the good range following \cite{KV:1995}.
	
	Let $G$ be a connected real reductive group and $\mathfrak{g}$ be its complexified Lie algebra.
    Let $\LAq$ be a $\theta$-stable parabolic subalgebra of $\g$, $\l$ and $\u$ be its Levi subalgebra and nilpotent radical respectively. Let $\Lambda$ be a one dimensional character of $L$, which will be identified with its differential in $\mathfrak{l}^*$. Take a Cartan subalgebra $\h$ of $\l$, and a positive root system $\Delta^+(\l, \h) \subseteq \Delta(\l, \h)$.
	Together with $\Delta(\u, \h)$ in the decomposition $\LAq = \l \oplus \u$, we get a positive root system $\Delta^+(\g, \h):= \Delta^+(\l, \h) \sqcup \Delta(\u, \h)$ for $(\g, \h)$.
	Denote the half sum of $\Delta^+(\g, \h)$ by $\delta_G$.
	We say $(\LAq, \Lambda)$ is in the \textit{good range}, or $\Lambda$ lies in the \textit{good range} for $\LAq$, if $\forall \alpha \in \Delta(\u, \h)$,
	$\Re\la \Lambda|_{\mathfrak{h}} + \delta_G, \alpha \ra >0$. This definition does not depend on the choice of $\h$ or $\Delta^+(\l, \h)$, since $\Lambda$ is invariant under the adjoint action of $L_{\mathbb{C}}$.
	
\begin{theorem}[{\cite[Corollary 11.219]{KV:1995}}]\label{thm: Langlands-quotient}
	Suppose $\Lambda$ lies in the good range for $\LAq$, then $A_\LAq(\Lambda)$ is irreducible.
	The Langlands quotient parameter $(MAN, \xi, \nu)$ for $A_\LAq(\Lambda)$ is described as follows:
	let $\h_0 = \t_0 \oplus \a_0$ be a maximally split $\theta$-stable Cartan subalgebra of the real Lie algebra $\l_0$ of $L$,
	$\Delta^+(\l, \h)$ be a positive root system of $\l$ taking $\a_0$ before $\sqrt{-1} \t_0$, 
    and
	$\Delta^+(\g, \h) = \Delta^+(\l, \h) \sqcup \Delta(\u, \h)$, then
	\begin{itemize}
	\item $A = \exp \a_0$, $\nu =$ the half sum of positive restricted roots of $\l$;
	\item $MA = Z_G(\a_0)$;
	\item $\xi$ is the disceret series representation of $M$ associated with Harish-Chandra parameter $(\Lambda|_{\mathfrak{h}} + \delta_G)|_{\t}$; 
	\item $N$ is determined by its complex Lie algebra $\mathfrak{n}$ such that $\Delta(\n, \h)$ contains $\Delta^+_{\mathrm{real}}(\l, \h)$, and $\nu$ is dominant with restrict to $\Delta(\mathfrak{n}, \mathfrak{a})$.
	\end{itemize}
\end{theorem}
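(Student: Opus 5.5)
This theorem is cited from Knapp--Vogan, so the plan follows the standard route: realize $A_\LAq(\Lambda)$ in the good range as a Langlands quotient by first treating the fundamental (compact) case and then using induction in stages.

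The first step is to pass from the fundamental Cartan to the maximally split Cartan $\h_0=\t_0\oplus\a_0$ of $\l_0$. Let $M_LA_L N_L$ be the minimal-type parabolic of $L$ with $M_LA_L=Z_L(\a_0)$. Since $\Lambda$ is one-dimensional, it is the Langlands quotient of $\Ind_{M_LA_LN_L}^{L}(\Lambda|_{M_L}\otimes e^{\nu}\otimes 1)$, where $\nu$ is the half sum of positive restricted roots of $\l$. This holds because the trivial representation is the Langlands quotient of the spherical principal series at $\rho$, twisted by $\Lambda$.

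The second step applies the cohomological induction functor $\mathcal{R}_\LAq^S$ and uses the induction-in-stages and transitivity theorems of Knapp--Vogan (Ch. XI). These interchange cohomological induction from $\LAq$ with real parabolic induction from $M_LA_LN_L$. The result identifies $\mathcal{R}_\LAq^S(\text{standard module of }L)$ with a standard module of $G$ induced from $MAN$, where $MA=Z_G(\a_0)$ and $N$ is chosen so that it contains $N_L$ and $\nu$ is dominant. The inducing datum is $\mathcal{R}_{\LAq\cap\m}^{S_M}(\Lambda|_{M_L})\otimes e^\nu$. Since $\a_0$ is split in $L$, the group $M_L$ has a compact Cartan $\t_0$, so $\LAq\cap\m$ is a $\theta$-stable parabolic of $\m$ with compact-type Levi. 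By the Vogan--Zuckerman realization of discrete series, $\mathcal{R}^{S_M}_{\LAq\cap\m}(\Lambda|_{M_L})$ is the discrete series of $M$ with Harish-Chandra parameter $(\Lambda|_\h+\delta_G)|_\t$. This needs the good-range condition, which restricts to the good range for $\m$.

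The third step passes from standard modules to their quotients. Exactness of $\mathcal{R}^S_\LAq$ in the good range (vanishing outside degree $S$) carries the surjection from the standard module of $L$ onto $\Lambda$ to a surjection onto $A_\LAq(\Lambda)$. Irreducibility of $A_\LAq(\Lambda)$ comes from the good-range irreducibility theorem. Because $\nu$ is strictly dominant for the roots of $\n$ in $\a$, the Langlands quotient of the $G$-standard module is its unique irreducible quotient, and this identifies the parameter.

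I expect the main obstacle to be the second step: choosing positive systems compatibly. The positive system on $\l$ must take $\a_0$ before $\sqrt{-1}\t_0$ so that $\Delta(\n,\h)\supset\Delta^+_{\rm real}(\l,\h)$, and the $\rho$-shifts must match so that the Harish-Chandra parameter comes out as $(\Lambda|_\h+\delta_G)|_\t$ rather than a $\rho(\u)$-twist of it. I would handle this by following the normalizations of Knapp--Vogan, Theorem 11.225, verbatim.
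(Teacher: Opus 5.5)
The paper does not prove this statement; it quotes Knapp--Vogan. Your outline is the standard route to that result: realize $\Lambda$ as the Langlands quotient of a minimal principal series of $L$, push the surjection through $\mathcal{R}^S_{\mathfrak q}$, use induction in stages to identify $\mathcal{R}^S_{\mathfrak q}$ of the $L$-standard module with $\operatorname{Ind}_{MAN}^G(\xi\otimes\nu)$, and recognize $\xi$ via the Vogan--Zuckerman realization of discrete series.

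\textbf{The gap is in your last step.} In general $\nu$ is \emph{not} strictly dominant for $\Delta(\mathfrak n,\mathfrak a)$. As the half sum of positive restricted roots of $\mathfrak l$, it is strictly positive on $\Delta^+(\mathfrak l,\mathfrak a)$. Restricted roots coming from $\mathfrak u\oplus\bar{\mathfrak u}$, however, can be orthogonal to it, which is why the statement only asks for dominance. This already happens in the paper's own application. Take $L=U(1,1)\times U(1,1)\subset U(2,2)$, so $m_1=m_2=2$ and $l_1=l_2=1$. Then $\mathfrak a$ is maximal split, with restricted roots $\pm a_1\pm a_2,\ \pm 2a_1,\ \pm 2a_2$; those of $\mathfrak l$ are $\pm 2a_1,\pm 2a_2$. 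Formula \eqref{eq: nu} gives $\nu=a_1+a_2$, which is orthogonal to the restricted root $a_1-a_2$.

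When $\nu$ is singular in this way, let $\mathfrak a'\subset\mathfrak a$ be the common kernel of the restricted roots orthogonal to $\nu$, and let $M'A'=Z_G(\mathfrak a')$. Then $\operatorname{Ind}_{MAN}^G(\xi\otimes\nu)\cong\operatorname{Ind}_{M'A'N'}^G(\tau\otimes\nu)$, where $\tau$ is the representation of $M'$ unitarily induced from $\xi$. This $\tau$ is tempered but may be reducible (nontrivial $R$-group). In that case the standard module has several irreducible quotients $J(M'A'N',\tau_i,\nu)$, so your claim that the standard module has a unique irreducible quotient is not justified.

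\textbf{What your argument does prove}, once the strictness claim is dropped, is that $A_{\mathfrak q}(\Lambda)$ is an irreducible quotient of $\operatorname{Ind}_{MAN}^G(\xi\otimes\nu)$. Hence it equals $J(M'A'N',\tau_i,\nu)$ for some constituent $\tau_i$. So its Langlands data in the Knapp--Zuckerman sense is $(MAN,\xi,\nu)$, which is what the statement asserts.

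To single out the constituent you need one more input. Either show that $\tau$ is irreducible, or use a lowest-$K$-type argument. For the latter: the bottom-layer $K$-type of highest weight $\Lambda+2\rho(\mathfrak u\cap\mathfrak p)$, with $\mathfrak p$ the $-1$-eigenspace of $\theta$, has multiplicity one in $\mathcal R^S_{\mathfrak q}$ of the $L$-standard module, so at most one irreducible quotient contains it.

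For the use made of this theorem in Appendix~\ref{sec: Langlands}, the issue is harmless. There all entries of $\nu$ are positive and can coincide only across different blocks. So $M'$ is a product of groups $GL_k(\mathbb C)$ (with split centers removed) and $U(p-l,q-l)$. Then $\tau$ is a unitary principal series of the $GL_k(\mathbb C)$ factors tensored with $\pi_\mu$, which is irreducible.
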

	Here, we say $\Delta^+(\l, \h)$ takes $\a_0$ before $\sqrt{-1} \t_0$, if there are spanning sets $\{E_1, \cdots, E_r\} \subset \a_0$, $\{E_{r+1}, \cdots, E_s\} \subset \sqrt{-1} \t_0$, such that $\Delta^+(\l, \h)$ is defined by the lexicographic ordering with respect to $\{E_i\}$:
	$\alpha \in \Delta(\l, \h)$ is positive iff there is an index $i$ such that $\la \alpha, E_j \ra = 0$ for $1 \leqslant j \leqslant i-1$, and $\la \alpha, E_i \ra >0$.
	
	In the case of $\pi(\psi^\R; \underline p, \underline q)$, $L \cong U(p_1, q_1) \times \cdots \times U(p_r, q_r)$. Then $A$ is a product of maximal split torus of $U(p_d, q_d)$ for $1 \leqslant \bk \leqslant r$. In particular, $A \cong \R^l_{>0}$ and $M \cong U(p-l, q-l) \times (S^1)^l$, where $l= l_1+ \cdots +l_r$ and $l_\bk = \min\{p_\bk, q_\bk\}$. The discrete series representation $\xi$ of $M$ can be written as a tensor product $\sigma \boxtimes \chi$, where $\sigma$ is a discrete series representation of $U(p-l, q-l)$ and $\chi$ is a character of $(S^1)^l$.
    Next, we would like to specify a maximal split torus in $U(p_d, q_d)$ and compute $\nu$ (see \ref{subsec: structure-of-U}); describe the complete Langlands parameter of $\sigma$ (see \ref{subsec: discrete-series}). All these results will be put together in \ref{subsec: calculation} and \ref{subsec: complete-para} to give the complete Langlands parameter of $\pi(\psi^\R; \underline p, \underline q)$.

\subsection{Maximal split torus in $U(p,q)$}\label{subsec: structure-of-U}
	Let $n = p+q$ and $l = \min\{p, q\}$. Any maximal split torus $A$ of $G = U(p, q)$ is isomorphic to $\R_{>0}^{l}$.
	We specify one by describing its Lie algebra $\a_0$.
	
	First, let us introduce some terminology.
	For $I \subseteq \{1, \cdots, n\}$ and $\nu=(\nu_1, \cdots, \nu_n) \in \C^n$, by an $I$ block of $\nu$ we mean a tuple $\nu_I : = (\nu_i)_{i\in I}$.
	For $I, J \subseteq \{1, \cdots, n\}$ and $X = (x_i^j)\in \gl_{n}$,
	by an $I\times J$ block of $X$ we mean its submatrix
	$X_I^J: = ( x_i^j )_{i\in I}^{j \in J}$.
	
	Let $\LI= \{1, \cdots, l\}$, $\MI = \{l+1, \cdots, n-l\}$, and $\UI = \{ n-l+1, \cdots, n\}$ in this subsection. 
	Then we take $A$ such that its Lie algebra  $\a_0$ consists of $X \in \gl_n$ with  
	\[X_\LI^\UI =\begin{pmatrix}
		&	&a_1\\
		&\iddots	&	\\
	a_l 	&	&	
	\end{pmatrix},\quad
	X_\UI^\LI=\begin{pmatrix}
		&	&a_l\\
		&\iddots	&	\\
	a_1 	&	&	
	\end{pmatrix},
	\textrm{ with }a_i \in \R,\]
	and zero entries outside its $\LI\times \UI$ and $\UI \times \LI$ blocks.
	For such $A$, its centralizer $Z_G(A)$ is equal to $MA$ with $M \cong U(p-l, q-l) \times (S^1)^l$.
	More explicitly, each $X \in M$ has
	\[X_\LI^\LI = \begin{pmatrix}
	e^{c_1\sqrt{-1} }	&	&	\\
		&\ddots 	&	\\
		&	&e^{c_{l}\sqrt{-1}}
	\end{pmatrix},\quad
	X_\UI^\UI =\begin{pmatrix}
	e^{c_{l}\sqrt{-1} }	&	&	\\
		&\ddots 	&	\\
		&	&e^{c_1\sqrt{-1}}
	\end{pmatrix},
	\textrm{ with } c_i\in \R,\]
	$X_\MI^\MI \in U(p-l, q-l),$
	and zero entries outside its $\LI \times \LI$, $\MI \times \MI$, and $\UI \times \UI$ blocks.

	We now calculate the half sum of positive restricted roots for $\a$ in $\gl_n$.
	Let $\t \subseteq \gl_n$ be the standard Cartan subalgebra consisting of diagonal matrices, and 
	$\b \subseteq \gl_n$ be the standard Borel subalgebra consisting of upper triangular matrices.
	They determine a positive root system $\{e_i- e_j \mid 1 \leqslant i< j \leqslant n\} \subseteq \Delta(\g, \t)$,
	whose half sum $\delta_G \in \t^* \cong \C^n$ takes the form $( \frac{n+1}2 -1, \cdots, \frac{n+1}2-n)$.
	Note that $\t$ is compact with respect to the real form $U(p,q)$.
	Consider the Cayley transforms with respect to the non-compact roots $e_i - e_{n+1- i}$, $i \in \LI$.
	They are strongly orthogonal to each other, so we can compose these Cayley transforms together, denote it by $\mathbf c$, and obtain a new Cartan $\h : = \mathbf c(\t)$.
	According to \cite[(6.66)]{Knapp96}, $\h = \t' \oplus \a$ can be described as follows.
	Let $E_i^j$ be the elementary matrix with $(i, j)$-entry $1$ and other entries $0$.
	The compact Cartan $\t$ decompose into $\t^\Delta \oplus \t_\MI \oplus \t^\nabla$ with
	\begin{itemize}
	\item $\t^\Delta$ spanned by $E_i^i + E_{n+1 -i}^{n+1 -i}$, $i \in \LI$,
	\item $\t_\MI$ spanned by $E_i^i$, $i \in \MI$, 
	\item $\t^\nabla$ spanned by $E_i^i - E_{n+1 -i}^{n+1 -i}$, $i \in \LI$.
	\end{itemize}
	Then $\mathbf c$ is identity on $\t' := \t^\Delta \oplus \t_\MI$, and sends $\t^\nabla$ to $\a$.
	More explicitly, $\mathbf c(E_i^i - E_{n+1 -i}^{n+1 -i}) = E_i^{n+1 -i} + E_{n+1 -i}^i$.
	Via the Cayley transform $\mathbf c$, the positive root system $\Delta^+(\g, \t) \subseteq \Delta(\g, \t)$ is sent to a positive root system $\Delta^+(\g, \h) \subseteq \Delta(\g, \h)$.
	One can also verify that $\Delta^+(\g, \h)$ takes $\a$ before $\t'$.
	Hence, the half sum of $\Delta^+(\g, \a)$, denoted by $\nu$, can be calculated by restricting the half sum of $\Delta^+(\g, \h) = \Delta^+(\g, \t) \circ \mathbf c^{-1}$ to $\a$.
	Under the maps
	\[\begin{tikzcd}
	\C^n \arrow[r]                          &
	\t \arrow[r, "\mathbf c"] 	& 
	\h=\t' \oplus \a       &
	\C^l \arrow[l, hook']                   \\
	{(t_1, \cdots, t_n)} \arrow[r, maps to] & 
	\begin{pmatrix} 
		t_1&	& \\ 
		& \ddots & \\ 
		& & t_n 
	\end{pmatrix}     &
	\begin{pmatrix} 
			&	&	&	& a_1	\\
			&	&	&\iddots	&	\\
			& 	&0	&	&	\\
			&\iddots	&	&	&	\\
		a_1	&	&	&	&
	\end{pmatrix}	& 
	{(a_1, \cdots, a_l)} \arrow[l, maps to]
	\end{tikzcd},\]
	$\delta_G \in \t^* \cong \C^n$ is restricted to $( n-1, \cdots, n+1-2l) \in \C^l \cong \a^*$, which represents the desired $\nu$.

\subsection{Discrete series of $U(p, q)$}\label{subsec: discrete-series}
\begin{subequations}
	In this subsection we review the complete Langlands parameter attached to a discrete series representation of $G = U(p,q)$ where $n = p + q$. We mainly follow \cite[section XI.8]{KV:1995} for the Harish-Chandra parametrization and cohomological induction construction for the discrete series of a general connected real reductive group $G$.
	If $G$ has a discrete series, then it must have a compact Cartan subgroup $T$.
	We fix a maximal compact subgroup $K$ of $G$, and assume $T\subseteq K$.
	For convenience, choose a Borel subalgebra $\b \subseteq \g$ containing $\t$.
	It determines a positive root system $\Delta^+(\g, \t) \subseteq \Delta(\g, \t)$, and we denote the half sum of positive roots by $\delta_G$.
	A regular dominant weight $\mu \in \t^*$ represents an infinitesimal character of discrete series of $G$ iff $\mu- \delta_G \in \t^*$ lifts to a character of $T$.
	In this case, for each $w\in W(\g, \t)$ there is a discrete series $\pi_{w\mu}$ with infinitesimal character $\mu$ realized by a cohomologically induced module.
	We call $w \mu$ the Harish-Chandra parameter of $\pi_{w\mu}$.
	Two discrete series $\pi_{w\mu}$ and $\pi_{w'\mu}$ are isomorphic iff $w' w^{-1}$ lies in $W(\k, \t)$,
	where $\k$ is the complex Lie algebra of $K$.
	These results are taken from \cite[Theorem 11.178]{KV:1995}.
	Consequently, the set of discrete series with infinitesimal character $\mu$ is in bijection with $W(\k, \t)\backslash W(\g, \t)$.
	
	The set $\{\pi_{w\mu} \mid w \in W(\k, \t) \backslash W(\g, \t)\}$ forms an L-packet $\Pi_{\phi^{\R}}(G)$ attached to some discrete L-parameter $\phi^{\R}: W_\R \to \D  G \rtimes {\rm Gal}(\C/\R)$.
	According to \cite[Section 5]{Adams:2011}, $\phi$ is determined by
	\begin{equation}\label{eq: Langlands-parameter}
	{BC} (\phi^{\R}): \C^\times = W_\C \to \D G,\quad 
	z \mapsto (\frac z {\bar z})^\mu
	\end{equation}
	where $\mu \in \delta_G + X^*(T) \subset \frac 1 2 X^*(T)$ is regarded as an element in $\frac 1 2 X_*(\D  T)$.
	Moreover, the adjoint action of $\phi(j) \in \D G \rtimes W_\R$ on $\D T \subseteq \D G$ is $t \mapsto t^{-1}$.
	Since $\mu$ is regular, the centralizer of $\Im \phi^{\R} \subset \D  G\rtimes W_\R$ in $\D G$ is the finite group $\{ t \in \D  T \mid t^2 =1\}$.
	Consequently, $A_{\phi^{\R}} = S_{\phi^{\R}} \cong \{\pm 1\}^n$, where $n$ is the rank of $G$.
	
	There is a parametrization $\Pi_{\phi^{\R}}(G) \to \D  S_{\phi^{\R}}$ specified by certain properties;
	the most important one is that  the generic representation in L-packet is sent to the trivial representation of $S_{\phi^{\R}}$. Such a parametrization is given in \cite[Section 7]{Adams:2011} (for real forms $G$ that are inner to a compact form). To be concise, we avoid the general setting, and quote the result for $G= U(p,q)$ in loc.cit. Section 9.
	One has to simultaneously consider all the pure inner forms of $G$.
	More explicitly, take 
	\[\Pi^{\rm pure}_{\phi^{\R}}(G) := \bigsqcup_{p+q =n} \Pi_{\phi^{\R}}(U(p,q)),\]
	where $U(p,q)$ and $U(q, p)$ represent different pure inner forms when $p \neq q$.
	Following \cite[Section 2]{Adams:2011}, we let $z = ((-1)^{n-1}, \cdots, (-1)^{n-1}) \in (S^1)^n \cong T$, $\mathcal X_1(z) := \{ t \in T \mid t^2 = z \}$, and 
	\[x_\bp = \left(\sqrt{-1} \right)^{n-1} \begin{pmatrix}
	(-1)^n	&	&	\\
		&\ddots&	\\
		&	&-1
	\end{pmatrix},\quad
	x_{p, q} = (\sqrt{-1})^{n-1}
	\begin{pmatrix}
	I_p 	&	 \\
		& -I_q
	\end{pmatrix}.\]
	Then we have
	\begin{itemize}
	\item a bijection $\Pi^{\rm pure}_{\phi^{\R}}(G) \to \mathcal X_1(z)$ sending $\pi_{w\mu} \in \Pi_{\phi^{\R}}(U(p,q))$ to $(-1)^n w^{-1}x_{p, q}$,  
	
	\item a bijection $\D  S_{\phi^{\R}} \cong \{\pm 1\}^n \to \mathcal X_1(z)$ sending
	$\epsilon= (\epsilon_1, \cdots, \epsilon_n )$ to
	$\epsilon x_\bp$.
	\end{itemize}
	The first bijection follows from \cite[(5.6)]{Adams:2011};
	here the action of $w \in S_n$ on $\mu=(\mu_1, \cdots, \mu_n) \in \C^n$ is $w\mu:= ( \mu_{w^{-1}(1)}, \cdots, \mu_{w^{-1}(n)})$.
	The second bijection follows from \cite[(6.10) and (9.3)]{Adams:2011}.
	Consequently, in the complete Langlands parameter $(\phi, \epsilon)$ of $\pi_{w\mu}$, the element $\epsilon \in \D A_{\phi^{\R}} \cong \{\pm 1\}^n$ is
	\begin{equation}\label{component-character}
	\epsilon_i = \left\{\begin{aligned}
	&(-1)^{n-i},	&w(i) \leqslant p,\\
	&(-1)^{n+1-i},	&w(i)>p.
	\end{aligned}\right.
	\end{equation}
\end{subequations}
\begin{remark}
	Our choice of base point $x_\bp$ differs from Adam's $x_b$ by a sign when $n$ is odd, which amounts to a different choice of the generic representation: 
	the bijection $\D  S_{\phi^{\R}} \to \Pi_{\phi^{\R}}^{\rm pure}(G)$ in \cite[Corollary 9.6]{Adams:2011} sends trivial character $1_{S_{\phi^{\R}}}$ to the generic representation of $U(\frac{n+1} 2, \frac{n-1} 2)$,
	while ours sends $1_{S_{\phi^{\R}}}$ to the generic representation of $U(\frac{n-1}2, \frac{n+1}2)$.
	These two generic representations can be identified via the isomorphism between $U(\frac{n+1}2,\frac{n-1}2)$ and $U(\frac{n-1}2, \frac{n+1}2)$, but are regarded as different elements in $\Pi_{\phi^{\R}}^{\rm pure}(G)$.
\end{remark}

\subsection{Explicit Langlands quotient parameter for $\pi(\psi^\R; \underline p, \underline q)$}\label{subsec: calculation}
	Let  $\psi^\R$ be an A-parameter of $U(p,q)$, which has integral regular infinitesimal character with good parity.
	For a representation in $\Pi_{\psi^\R}(U(p,q))$, we will calculate its standard module, from which one can read off its complete Langlands parameter easily.	
	 As in Section \ref{subsec: main}, decompose the base change of ${\psi^\R}$ as
	\[{BC}(\psi^\R) = \bigoplus_{i=1}^r (\frac z {\bar z})^{k_i/2} \boxtimes \FDR_{m_i}, \quad k_i, m_i \in \mathbb{Z}
    \]
	and assume
	\begin{itemize}
	\item (parity) $\forall i=1, \cdots, r$, $k_i + m_i$ has the same parity with $n =p+q$, and
	
	\item (regularity) $\forall i =1, \cdots, r-1$, $k_i- m_i \geqslant k_{i+1}+ m_{i+1}$.
	\end{itemize}
	 As mentioned in \ref{cls: psiR to psip}, the infinitesimal character of $\psi^\R$ is represented by
	\[\lambda := \left(\frac{k_1} 2 + \frac{m_1 -1 } 2, \cdots, \frac{k_1} 2 - \frac{m_1 -1} 2, 
		\cdots,
		\frac{k_r}2 + \frac{m_r -1} 2, \cdots, \frac{k_r}2 - \frac{m_r -1} 2 \right)
	\in \C^n.\]
	The parity and regularity condition implies that $\lambda = (\lambda_1, \cdots, \lambda_n)$ is a decreasing sequence with entries in $\frac{n-1}2 + \BZ$,
	so we are in the setup of Section \ref{sec: reduction}. 
	    
	The pure Arthur packet $\Pi_{\psi^\R}^\AV(G)$ is parametrized  by
	\[\MRV:=\{(\underline p, \underline q) \in \N^r \times \N^r \mid p_i + q_i= m_i \} \cong
	 \frac{\{( \underline l, \underline \eta ) \in \N^r \times \{ \pm 1\}^r  \mid l_i \leqslant \frac {m_i} 2\}}{(\frac {m_i} 2, +) \sim ( \frac {m_i} 2, -)} =: \ES.\]
	Recall the bijection 
	\[W(U(p,q), T) \backslash W(G_{\C}, T_\C) / W(L_\C, T_\C) \cong \MRV \cong \ES\]
	in Section \ref{sec: reduction}.
	In this subsection we fix a parameter $(\underline p, \underline q)$ (whence a pair $(\underline l, \underline \eta)$), and
	choose a representative $w\in S_n = W(G_{\C}, T_\C)$ for its corresponding class in the double coset.
	Let's specialize Theorem \ref{thm: Langlands-quotient} to $\pi({\psi^\R}; \underline l, \underline \eta) = \pi({\psi^\R}; \underline p, \underline q) = A_{w(\LAq)}(w\Lambda_{L, \psi^\R})$.    
    
	Recall the definition of $w(\LAq)$ and $w \Lambda_{L, \psi^\R}$.
	We have defined a subgroup $L \subseteq U(n,0)$ with complex Lie algebra $\l = \gl_{m_1} \times \cdots \times \gl_{m_r}$,
	which is the Levi factor of a standard parabolic subalgebra $\LAq \subset \gl_{n}$.
	Then $L_w \subseteq U(p,q)$ is taken to satisfy $L_w(\C) = (w L)(\C)$.
	It follows that $L_w \cong U(p_1, q_1) \times \cdots \times U(p_r, q_r)$, and
	the character $w\Lambda_{L, \psi^\R}$ takes the form $\det^{a_1} \boxtimes \cdots \boxtimes \det^{a_r}$ under this isomorphism, where
	\[a_\bk = \frac{k_\bk +m_\bk - n}2 + \sum_{\bh< \bk} m_\bh.\]
	To ease the notations, we will denote
	\begin{itemize}
	\item $w(\LAq)$ by $\LAq'$;
	\item $L_w$ by $L'$;
	\item $\l_w$ (complex Lie algebra of $L_w$) by $\l'$;
	\item the differential of $w \Lambda_{L, \psi^\R} |_{T}$ by $\Lambda' \in \t^*$.
	\end{itemize}
	Following Subsection \ref{subsec: KV}, denote the nilpotent radical of $\LAq'$ by $\u'$.
	We will use these notations throughout this subsection.

\subsubsection{Partition of indices}\label{subsubsec: partition}
\begin{subequations}
   Take $\PI_\bk , \QI_\bk \subseteq \{1, \cdots, n\}$ such that $|\PI_\bk|= p_\bk$, $|\QI_\bk|= q_\bk$, and $\PI_1< \cdots <\PI_\bk < \QI_\bk < \cdots <\QI_1$.
	Here by $I < J$ we mean all elements in $I$ are smaller than any element in $J$.
	Then our requirements force
	\[\PI_\bk = \{p_1 + \cdots +p_{\bk-1} < i \leqslant p_1 + \cdots + p_\bk\},\quad
	\QI_\bk = \{n-(q_1+\cdots + q_\bk) < i \leqslant n-(q_1 + \cdots + q_{\bk-1}) \}.\]
	We can choose $w \in S_n$ in the double coset so that it sends $\{m_1+\cdots+m_{d-1}, \cdots, m_1+\cdots +m_d\}$ to $\PI_d \sqcup \QI_d$ in an order-preserving way.
	The isomorphism $L \xrightarrow{\simeq} U(p_1, q_1) \times \cdots U(p_r, q_r)$ can be realized by
	\[X \mapsto \left( X_{\PI_1 \sqcup \QI_1}^{\PI_1 \sqcup \QI_1}, \cdots, X_{\PI_r \sqcup \QI_r}^{\PI_r \sqcup \QI_r}\right ).\]
	There is another partition $\PI_\bk \sqcup \QI_\bk = \LI_\bk \sqcup \MI_\bk \sqcup \UI_\bk$ such that $|\LI_\bk| = l_\bk = |\UI_\bk|$, and $\LI_\bk < \MI_\bk < \UI_\bk$. 
	It's clear that $\LI_\bk \subseteq \PI_\bk$, $\UI_\bk \subseteq \QI_\bk$, and $\MI_\bk$ is contained in either $\PI_\bk$ or $\QI_\bk$, determined by whether $p_\bk > q_\bk$ or not.
	Let $n_\bk =n+ \sum_{\bh<\bk}p_\bh - q_\bh$, then  $i \mapsto n_\bk +1 -i$ gives rise to an order-reversing bijection $\LI_\bk \to \UI_\bk$.

\subsubsection{The split torus $A$ and its character $\nu$}	
	We first would like to choose a maximally split $\theta$-stable Cartan subalgebra $\h_0 = \t_0' \oplus \a_0$ of $\l'_0$,
	and choose a positive root system $\Delta^+(\l', \h)$ taking $\a_0$ before $\sqrt{-1} \t_0'$.
	Since $L' \cong U(p_1, q_1) \times \cdots \times U(p_r, q_r)$, we can make a choice for each $U(p_\bk, q_\bk)$ as in Subsection \ref{subsec: structure-of-U}, and then take direct product.
	Take the following spaces:
	\begin{itemize}
	\item $\t^\Delta = \t_1^\Delta \oplus \cdots \oplus \t_r^\Delta,$ with $\t_\bk^\Delta$ spanned by $E_i^i + E_{n_\bk+1-i}^{n_\bk+1-i}$, $i \in \LI_\bk$,
	\item $\t_\MI = \t_{\MI_1} \oplus \cdots \oplus \t_{\MI_r}$, with $\t_{\MI_\bk}$ spanned by $E_i^i$, $i \in \MI_\bk$, and
	\item $\t^\nabla = \t_1^\nabla \oplus \cdots \t_r^\nabla$, with $\t_\bk^\nabla$ spanned by $E_i^i - E_{n_\bk+1-i}^{n_\bk+1-i}$, $i \in \LI_\bk$;
	\item $\a = \a_1 \oplus \cdots \oplus \a_r$, with $\a_\bk$ spanned by $E_i^{n_\bk+1-i} + E_{n_\bk+1-i}^i$, $i \in \LI_\bk$.
	\end{itemize}
	Then $\t = \t^\Delta \oplus \t_\MI \oplus \t^\nabla$ is a compact Cartan subalgebra of $\l$. Let
	$\h := \t' \oplus \a$ with $\t' = \t^\Delta \oplus \t_\MI$ and
	$A = \exp \a_0$.

	Choose $\Delta^+(\l', \t) =\{
	e_i - e_j |  i<j \in \PI_\bk\sqcup \QI_\bk\}$.
	The composition of Cayley transforms, with respect to the non-compact roots $e_i - e_{n_\bk+1 -i}$, $i \in \LI_\bk$, would send $\t = \t'  \oplus \t^\nabla$ to $\h = \t' \oplus \a$. 
	Via this Cayley transform, the positive root system $\Delta^+(\l', \t) \subseteq \Delta(\l', \t)$ is sent to the positive root system $\Delta^+(\l', \h) \subseteq \Delta(\l', \h)$, 
	which takes $\a$ before $\t'$.
	Hence, $\Delta^+(\l', \a) \subseteq \Delta(\l', \a)$ coincides with the non-zero restriction from $\Delta^+(\l', \h) \subseteq \Delta(\l', \h)$, and
	the half sum $\nu$ of $\Delta^+(\l', \a)$ coincides with the restriction of the half sum $\delta_{L'}$ of $\Delta^+(\l', \h)$.	
	Following the calculation in subsection \ref{subsec: structure-of-U}, we know $\nu \in \a^*$ takes the form
	\begin{equation}\label{eq: nu}
	(m_1 - 1, \cdots, m_1+1 - 2l_1; \cdots; m_r - 1, \cdots, m_r+1 - 2l_r)
	\end{equation}
	 under $\a =  \a_1 \oplus \cdots \oplus \a_r \cong \C^{l_1+ \cdots + l_r}$.

\subsubsection{The group $M$}\label{subsubsec: M}
	The centralizer $Z_G(A)$ is equal to $MA$ with $M \cong U(p-l, q-l) \times (S^1)^{l}$.
	To be more explicit, let $\MI= \MI_1 \sqcup \cdots \sqcup \MI_r$ with $\MI_\bk$ as in Subsubsection \ref{subsubsec: partition} . 
	Then each $X \in M$ has
	\[X_{\LI_\bk}^{\LI_\bk} = \begin{pmatrix}
	e^{c_1\sqrt{-1} }	&	&	\\
		&\ddots 	&	\\
		&	&e^{c_{l_\bk}\sqrt{-1}}
	\end{pmatrix}, \quad
	X_{\UI_\bk}^{\UI_\bk} = \begin{pmatrix}
	e^{c_{l_\bk}\sqrt{-1} }	&	&	\\
		&\ddots 	&	\\
		&	&e^{c_1\sqrt{-1}}
	\end{pmatrix},
	\textrm{ with } c_i \in \R,\]
	$X_\MI^\MI \in U(p-l, q-l)$, and zero entries outside the $\LI_\bk \times \LI_\bk$, $\UI_\bk \times \UI_\bk$, and $\MI \times \MI$ blocks ($\bk =1, \cdots, r$).
	Moreover, $\t' = \t^\Delta \oplus \t_\MI$ is a compact Cartan subalgebra of $\m$, where
	\begin{itemize}
	\item $\t^\Delta = \t_1^\Delta \oplus \cdots \oplus \t_r^\Delta$ is identified with complex Lie algebra of $(S^1)^{l_1}\times \cdots \times (S^1)^{ l_r} \subseteq M$,
	\item $t_\MI = \t_{\MI_1} \oplus \cdots \oplus \t_{\MI_r}$ is identified with complex Lie algebra of a compact Cartan subalgebra of $U(p-l, q-l) \subseteq M$.
	\end{itemize}

\subsubsection{The discrete series $\xi$}
     Let $\delta'_G$ be the half sum of $\Delta^+(\g, \t)' := \Delta^+(\l', \t) \sqcup \Delta(\u', \t)$. Since the Cayley transform from $\t$ to $\h$ is identity on $\t'$, then the discrete series $\xi$ of $M$ has Harish-Chandra parameter $(\Lambda' + \delta'_G)|_{\t'} \in (\t')^*$.
	Let's write this parameter more explicitly.
    
	Under the identification $\t^* \cong \C^n$, the $\PI_\bk \sqcup \QI_\bk$ block of $\Lambda'$ has all entries equal to 
	\[a_\bk = \frac{k_\bk +m_\bk - n}2 + \sum_{\bh< \bk} m_\bh.\]
	Note
	\begin{itemize}
	\item $\Delta^+(\l',\t) =\bigsqcup_\bk\{e_i - e_j \mid  i<j \in \PI_\bk\sqcup \QI_\bk\}$,
	\item $\Delta(\u', \t) =\bigsqcup_{\bh<\bk}\{e_i- e_j \mid i \in \PI_\bh \sqcup \QI_\bh, j \in \PI_\bk \sqcup \QI_\bk\}$.
	\end{itemize}
	Then the $\PI_\bk \sqcup \QI_\bk$ block of $\delta'_G \in \t^* \cong \C^n$ is
	\[\left(\frac{n+1}2 - \sum_{c < \bk} m_\bh -1, \cdots,
	\frac{n+1}2 - \sum_{c < \bk} m_\bh -m_\bk \right).\]
	Take the restriction along
	\[\t' = \bigoplus_{\bk=1}^r \t_\bk^\Delta \oplus \t_{\MI_\bk} \hookrightarrow \t,\]
	and one can check that 
	\begin{itemize}
	\item on $\C^{l_\bk} \cong \t_\bk^\Delta$, $\Lambda' + \delta'_G$ has all entries equal to
	\begin{equation}\label{eq: chi}
	\left(a_\bk + \frac{n+1} 2 - \sum_{\bh<\bk } m_\bh - 1 \right)+
	\left(a_\bk + \frac{n+1} 2 - \sum_{\bh<\bk} m_\bh  - m_\bk \right) = k_\bk,
	\end{equation}
	\item and on $\C^{\MI_\bk} \cong \t_{\MI_\bk}$, $\Lambda' + \delta'_G$ takes the form
	\begin{equation}\label{eq: mu}
	\left( \frac{k_\bk + m_\bk - 2l_\bk +1} 2 -1, \cdots,
	 \frac{k_\bk + m_\bk - 2l_\bk +1} 2 -(m_\bk - 2l_\bk)\right).
	 \end{equation}
	\end{itemize}
	Dentote $(\Lambda' + \delta'_G)|_{\t^\Delta}$ by $\chi$, and $(\Lambda' + \delta'_G)|_{\t_\MI}$ by $\mu$.
	Since $M \cong U(p-l, q-l) \times (S^1)^{l}$, by the product property of cohomological induction,
	\[\xi = \pi_{(\Lambda' +\delta'_G)|_{\t'}} \cong \pi_{\mu} \boxtimes \chi,\]
	where $\pi_\mu$ is the discrete series of $U(p-l, q-l)$ with Harish-Chandra parameter $\mu$, and $\chi \in (\t^\Delta)^*$ is lifted to a character of $(S^1)^{l_1}\times \cdots \times (S^1)^{l_r}$, still denoted by $\chi$.
\end{subequations}

\subsection{Complete Langlands parameter for $\pi(\psi^\R; \underline p, \underline q)$} \label{subsec: complete-para}
	We have seen $\pi(\psi^\R; \underline p, \underline q) = A_{\LAq'}(\Lambda')$ is the Langlands quotient of $\Ind_{MAN}^G(\xi \boxtimes \nu)$,
	where $MA\cong U(p-l, q-l) \times (S^1)^l \times \R_{>0}^l$, and
	\[\xi \boxtimes \nu \cong \pi_\mu \boxtimes \chi \boxtimes \nu.\]
	It remains to read off the complete Langlands parameter $(\phi^\R, \epsilon)$ from the above standard module.

\subsubsection{Non-tempered part}
	According to \eqref{eq: nu} and \eqref{eq: chi}, the character $\chi \boxtimes \nu$ of $(S^1)^l \times \R^l_{>0} \cong (\C^{\times})^l = (\C^{\times})^{l_1} \times \cdots \times (\C^{\times})^{l_r}$ can be rewritten as
	\[\bigboxtimes_{\bk=1}^r \bigboxtimes_{j=0}^{l_\bk-1} (\frac z{\bar z})^{\frac {k_\bk} 2} (z \bar z)^{\frac{m_\bk -1} 2 - j}.\]
	Under the local Langlands correspondence, it determines the following $l$-dimensional representation of $W_\C = \C^\times$:
	\[\bigoplus_{\bk=1}^r \bigoplus_{j=0}^{l_\bk - 1}
		(\frac z{\bar z})^{\frac {k_\bk} 2} (z \bar z)^{\frac{m_\bk -1} 2 - j}.\]
	The conjugate dual of this representation is
	\[\bigoplus_{d=1}^r \bigoplus_{j=0}^{l_\bk - 1}
		(\frac z{\bar z})^{\frac {k_\bk} 2} (z \bar z)^{-\frac{m_\bk -1} 2 + j}.\]
	The above two representations provide the non-tempered part of ${BC}(\phi^\R)$, as presented in Theorem \ref{Langlands-sub}.

\subsubsection{Discrete series part}
	According to Subsection \ref{subsec: discrete-series}, to write out the complete Langlands parameter $(\phi_-^\R, \epsilon_-)$ of $\pi_\mu$, 
	we have to find $\mu_\Std \in \t_\MI^*$ dominant with respect to the standard Borel in $\gl_{|\MI|}$, and 
	$\sigma \in W(\gl_{|\MI|}, \t_\MI)$ such that $\mu = \sigma \mu_{\Std}$.
	Here $\gl_{|\MI|}$ is identified with endomorphisms over $\C^{|\MI|} \cong \t_\MI \subseteq \t \cong \C^n$ and is regarded as a subalgebra of $\gl_{n}$ in the natural way.
	
	Since $k_\bk- m_\bk \geqslant k_{\bk+1} + m_{\bk+1}$ for $1 \leqslant d \leqslant r-1$, then it follows from \eqref{eq: mu} that $\mu_{\Std} \in \t_\MI^* \cong \C^{|\MI|}$ takes the form
	\[\left( \frac{k_1}2 + \frac{m_1 - 2l_1 -1}2, \cdots, \frac{k_1}2 - \frac{m_1 - 2l_1-1}2,
	\cdots,
	\frac{k_r}2 + \frac{m_r - 2l_r -1}2, \cdots, \frac{k_r}2 - \frac{m_r - 2l_r -1} 2 \right).\]
	By \eqref{eq: Langlands-parameter}, $\phi_-^\R$ has base change
	\[{BC}(\phi_-^\R) = \bigoplus_{\bk=1}^r 
	\left(( \frac z{\bar z})^{\frac{ k_\bk+ (m_\bk-2l_\bk-1)} 2} 
	\oplus \cdots \oplus
	 ( \frac z{\bar z})^{\frac{ k_\bk- (m_\bk-2l_\bk-1)} 2} \right),\]
	as in Theorem \ref{Langlands-sub}.
	 
	Take the partition $\MI= \Delta_1 \sqcup \cdots \sqcup \Delta_r$ such that $|\Delta_\bk| = |\MI_\bk|$, and $\Delta_1< \cdots < \Delta_r$.
	The $\Delta_\bk$ block of $\mu_\Std$ is exactly $\MI_\bk$ block of $\mu$.
	Take the permutation $\sigma$ of $\MI$ which sends indices in $\Delta_\bk$ to indices in $\MI_\bk$, and preserves their order for each $\bk=1, \cdots, r$,
	then we have $\mu = \sigma \mu_\Std$:
	by our notation, $\forall i \in \MI$, the entry of $\sigma \mu_\Std$ at $i$ is $\mu_{\Std, \sigma^{-1}(i)}$, so
	$(\sigma\mu_{\Std})_{\MI_\bk} = \mu_{\Std, \sigma^{-1}(\MI_\bk)} = \mu_{\Std, \Delta_\bk} = \mu_{\MI_\bk}$.
	Under $\Pi_{\phi_-^\R}(U(p-l, q-l)) \to \D  A_{\phi_-^\R} \cong \{\pm 1 \}^{|\MI|}$ given by \eqref{component-character}, $\pi_{\sigma \mu_\Std}$ is sent to $\epsilon_- = (\epsilon_{-, i})_{i \in \MI}$ with
	\[\epsilon_{-, i} = \left\{\begin{aligned}
	&(-1)^{|\MI|- c_i},	&\sigma(i) \leqslant p,\\
	&(-1)^{|\MI|+1-c_i},	&\sigma(i)>p,
	\end{aligned}\right.\]
    where $1 \leqslant c_i \leqslant |\MI|$ is the position of $i$ in $\MI$. Since $\sigma(\Delta_\bk) = \MI_\bk$,  for $i \in \Delta_\bk$, whether $\sigma(i)\leqslant p$ or not is determined by $\MI_\bk \subseteq \PI_\bk$ or $\QI_\bk$.
	Thus, $\epsilon_- \in \{\pm 1\}^{|\MI}$ can be further described as follows:
	\begin{itemize}
	\item it alternates on each $\Delta_\bk$;
	\item it takes value $(-1)^{n - (m_1+\cdots+ m_{\bk-1})} \sgn(p_\bk - q_\bk)$ at the beginning of $\Delta_\bk$;
	\item it takes value $(-1)^{n+1 - (m_1+ \cdots + m_\bk)} \sgn(p_\bk - q_\bk) = \eta_\bk$ at the ends of $\Delta_\bk$.
	\end{itemize}
	This also coincides with the result of Theorem \ref{Langlands-sub};
	note that the indexing 
	$\Delta_\bk \to \mu_{\Std, \Delta_\bk} = [\frac {k_\bk}2 - \frac{m_\bk - 2l_\bk -1}2, \frac{k_\bk}2 + \frac{m_\bk - 2l_\bk -1} 2] = [B_\bk + l_\bk, A_\bk - l_\bk]$ 
	is order-reversing.

\section{Parameterization of orbits in $\D K \backslash \D G / \D P$}\label{Appendix: Characterization of orbtis}

We follow the notations in 2.5 of Subsection~\ref{subsec: LLC real}. Let $\D{G}=\GL_n(\BC)$. Identify $W^{\D{G}}$ with the symmetric group $S_n$ by requiring $e_{w(i)} = w(e_i)$ for the standard basis $e_1, \cdots, e_n$ of $\hat{\mathfrak{t}}$. Let $\lambda = (\lambda_1, \cdots, \lambda_n)$ be an integral dominant infinitesimal character of $G$. We first consider the good parity case, i.e. $\lambda \in \BZ^n$ if $n$ is odd and $\lambda \in(\frac12 \BZ \backslash \BZ)^n$ if $n$ is even.

We denote
\begin{equation}\label{eqn: I-lambda-good}
\mathfrak{I}_{\lambda}:=\{s \in W^{\D{G}} \, | \, s^{2} = 1, \text{ and } s(i) = i\text{ if } \lambda_i = \lambda_{s(i)} \}.
\end{equation}
Note that $W^{\D M(\lambda)}$ acts on $\mathfrak{I}_{\lambda}$
by conjugation. 
\begin{lemma}
\label{lemma: Weyl group parametrization-surjective}
There is a surjection 
\begin{equation}
\label{eq: Weyl group parametrization}
\mathfrak{I}_{\lambda}/W^{\D{M}(\lambda)} \surjects \operatorname{Sym}_n(\mathbb{C})_{reg} / \D{P}(\lambda)
\end{equation}
by sending $s$ to the permutation matrix $\dot s$ representing $s$. 

\end{lemma}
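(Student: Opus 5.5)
We must show three things: the map $s \mapsto \dot s$ lands in $\operatorname{Sym}_n(\BC)_{reg}$, its class modulo $\D P(\lambda)$-congruence depends only on the $W^{\D M(\lambda)}$-conjugacy class of $s$, and every class in $\operatorname{Sym}_n(\BC)_{reg}/\D P(\lambda)$ is hit.

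The first two points are quick. Since $s^2=1$, the permutation matrix $\dot s$ is symmetric and invertible. For $w \in W^{\D M(\lambda)}$ with permutation matrix $\dot w \in \D M(\lambda)\subset \D P(\lambda)$, we have $\dot w\,\dot s\,{}^t\dot w = \dot w \dot s \dot w^{-1} = \dot{(wsw^{-1})}$. Also $wsw^{-1}\in\mathfrak I_\lambda$, because $w$ preserves the level sets of $\lambda$. So the map is well defined on $W^{\D M(\lambda)}$-orbits.

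Surjectivity is the main step, and I would do it in two stages, passing through $\Phi(G)_\lambda$. Take $S\in \operatorname{Sym}_n(\BC)_{reg}$. By Lemma~\ref{lemma: sym/skew as ABV spaces} and Proposition~\ref{prop: orbit vs L-param}, $S$ corresponds to an L-parameter $\phi^\BR$. Conjugating, write $\phi^\BR(z)=z^\lambda\bar z^\mu$ with $\phi^\BR(j)$ normalizing $\D T$. The first half of the proof of Lemma~\ref{lemma: Sym vs L-param} (the part showing the image is $\D T$-congruent to $\dot s$ for an involution $s$ with $s\lambda=-\mu$) does not use the present lemma. So $S$ is $\D P(\lambda)$-congruent to $\dot s_0$ for some involution $s_0$ with $s_0\lambda=-\mu$, and the diagonal $\D T$-congruence factor can be absorbed since $\D T\subset \D P(\lambda)$. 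Alternatively, one can argue directly: every $\D P(\lambda)$-orbit on symmetric forms, equivalently every $O(n)$-orbit on $\D G/\D P(\lambda)$, contains a $\D T$-stable point. By Bruhat/Matsuki-type reasoning, each $K$-orbit on a flag variety meets the $T$-fixed points of a $\theta$-stable torus, which gives a monomial symmetric matrix, and after torus scaling this becomes a permutation matrix of an involution.

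It remains to move $s_0$ into $\mathfrak I_\lambda$ while keeping its $\D P(\lambda)$-class. Suppose $s_0$ has a 2-cycle $(i\,k)$ with $\lambda_i=\lambda_k$. Then the $2\times 2$ block $\begin{pmatrix}0&1\\1&0\end{pmatrix}$ on the coordinates $i,k$ is congruent to the identity $I_2$ via an element $g\in \GL_2$ acting on $e_i,e_k$. Because $\lambda_i=\lambda_k$, this $g$ lies in $\D M(\lambda)\subset\D P(\lambda)$, and it fixes the other coordinates. So we may replace the 2-cycle by two fixed points. Repeating this removes every such 2-cycle, and the result is an involution $s\in\mathfrak I_\lambda$ with $\dot s$ congruent to $S$.

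I expect the delicate point to be the reduction to permutation matrices in the first stage, namely making sure the cited argument from Lemma~\ref{lemma: Sym vs L-param} is not circular. If needed, I would replace it with the direct argument: diagonalize the action using the $\D T$-fixed points.
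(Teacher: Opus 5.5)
Your proof is correct. The descent to $W^{\D M(\lambda)}$-orbits and the removal of each 2-cycle $(i\,k)$ with $\lambda_i=\lambda_k$ by a $\GL_2\subset\D M(\lambda)$-congruence are exactly what the paper does.

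The difference is in the first stage, where one shows that every $\D P(\lambda)$-congruence class in $\operatorname{Sym}_n(\BC)_{reg}$ contains $\dot s$ for some involution $s$.

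\begin{itemize}
\item \textbf{The paper's route.} It cites Richardson--Springer \cite[\S 10.2]{RS:1990}: every $\D B$-congruence class of invertible symmetric matrices contains (a unique) $\dot s$ with $s^2=1$. It then uses $\D B\subset\D P(\lambda)$.
\item \textbf{Your main route.} You pass through the Langlands dictionary. You use the surjectivity of the bijection in Proposition~\ref{prop: orbit vs L-param}, the normalization of $\phi^\BR$ so that $\phi^\BR(j)$ normalizes $\D T$, and the first half of the proof of Lemma~\ref{lemma: Sym vs L-param}. That first half only shows that $g^{-1}$ is a symmetric monomial matrix, hence $\D T$-congruent to $\dot s$. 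It does not invoke the present lemma, so there is no circularity.
\end{itemize}

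Your route works, but it imports the ABV/Adams--du Cloux machinery for what is a linear-algebra statement. That statement depends only on the equality pattern of $\lambda$, and your argument also ties it to a good-parity realization of that pattern. The paper's citation is self-contained and gives the finer $\D B$-level statement.

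Your fallback ``Matsuki-type'' argument is essentially the content behind the Richardson--Springer result, and it is sound once made precise. Let $\theta(x)={}^tx^{-1}$, and suppose $g\D P(\lambda)$ is fixed by a $\theta$-stable maximal torus $T'$. Then $g^{-1}T'g$ is a maximal torus of $\D P(\lambda)$, so $g^{-1}T'g=p\D Tp^{-1}$ for some $p\in\D P(\lambda)$. Setting $h=gp$ gives $h\D P(\lambda)=g\D P(\lambda)$ and $h\D Th^{-1}=T'$. Finally, $h\D Th^{-1}$ is $\theta$-stable if and only if ${}^thh$ normalizes $\D T$, i.e.\ is monomial.
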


\begin{proof}
Let $\widehat{B}$ be the subgroup of upper-triangular matrices in $\widehat{G}$. It follows from \cite[\textsection 10.2]{RS:1990} that we have a bijection
\[
\{s \in W^{\D G} \, | \, s^{2} = 1\} \bijects \operatorname{Sym}_n(\mathbb{C})_{reg} / \D{B}, \quad s \mapsto \dot s
\]
which is order reversing where the left hand side is equipped the Bruhat order.
For $s \in W^{\D G}$ satisfying $s^{2} = 1$, if there exists $i$ such that $s(i) \neq i$ and $\lambda_i = \lambda_{s(i)}$, then $\dot s$ contains the block 
$\begin{pmatrix}
	0&1\\1&0
\end{pmatrix}$
on the rows and columns indexed by $\{i,s(i)\}$. This block is congruent, under the $\{i,s(i)\}$-block $\GL_2(\BC) \subset \D M(\lambda)$, to the identity matrix. In other words, there exists $g \in \D M(\lambda)$ such that ${}^t\!g\dot sg = \dot s'$, where $s' \in W^{\D G}$ satisfies $s'(j) = s(j)$ for $j \neq i$ or $s(i)$, and $s'(i) = i$. Repeating this process for the $i$'s as above, we see eventually that $\dot s$ is $\D P(\lambda)$-congruent to an element coming from $\mathfrak{I}_\lambda$. This shows that we have a surjection
\begin{equation*}
	\mathfrak{I}_\lambda \surjects \operatorname{Sym}_n(\mathbb{C})_{reg} / \D{P}(\lambda).
\end{equation*}
Finally, if $s \in \mathfrak{I}_\lambda$ is replaced by $wsw\inv$ with $w \in W^{\D M(\lambda)}$, then letting $\dot w$ be a permutation lift of $w$, we have ${}^t \dot w = \dot w\inv$. Hence $\dot w \dot s \dot w\inv = \dot w \dot s \,{}^t\! \dot w$ is in the same $\D P(\lambda)$-congruence class as $\dot s$, and the surjection above descends to (\ref{eq: Weyl group parametrization}).
\end{proof}

\begin{lemma}
\label{lemma:W-orbit-by-Auv}
Write the index set as a disjoint union
of blocks
\[
\{1,\dots,n\}=J_1\sqcup\cdots\sqcup J_r,
\qquad
\lambda \text{ is constant on each }J_u,
\]
so that
\[
W^{\D M(\lambda)}\;\cong\;\operatorname{Sym}(J_1)\times\cdots\times \operatorname{Sym}(J_r),
\]
acting on $S_n$ by conjugation.

For $s\in \mathfrak{I}_\lambda$, set
\[
A_{u,v}(s):=\#\{\, i\in J_u:\ s(i)\in J_v\,\}\qquad (1\le u,v\le r).
\]
Then two elements $s,t\in \mathfrak{I}_\lambda$ are conjugate under $W^{\D M(\lambda)}$ if and only if
\[
A_{u,v}(s)=A_{u,v}(t)\qquad \text{for all }u,v.
\]
\end{lemma}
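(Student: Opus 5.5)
The forward direction is immediate. If $t = wsw^{-1}$ with $w=(w_1,\dots,w_r)\in \operatorname{Sym}(J_1)\times\cdots\times\operatorname{Sym}(J_r)$, then $w$ preserves each block $J_u$. Hence $i\in J_u$ with $s(i)\in J_v$ if and only if $w(i)\in J_u$ with $t(w(i)) = w(s(i))\in J_v$. Since $w|_{J_u}$ is a bijection, this gives $A_{u,v}(s)=A_{u,v}(t)$.

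For the converse, I would first translate the condition $s\in\mathfrak{I}_\lambda$ into combinatorics. The element $s$ is an involution, and whenever $s(i)\ne i$ the indices $i$ and $s(i)$ lie in different blocks. So inside a single block $J_u$, the points come in two kinds:
\begin{itemize}
\item the fixed points of $s$ in $J_u$, numbering $A_{u,u}(s)$;
\item points $i$ paired with some $s(i)\in J_v$ for $v\neq u$, numbering $A_{u,v}(s)$ for each $v$.
\end{itemize}
Thus $s$ is determined by three pieces of data: a partition of each $J_u$ into subsets $F_u$ and $S_{u,v}$ for $v\ne u$, with $|F_u|=A_{u,u}$ and $|S_{u,v}|=A_{u,v}$; and, for each unordered pair $\{u,v\}$ with $u\neq v$, a bijection $\beta_{u,v}:S_{u,v}\to S_{v,u}$ given by restricting $s$. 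Involutivity gives $\beta_{v,u}=\beta_{u,v}^{-1}$. These are consistent because $A_{u,v}=A_{v,u}$, which follows from $s$ being an involution.

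Now take $s,t$ with equal matrices $A$, with corresponding data $(F_u,S_{u,v},\beta)$ and $(F'_u,S'_{u,v},\beta')$. I construct $w$ in two stages.
\begin{enumerate}
\item On each $J_u$, choose $w_u$ sending $F_u$ bijectively onto $F'_u$. This is possible because the cardinalities match.
\item For each unordered pair $\{u,v\}$, choose $w_u|_{S_{u,v}}:S_{u,v}\to S'_{u,v}$ to be an arbitrary bijection, and then define
\[
w_v|_{S_{v,u}} := \beta'_{u,v}\circ w_u|_{S_{u,v}}\circ\beta_{u,v}^{-1}.
\]
\end{enumerate}
Each $w_u$ is then a permutation of $J_u$, since it is assembled from bijections between corresponding pieces of the two partitions.

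It remains to check that $wsw^{-1}=t$, case by case. On $F'_u$ both sides fix the point. For $i\in S_{u,v}$ we have $w(s(i)) = w_v(\beta_{u,v}(i)) = \beta'_{u,v}(w_u(i)) = t(w(i))$. By the definition in stage 2, the same identity holds for $i\in S_{v,u}$.

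The only real subtlety is making the choice in stage 2 once per unordered pair, so that it stays consistent with involutivity. Once that is arranged, the argument is bookkeeping.
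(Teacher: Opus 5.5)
Your proposal is correct and follows essentially the same route as the paper. The forward direction uses block-preservation of $w$. The converse builds $w$ blockwise: a bijection between the fixed-point sets, and for each unordered pair $\{u,v\}$ an arbitrary bijection on $S_{u,v}$ transported to $S_{v,u}$ through $s$ and $t$. Your $w_v|_{S_{v,u}} := \beta'_{u,v}\circ w_u|_{S_{u,v}}\circ\beta_{u,v}^{-1}$ is exactly the paper's $\alpha_{v,u}(s(i)) := t(\alpha_{u,v}(i))$.
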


\begin{proof}
Let $w\in W^{\D M(\lambda)}$. By definition, $w$ preserves each block $J_u$ setwise.
For any $u,v$ we have a bijection
\[
J_u \xrightarrow{\;w\;} J_u,\qquad i\longmapsto w(i),
\]
and
\[
(ws w^{-1})(w(i)) = w(s(i)).
\]
Thus $s(i)\in J_v$ if and only if $(wsw^{-1})(w(i))\in J_v$, and counting gives
$A_{u,v}(wsw^{-1})=A_{u,v}(s)$.

For $s\in \frak \mathfrak{I}_\lambda$,  it follows from the definition that 
if $s(i)\in J_u$ and $i\in J_u$, then $s(i)=i$.
Hence every $s\in \mathfrak{I}_\lambda$ is a product of:
(i) fixed points lying in each $J_u$, and
(ii) transpositions pairing elements of two distinct blocks.

Define the disjoint subsets (for fixed $s$)
\[
F_u(s):=\{\,i\in J_u:\ s(i)=i\,\},
\qquad
E_{u,v}(s):=\{\,i\in J_u:\ s(i)\in J_v\,\}\quad (u\neq v).
\]
Then
\[
J_u = F_u(s)\ \sqcup\ \bigsqcup_{v\neq u} E_{u,v}(s),
\qquad
|F_u(s)|=A_{u,u}(s),\quad |E_{u,v}(s)|=A_{u,v}(s).
\]
Moreover, since $s$ is an involution, the map $s$ restricts to a bijection
\[
s:\ E_{u,v}(s)\xrightarrow{\sim} E_{v,u}(s)\qquad (u\neq v).
\]

Assume $A_{u,v}(s)=A_{u,v}(t)$ for all $u,v$, where $s,t\in \frak \mathfrak{I}_\lambda$.
\begin{itemize}
\item[(a)]
For each $u$, choose an arbitrary bijection
\[
\beta_u:\ F_u(s)\xrightarrow{\sim} F_u(t)
\]
which exists because $|F_u(s)|=A_{u,u}(s)=A_{u,u}(t)=|F_u(t)|$.

\item[(b)]
For each ordered pair $(u,v)$ with $u<v$, choose an arbitrary bijection
\[
\alpha_{u,v}:\ E_{u,v}(s)\xrightarrow{\sim} E_{u,v}(t)
\]
which exists because $|E_{u,v}(s)|=A_{u,v}(s)=A_{u,v}(t)=|E_{u,v}(t)|$.
Now define a bijection
\[
\alpha_{v,u}:\ E_{v,u}(s)\xrightarrow{\sim} E_{v,u}(t)
\]
by the rule
\[
\alpha_{v,u}(s(i)) \ :=\ t(\alpha_{u,v}(i))\qquad (i\in E_{u,v}(s)).
\]
This is well-defined and bijective because $s:E_{u,v}(s)\to E_{v,u}(s)$ and
$t:E_{u,v}(t)\to E_{v,u}(t)$ are bijections.

\item[(c) ]
For each $u$, define a map $w_u:J_u\to J_u$ by:
\[
w_u|_{F_u(s)}:=\beta_u,
\qquad
w_u|_{E_{u,v}(s)}:=\alpha_{u,v}\ \ \ (v\neq u).
\]
Since the sets $F_u(s)$ and $E_{u,v}(s)$ (for $v\neq u$) partition $J_u$ and the target
sets $F_u(t)$ and $E_{u,v}(t)$ partition $J_u$ with matching cardinalities, $w_u$ is a
bijection. Hence $w_u\in \operatorname{Sym}(J_u)$.
\end{itemize}

Let
\[
w := w_1\cdots w_r \ \in\ \operatorname{Sym}(J_1)\times\cdots\times \operatorname{Sym}(J_r)\ \cong\ W^{\D M(\lambda)}.
\]

It suffices to check the intertwining identity
\[
w\circ s = t\circ w
\quad\text{on }\{1,\dots,n\}.
\]

- If $i\in F_u(s)$, then $s(i)=i$, and by construction $w(i)\in F_u(t)$, so
$t(w(i))=w(i)=w(s(i))$.

- If $i\in E_{u,v}(s)$ with $u\neq v$, then $s(i)\in E_{v,u}(s)$ and, by the definition
of $\alpha_{v,u}$,
\[
w(s(i)) = w_v(s(i)) = \alpha_{v,u}(s(i))
= t(\alpha_{u,v}(i)) = t(w_u(i)) = t(w(i)).
\]

Thus $w\circ s=t\circ w$, hence $w s w^{-1}=t$. This shows $s$ and $t$ are
$W^{\D M(\lambda)}$-conjugate.
This finishes the proof.
\end{proof}

\begin{proposition}\label{prop: Weyl group parametrization}
The map \eqref{eq: Weyl group parametrization} is a bijection.
\end{proposition}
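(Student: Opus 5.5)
Surjectivity is Lemma~\ref{lemma: Weyl group parametrization-surjective}, so only injectivity remains. The plan is to attach to each $\D P(\lambda)$-congruence class in $\operatorname{Sym}_n(\BC)_{reg}$ a numerical invariant that recovers the matrix $A_{u,v}(s)$ of Lemma~\ref{lemma:W-orbit-by-Auv}. Then two elements $s,t\in\mathfrak{I}_\lambda$ with $\D P(\lambda)$-congruent $\dot s,\dot t$ have $A_{u,v}(s)=A_{u,v}(t)$ for all $u,v$. By Lemma~\ref{lemma:W-orbit-by-Auv} they are then $W^{\D M(\lambda)}$-conjugate, which is injectivity.

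For the invariants, order the blocks $J_1<\cdots<J_r$ according to the standard parabolic $\D P(\lambda)$. A symmetric form $S$ is the Gram matrix $(e_i,e_j)\mapsto S_{ij}$, and $\D P(\lambda)$-congruence is change of basis by elements preserving the partial flag $F_u=\operatorname{span}\{e_i: i\in J_1\sqcup\cdots\sqcup J_u\}$. I would use the ranks
\[
 r_{u,v}(S):=\operatorname{rank}\big(S|_{F_u\times F_v}\big),\qquad 1\le u,v\le r,
\]
which are $\D P(\lambda)$-invariant. For $S=\dot s$ a permutation matrix, the restricted block is a partial permutation matrix, so
\[
r_{u,v}(\dot s)=\#\{i\in J_{\le u}: s(i)\in J_{\le v}\}=\sum_{u'\le u,\,v'\le v}A_{u',v'}(s).
\]
Inclusion--exclusion then recovers every $A_{u,v}(s)$ from the $r_{u,v}$.

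The only point needing care is that the ranks detect the counts $A_{u,v}$, and this formula shows that they do. The condition defining $\mathfrak{I}_\lambda$ does not enter here except to make Lemma~\ref{lemma:W-orbit-by-Auv} applicable. The main step I expect to double-check is the convention for the congruence action: whether $\D P(\lambda)$ acts by $S\mapsto {}^tgSg$ or ${}^tg^{-1}Sg^{-1}$. With the lower-triangular convention one replaces $F_u$ by the complementary flag built from the last blocks, and the same counting argument applies. If this convention issue works out, the composite
\[
\mathfrak{I}_\lambda/W^{\D M(\lambda)}\to \operatorname{Sym}_n(\BC)_{reg}/\D P(\lambda)\xrightarrow{(r_{u,v})}\BZ^{r\times r}
\]
is injective, and hence so is the first map.
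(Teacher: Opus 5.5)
Your proof is correct and is essentially the paper's argument. Since
\[
\operatorname{rank}\bigl(S|_{F_u\times F_v}\bigr)=\dim F_u-\dim\bigl(F_u\cap F_v^{\perp_S}\bigr),
\]
your invariant $r_{u,v}$ is exactly the paper's $c_{u,v}=m_u-d_{u,v}$. The paper likewise evaluates $c_{u,v}$ on $\dot s$, inverts it by inclusion--exclusion to get the $A_{u,v}(s)$, and applies Lemma~\ref{lemma:W-orbit-by-Auv}.

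Your concern about the convention does not arise. The action coming from Lemma~\ref{lemma: sym/skew as ABV spaces} is $S\mapsto {}^t p S p$ with $p\in\D P(\lambda)$ block upper triangular, and replacing $p$ by $p^{-1}$ gives the same orbits. So the standard flag $F_u$ is the correct one to use.
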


\begin{proof}
Write the index set $\{1,\dots,n\}$ as a disjoint union of blocks
\[
\{1,\dots,n\}=J_1\sqcup\cdots\sqcup J_r
\]
so that $\lambda$ is constant on each $J_a$ and takes distinct values on distinct blocks.
For each $a=1,\dots,r$, set $m_a:=|J_1|+\cdots+|J_a|$ and define the standard
partial flag subspaces
\[
F_a:=\Span(e_1,\dots,e_{m_a})\subset \BC^n,
\]
so that $\D P(\lambda)$ is precisely the stabilizer of the partial flag
$0\subset F_1\subset\cdots\subset F_r=\BC^n$. Given $X\in\operatorname{Sym}_n(\mathbb{C})_{reg}$,
let $B_X(u,v):=u^tXv$ be the nondegenerate symmetric bilinear form attached to $X$.
For  $a,b\in\{1,\dots,r\}$, define the integers
\[
d_{a,b}(X):=\dim\bigl(F_a\cap F_b^{\perp_X}\bigr),
\qquad F_b^{\perp_X}:=\{v\in \BC^n: B_X(v,F_b)=0\}.
\]
\emph{\textbf{Claim:}} $d_{a,b}(X)$ is invariant under $P(\lambda)$-congruence.

Indeed, for $X'=p^tXp$ one has $B_{X'}(u,v)=B_X(pu,pv)$, hence
$F_b^{\perp_{X'}} = p^{-1}(F_b^{\perp_X})$ because $pF_b=F_b$ for $p\in P(\lambda)$.
Therefore
\[
d_{a,b}(X')=\dim(F_a\cap p^{-1}(F_b^{\perp_X}))
=\dim(pF_a\cap F_b^{\perp_X})=\dim(F_a\cap F_b^{\perp_X})=d_{a,b}(X).
\]

Now specialize to $X=\dot s$ for $s\in \mathfrak{I}_\lambda$. Since $B_{\dot s}(e_i,e_j)=1$
iff $j=s(i)$, one checks:
\[
e_i\in F_b^{\perp_{\dot s}}
\iff B_{\dot s}(e_i,F_b)=0
\iff s(i)>m_b.
\]
Hence
\begin{equation}\label{eq:dab-dot-s}
d_{a,b}(\dot s)=\#\{\, i\le m_a:\ s(i)>m_b\,\}.
\end{equation}

Define also
\[
c_{a,b}(s):=\#\{\, i\le m_a:\ s(i)\le m_b\,\}
= m_a-d_{a,b}(\dot s).
\]
From the matrix $(c_{a,b}(s))_{a,b}$ we can recover for each pair of blocks $u,v$,
the numbers $A_{u,v}(s)$ from Lemma \ref{lemma:W-orbit-by-Auv}
via the inclusion--exclusion formula
\[
A_{u,v}(s)
=\bigl(c_{u,v}(s)-c_{u,v-1}(s)\bigr)-\bigl(c_{u-1,v}(s)-c_{u-1,v-1}(s)\bigr),
\]
where we set $c_{0,b}=c_{a,0}=0$.

For $s\in \mathfrak{I}_\lambda$, we have $A_{u,u}(s)=\#\{\text{fixed points of }s\text{ in }J_u\}$
and for $u\ne v$ the integer $A_{u,v}(s)$ counts transpositions pairing $J_u$ with $J_v$,
and is symmetric: $A_{u,v}(s)=A_{v,u}(s)$.

Suppose $\dot s$ and $\dot t$ (with $s,t\in \mathfrak{I}_\lambda$) lie in the same $\D P(\lambda)$-orbit.
Then $d_{a,b}(\dot s)=d_{a,b}(\dot t)$ for all $a,b$ by Claim A, hence
$c_{a,b}(s)=c_{a,b}(t)$ and therefore $A_{u,v}(s)=A_{u,v}(t)$ for all $u,v$.
By  Lemma \ref{lemma:W-orbit-by-Auv}, $s$ and $t$ are conjugate by $W^{\D M(\lambda)}$. Thus the map $\mathfrak{I}_\lambda/W^{\D M(\lambda)}\to \operatorname{Sym}_n(\C)_{ reg}/\D P(\lambda)$ is injective.
Combined with surjectivity from Lemma \ref{lemma: Weyl group parametrization-surjective}, it is a bijection.

\end{proof}

We now turn to the bad parity case, i.e., $n$ is even and $\lambda \in \BZ^n$.
We say that $s\in S_n$ is  fixed point free if for any $i\in \{1, \cdots, n\}$, 
we have $s(i)\neq i$.
We denote by $\frak{J}^S_{\D{G}}$ the set of fixed point free involutions in $W^{\D G}$. Note that $W^{\D M(\lambda)}$ acts on $\frak{J}^S_{\D{G}}$
by conjugation.

\begin{lemma}
\label{lemma: Weyl group parametrization-skew-surjective}
There is a surjection 
\begin{equation}
\label{eq: Weyl group parametrization-sign-surjective}
\frak{J}^S_{\D{G}}/W^{\D M(\lambda)} \surjects \operatorname{Skew}_n(\mathbb{C})_{reg} / \D{P}(\lambda)
\end{equation}
by sending $s$ to the matrix $\dot s $, where 
$\dot s$ is a signed permutation matrix lifting $s$ satisfying 
\[
{}^t\!\dot s= -\dot s .
\]
\end{lemma}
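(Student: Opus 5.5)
The plan is to mimic the proof of Lemma~\ref{lemma: Weyl group parametrization-surjective}, replacing Richardson--Springer's parametrization of $\operatorname{Sym}_n(\BC)_{reg}/\D B$ by the analogous one for skew forms. Every invertible skew-symmetric matrix is congruent to a standard one, so $\operatorname{Skew}_n(\BC)_{reg}\cong \Sp(n,\BC)\backslash \GL_n(\BC)$ (Lemma~\ref{lemma: sym/skew as ABV spaces}). The $\D B$-orbits, equivalently the $\Sp(n,\BC)$-orbits on the full flag variety, are indexed by fixed point free involutions $s\in S_n$; this is the symmetric-pair case $(\GL_n,\Sp_n)$ of \cite[\textsection 10]{RS:1990}.

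\textbf{Step 1 (the $\D B$-orbit statement).} I would prove directly that every $X\in\operatorname{Skew}_n(\BC)_{reg}$ is $\D B$-congruent to some $\dot s$, where $\dot s$ is a signed permutation matrix with ${}^t\dot s=-\dot s$ lifting a fixed point free involution. The method is a Gaussian elimination from the last column. Take the largest index $j=n$ and let $i<j$ be the largest index with $X_{ij}\neq 0$; such an $i$ exists because $X$ is invertible and $X_{jj}=0$. Using upper triangular column and row operations, which act congruently, we clear the other entries of column $j$ and row $i$, leaving $\pm1$ in positions $(i,j)$ and $(j,i)$. Then we remove the indices $\{i,j\}$ and recurse, which produces $s=(i\,j)\cdots$. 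The step that needs the most care is checking that the operations are upper triangular. Adding multiples of column $i$ to earlier columns is allowed, and clearing with the pivot $(i,j)$ uses column $j$ added to columns $k<j$, so this is fine as long as the order of pivots is chosen this way. The elimination is the same as in the symmetric case, except that diagonal pivots never occur, which is why $s$ has no fixed points. A diagonal torus element normalizes the signs to a chosen representative.

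\textbf{Step 2 (descending to $\D P(\lambda)$ and the $W^{\D M(\lambda)}$-action).} Since $\D B\subset\D P(\lambda)$, Step 1 gives surjectivity of $\frak J^S_{\D G}\to\operatorname{Skew}_n(\BC)_{reg}/\D P(\lambda)$. Next I would check that the image of $\dot s$ does not depend on the choice of signed lift. Two lifts differ by $\dot s'=d\dot s d$ for a diagonal $d\in\D T\subset \D P(\lambda)$: given signs $\epsilon_{i,s(i)}=\pm1$, choose $d_i d_{s(i)}=\pm1$ appropriately. Finally, for $w\in W^{\D M(\lambda)}$ with permutation lift $\dot w\in \D M(\lambda)$, the matrix $\dot w\dot s\,{}^t\dot w=\dot w\dot s\dot w^{-1}$ is a signed skew lift of $wsw^{-1}$, congruent to $\dot s$ under $\D P(\lambda)$. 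Hence the surjection factors through $\frak J^S_{\D G}/W^{\D M(\lambda)}$.

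I expect the main obstacle to be Step 1, namely ordering the elimination so that only $\D B$-operations are used. If that bookkeeping gets messy, I would instead cite the known classification of $\Sp_n$-orbits on the flag variety of $\GL_n$ by fixed point free involutions (Richardson--Springer), transported through Lemma~\ref{lemma: sym/skew as ABV spaces}.
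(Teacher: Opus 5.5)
Your overall strategy is the paper's. The paper cites Richardson--Springer for the bijection between fixed point free involutions and $\operatorname{Skew}_n(\BC)_{reg}/\D B$, uses $\D B\subset \D P(\lambda)$, and notes that replacing $s$ by $wsw^{-1}$ with $w\in W^{\D M(\lambda)}$ does not change the $\D P(\lambda)$-class of $\dot s$. Your Step 2 is exactly this, and your check (via a diagonal $d\in \D T$) that the class does not depend on the choice of signs is a point the paper leaves implicit. The citation you keep in reserve is precisely the paper's proof.

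However, the self-contained elimination in your Step 1 is oriented for the wrong Borel. For $X\mapsto {}^t bXb$ with $b=I+cE_{lk}$ upper triangular ($l<k$), the operation adds $c$ times column $l$ to the \emph{later} column $k$, and row $l$ to the later row $k$. Adding column $i$, or column $j=n$, to earlier columns, as you propose, is a lower triangular operation. So your procedure (pivot in the last column at the largest nonzero row) is the correct reduction for the opposite Borel $\D B^-$, which is not contained in $\D P(\lambda)$, and Step 2 cannot use it.

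It really does give wrong answers for $\D B$. Take $n=4$ and the invertible skew matrix $X$ with $X_{14}=X_{23}=X_{34}=1$. Your rule pairs $(3,4)$ and then $(1,2)$. But for every $b\in \D B$, row $1$ of ${}^t bXb$ equals $b_{11}\,(\text{row }1\text{ of }X)\,b=(0,0,0,b_{11}b_{44})$. Hence $X$ lies in the $\D B$-orbit of $(1\,4)(2\,3)$, and the entry $X_{14}$ can never be removed using the pivot at $(3,4)$.

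The repair is easy; any one of the following completes the argument:
\begin{enumerate}
\item Run the elimination in mirror order. Pivot in column $1$ at the smallest row $j$ with $X_{j1}\neq 0$, clear the rest of column and row $1$ by adding row and column $j$ to later ones, then clear row and column $j$ by adding column and row $1$ to the others, and recurse on the remaining indices.
\item Conjugate your $\D B^-$-statement by the antidiagonal permutation matrix $\dot w_0$. It satisfies ${}^t\dot w_0=\dot w_0^{-1}$, interchanges $\D B^-$ and $\D B$, and carries signed skew lifts of fixed point free involutions to such lifts.
\item Use the Richardson--Springer citation, as the paper does.
\end{enumerate}
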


\begin{proof}
Let $\widehat{B}$ be the subgroup of upper-triangular matrices in $\widehat{G}$. It follows from \cite[\textsection 10.2]{RS:1990} that we have a bijection
\[
\{s \in W^{\D G} \, | \, s^{2} = 1, s \text{ is fixed point free}\} \bijects \operatorname{Skew}_n(\mathbb{C})_{reg} / \D{B}, \quad s \mapsto \dot s 
\]
which is order reversing where the left hand side is equipped the Bruhat order. As in the proof of Lemma \ref{lemma: Weyl group parametrization-surjective}, $W^{\D M(\lambda)}$-conjugation on $s$ does not change the resulting $\D P(\lambda)$-congruence class of $\dot s$. Hence the above bijection descends to the surjection (\ref{eq: Weyl group parametrization-sign-surjective}).
\end{proof}

\begin{lemma}
\label{lemma:W-orbit-by-Auv-skew}
Write the index set as a disjoint union
of blocks
\[
\{1,\dots,n\}=J_1\sqcup\cdots\sqcup J_r,
\qquad
\lambda \text{ is constant on each }J_u,
\]
so that
\[
W^{\D M(\lambda)}\;\cong\;\operatorname{Sym}(J_1)\times\cdots\times \operatorname{Sym}(J_r),
\]
acting on $S_n$ by conjugation.

For $s\in \frak{J}^S_{\hat G}$, set
\[
A_{u,v}(s):=\#\{\, i\in J_u:\ s(i)\in J_v\,\}\qquad (1\le u,v\le r).
\]
Then two elements $s,t\in \frak{J}^S_{\hat G}$ are conjugate under $W^{\D M(\lambda)}$ if and only if
\[
A_{u,v}(s)=A_{u,v}(t)\qquad \text{for all }u,v.
\]
\end{lemma}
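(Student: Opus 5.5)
The argument mirrors the proof of Lemma~\ref{lemma:W-orbit-by-Auv}, with fixed points now absent. The invariance direction is immediate. If $t=wsw^{-1}$ with $w\in W^{\D M(\lambda)}$, then $w$ preserves each block $J_u$ setwise and satisfies $t(w(i))=w(s(i))$. Hence $i\mapsto w(i)$ is a bijection from $\{i\in J_u: s(i)\in J_v\}$ onto $\{i\in J_u: t(i)\in J_v\}$, which gives $A_{u,v}(s)=A_{u,v}(t)$.

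For the converse, I would build an explicit intertwiner. Fix $s\in\mathfrak{J}^S_{\D G}$ and set
\[
E_{u,v}(s)=\{i\in J_u: s(i)\in J_v\}\quad\text{for all } u,v, \text{ including } u=v.
\]
These sets partition $J_u$, and $s$ restricts to a bijection $E_{u,v}(s)\to E_{v,u}(s)$.

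The off-diagonal pairs $u<v$ are handled exactly as in Lemma~\ref{lemma:W-orbit-by-Auv}. Choose an arbitrary bijection $\alpha_{u,v}\colon E_{u,v}(s)\to E_{u,v}(t)$, which exists since the cardinalities agree. Then define $\alpha_{v,u}\colon E_{v,u}(s)\to E_{v,u}(t)$ by $\alpha_{v,u}(s(i))=t(\alpha_{u,v}(i))$.

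The new feature is the diagonal piece $E_{u,u}(s)$. Here $s$ restricts to a fixed point free involution, because $s$ has no fixed points at all. So $E_{u,u}(s)$ is a disjoint union of $A_{u,u}(s)/2$ two-element $s$-orbits, and likewise $E_{u,u}(t)$ is a union of $A_{u,u}(t)/2$ two-element $t$-orbits. Since $A_{u,u}(s)=A_{u,u}(t)$, the numbers of orbits agree. Choose any bijection between the sets of orbits, and within each matched pair of orbits send $\{i,s(i)\}$ to $\{j,t(j)\}$ by $i\mapsto j$ and $s(i)\mapsto t(j)$. This defines $\beta_u\colon E_{u,u}(s)\to E_{u,u}(t)$ with $\beta_u\circ s=t\circ\beta_u$.

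Gluing these maps gives $w_u\in\operatorname{Sym}(J_u)$ for each $u$, and I set $w=w_1\cdots w_r\in W^{\D M(\lambda)}$. The identity $w\circ s=t\circ w$ then holds case by case. On $E_{u,u}(s)$ it holds by the construction of $\beta_u$. On $E_{u,v}(s)$ with $u\ne v$, it follows from the definition of $\alpha_{v,u}$ applied in the relevant order, exactly as in the earlier proof. Therefore $wsw^{-1}=t$.

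The only real obstacle is the diagonal blocks. In the good-parity Lemma~\ref{lemma:W-orbit-by-Auv}, membership in $\mathfrak{I}_\lambda$ forced $s$ to act trivially on $E_{u,u}(s)$. Here $s$ acts on $E_{u,u}(s)$ as a fixed point free involution instead, so the bijection must be chosen to match orbits rather than points. The orbit-matching construction above handles this; everything else is routine bookkeeping.
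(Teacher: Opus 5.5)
Your proof is correct and follows essentially the same route as the paper: arbitrary bijections $\alpha_{u,v}$ on the off-diagonal pieces $E_{u,v}(s)$ with $\alpha_{v,u}$ forced by compatibility, plus a conjugating bijection $\beta_u$ on each diagonal piece $E_{u,u}(s)$, where $s$ and $t$ act as fixed-point-free involutions. Your orbit-matching construction of $\beta_u$ makes explicit the paper's bare assertion that two fixed-point-free involutions on sets of equal size are conjugate, and you also write out the easy invariance direction, which the paper leaves implicit in the skew case.
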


\begin{proof}
Write $E_{u,v}(s):=\{\,i\in J_u:\ s(i)\in J_v\,\}$. Then $|E_{u,v}(s)|=A_{u,v}(s)$ and $s$
restricts to a bijection $E_{u,v}(s)\xrightarrow{\sim}E_{v,u}(s)$ for all $u,v$.
The condition $s\in \frak{I}_G^S$ means there are no fixed points, but it allows $u=v$,
in which case $s|_{E_{u,u}(s)}$ is a fixed-point-free involution on that subset.

Assume $A_{u,v}(s)=A_{u,v}(t)$ for all $u,v$.

(i) For each $u<v$, choose any bijection $\alpha_{u,v}:E_{u,v}(s)\to E_{u,v}(t)$ and define
$\alpha_{v,u}:E_{v,u}(s)\to E_{v,u}(t)$ by $\alpha_{v,u}(s(i)):=t(\alpha_{u,v}(i))$.
This forces compatibility with the transpositions pairing $J_u$ and $J_v$.

(ii) For each $u$, the restrictions $s|_{E_{u,u}(s)}$ and $t|_{E_{u,u}(t)}$ are both fixed-point-free
involutions on sets of the same cardinality $A_{u,u}$. Hence they are conjugate by some permutation
$\beta_u\in \operatorname{Skew}(E_{u,u}(s))$.

Now define $w_u\in \operatorname{Sym}(J_u)$ by setting
\[
w_u|_{E_{u,v}(s)}:=\alpha_{u,v}\quad (v\neq u),
\qquad
w_u|_{E_{u,u}(s)}:=\beta_u.
\]
These pieces have disjoint domains and cover $J_u$, so $w_u$ is a permutation of $J_u$.
Let $w:=w_1\cdots w_r\in \operatorname{Sym}(J_1)\times\cdots\times \operatorname{Sym}(J_r)\cong W^{\D M(\lambda)}$.
By construction one checks on each $E_{u,v}(s)$ that $w\circ s=t\circ w$, hence
$wsw^{-1}=t$. This proves the lemma.
\end{proof}

\begin{proposition}\label{prop: Weyl group parametrization-skew}
The map \eqref{eq: Weyl group parametrization-sign-surjective} is a bijection.
\end{proposition}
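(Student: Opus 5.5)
Surjectivity is already Lemma~\ref{lemma: Weyl group parametrization-skew-surjective}, so the plan is to prove injectivity by transcribing the proof of Proposition~\ref{prop: Weyl group parametrization} to the skew setting. First check the map is well defined, i.e.\ independent of the signed lift: two signed skew-symmetric permutation matrices lifting the same $s$ differ by reversing signs on some transposition pairs $\{i,s(i)\}$. This is congruence by a diagonal matrix with entry $-1$ at $i$ and $1$ elsewhere, which lies in $\D T\subset \D P(\lambda)$.

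For injectivity, let $0\subset F_1\subset\cdots\subset F_r=\BC^n$ be the standard partial flag stabilized by $\D P(\lambda)$, with $F_a=\Span(e_1,\dots,e_{m_a})$. For $X\in\operatorname{Skew}_n(\BC)_{reg}$ set $B_X(u,v)={}^tuXv$, a nondegenerate alternating form, and $d_{a,b}(X)=\dim(F_a\cap F_b^{\perp_X})$. The Claim in the proof of Proposition~\ref{prop: Weyl group parametrization} uses only $pF_b=F_b$ and $B_{{}^tpXp}(u,v)=B_X(pu,pv)$, not symmetry, so $d_{a,b}$ is again a $\D P(\lambda)$-congruence invariant.

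For $X=\dot s$ we have $B_{\dot s}(e_i,e_j)=\pm1$ if $j=s(i)$ and $0$ otherwise; signs do not affect vanishing. Hence again $d_{a,b}(\dot s)=\#\{i\le m_a: s(i)>m_b\}$, and $c_{a,b}(s)=m_a-d_{a,b}(\dot s)$. The inclusion--exclusion formula then recovers $A_{u,v}(s)$, now including the diagonal counts $A_{u,u}(s)$, which here record pairs inside a single block. So if $\dot s$ and $\dot t$ are $\D P(\lambda)$-congruent, then $A_{u,v}(s)=A_{u,v}(t)$ for all $u,v$, and Lemma~\ref{lemma:W-orbit-by-Auv-skew} gives that $s$ and $t$ are $W^{\D M(\lambda)}$-conjugate.

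There is no real obstacle. The only points needing care are the well-definedness with respect to the sign choices, handled above, and checking that the invariant and its evaluation at $\dot s$ never used symmetry of $X$. Both are routine.
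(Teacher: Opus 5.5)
Your proposal is correct and is essentially the paper's proof: injectivity via the $\widehat{P}(\lambda)$-congruence invariants $d_{a,b}(X)=\dim(F_a\cap F_b^{\perp_X})$, their evaluation at $\dot s$ (where the signs are irrelevant), inclusion--exclusion to recover the counts $A_{u,v}$, and then Lemma~\ref{lemma:W-orbit-by-Auv-skew}, with surjectivity taken from Lemma~\ref{lemma: Weyl group parametrization-skew-surjective}. Your additional check that the choice of signed lift does not matter is a small point the paper leaves implicit; it works because congruence by a diagonal sign matrix in $\widehat{T}\subset\widehat{P}(\lambda)$ flips the signs on a transposition pair.
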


\begin{proof}
Assume $\dot s$ and $\dot t$ lie in the same $\D P(\lambda)$-orbit, so there exists $p\in \D P(\lambda)$ with
\[
p^t\,\dot s\,p=\dot t.
\]
Let $J_1\sqcup\cdots\sqcup J_r$ be the block decomposition of $\{1,\dots,n\}$ into maximal
contiguous blocks on which $\lambda$ is constant, and set $m_a:=|J_1|+\cdots+|J_a|$.
Let $F_a:=\Span(e_1,\dots,e_{m_a})$, so $\D P(\lambda)$ is precisely the stabilizer of the partial flag
$0\subset F_1\subset\cdots\subset F_r=\C^n$.

For $X\in \operatorname{Skew}_n(\C)_{ reg}$ define
\[
d_{a,b}(X):=\dim\bigl(F_a\cap F_b^{\perp_X}\bigr),\qquad
F_b^{\perp_X}:=\{v\in \C^n:\ v^t X F_b=0\}.
\]
Exactly as in the symmetric case, $d_{a,b}(X)$ is invariant under $\D P(\lambda)$-congruence, hence
$d_{a,b}(\dot s)=d_{a,b}(\dot t)$ for all $a,b$.

For $X=\dot s$, one checks that $e_i\in F_b^{\perp_{\dot s}}$ if and only if
$s(i)>m_b$ (the sign in $\dot s$ does not affect whether the pairing is zero).
Hence
\[
d_{a,b}(\dot s)=\#\{\,i\le m_a:\ s(i)>m_b\,\},
\qquad
c_{a,b}(s):=\#\{\,i\le m_a:\ s(i)\le m_b\,\}=m_a-d_{a,b}(\dot s).
\]
Thus $c_{a,b}(s)=c_{a,b}(t)$ for all $a,b$.
As in the proof of Proposition \ref{prop: Weyl group parametrization}, these numbers determine the block-incidence counts
\[
A_{u,v}(s):=\#\{\,i\in J_u:\ s(i)\in J_v\,\},
\]
via inclusion--exclusion from the cumulative counts $c_{a,b}$. Therefore
$A_{u,v}(s)=A_{u,v}(t)$ for all $u,v$.

Suppose $\dot s$ and $\dot t$ with $s,t\in \frak{J}^S_{\hat G}$ lie in the same $\D P(\lambda)$-orbit.
Then $d_{a,b}(\dot s)=d_{a,b}(\dot t)$ for all $a,b$, hence
$c_{a,b}(s)=c_{a,b}(t)$ and therefore $A_{u,v}(s)=A_{u,v}(t)$ for all $u,v$.
By Lemma \ref{lemma:W-orbit-by-Auv-skew}, $s$ and $t$ are conjugate by $W^{\D M(\lambda)}$. Thus the map $\frak{J}^S_{\D G}/W^{\D M(\lambda)}\to \operatorname{Skew}_n(\C)_{ reg}/\D P(\lambda)$ is injective.

\end{proof}

\bibliographystyle{amsalpha}

\bibliography{reps}

\end{document}